\providecommand{\algorithmname}{Algorithm}
\newtheorem{lemma}{\textbf{Lemma}}
\newtheorem{theorem}{\textbf{Theorem}}\setcounter{theorem}{0}
\newtheorem{corollary}{\textbf{Corollary}}
\newtheorem{assumption}{\textbf{Assumption}}
\newtheorem{definition}{\textbf{Definition}}
\newtheorem{proposition}{\textbf{Proposition}}
\DeclareMathOperator*{\argmin}{arg\,min}
\newcommand{\peakmPREV}{B_{m,j}}
\newcommand{\instfamPREV}{\mathcal{F}}
\newcommand{\omgdistPREV}{\mathrm{Unif}(\Omega)}
\newcommand{\eoverpiPREV}{\mathbb{E}_{\pi,\omega}}
\newcommand{\wlooPREV}{\omega_{-(m,j)}}
\newcommand{\omglooPREV}{\Omega_{-(m,j)}}
\newcommand{\pminusPREV}{\mathbb{P}_{\pi,\omega_{m,j}^{}=-1}}
\newcommand{\pplusPREV}{\mathbb{P}_{\pi,\omega_{m,j}^{}=1}}
\newcommand{\crSymbol}{\mathrm{RI}(\Gamma,\pi)}
\newcommand{\peaki}{B_{i,j}}
\newcommand{\pminus}{\mathbb{P}_{\Gamma, \pi;\sigma_{i,j}=-1}}
\newcommand{\pplus}{\mathbb{P}_{\Gamma, \pi;\sigma_{i,j}=1}}
\newcommand{\gmin}{\gamma_\mathrm{min}}
\newcommand{\gmax}{\gamma_\mathrm{max}}
\newcommand{\sset}{\mathcal{S}}
\newcommand{\nullp}{P_0}
\newcommand{\altq}{Q}
\newcommand{\nsample}{n}
\newcommand{\binht}{\delta}
\newcommand{\nbin}{z}
\newcommand{\supportsize}{s}
\newcommand{\multinomvec}{\bm{N}}
\newcommand{\nheadvec}{\bm{R}}
\newcommand{\singlecordtv}{\frac{1}{4M}}
\newcommand{\chisquareconstant}{32}
\newcommand{\Mexponent}{-8}
\newcommand{\Mexponentoracle}{-4}
\newcommand{\upperprefactor}{M^{5}(\log T)}
\newcommand{\myalg}{\texttt{BaSEDB}\xspace}
\newcommand{\rbalg}{\texttt{RoBIN}\xspace}
\definecolor{cm}{RGB}{0,0,200}
\definecolor{rj}{RGB}{0,0,200}
\newcommand{\Env}{\mathcal{P}}
\newcommand{\EnvAlpha}{\Env_{\alpha}}
\newcommand{\knowledge}{\mathcal{K}}
\newcommand{\gridset}{\mathcal{U}_M}
\newcommand{\optexp}{\psi_{M}^\star}
\newcommand{\posrev}{f^{(1)}}
\newcommand{\negrev}{f^{(-1)}}
\author[1]{Rong Jiang}
\author[1,2]{Cong Ma}
\affil[1]{Committee on Computational and Applied Mathematics, University of Chicago}
\affil[2]{Department of Statistics, University of Chicago}
\begin{document}
\title{The Adaptivity Barrier in Batched Nonparametric Bandits: \\ Sharp Characterization of the Price of Unknown Margin}
\date{November 2025}
\maketitle

\begin{abstract}
  We study batched nonparametric contextual bandits under a margin condition when the margin parameter $\alpha$ is unknown. 
  To capture the statistical cost of this ignorance, we introduce the regret inflation criterion, defined as the ratio between the regret of an adaptive algorithm and that of an oracle knowing $\alpha$. 
  We show that the optimal regret inflation grows polynomially with the horizon $T$, with exponent $\optexp$ given by the value of a convex optimization problem that depends on the dimension, smoothness, and number of batches $M$. Moreover, the minimizer of this optimization problem directly prescribes the batch allocation and exploration strategy of a rate-optimal algorithm. Building on this principle, we develop \rbalg (RObust batched algorithm with adaptive BINning), which achieves the optimal regret inflation up to polylogarithmic factors.  These results reveal a new adaptivity barrier: under batching, adaptation to an unknown margin parameter inevitably incurs a polynomial penalty, sharply characterized by a variational problem.
  Remarkably, this barrier vanishes once the number of batches exceeds order $\log \log T$; with only a doubly logarithmic number of updates, one can recover the oracle regret rate up to polylogarithmic factors.  
\end{abstract}

% \tableofcontents

\section{Introduction}

A central question in sequential decision making is the cost of adaptation: how much performance is lost when key complexity parameters are unknown.
Nonparametric contextual bandits provide a canonical setting to study this question~\citep{woodroofe1979one,yang2002nonp,rigollet2010nonparametric,perchet2013multi}.
In the fully online regime, the problem is well understood. 
Under smoothness and margin assumptions, algorithms that attain minimax-optimal regret  can even adapt to an unknown margin parameter at no extra cost. 
In particular, the foundational work of~\cite{rigollet2010nonparametric} and the 
ABSE policy of~\cite{perchet2013multi} demonstrate that margin adaptation comes at no statistical cost in the \emph{online} regime.

In many settings, including clinical trials, education, and digital platforms, fully online interaction is infeasible because of logistical, ethical, or computational constraints. 
Instead, data collection proceeds in a limited number of \emph{batches}: actions are fixed for a group of covariates, feedback is revealed only at the end of the batch, and subsequent policies must adapt accordingly. 
While minimax-optimal rates have been established for batched nonparametric contextual bandits when the margin parameter is known, these procedures require oracle knowledge to tune batch sizes and exploration schedules~\citep{jiang2025batched}.
This raises a fundamental question: 
{\center \textit{What is the statistical price of not knowing the margin parameter when learning under batch constraints?}} 
\medskip

% This paper provides a sharp answer.
This paper provides a sharp answer.
We introduce the regret inflation criterion, defined as the ratio between the regret of an adaptive algorithm and that of an oracle who knows the true margin parameter. 
We show that the optimal regret inflation grows polynomially as $T^{\optexp}$ with the horizon $T$, with an exponent $\optexp$ precisely characterized by a convex variational optimization problem that depends on the dimension, smoothness, and batch budget.
Strikingly, the minimizer of this program also prescribes the batch allocation and exploration schedule of a rate-optimal algorithm, yielding matching upper and lower bounds up to polylogarithmic factors.

A key corollary of this result is the identification of an adaptivity barrier unique to the batch constraint. 
In the online regime, the margin parameter admits free adaptation, but when updates are limited, adaptation becomes inherently costly. 
We prove that the barrier vanishes when the number of batches exceeds order $\log\log T$; that is, with a doubly logarithmic number of updates, one can match the oracle regret rate up to polylogarithmic factors. 
Conversely, when the batch budget grows more slowly than $\log\log T$, the regret inflation is unavoidably polynomial.
This threshold cleanly delineates the transition between the regimes where batching constrains adaptation and where it does not.

Conceptually, our analysis unifies statistical limits and algorithm design through a single variational object. The variational characterization exposes the precise dependence of adaptivity cost on dimension, smoothness, and batch budget, while its minimizer yields an explicit constructive principle for designing robust batched policies under unknown complexity.

\subsection{Related work}\label{sec:related-work}

\paragraph{Contextual bandits.} The concept of contextual bandits was introduced by~\citet{woodroofe1979one}. For linear contextual bandits, \citet{auer2002using,abbasi2011improved,goldenshluger2013linear,bastani2020online,qian2023adaptive} established regret guarantees in both
low- and high-dimensional settings. Meanwhile, modeling the mean reward function as a smooth function of the contexts was studied in~\cite{yang2002nonp}. \cite{rigollet2010nonparametric} proved a minimax regret lower bound for this setup and designed an upper-confidence-bound-type (UCB-type) policy to attain the near-optimal rate. \cite{perchet2013multi}
refined this result by proposing the Adaptively Binned Successive
Elimination (ABSE) policy that can also adapt to the unknown margin
parameter in the fully online setting. Additional insights in nonparametric contextual  bandits were obtained in~\cite{qian2016kernel,reeve2018k,guan2018nonparametric,hu2022smooth,suk2021self,gur2022smoothness,cai2024transfer}.

\paragraph{Margin condition in classification.}
The margin condition originates from nonparametric classification, where it was studied by~\cite{audibert2007fast}. This condition, often referred to as the Tsybakov margin condition, quantifies how well-separated the optimal decision boundary is and directly governs learning rates. Its adaptation to contextual bandits was initiated by~\cite{Goldenshluger_2009,rigollet2010nonparametric,perchet2013multi}, who showed that the margin parameter $\alpha$ fundamentally shapes the complexity of bandit learning. In the online setting, adaptation to unknown $\alpha$ is possible without additional regret cost~\citep{perchet2013multi}. However, under the batch constraint, as explored in our work, such adaptivity becomes costly, giving rise to a new barrier.

\paragraph{Batch learning.} The multi-armed bandit problem under the batched setting was studied by \cite{perchet2016batch,zijun2020batch}. Batch learning in linear contextual bandits was studied by \cite{han2020sequential,ren2020batched,ruan2021linear} and \cite{ren2023dynamic,wang2020online,fan2023provably} further
considered the problem with high-dimensional covariates. \cite{jiang2025batched,arya2025batched} studied the nonparametric contextual bandit problem under the batch constraint. \cite{karbasi2021parallelizing,kalkanli2021batched}
developed batched Thompson sampling algorithms. \cite{feng2022lipschitz} considered the Lipschitz continuum-armed bandit problem under the batched setting. Further insights in batched bandits were developed in~\cite{zhang2020inference,tianyuan2021anytime, tianyuan2021double,liu2025batched}. Another related topic is online learning with switching costs~\citep{cesa2013online}. Best arm identification with limited rounds of interaction has been studied by~\cite{tao2019collaborative}. Reinforcement learning with low switching costs has been considered by~\cite{bai2019provably,zhang2020almost,gao2021provably,wang2021provably,qiao2022sample}.

\paragraph{Adaptation.} 
% Understanding the statistical limits of sequential decision making problems under unknown complexity parameters has received growing attention in the past decade. 
In the fully online setting, adaptivity to the margin parameter is feasible at no extra cost~\citep{perchet2013multi}. One might ask whether the same is true for the smoothness parameter. The answer is negative: even without the batch constraint, adaptation to smoothness is impossible~\citep{locatelli2018adaptivity,gur2022smoothness,cai2022stochastic}. Minimax regret rates depend explicitly on the H\"older smoothness $\beta$, and no single procedure can achieve the optimal rate simultaneously across different values of $\beta$. This impossibility parallels classical results in nonparametric estimation and classification, where smoothness adaptation requires additional structure or necessarily incurs a penalty~\citep{qian2016random,gur2022smoothness,cai2024transfer}. Thus, the margin parameter is the quantity of genuine interest: it admits free adaptation online, yet, becomes costly under the batch constraint.

\subsection{Notation and paper organization}
For any positive integer $n$, we use the shorthand $[n]$ to denote the set $\{1,2,\ldots, n\}$. We use the notations $\apprle$, $\apprge$, and $\asymp$ to indicate relationships that hold up to constant factors. Specifically, $f(n) \apprle g(n)$ means there exists a constant $C > 0$ such that $f(n) \le C\,g(n)$, while $f(n) \apprge g(n)$ indicates that $f(n) \ge c\,g(n)$ for some constant $c > 0$. We write $f(n) \asymp g(n)$ when both $f(n) \apprle g(n)$ and $f(n) \apprge g(n)$ hold.

In terms of paper organization, in  Section~\ref{sec:setup}, we introduce the problem setup and the regret inflation criterion. 
Section~\ref{sec:main} presents the main results, including the variational characterization of regret inflation and the master theorem. Section~\ref{sec:algorithm} describes the optimal algorithm guided by the variational principle. Section~\ref{sec:lower} contains the proof of the lower bound.  
We conclude with a discussion of extensions and future directions in Section~\ref{sec:discussion}.

% \subsection{Main contributions}

% Contributions. This paper makes the following contributions:

% \begin{itemize}
%     \item Regret inflation criterion. We introduce regret inflation as a new performance criterion, defined as the ratio between the regret of an adaptive algorithm without margin knowledge and that of an oracle algorithm knowing the margin parameter.
%     \item Variational characterization. We show that regret inflation is governed by the solution of a variational optimization problem. Its optimal value  determines the inflation exponent, while its optimizer informs the design of the optimal algorithm.
%     \item Sharp lower and upper bounds. We establish a master theorem showing that regret inflation is necessarily polynomial in the horizon T, with exact exponent . The lower bound is proved via information-theoretic arguments, while the upper bound is attained by a constructive algorithm.
%     \item Optimal algorithm. We design and analyze an algorithm that achieves the minimax regret inflation rate, showing that the variational principle is not only a characterization but also a guide to optimal procedure design.
%     \item Adaptivity barrier. Together, these results reveal a new statistical barrier: under batching constraints, adaptation to an unknown margin parameter inevitably incurs a polynomial regret penalty.
% \end{itemize}

\section{Problem setup and the regret inflation criterion\label{sec:setup}}

We begin by introducing the model and assumptions for batched contextual bandits, then review the oracle regret when the margin parameter is known, and finally introduce the regret inflation criterion that drives the rest of our analysis.

\subsection{Model and assumptions}

We study a two-armed nonparametric contextual bandit with horizon \(T\). At each round,
\[
(X_t, Y_t^{(1)}, Y_t^{(-1)}), \qquad t=1,\dots,T,
\]
are drawn i.i.d.~from a distribution $P$, where the context \(X_t \in \mathcal{X}\coloneqq [0,1]^d\) follows a distribution \(P_X\).
The rewards take values in \([0,1]\) and satisfy
\[
\mathbb{E}\!\big[Y_t^{(k)} \,|\, X_t\big] = f^{(k)}(X_t), \qquad k\in\{1,-1\},
\]
for unknown mean reward functions \(\posrev,\negrev\).

\paragraph{Batch policies.}
Under an \(M\)-batch constraint, the learner specifies
(i) a partition \(\Gamma=\{0=t_0<t_1<\cdots<t_M=T\}\) of the horizon, and
(ii) a sequence of decision rules \(\pi=(\pi_t)_{t=1}^T\).
At time \(t\), only contexts up to \(t\) and rewards from \emph{previous} batches are available. 
Let \(\Gamma(t)\) denote the batch index of round \(t\), i.e., $\Gamma(t) \coloneqq  i$ if $t_{i-1} < t \leq t_i$. The information set at time $t$ is
\[
\mathcal{H}_t=\{X_\ell\}_{\ell=1}^t \cup \{Y_\ell^{(\pi_{\ell}(X_\ell))}\}_{\ell=1}^{\,t_{\Gamma(t)-1}}.
\]
The grid $\Gamma$ may be chosen adaptively, meaning that the statistician can use all information up to $t_{i-1}$ to determine $t_i$. The statistician's policy $\pi_{t}$
at time $t$ may depend on $\mathcal{H}_{t}$. The goal is to design an $M$-batch policy $(\Gamma,\pi)$
that can compete with an oracle that knows the environment, i.e., the law $P$ of $(X_{t},Y_{t}^{(1)},Y_{t}^{(-1)})$.

% Throughout the paper we assume that $M$, the number of batches, is a fixed constant that does not grow with the horizon $T$. This assumption is natural in many applications-most notably in clinical trials-where the number of interim analyses is limited in advance by logistical or regulatory constraints. 

\paragraph{Distributional assumptions.}
We impose the following standard conditions in the nonparametric bandits literature
\citep{rigollet2010nonparametric,perchet2013multi,cai2024transfer,jiang2025batched}:

The first assumption is concerned with the regularity of the covariate distribution~$P_X$.

\begin{assumption}[Bounded density]\label{ass:bdd-density}
There exist constants \(\underline{c},\bar{c}>0\) such that 
% the context distribution $P_X$ has a density $\mu$ w.r.t the Lebesgue measure bounded away from zero on its support:
\[
\underline{c}\, r^d \;\le\; P_X\!\big(B(x,r)\big) \;\le\; \bar c\, r^d,
% \underline{c}\le\mu(x)\le \bar{c},
\qquad \forall x\in \mathrm{supp}(P_X), \forall r \in (0,1],
% \; r\in(0,1].
\]
where $B(x,r)$ is the $\ell_\infty$ ball centered at $x$ with radius $r$.
\end{assumption}

The second assumption
is on the smoothness of the mean reward functions. 
\begin{assumption}[Smoothness]\label{ass:smooth}
Each \(f^{(k)}\) is \((\beta,L)\)-H\"older smooth:
\[
|f^{(k)}(x)-f^{(k)}(x')|\le L\|x-x'\|_2^\beta, \qquad \forall x,x'\in\mathcal{X},\; k\in\{1,-1\}.
\]
\end{assumption}

The last assumption measures the closeness between the reward
functions of the two actions.

\begin{assumption}[Margin]\label{ass:margin}
For some \(\alpha\ge0\), there exist \(\delta_0\in(0,1)\) and \(D_0>0\) such that
\[
P_X\!\left(0<\big|\posrev(X)-\negrev(X)\big|\le \delta\right) \le D_0\, \delta^\alpha,
\qquad \forall \delta\in[0,\delta_0].
\]
\end{assumption}
\noindent For a fixed margin parameter \(\alpha\), let \(\EnvAlpha\) be the class of distributions satisfying Assumptions~\ref{ass:bdd-density}--\ref{ass:margin}, where we implicitly assume that $d$ and $\beta$ are fixed and known.

Assumption~\ref{ass:margin} pertains to the margin condition
in nonparametric classification \citep{mammen1999smooth,tsybakov2004optimal,audibert2007fast}, and has been adapted to the bandit setup by~\cite{Goldenshluger_2009,rigollet2010nonparametric,perchet2013multi}.
The margin parameter governs the fundamental complexity of the problem. When $\alpha=0$, the margin assumption becomes vacuous, and the reward functions of the two arms can be arbitrarily close to each other, making it challenging to identify the optimal one. When $\alpha$ increases, the reward functions of the two actions exhibit strong separation over a region of high probability mass, and discerning the optimal action is less difficult.

The following proposition adapted from~\cite{perchet2013multi} depicts the interplay
between the smoothness parameter $\beta$ and the margin parameter
$\alpha$. 

\begin{proposition}\label{propo:alpha}
Under Assumptions~\ref{ass:bdd-density}-\ref{ass:margin}:
\begin{itemize}
\item When $\alpha>d/\beta$, there is a constant gap between the reward
functions of the two arms and one can take $\alpha=\infty$. 
\item When $\alpha\le d/\beta$, there exist nontrivial contextual bandit
instances in $\EnvAlpha$.
\end{itemize}
\end{proposition}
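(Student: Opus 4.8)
Write $g \coloneqq \posrev-\negrev$; by Assumption~\ref{ass:smooth} and the triangle inequality, $g$ is $(\beta,2L)$-H\"older, and the two bullets are handled separately.

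For the regime $\alpha>d/\beta$, the plan is to argue by contradiction that $g$ cannot take arbitrarily small nonzero values on $\mathrm{supp}(P_X)$ --- which is precisely the assertion that the two arms differ by a constant amount wherever they differ, so that one may take $\alpha=\infty$. Suppose instead that for every $\varepsilon>0$ there is $x_0\in\mathrm{supp}(P_X)$ with $0<|g(x_0)|=:\delta_1<\varepsilon$. Choose the $\ell_\infty$-ball $B(x_0,\rho)$ with radius $\rho\asymp\delta_1^{1/\beta}$ small enough that $2L(\sqrt d\,\rho)^{\beta}\le\tfrac12\delta_1$; H\"older continuity then forces $\tfrac12\delta_1\le|g(x)|\le\tfrac32\delta_1$ for all $x\in B(x_0,\rho)$, so $B(x_0,\rho)\subseteq\{x:0<|g(x)|\le\tfrac32\delta_1\}$. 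Assumption~\ref{ass:bdd-density} gives $P_X(B(x_0,\rho))\ge\underline c\,\rho^{d}\asymp\delta_1^{d/\beta}$, while Assumption~\ref{ass:margin} (applicable since $\tfrac32\delta_1<\delta_0$ for small $\varepsilon$) gives $P_X(\{0<|g|\le\tfrac32\delta_1\})\le D_0(\tfrac32\delta_1)^{\alpha}$. Combining yields $\delta_1^{d/\beta}\apprle\delta_1^{\alpha}$, which fails once $\delta_1$ is small since $d/\beta<\alpha$ --- a contradiction. Hence $|g|$ is bounded below by some $\delta_\star>0$ wherever it is nonzero on $\mathrm{supp}(P_X)$, so $\{x:0<|g(x)|\le\delta\}=\emptyset$ for $\delta<\delta_\star$ and Assumption~\ref{ass:margin} holds vacuously for every $\alpha$.

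For the regime $\alpha\le d/\beta$, the plan is to exhibit an explicit nontrivial member of $\EnvAlpha$ via the multi-scale bump construction of~\citet{audibert2007fast,perchet2013multi}. Take $P_X=\mathrm{Unif}([0,1]^d)$, set $\negrev\equiv\tfrac12$ and $\posrev=\tfrac12+g$ with $g=\sum_i g_i$ a sum of rescaled copies of a fixed smooth bump placed on pairwise disjoint cubes. A bump of height $\asymp h$ is rescaled to width $\asymp h^{1/\beta}$, which makes each $g_i$ exactly $(\beta,L)$-H\"older, and disjointness of the supports keeps $g$ itself $(\beta,L)$-H\"older and its range in a small interval, so the rewards stay in $[0,1]$. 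Including bumps over a geometric ladder of heights $h\asymp 2^{-j}$, with the number of bumps at height $2^{-j}$ calibrated appropriately, one arranges that the level set $\{0<|g|\le\delta\}$ --- a union of thin shells inside the larger bumps together with the full supports of all bumps of height $\le\delta$ --- has $P_X$-mass of order exactly $\delta^{\alpha}$. The upper bound places the instance in $\EnvAlpha$; the matching lower bound shows its margin exponent is genuinely $\alpha$ (so it is not the constant-gap instance of the previous bullet), and since $g\not\equiv0$ on a set of positive mass it is a genuine two-armed bandit problem.

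The first bullet is a short H\"older-and-volume estimate. The crux is the bookkeeping in the second bullet: choosing the grid width, the height ladder, and the sub-collection of cubes carrying bumps so that the two-sided bound $P_X(0<|g|\le\delta)\asymp\delta^{\alpha}$ holds uniformly for small $\delta$. This is exactly where the ceiling $\alpha\le d/\beta$ is consumed --- it is the feasibility condition for fitting the required ladder of bumps inside $[0,1]^d$ while respecting the H\"older scaling. As this is by now a standard construction, the cleanest exposition is to quote it from~\citet{perchet2013multi}.
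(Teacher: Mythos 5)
The paper itself gives no proof of this proposition, merely noting it is adapted from \citet{perchet2013multi}; your argument is correct and is essentially the same H\"older-plus-volume argument that underlies the cited result. The first bullet is self-contained and sound: the only points worth checking are that $\rho\le 1$ (so Assumption~\ref{ass:bdd-density} applies) and $\tfrac32\delta_1\le\delta_0$ (so Assumption~\ref{ass:margin} applies), both of which you handle by taking $\varepsilon$ small, and the contradiction $\underline{c}\,\rho^d \lesssim \delta_1^{\alpha}$ with $\rho\asymp\delta_1^{1/\beta}$ indeed fails as $\delta_1\downarrow 0$ precisely because $d/\beta<\alpha$.

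For the second bullet you can get away with less than the full multi-scale ladder. A \emph{single} compactly supported bump $g(x)=c\,\phi^{\beta}(\|x-x_0\|_\infty/w)$ (with $\phi$ the piecewise-linear profile the paper itself uses in its lower-bound construction and $c,w$ small) already works: the level set $\{0<|g|\le\delta\}$ is an annulus of $\ell_\infty$-thickness $\asymp w(\delta/c)^{1/\beta}$ together with the outer part of the cube, so $P_X(0<|g|\le\delta)\apprle(\delta/c)^{d/\beta}\le D_0\,\delta^{\alpha}$ for every $\alpha\le d/\beta$ once $\delta$ is small, and the instance is clearly non-degenerate. The geometric ladder you invoke is needed only if one wants the margin exponent to be \emph{exactly} $\alpha$ (a two-sided bound), which is strictly more than the proposition requires, though it is harmless to quote it from \citet{perchet2013multi} as you do. Either way, the feasibility of the construction is exactly what consumes the hypothesis $\alpha\le d/\beta$, matching the contradiction threshold from the first bullet.
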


\noindent In other words, $\alpha>d/\beta$ is the regime where the problem class is reduced to multi-armed bandits
without covariates 
% the static bandit problems with constant gap between the arms 
and one equivalently has $\alpha=\infty$. On the other hand, $\alpha\le d/\beta$ is the regime where $\EnvAlpha$ corresponds to a non-degenerate class of nonparametric bandits.

\subsection{Oracle regret with known margin}

Given an \(M\)-batch policy \((\Gamma,\pi)\) and an environment $P$ with reward functions \((\posrev,\negrev)\),  we define the cumulative regret
\begin{align}
\label{eq:regret}
R_T(\Gamma,\pi;P)
\coloneqq 
\mathbb{E}_{P}\!\left[\sum_{t=1}^T \Big(f^\star(X_t)-f^{(\pi_t(X_t))}(X_t)\Big)\right],
\end{align}
where $f^{\star}(x)\coloneqq \max_{k\in\{1,-1\}}f^{(k)}(x)$ is the
maximum mean reward one could obtain on the context $x$.

The oracle minimax regret with known \(\alpha\) is
\begin{equation}\label{eq:oracle-rate-def}
 R_T^\star(\alpha)
\coloneqq 
\inf_{(\Gamma,\pi)}
\;\sup_{P \in \EnvAlpha}
\;
R_T(\Gamma,\pi; P),   
\end{equation}
where \(\EnvAlpha\) denotes the class of environments with margin $\alpha$.
It is known that the rate depends on the dimension \(d\), smoothness \(\beta\), margin \(\alpha\), and batch budget \(M\).
Define
\begin{equation}\label{eq:def-gamma-h}
\gamma(\alpha)\coloneqq  \frac{\beta(\alpha+1)}{2\beta+d},
\qquad \text{and} \qquad
h_M(\alpha)\coloneqq  \frac{1-\gamma(\alpha)}{1-\gamma(\alpha)^M}.    
\end{equation}
When $\alpha = \infty$, we have $\gamma(\infty) = \infty$, and $h_M(\infty) = 0$. 
The optimal rates with known margin have been established in~\cite{jiang2025batched}, which are stated in the following proposition.

\begin{proposition}\label{proposition:oracle-rate}
Fix a margin $\alpha \in [0,d/\beta] \cup \{\infty\}$. We have
    \begin{align}\label{eq:oracle-regret}
c_1 M^{\Mexponentoracle}  T^{\,h_M(\alpha)} \leq R_T^\star(\alpha) \leq c_2 M^{} (\log T)  T^{\,h_M(\alpha)},
    \end{align}
    where $c_1,c_2>0$ are constants independent of $T$ and $M$.
\end{proposition}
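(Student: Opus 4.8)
The plan is to establish the two inequalities separately; both are contained in~\cite{jiang2025batched}, and I sketch the structure of each argument.

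\emph{Upper bound.} I would analyze a batched successive-elimination policy with adaptively refined binning (the \myalg\ scheme). Fix the grid $t_i=\big\lceil T^{\,h_M(\alpha)(1-\gamma(\alpha)^i)/(1-\gamma(\alpha))}\big\rceil$ for $i\in[M]$, with $t_0\coloneqq1$, so that $t_M=T$ and $\log_T t_i-\gamma(\alpha)\log_T t_{i-1}=h_M(\alpha)$ for every $i$ --- i.e.\ the grid solves the geometric recursion $t_i\asymp t_{i-1}^{\gamma(\alpha)}\,T^{h_M(\alpha)}$. During batch $i\ge2$ the learner maintains an $\ell_\infty$-cube partition of side $w_i\asymp t_{i-1}^{-1/(2\beta+d)}$; on each cube still flagged ambiguous it pulls both arms on a balanced set of contexts, estimates $f^{(1)}-f^{(-1)}$ by local averaging over the data of batches $<i$, and commits the better arm on that cube once a confidence half-width $\asymp w_i^{\beta}\sqrt{\log T}\asymp t_{i-1}^{-\beta/(2\beta+d)}\sqrt{\log T}$ certifies the sign of the gap. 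A Hoeffding bound with a union bound over the $O(T)$ cube-level tests --- the origin of the $\log T$ factor --- yields a high-probability event on which no cube is committed incorrectly and every cube still ambiguous when batch $i$ begins has gap $\lesssim t_{i-1}^{-\beta/(2\beta+d)}\sqrt{\log T}$. On that event the regret splits into (i) batch $1$, where no feedback exists and the policy simply commits, costing $\lesssim t_1=T^{h_M(\alpha)}$, and (ii) for each $i\ge2$, the exploration-plus-commitment regret within batch $i$, which by Assumption~\ref{ass:margin} is at most $(t_i-t_{i-1})\big(t_{i-1}^{-\beta/(2\beta+d)}\sqrt{\log T}\big)^{\alpha+1}\lesssim(\log T)^{(\alpha+1)/2}\,t_i\,t_{i-1}^{-\gamma(\alpha)}$. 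Substituting the grid makes each of the $O(M)$ terms equal to $T^{h_M(\alpha)}$ up to $\mathrm{polylog}(T)$, proving the stated $M(\log T)\,T^{h_M(\alpha)}$ bound; the case $\alpha=\infty$ degenerates to the $M$-batch two-armed bandit with a constant gap.

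\emph{Lower bound.} I would reduce to a family of hard instances obtained by superposing, at $M$ geometric scales $r_1>\cdots>r_M$ with $r_i^{-(2\beta+d)}\asymp T^{\,h_M(\alpha)(1-\gamma(\alpha)^{i-1})/(1-\gamma(\alpha))}$, smooth bumps of height $\asymp r_i^{\beta}$ on $\asymp r_i^{\alpha\beta-d}$ disjoint cubes of side $r_i$ carrying i.i.d.\ random signs; splitting the margin budget of Assumption~\ref{ass:margin} evenly across the scales keeps it valid (and requires $\alpha\le d/\beta$, consistent with Proposition~\ref{propo:alpha}). Distinguishing a level-$i$ bump requires $\gtrsim r_i^{-(2\beta+d)}$ correctly allocated samples, so no learner can resolve level $i$ before round $\asymp r_i^{-(2\beta+d)}$; since feedback is revealed only at batch endpoints, a learner whose grid point $t_{j-1}$ lies below this threshold must, throughout batch $j$, guess the level-$i$ signs essentially at random --- made precise by a $\chi^2$/Assouad estimate --- and therefore incurs expected regret $\gtrsim t_j\,r_i^{\beta(\alpha+1)}$, where $j$ is the last such batch. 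Writing $b_i\coloneqq\log_T t_i$ for the (possibly data-dependent) grid exponents, a telescoping argument shows that for every admissible $0\le b_1\le\cdots\le b_M=1$ some scale $i$ yields a bound of exponent $\ge h_M(\alpha)$: otherwise an induction on $i$ (each failed scale forcing $b_{i-1}<h_M(\alpha)(1-\gamma(\alpha)^{i-1})/(1-\gamma(\alpha))$) produces a chain inconsistent with $b_M=1$. Choosing the prior to realize this scale forces expected regret $\gtrsim T^{h_M(\alpha)}$, up to a polynomial-in-$M$ loss from the margin split, the Assouad constant, and the number of scales --- a loss we do not optimize and which produces the $M^{-4}$ prefactor.

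\emph{Main obstacle.} The delicate point is the lower bound's treatment of an \emph{adaptive} grid: although $t_i$ is chosen after the learner observes $\mathcal{H}_{t_{i-1}}$, one must show that the joint law of the grid and of the within-batch arm choices is nearly independent of the hidden fine-scale signs, which needs a careful conditioning/coupling argument and a $\chi^2$ bound uniform over the learner's behavior inside a batch. A secondary but nontrivial point is synchronizing the $M$-dependent bookkeeping on the two sides so that the exponents match exactly at $h_M(\alpha)$ while the prefactor gap --- the asymmetry between $M^{-4}$ below and $M\log T$ above --- stays polynomial in $M$.
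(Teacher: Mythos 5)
Your upper-bound sketch matches the paper's argument: both run the \texttt{BaSEDB}-style adaptively binned successive-elimination with the geometric grid $t_i\asymp t_{i-1}^{\gamma(\alpha)}T^{h_M(\alpha)}$, bin width $w_i\asymp t_{i-1}^{-1/(2\beta+d)}$, and confidence width $\asymp w_i^\beta\sqrt{\log T}$, then invoke the margin condition batch-by-batch; the paper packages this as Lemma~\ref{lem:basedb-regret} and plugs in the split factors of Equations~\eqref{eq:split-factors-oracle}--\eqref{eq:batch-bin-size-oracle}, but the content is the same, including the degenerate two-batch treatment of $\alpha=\infty$.

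On the lower bound, your hard-instance construction (bumps of side $r_m$ and height $\asymp r_m^\beta$ placed on $\asymp r_m^{\alpha\beta-d}$ cells at $M$ geometric scales, random Rademacher signs, margin budget split across scales) is exactly the paper's stripe construction. However, your mechanism for extracting a bad scale from a potentially \emph{adaptive} grid is where the proposal has a genuine gap, and you correctly flag it yourself. Your ``telescoping/induction on the exponents $b_i=\log_T t_i$'' only makes sense for a deterministic grid: when $t_i$ depends on the observed history, the $b_i$ are random and correlated with the hidden signs, so one cannot speak of ``the'' chain of inequalities to contradict. The paper's fix is to define \emph{deterministic} reference times $T_m\asymp T^{h_M(\alpha)(1-\gamma^m)/(1-\gamma)}$ and the events $A_m=\{t_{m-1}<T_{m-1}<T_m\le t_m\}$, which partition the sample space regardless of how the learner chooses the grid. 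Pigeonholing over this partition gives some $m^\star$ with $\mathbb{E}_\omega\,\mathbb{P}_{\pi,\omega}(A_{m^\star})\ge 1/M$, and on $A_{m^\star}$ the learner has exactly the observations available at $T_{m^\star-1}$ throughout the pivotal window $[T_{m^\star-1},T_{m^\star}]$, which makes the single-bin KL/TV bound (Lemma~\ref{lemma:single-bin-tv}) directly applicable --- no coupling argument or $\chi^2$ bound ``uniform over the learner's within-batch behavior'' is needed. This event-partition device is the missing ingredient; the $\chi^2$ machinery you anticipate is needed in the paper's \emph{main} lower bound (for distinguishing the mixture families $Q_i$) but not for this oracle proposition, where a per-coordinate Le Cam/Pinsker step suffices and already yields the $M^{-4}$ prefactor through $s_m$, $\delta_m$, and the $1/M$ loss from the pigeonhole.
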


\noindent Although~\cite{jiang2025batched} established the minimax rate for the case $\alpha\le1/\beta$, we extend their result to $\alpha \in [0,d/\beta] \cup \{\infty\}$ for $d\ge 1$. Importantly, we make the dependence on $M$ explicit in the lower bound. See Appendix~\ref{sec:lower-bound-extension} for the proof.
\medskip

\subsection{The regret inflation criterion\label{sec:regInf}}
When the margin parameter is unknown, the key question is: how much additional regret must we pay to adapt? To capture this, we introduce the notion of regret inflation.

\begin{definition}\label{eq:comp_ratio}
Denote by $\knowledge \coloneqq  [0, d/\beta] \cup \{\infty\}$ the set of possible margin parameters. 
For any $M$-batch policy $(\Gamma,\pi)$, define the regret inflation as 
\begin{equation}
\mathrm{RI}(\Gamma,\pi)
\coloneqq  \sup_{\alpha \in \knowledge} \sup_{P \in \EnvAlpha}
\frac{R_T(\Gamma,\pi;P)}{R_T^\star(\alpha)}.
\end{equation}

\end{definition}

This ratio compares the regret of the adaptive policy to that of the oracle who knows $\alpha$. 
Our notion of regret inflation parallels the risk inflation criterion in model selection for linear models \citep{foster1994risk}. In that context, risk inflation measures the excess risk of a data-driven model selection rule relative to an oracle that knows the true model. Analogously, regret inflation quantifies the excess regret of an adaptive bandit policy relative to an oracle that knows the margin parameter. Both notions capture the statistical price of adaptation to unknown complexity.

Our goal is to characterize the rate of the optimal regret inflation, 
\[
\inf_{\Gamma, \pi} \crSymbol,
\]
and its dependence on the batch budget $M$.

\section{Sharp characterization of the optimal regret inflation\label{sec:main}}

The central task of this section is to analyze the optimal regret inflation. We show that it admits an exact characterization: its exponent is given by the value of a convex optimization problem, and the minimizers of this problem prescribe the design of the rate-optimal algorithm. Thus, the variational problem provides both a fundamental statistical limit and a constructive principle for algorithm design.

\subsection{Key variational problem}

Minimizing regret inflation can be formulated as a two-player zero-sum game.  The learner commits to an $M$-batch policy $(\Gamma, \pi)$ without knowledge of the margin parameter, while nature selects a distribution $P$ over contexts and rewards consistent with some margin parameter $\alpha$. 
The payoff is the ratio between the learner's regret and that of the oracle who knows $\alpha$. 

Although both the learner's strategy space (policies) and nature's strategy space (distributions) are infinite-dimensional, 
the complexity of this game can be captured by a \emph{finite-dimensional} reduction. 
The learner's choice reduces to specifying a batch allocation across the $M$ updates, and nature's choice reduces to selecting a margin parameter $\alpha$ from the admissible range $\knowledge$. 
We parameterize the batch schedule $\Gamma = \{t_0=0 < t_1 < \cdots < t_M=T\}$ by exponents $u_i \in [0,1]$, so that $t_i \approx T^{u_i}$. This exponent parameterization captures exponentially growing batch sizes, which balance exploration and exploitation under geometric time scales. We refer to $\bm{u} = (u_1,\dots,u_{M-1})$ as an exponent grid.
This reduction yields a finite-dimensional convex optimization problem.

Formally, let $\bm{u} \in 
    \gridset \coloneqq  \{\bm{u}\in\mathbb{R}^{M-1}:0\le u_{1}\le\cdots\le u_{M-1}\le1\}$ be the grid choice of the learner, and let $\alpha \in \knowledge$ be the margin parameter selected by nature. 
Define the payoff function 
\begin{align}\label{eq:def-payoff}
    \Psi_M(\bm u,\alpha) \;\coloneqq  \; \max_{1 \le i \le M} \eta_i(\bm u,\alpha) - h_M(\alpha), 
\end{align}
where 
\begin{align}\label{eq:def-eta}
    \eta_1(\bm{u},\alpha)=u_{1}, \quad 
    \eta_{i}(\bm{u},\alpha)=u_{i}-u_{i-1}\gamma(\alpha),\quad2\le i\le M-1, \quad 
    \eta_{M}(\bm{u},\alpha)=1-u_{M-1}\gamma(\alpha). 
\end{align}
% Here, $u_i$ is proportional to the size of the $i$-th batch, and the function $\eta_i$ reflects the regret incurred during that batch. The value of $\gamma(\alpha)$, which is  increasing in $\alpha$, can be viewed as a weight that measures the usefulness of the previous batch. The term $h_M(\alpha)$ corresponds to the optimal rate one can achieve when equipped with the knowledge of $\alpha$. Hence, the value of $\psi_{M}(\bm{u})$ measures the performance of an algorithm under $\bm{u}$ against that of the oracle uniformly over the range of $\alpha$.
Then the optimal value of this finite-dimensional two-player game is 
\begin{equation}\label{eq:min-opt}
\optexp \coloneqq  \inf_{\bm{u}\in\gridset} \sup_{\alpha \in \knowledge} \; \Psi_M(\bm u,\alpha).
\end{equation}
For notational convenience, we also define the objective function w.r.t.~$\bm{u}$ as 
\begin{equation}\label{eq:def-psi-u}
  \psi_M(\bm{u}) \coloneqq  \sup_{\alpha \in \knowledge} \Psi_{M} (\bm{u}, \alpha).   
\end{equation}

\subsection{Optimal regret inflation}

Our main theorem establishes a tight characterization of the optimal regret inflation.
\begin{theorem}\label{thm:main}
Let $\optexp$ be the optimal value of the variational problem as defined in~\eqref{eq:min-opt}. Then there exist 
% quantities $c_1(d, \beta, M), c_2(d,\beta, M) > 0 $ 
constants $c_1,c_2>0$ independent of $T$ and $M$
such that
\begin{enumerate}
    \item For any $M$-batch policy $(\Gamma,\pi)$,
    \[
    \mathrm{RI}(\Gamma,\pi) \;\ge\; c_1 M^{\Mexponent} \, (\log T)^{-1}\, T^{\optexp}.
    \]
    \item There exists an $M$-batch policy $(\hat\Gamma,\hat\pi)$ such that
    \[
    \mathrm{RI}(\hat\Gamma,\hat\pi) \;\le\; c_2\, \upperprefactor\,  T^{\optexp}.
    \]
\end{enumerate}

\end{theorem}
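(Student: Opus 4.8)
The plan is to prove the two directions of Theorem~\ref{thm:main} largely independently, since the lower bound is an information-theoretic statement about all policies while the upper bound is constructive. For the lower bound, I would fix an arbitrary $M$-batch policy $(\Gamma,\pi)$ and show that its regret inflation is at least (a constant times $M^{\Mexponent}(\log T)^{-1}$ times) $T^{\optexp}$. The natural route is: first, observe that $\psi_M(\bm u)=\sup_{\alpha\in\knowledge}\Psi_M(\bm u,\alpha)$ is continuous and its infimum $\optexp$ is attained on the compact set $\gridset$ (a short convexity/compactness argument, possibly deferred to an appendix). Then, given the policy's (possibly adaptive) grid, I would extract from it an ``effective exponent grid'' $\bm u$ by comparing $t_i$ to powers of $T$; since $\psi_M(\bm u)\ge\optexp$ for every $\bm u\in\gridset$, there must exist some $\alpha^\star\in\knowledge$ (depending on the policy) at which $\Psi_M(\bm u,\alpha^\star)$ is within a polylog factor of $\optexp$, i.e.\ $\max_i\eta_i(\bm u,\alpha^\star)\ge h_M(\alpha^\star)+\optexp-o(1)$. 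The crux is then a per-instance regret lower bound: for this $\alpha^\star$ I must exhibit a family of environments in $\Env_{\alpha^\star}$ forcing regret at least of order $T^{\max_i\eta_i(\bm u,\alpha^\star)}$ (up to $M$-dependent and polylog factors) against \emph{any} policy whose grid induces exponents $\bm u$. Dividing by the oracle rate $R_T^\star(\alpha^\star)\asymp T^{h_M(\alpha^\star)}$ from Proposition~\ref{proposition:oracle-rate} (with the explicit $M^{\Mexponentoracle}$ and $M\log T$ prefactors) then yields the claimed bound $T^{\optexp}$ up to the stated $M^{\Mexponent}(\log T)^{-1}$ slack. I expect this last ingredient---the instance-wise lower bound producing regret $\asymp T^{\eta_i(\bm u,\alpha)}$ for each ``phase'' $i$---to be the main obstacle, and I anticipate it is proved via a multi-scale construction of ``bump'' reward functions localized on a grid of small $\ell_\infty$ balls, with signs randomized, followed by a KL/$\chi^2$ testing argument showing that with only the data collected in batches $1,\dots,i-1$ (of sizes $\approx T^{u_1},\dots,T^{u_{i-1}}$) the learner cannot distinguish the relevant signs, so it must incur the $\eta_i$ regret in batch $i$; the $\eta_i=u_i-u_{i-1}\gamma(\alpha)$ form comes precisely from balancing the $T^{u_{i-1}}$ samples available for estimation against the $T^{u_i}$ rounds played in the current batch, calibrated through the margin exponent. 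The detailed execution of this is presumably the content of Section~\ref{sec:lower}.

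For the upper bound, the strategy is to instantiate the algorithm \rbalg with a batch schedule read off from a near-minimizer $\hat{\bm u}\in\gridset$ of $\psi_M$: set $t_i\approx T^{\hat u_i}$ and run adaptively binned successive elimination within each batch, with bin widths and elimination thresholds chosen so that, on a high-probability event, after batch $i$ the surviving ``active region'' has been narrowed at the rate dictated by $\gamma(\alpha)$ --- \emph{simultaneously for every} $\alpha\in\knowledge$, since the algorithm does not know $\alpha$. The analysis then decomposes the total regret of \rbalg as a sum over the $M$ batches; the regret contributed by batch $i$ on a generic environment $P\in\Env_\alpha$ is controlled by (number of rounds in batch $i$) $\times$ (typical reward gap on the region still active at the start of batch $i$), and a careful bookkeeping shows this is at most $\widetilde{O}(T^{\eta_i(\hat{\bm u},\alpha)})$. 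Summing over $i$ and taking the max gives regret $\widetilde O\big(T^{\max_i\eta_i(\hat{\bm u},\alpha)}\big)$; dividing by $R_T^\star(\alpha)\apprge M^{\Mexponentoracle}T^{h_M(\alpha)}$ and taking the supremum over $\alpha\in\knowledge$ yields $\mathrm{RI}(\hat\Gamma,\hat\pi)\le c_2\,\upperprefactor\,T^{\psi_M(\hat{\bm u})}=c_2\,\upperprefactor\,T^{\optexp}$ (the $M^5\log T$ prefactor absorbing the $M^{\Mexponentoracle}$ in the denominator, the per-batch polylog losses, and the $M$-fold union bounds). The details of \rbalg, the high-probability event, and the per-batch regret bound are developed in Section~\ref{sec:algorithm}; here the main subtlety is that the binning and elimination must be margin-agnostic yet still achieve the $\alpha$-dependent contraction rate, which is exactly what the ABSE-style adaptive binning buys us, now carried out under the batch constraint.

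A final remark on glue: both halves match at exponent $\optexp$ by construction because the lower bound certifies $\psi_M(\bm u)\ge\optexp$ for \emph{every} $\bm u$ (hence for the policy's effective grid) while the upper bound achieves $\psi_M(\hat{\bm u})=\optexp+o(1)$ for the \emph{chosen} grid; the only looseness is in the $M$- and $\log T$-dependent prefactors, which is consistent with the statement and, via Proposition~\ref{proposition:oracle-rate}, is unavoidable at this level of granularity. I would present the lower bound first (as Section~\ref{sec:lower} does) since its construction motivates the definition of $\eta_i$ and $\Psi_M$, then derive the upper bound by ``inverting'' the same phase decomposition.
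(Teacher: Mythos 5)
Your upper bound plan is essentially the paper's argument: instantiate \rbalg with $t_i \approx T^{u_i^\star}$ for the variational minimizer, invoke a per-batch regret bound of the form $(t_i-t_{i-1})w_{i-1}^{\beta(1+\alpha)}$ (Lemma~\ref{lem:basedb-regret}), simplify to $T^{\max_i\eta_i(\bm u^\star,\alpha)}$, and divide by $R_T^\star(\alpha)$. The prefactor bookkeeping is as you describe.

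The lower bound plan, however, has a genuine gap, and it is precisely at the step you flag as the ``crux.'' You propose to extract an ``effective exponent grid'' $\bm u$ from the policy's (possibly adaptive) grid, then pick a worst-case $\alpha^\star$ for that $\bm u$ and force regret $\gtrsim T^{\max_i\eta_i(\bm u,\alpha^\star)}$. This fails for adaptive grids for a structural reason: the grid $\Gamma=\{t_1,\dots,t_{M-1}\}$ is a random variable (the learner may choose $t_i$ using data from batches $1,\dots,i-1$), so there is no single deterministic $\bm u$ to extract. Worse, the adversary must fix the environment (hence $\alpha$ and $P$) \emph{before} the grid is realized, so one cannot first read off $\bm u$ and then choose $\alpha^\star(\bm u)$ — that is exactly the chicken-and-egg dependence that makes the adaptive-grid case hard (and indeed the paper's Theorem~\ref{thm:general-K-fixed} separately handles the easier fixed-grid regime where your approach would go through).

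What the paper does instead is quite different. It does not look at the policy's grid at all when designing the hard instances. Instead it (i) takes the \emph{minimizer} $\bm u^\star$ of the variational problem and defines fixed cutoffs $T_i=\lceil T^{u_i^\star}\rceil$; (ii) invokes Proposition~\ref{proposition:opt-prop}(2), which supplies a non-increasing sequence $\alpha_1=\infty\ge\alpha_2\ge\cdots\ge\alpha_M$ for which \emph{all} $M$ branches $\eta_i(\bm u^\star,\alpha_i)-h_M(\alpha_i)$ are simultaneously equal to $\optexp$ — this ``full activity'' structure is the load-bearing insight and is absent from your plan; (iii) constructs a chain of mixture distributions $Q_1,\dots,Q_M$, one per difficulty level, and proves via a $\chi^2$ bound (Lemma~\ref{lemma:chi-square-bound}) that consecutive mixtures $Q_i,Q_{i+1}$ are indistinguishable from data up to time $T_{i-1}$ (Lemma~\ref{lemma:consecutive-family-tv}); (iv) partitions the sample space by the bad events $A_i=\{t_{i-1}<T_{i-1}<T_i\le t_i\}$, and uses a telescoping/triangle-inequality argument together with the indistinguishability to conclude that some $Q_{i^\star}(A_{i^\star})\ge 1/(2M)$ (Lemma~\ref{lemma:exist-i}); and (v) on that event, the policy is stuck in a too-long batch spanning $[T_{i^\star-1},T_{i^\star}]$ without enough information to resolve level $i^\star$, yielding the regret $\gtrsim T^{\eta_{i^\star}(\bm u^\star,\alpha_{i^\star})}$. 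Your multi-scale bump construction and KL/$\chi^2$ testing intuition is on the right track for step (v), but without steps (ii)--(iv) you do not have a way to force an adaptive policy to actually land in a bad window against an environment chosen a priori.
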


To better interpret Theorem~\ref{thm:main}, 
we treat $M$ as fixed and focus on how the regret inflation scales with the horizon~$T$;the effect of varying 
$M$ will be analyzed in Section~\ref{sec:phase-transition}. 
Theorem~\ref{thm:main} shows that the variational value $\optexp$ fully determines this dependence:
up to logarithmic factors, any adaptive policy incurs inflation of order $T^{\optexp}$. 
If $\optexp$ is \emph{strictly positive}, then Theorem~\ref{thm:main} implies an unavoidable polynomial adaptation cost in~$T$. 
The next subsection analyzes the structure of the variational problem 
and establishes that $\optexp$ is indeed strictly positive.

% For now, we focus primarily on the dependence on $T$, i.e., assuming that $M$ is a fixed constant. 
% We see from Theorem~\ref{thm:main} that the exponent $\psi_{M}^\star$, determined by the variational problem, exactly quantifies the statistical price of not knowing the margin, up to logarithmic factors. 
% It is thus instrumental to understand the behavior of~$\optexp$.  
% It turns out we always have $\optexp >0$, indicating that the regret inflation is inevitably polynomial in the horizon $T$; this is formally shown in the following proposition. 

\subsection{Structural properties of the variational problem}

\begin{proposition}
 \label{proposition:opt-prop}
 % Consider the case when $\knowledge$ has more than one element. 
 The following properties hold for the variational problem: 
    \begin{enumerate}
        \item The function $\psi_M$ is convex and admits a minimizer $\bm{u}^\star$ in the interior of $\gridset$ with positive optimal value, i.e., $\optexp > 0$. 
\item 
There exists a non-increasing sequence $\{\alpha_i\}_{1 \leq i \leq M}$ such that 
\[\psi_{M}(\bm{u}^{\star})=\;\eta_{i}\bigl(\bm{u}^{\star},\alpha_{i}\bigr)\;- h_M(\alpha_i).\]
Moreover, one has $\alpha_{1}=\infty$, and 
$\alpha_{i} \leq d/\beta $ for $2\le i\le M$. 
     
\end{enumerate}  
\end{proposition}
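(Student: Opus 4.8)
The plan is to treat the two parts separately, exploiting the piecewise-linear structure of the payoff $\Psi_M(\bm u,\alpha)$. For part~1, first I would observe that for each fixed $\alpha$, the map $\bm u \mapsto \Psi_M(\bm u,\alpha)$ is a maximum of affine functions of $\bm u$ (each $\eta_i$ is affine in $\bm u$, and $h_M(\alpha)$ is a constant shift), hence convex; taking the supremum over $\alpha\in\knowledge$ in \eqref{eq:def-psi-u} preserves convexity, so $\psi_M$ is convex on the polytope $\gridset$. Existence of a minimizer is then immediate from compactness of $\gridset$ and lower semicontinuity of $\psi_M$ (the sup of continuous functions is lsc; one should check the sup is attained or at least finite, using $\gamma(\alpha)\in[\gamma(0),\infty]$ and $h_M(\infty)=0$). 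To show $\optexp>0$, I would argue by contradiction: if $\psi_M(\bm u)=0$ for some $\bm u$, then $\eta_i(\bm u,\alpha)\le h_M(\alpha)$ for all $i$ and all $\alpha\in\knowledge$ simultaneously. Evaluating at $\alpha=\infty$ forces $\max_i\eta_i(\bm u,\infty)\le h_M(\infty)=0$; but $\eta_1(\bm u,\infty)=u_1\ge 0$ and $\eta_M(\bm u,\infty)=1-u_{M-1}\cdot\infty$, which is $-\infty$ unless $u_{M-1}=0$, in which case $\eta_M=1>0$ — so the constraint at $\alpha=\infty$ already cannot be met with value $0$, giving a strictly positive lower bound. (More carefully, one interprets $\gamma(\infty)=\infty$ via the limiting/known-$\alpha$ rate $h_M(\infty)=0$, so nature playing $\alpha=\infty$ demands the learner drive a sum of telescoping exponents to $0$, which is impossible.) For the interiority claim, I would show that any $\bm u$ on the boundary of $\gridset$ (some $u_i=u_{i+1}$, or $u_1=0$, or $u_{M-1}=1$) is strictly suboptimal: collapsing two grid points wastes a batch and makes one $\eta_i$ jump, so a small interior perturbation strictly decreases $\psi_M$; this is a direct computation with the $\eta_i$'s exploiting that $\gamma(\alpha)<1$ for $\alpha< d/\beta$.

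For part~2, the key tool is the minimax / saddle-point structure. Since $\psi_M(\bm u)=\sup_{\alpha}\Psi_M(\bm u,\alpha)$ is a finite supremum-like object over the one-dimensional set $\knowledge$, and each $\Psi_M(\cdot,\alpha)$ is a max of the $M$ affine functions $\eta_i(\cdot,\alpha)-h_M(\alpha)$, at the optimum $\bm u^\star$ the subdifferential optimality condition says $0\in\partial\psi_M(\bm u^\star)$, i.e., $0$ lies in the convex hull of the gradients $\nabla_{\bm u}\eta_i(\bm u^\star,\alpha)$ over those pairs $(i,\alpha)$ that are \emph{active} — meaning $\eta_i(\bm u^\star,\alpha)-h_M(\alpha)=\psi_M(\bm u^\star)$. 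I would argue that the active set must contain, for each index $i\in[M]$, at least one margin value $\alpha_i$ with $\eta_i(\bm u^\star,\alpha_i)-h_M(\alpha_i)=\psi_M(\bm u^\star)$: if some index $i$ were never active, then $\psi_M$ near $\bm u^\star$ would not depend on that index's gradient direction, and because the gradients $\nabla_{\bm u}\eta_i$ (which are $\pm e_j$ or $e_i-\gamma e_{i-1}$ type vectors) span $\mathbb{R}^{M-1}$ only when all $M$ of them participate, one could decrease $\psi_M$ by moving $\bm u^\star$ — contradicting optimality and the interiority from part~1. This gives the existence of the sequence $\{\alpha_i\}$ with the claimed identity $\psi_M(\bm u^\star)=\eta_i(\bm u^\star,\alpha_i)-h_M(\alpha_i)$.

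The endpoint and monotonicity claims then follow from examining the shape of $\alpha\mapsto\eta_i(\bm u^\star,\alpha)-h_M(\alpha)$ for each $i$. For $i=1$, $\eta_1(\bm u^\star,\alpha)=u_1^\star$ is independent of $\alpha$, while $h_M(\alpha)$ is decreasing in $\alpha$ (since $\gamma(\alpha)$ increases in $\alpha$, and one checks $h_M$ is monotone) with $h_M(\infty)=0$; hence $\eta_1-h_M$ is maximized at $\alpha=\infty$, forcing $\alpha_1=\infty$. For $i\ge2$, each $\eta_i(\bm u^\star,\alpha)=u_i^\star-u_{i-1}^\star\gamma(\alpha)$ is \emph{decreasing} in $\alpha$ (as $\gamma$ increases and $u_{i-1}^\star>0$ by interiority), so its worst case over $\knowledge$ cannot be at $\alpha=\infty$ (where $\eta_i=-\infty$) — it lies in $[0,d/\beta]$; thus $\alpha_i\le d/\beta$ for $i\ge 2$. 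For the non-increasing property of the sequence, I would use the envelope/complementary-slackness characterization together with the ordering $u_1^\star\le\cdots\le u_{M-1}^\star$: writing out the condition $\eta_i(\bm u^\star,\alpha_i)-h_M(\alpha_i)=\eta_{i+1}(\bm u^\star,\alpha_{i+1})-h_M(\alpha_{i+1})$ and using the monotonicity of $\gamma$ and convexity, a larger index $i$ (with larger $u_{i}^\star,u_{i-1}^\star$) balances against a smaller margin value, giving $\alpha_{i+1}\le\alpha_i$; this is the step I expect to require the most care, since it needs the precise interplay between the increments $u_i^\star-u_{i-1}^\star$, the function $h_M$, and the monotone reparametrization $\alpha\mapsto\gamma(\alpha)$, and may be cleanest if one substitutes $\gamma$ for $\alpha$ as the free variable and works with the resulting piecewise-linear program directly.

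The main obstacle I anticipate is the last part — proving $\{\alpha_i\}$ is non-increasing. Everything else is convex-analytic bookkeeping, but the monotonicity of the active margins is a genuinely structural fact that ties together all $M$ coordinates of $\bm u^\star$ at once; I would handle it by first solving, for each pair $(i,i+1)$, the two-variable subproblem that equalizes consecutive $\eta_i-h_M$ terms, showing the equalizing $\alpha$ is monotone in the pair $(u_{i-1}^\star,u_i^\star,u_{i+1}^\star)$, and then invoking the global ordering of the $u_j^\star$.
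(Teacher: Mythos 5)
Your overall architecture matches the paper's: convexity from a sup of maxima of affine functions, existence via compactness, a subdifferential/KKT argument to show all $M$ branches are active at the interior minimizer, and a reparametrization to $\gamma$ to read off $\alpha_1=\infty$, $\alpha_i\le d/\beta$, and monotonicity. Three of your steps, however, have concrete problems.

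\textbf{Positivity of $\optexp$.} Your contradiction "the constraint at $\alpha=\infty$ already cannot be met with value $0$" is false. Under the paper's convention $\Psi_M(\bm u,\infty)=u_1$ (and even under the naive $\gamma(\infty)=\infty$ interpretation of the first branch), taking $u_1=0$ with $u_{M-1}>0$ gives $\Psi_M(\bm u,\infty)=0$: the $\eta_M$ branch is $-\infty$, the $\eta_1$ branch is $0$, so there is no contradiction at $\alpha=\infty$ alone. Nature must play a \emph{second} margin value. The paper instead observes that $\inf_{\bm u}\Psi_M(\bm u,\alpha)=0$ holds for every fixed $\alpha$ with a \emph{unique} minimizer $\bm u^\star(\alpha)$, and that distinct $\alpha$'s have distinct minimizers; hence no single $\bm u$ can drive $\Psi_M(\bm u,\alpha)$ to $0$ simultaneously for all $\alpha$, forcing $\psi_M(\bm u)>0$ for every $\bm u$. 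Alternatively your argument can be repaired: $\alpha=\infty$ forces $u_1=0$, and then at $\alpha=d/\beta$ the telescoping inequalities $u_i-u_{i-1}\gmax\le h_M(\gmax)$ and $1-u_{M-1}\gmax\le h_M(\gmax)$ collapse to the impossible $\gmax\ge1$. But as written the step has a gap.

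\textbf{All $M$ branches active.} Your reason — that the gradients "$\bm e_1,\ \bm e_i-\gamma\bm e_{i-1},\ -\gamma\bm e_{M-1}$ span $\mathbb R^{M-1}$ only when all $M$ of them participate" — is incorrect: \emph{any} $M-1$ of these $M$ vectors already span $\mathbb R^{M-1}$ (e.g.\ $\bm g_2,\dots,\bm g_M$ triangularize). The correct statement is about the \emph{zero vector in the convex hull}: if some $\bm g_{i_0}$ is omitted, then starting from the first coordinate that only $\bm g_1$ and $\bm g_2$ touch, the nonnegativity of the weights forces a cascade $\theta_1=0\Rightarrow\theta_2=0\Rightarrow\cdots$, so $\sum\theta_k=1$ cannot hold. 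This cascade argument (the paper's Lemma on missing directions) is what you need, not a rank/spanning argument.

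\textbf{Monotonicity of $\{\alpha_i\}$.} You correctly flag this as the delicate step, but your proposed route — pairwise "two-variable subproblems" equalizing consecutive branches — is heavier than necessary. The paper's observation is that after reparametrizing to $\gamma$, the optimality system reads $u_i=u_1+\phi_M(u_{i-1})$ with $\gamma_i=\argmin_\gamma\{\gamma\,u_{i-1}+h_M(\gamma)\}$; since $h_M$ is strictly convex with $h_M'<0$, the map $v\mapsto\gamma^\star(v)=\argmin_\gamma\{v\gamma+h_M(\gamma)\}$ is non-increasing, and since $u_1<u_2<\cdots<u_{M-1}$ (interiority), $\gamma_i$ — hence $\alpha_i$ — is automatically non-increasing. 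This reduces the "global" monotonicity to a one-variable calculus fact and avoids coupling consecutive indices.
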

\noindent Part~1 of Proposition~\ref{proposition:opt-prop} ensures the exponent is \emph{strictly} positive, implying polynomial inflation (up to polylogs) in the fixed $M$ case via Theorem~\ref{thm:main}. 
In Part 2 of Proposition~\ref{proposition:opt-prop}, 
the sequence $\{\alpha_i\}$ identifies the branches of $\Psi_M$ that are active at $\bm u^\star$.
This structure is used in the lower-bound construction (Section~\ref{sec:lower}) to select difficulty levels of the hard instances.
 See Appendix~\ref{sec:variational} for the proof.
% Proposition~\ref{proposition:opt-prop} has several important implications. First, it guarantees the existence of a minimizer $\bm{u}^\star$ and, in fact, shows that the minimizer always lies in the interior of $\gridset$. Second, it establishes the existence of a sequence of 
% non-increasing margin parameters such that the optimal value $\optexp$ can be attained along each branch of the payoff function. This structural property plays a key role in our theoretical analysis.

\subsection{How many batches suffice for no regret inflation?}\label{sec:phase-transition}
When we allow the number $M$ of batches to increase with the horizon $T$, a natural question arises: how many batches are sufficient for no regret inflation?

The following proposition provides a crude order-wise bound on $\optexp$ in terms of the number $M$ of batches. 

\begin{proposition}\label{prop:optexp-control}
    Let $\gmax \coloneqq  \frac{d+\beta}{d+2\beta}$. Then we have the following control on the optimal exponent
    \[
    \frac{\gmax^{M-1}(1-\gmax)^{2}}{(1-\gmax^{M})^{2}} \leq \optexp \leq \frac{(M+1)\gmax^{M-1}}{(1-\gmax)^{2}}.
    \]
\end{proposition}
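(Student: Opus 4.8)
\textbf{Proof plan for Proposition~\ref{prop:optexp-control}.}

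The plan is to bound the minimax value $\optexp = \inf_{\bm u}\sup_\alpha \Psi_M(\bm u,\alpha)$ from both sides by exhibiting a near-optimal feasible grid for the upper bound and a single adversarial margin value for the lower bound, in both cases reducing everything to the extreme margin $\alpha$ that gives $\gamma(\alpha) = \gmax$. Note that $\gamma(\alpha)$ is increasing in $\alpha$ and that on the admissible range $\alpha \in [0,d/\beta]$ the largest value is $\gamma(d/\beta) = \frac{\beta(d/\beta+1)}{2\beta+d} = \frac{d+\beta}{d+2\beta} = \gmax$; the remaining choice $\alpha=\infty$ gives $\gamma = \infty$ and $h_M(\infty)=0$, which only constrains the $\eta_1$ branch. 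So the interesting adversary is $\alpha = d/\beta$.

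\textbf{Upper bound.} For the upper bound I would take the ``balanced geometric'' grid that is optimal for the oracle problem at the single margin $\alpha_0 := d/\beta$: set $u_i$ so that the increments $\eta_i(\bm u,\alpha_0)$ are all equal. Concretely, with $\gamma_0 := \gmax$, the oracle-optimal exponents are $u_i = \gamma_0^{M-i}\,\frac{1-\gamma_0^{i}}{1-\gamma_0^{M}}\cdot(\text{something})$ — more cleanly, choose $u_i$ recursively from $u_1 = h_M(\alpha_0)$ and $u_i = u_{i-1}(1+\gamma_0)$-type relations so that $\eta_i(\bm u,\alpha_0) = h_M(\alpha_0)$ for all $i$, which makes $\max_i \eta_i(\bm u,\alpha_0) - h_M(\alpha_0) = 0$. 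Then I must bound $\sup_{\alpha\in\knowledge}\Psi_M(\bm u,\alpha)$ for this fixed $\bm u$. For $\alpha \le d/\beta$ we have $\gamma(\alpha)\le\gamma_0$, so each $\eta_i(\bm u,\alpha) = u_i - u_{i-1}\gamma(\alpha) \le u_i - u_{i-1}\gamma_0 + u_{i-1}(\gamma_0-\gamma(\alpha)) = h_M(\alpha_0) + u_{i-1}(\gamma_0-\gamma(\alpha))$, while $h_M(\alpha) \ge h_M(\alpha_0)$ might not hold in the right direction — so I instead bound $\Psi_M(\bm u,\alpha) \le \max_i \eta_i(\bm u,\alpha) - \min_i \eta_i(\bm u,\alpha_0) \le \max_i u_{i-1}(\gamma_0 - \gamma(\alpha))$ plus the gap $h_M(\alpha_0)-h_M(\alpha)$; crudely, $u_{i-1}\le 1$, $\gamma_0 - \gamma(\alpha) \le \gamma_0$, and $|h_M(\alpha_0)-h_M(\alpha)| \le h_M(\alpha_0) \le \frac{1-\gamma_0}{1-\gamma_0^M}$. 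Summing the worst case over the $M$ branches and using $h_M(\alpha_0)\asymp \gamma_0^{M-1}(1-\gamma_0)/(1-\gamma_0^M)$ (since $1-\gamma_0^M \ge 1-\gamma_0$) yields a bound of order $(M+1)\gamma_0^{M-1}/(1-\gamma_0)^2$ after keeping track of the $\frac{1-\gamma_0^M}{1-\gamma_0}$ factor that appears when I relate the recursion for $u_i$ (whose top entry $u_{M-1}$ is close to $1$) back to $h_M$. The $\alpha=\infty$ branch contributes only through $\eta_1(\bm u,\infty) = u_1 = h_M(\alpha_0)$, which is already within the claimed order.

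\textbf{Lower bound.} For the lower bound, fix the adversary to play $\alpha = d/\beta$, so $\optexp \ge \inf_{\bm u}\Psi_M(\bm u, d/\beta) = \inf_{\bm u}\big(\max_i \eta_i(\bm u,\gamma_0) - h_M(\alpha_0)\big)$. This is exactly the oracle allocation problem whose optimum is $0$, achieved only by the balanced grid; but that is not useful alone because the adversary is restricted to one value. The point is that a \emph{single} adversary $\alpha=d/\beta$ forces inflation $0$, so I need a two-point adversary. I would instead lower bound by considering the adversary playing either $\alpha = \infty$ or $\alpha = d/\beta$ and taking the max: $\optexp \ge \inf_{\bm u}\max\{\Psi_M(\bm u,\infty),\,\Psi_M(\bm u, d/\beta)\}$. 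Now $\Psi_M(\bm u,\infty) \ge \eta_1(\bm u,\infty) - h_M(\infty) = u_1$, so if $u_1$ is large we are done; and $\Psi_M(\bm u,d/\beta) \ge \eta_M(\bm u,\gamma_0) - h_M(\alpha_0) = 1 - u_{M-1}\gamma_0 - h_M(\alpha_0)$, and more generally $\Psi_M(\bm u,d/\beta) \ge \eta_i(\bm u,\gamma_0) - h_M(\alpha_0)$ for each $i$. Taking a suitable convex combination (weighted telescoping sum) of the inequalities $u_1 \le$ value, $u_i - u_{i-1}\gamma_0 - h_M(\alpha_0) \le$ value for $i\ge 2$, and $1 - u_{M-1}\gamma_0 - h_M(\alpha_0) \le$ value, with weights $\gamma_0^{-j}$ or $1/j$ chosen so the $u_i$ terms cancel, produces a $\bm u$-free lower bound: the telescoping leaves $1 - (\text{sum of weights})\cdot h_M(\alpha_0)$ on one side and (sum of weights)$\cdot$value on the other, giving value $\ge \frac{1 - (\sum w_j)h_M(\alpha_0)}{\sum w_j}$. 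Choosing weights so that $\sum w_j \asymp (1-\gamma_0^M)^2/(\gamma_0^{M-1}(1-\gamma_0)^2)$ and checking $1 - (\sum w_j)h_M(\alpha_0)$ stays bounded below by a constant (which holds because $h_M(\alpha_0)\cdot \sum w_j$ is $O(1)$ by the same estimate) gives $\optexp \gtrsim \gamma_0^{M-1}(1-\gamma_0)^2/(1-\gamma_0^M)^2$.

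\textbf{Main obstacle.} The routine part is the algebra of geometric series; the delicate part is the bookkeeping in the lower bound — choosing the telescoping weights $w_j$ so that all $u_i$-dependence cancels exactly while the residual constant term $1 - h_M(\alpha_0)\sum_j w_j$ is provably bounded away from $0$ (not merely nonnegative). This requires a two-sided estimate on $h_M(\alpha_0)\cdot\sum_j w_j$, i.e. showing it lies in an interval like $[\tfrac12, \tfrac34]$ for the chosen weights, which in turn pins down the exact geometric weighting and may need a small correction (e.g. truncating or rescaling the weights) to make the constant work for all $M\ge 2$ and all admissible $d,\beta$. The upper-bound direction is comparatively safe since any feasible $\bm u$ gives a valid bound, so a slightly suboptimal balanced grid with crude per-branch estimates suffices to land at the stated $(M+1)\gamma_0^{M-1}/(1-\gamma_0)^2$ order.
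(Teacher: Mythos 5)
Your lower bound strategy is sound and, after filling in the weight choice, actually reproduces the paper's argument in a more self-contained form. The paper (Lemma~\ref{lemma:u1-lb-strengthen}) first establishes that the minimizer $\bm u^\star$ is interior and satisfies the KKT recursion $u_1 = \max_\gamma(u_{i+1}-\gamma u_i - h_M(\gamma))$, then sums these inequalities weighted by $\gmax^{-(j-1)}$; you instead observe that for \emph{any} $\bm u$, the two-point adversary $\alpha\in\{\infty,d/\beta\}$ yields $u_1 \le V$ and $u_i-\gmax u_{i-1}-h_M(\gmax)\le V$ (with $u_M=1$), and the same geometric weighting telescopes. Concretely, multiplying the first by $\gmax^{M-1}$ and the $i$-th by $\gmax^{M-i}$ and summing gives $1 - \tfrac{1-\gmax^{M-1}}{1-\gmax^{M}} \le V\tfrac{1-\gmax^{M}}{1-\gmax}$, which rearranges exactly to $V \ge \gmax^{M-1}(1-\gmax)^2/(1-\gmax^M)^2$. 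Your concern about whether the residual constant is bounded away from zero is unfounded; it equals $\gmax^{M-1}(1-\gmax)/(1-\gmax^M)$ exactly, no estimate needed. Your derivation also avoids the interiority/KKT machinery the paper invokes, which is a modest simplification.

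The upper bound, however, has a genuine gap, and the error is concrete: the ``balanced at $\alpha_0=d/\beta$'' grid you propose sets $u_1 = h_M(\gmax) = \tfrac{1-\gmax}{1-\gmax^M}$, and your claim that this is ``already within the claimed order'' is false. The target $\tfrac{(M+1)\gmax^{M-1}}{(1-\gmax)^2}$ is of order $\gmax^{M-1}$ (exponentially small in $M$), while $h_M(\gmax)\to 1-\gmax$ as $M\to\infty$, a positive constant. For instance with $\gmax = 2/3$ and $M=15$, $h_M(\gmax)\approx 1/3$ but $\tfrac{(M+1)\gmax^{M-1}}{(1-\gmax)^2}\approx 0.06$. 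Since $\Psi_M(\bm u,\infty)=u_1$, your grid gives $\sup_\alpha\Psi_M(\bm u,\alpha)\ge h_M(\gmax)$, which overshoots the claimed bound badly. The problem is that ``balanced at $\alpha_0$'' corresponds to making $\eta_i(\bm u,\gmax)-h_M(\gmax)=0$ for all $i$, which forces $u_1$ to equal $h_M(\gmax)$; you instead want the grid that balances $\eta_i(\bm u,\gmax)-h_M(\gmax)$ to a \emph{common strictly positive value} $u_1$, i.e., the solution of $u_i = u_1 + \gmax u_{i-1} + h_M(\gmax)$ with $u_M=1$. That grid has $u_1 = \gmax^{M-1}(1-\gmax)^2/(1-\gmax^M)^2$, and one must then still verify the remaining $\alpha$ branches. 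The paper sidesteps the construction entirely: it uses the characterization $\optexp = u_1^\star$ together with the fixed-point identity $u_1^\star + \phi_M(u_{M-1}^\star) = 1$ (Lemma~\ref{lem:u1-upper-bound}), splitting into the case $\gamma_M=\gmax$ (solved exactly, giving the same $u_1$ as above) and $\gamma_M<\gmax$ (handled via $u_{M-1}\ge -h_M'(\gamma_M)$ and monotonicity of $\gamma\mapsto\gamma h_M'(\gamma)-h_M(\gamma)$). Without invoking the KKT-derived identity, your constructive route needs a different grid than the one you wrote down, and it needs a second argument for the $\alpha<d/\beta$ branches once $u_1>0$.
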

\noindent See Appendix~\ref{sec:variational} for the proof. 
\medskip 

As a direct consequence of the upper bound in Proposition~\ref{prop:optexp-control}, we know that when $M \geq c_1 \log \log T$ for some large constant $c_1$, we have $T^{\optexp} = O(1)$. 
Therefore, we know that $\log \log T$ batches are sufficient for no regret inflation. 
In other words, even if we do not know the exact margin parameter $\alpha$ of the bandit instance, 
with $\log \log T$ batches, there exists an algorithm that enjoys the same order of regret as the oracle algorithm with the knowledge of the $\alpha$, up to the $(\log \log T)^5 \log T$ factor. 
This is summarized in the following corollary.
\begin{corollary}
There exists a constant $c_1$ such that if $M \geq c_1 \log \log T$, then 
\[
\inf_{\Gamma, \pi} \; \mathrm{RI} (\Gamma,\pi) \leq C (\log \log T)^5 \log T, 
\]
where $C$ is some constant independent of $T$.
\end{corollary}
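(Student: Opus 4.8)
The plan is to combine the upper bound on $\optexp$ from Proposition~\ref{prop:optexp-control} with the constructive upper bound in Part~2 of Theorem~\ref{thm:main}. First I would take the explicit policy $(\hat\Gamma,\hat\pi)$ guaranteed by Theorem~\ref{thm:main}, which satisfies $\mathrm{RI}(\hat\Gamma,\hat\pi) \le c_2\, M^5 (\log T)\, T^{\optexp}$. The goal is to show that when $M \ge c_1 \log\log T$ for a sufficiently large constant $c_1$, two things happen simultaneously: the factor $T^{\optexp}$ collapses to a constant, and the prefactor $M^5 \log T$ is at most $(\log\log T)^5 \log T$ up to a constant (the latter being immediate once $M \asymp \log\log T$; one should note the corollary's bound is stated for $M$ of exactly this order, so the prefactor $M^5$ is simply bounded by $(c_1 \log\log T)^5$).

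The core step is controlling $T^{\optexp}$. By Proposition~\ref{prop:optexp-control}, $\optexp \le \frac{(M+1)\gmax^{M-1}}{(1-\gmax)^2}$ with $\gmax = \frac{d+\beta}{d+2\beta} \in (0,1)$ a fixed constant. Hence
\[
T^{\optexp} = \exp\!\Big(\optexp \cdot \log T\Big) \le \exp\!\Big(\frac{(M+1)\gmax^{M-1}}{(1-\gmax)^2}\,\log T\Big).
\]
It therefore suffices to choose $c_1$ large enough that $\frac{(M+1)\gmax^{M-1}}{(1-\gmax)^2}\,\log T \le 1$ whenever $M \ge c_1 \log\log T$. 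Since $\gmax^{M-1}$ decays exponentially in $M$ while $(M+1)$ grows only linearly, the product $(M+1)\gmax^{M-1}$ is eventually bounded by $\gmax^{M/2}$ (say) for $M$ past an absolute threshold; and $\gmax^{(c_1/2)\log\log T} = (\log T)^{-(c_1/2)\log(1/\gmax)}$, which is $\le (1-\gmax)^2/\log T$ once $(c_1/2)\log(1/\gmax) \ge 2$, i.e. once $c_1 \ge 4/\log(1/\gmax)$. Combining, for $c_1$ exceeding this explicit constant (and the absolute threshold ensuring the linear-vs-exponential domination), we get $\optexp \log T \le 1$, so $T^{\optexp} \le e$.

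Putting the pieces together: for $M \ge c_1\log\log T$ with this choice of $c_1$,
\[
\inf_{\Gamma,\pi}\mathrm{RI}(\Gamma,\pi) \le \mathrm{RI}(\hat\Gamma,\hat\pi) \le c_2\, M^5 (\log T)\, T^{\optexp} \le c_2\, e\, (c_1\log\log T)^5 \log T = C\,(\log\log T)^5\log T,
\]
with $C = c_2 e\, c_1^5$ independent of $T$, which is the claimed bound. The main obstacle — really the only nontrivial point — is verifying that the doubly logarithmic growth of $M$ is exactly the right rate to kill $T^{\optexp}$: one needs the exponential decay of $\gmax^{M-1}$ to beat $\log T$, and since $\gmax^{\Theta(\log\log T)} = (\log T)^{-\Theta(1)}$, the threshold $M \asymp \log\log T$ is precisely where $\optexp \log T$ transitions from diverging to vanishing. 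A minor bookkeeping point is that one should also confirm the lower bound in Proposition~\ref{prop:optexp-control} is not needed here (it is only relevant for the converse direction, showing $\log\log T$ batches are also necessary), so the corollary follows purely from the upper bounds.
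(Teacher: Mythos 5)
Your approach — feed the upper bound on $\optexp$ from Proposition~\ref{prop:optexp-control} into the constructive upper bound of Theorem~\ref{thm:main} (equivalently Theorem~\ref{thm:c-ratio-upper}), then check that $\gmax^{\Theta(\log\log T)} = (\log T)^{-\Theta(1)}$ kills the exponent — is exactly the paper's argument, and your core calculation verifying $\optexp \log T \le 1$ for $M \ge c_1 \log\log T$ with $c_1 \gtrsim 1/\log(1/\gmax)$ is correct.

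However, there is one small inaccuracy you should fix. You write that ``the corollary's bound is stated for $M$ of exactly this order, so the prefactor $M^5$ is simply bounded by $(c_1\log\log T)^5$,'' but the corollary is stated for \emph{all} $M \ge c_1\log\log T$, not just $M \asymp \log\log T$. If you plug the full budget $M$ into the bound $c_2 M^5 (\log T) T^{\optexp}$, then for $M$ growing faster than $\log\log T$ (say polynomially in $T$) the factor $M^5$ swamps $(\log\log T)^5$, and your displayed chain of inequalities no longer yields the claimed bound. The fix is short: set $M' = \lceil c_1\log\log T\rceil$ and apply Theorem~\ref{thm:main} at batch budget $M'$, obtaining an $M'$-batch policy $(\hat\Gamma,\hat\pi)$ with $\mathrm{RI}(\hat\Gamma,\hat\pi) \le c_2 (M')^5 (\log T) T^{\psi^\star_{M'}} \le C(\log\log T)^5 \log T$. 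Since any $M'$-batch policy is in particular an $M$-batch policy for $M \ge M'$ (one may simply decline to use the extra update rounds), the infimum over $M$-batch policies is at most this quantity, which gives the corollary for every $M \ge c_1\log\log T$. With that one-line adjustment the argument is complete and coincides with the paper's.
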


Conversely, by the lower bound in Proposition~\ref{prop:optexp-control}, we also know that $M \asymp \log \log T$ is necessary for small regret inflation. 
That is, when $M \leq c_2 \log \log T$ for some small constant $c_2$, the regret inflation is super-polylogarithmic in $T$. 
Together, $M \asymp \log\log T$ marks a sharp phase transition between regimes with costly and cost-free margin adaptation.

\subsection{Numerical illustrations}\label{sec:numerics}

 \begin{figure}[t]
\centering
\begin{minipage}{0.48\textwidth}
    \centering
    \includegraphics[scale=0.4]{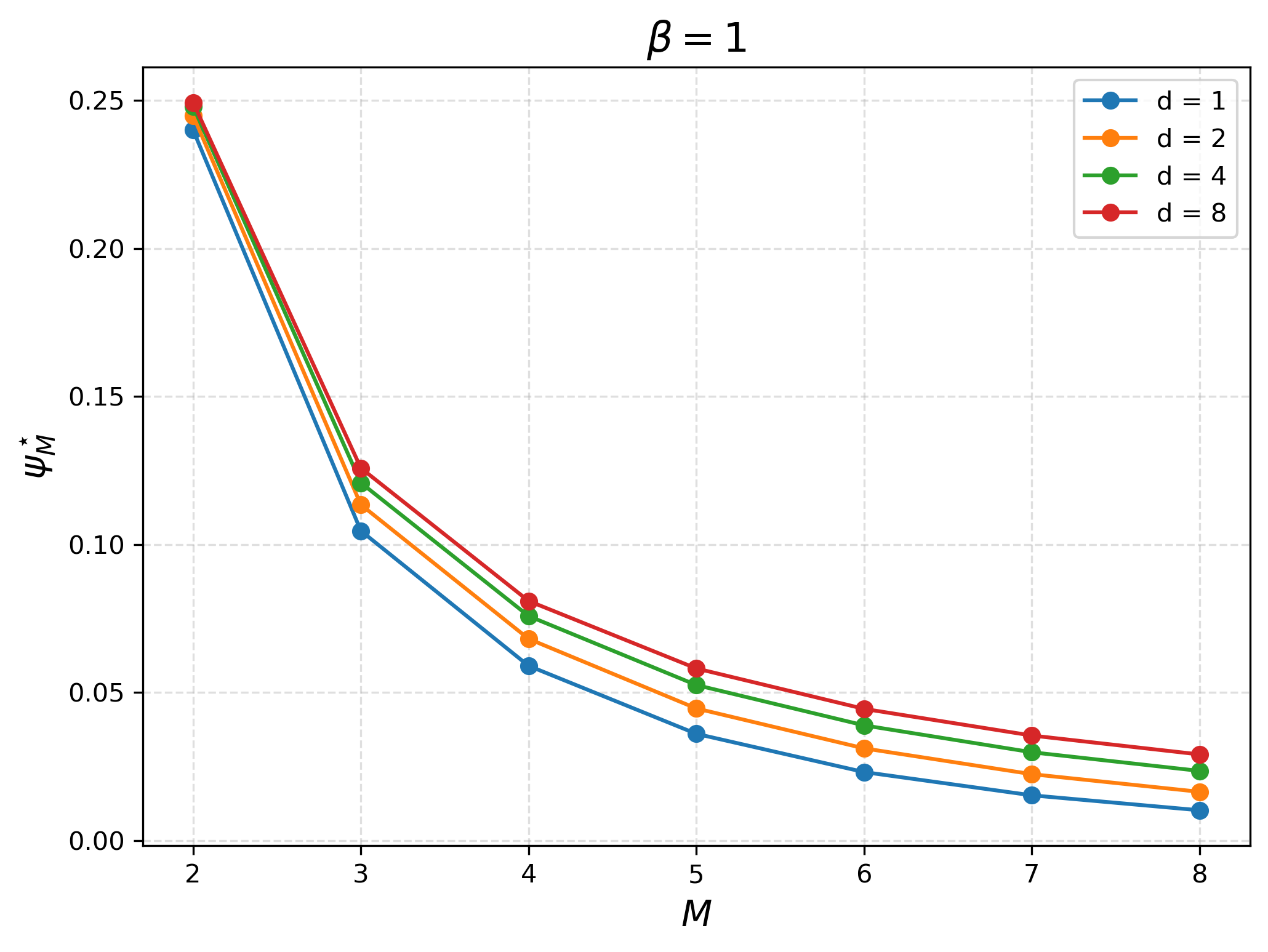}
    \caption{$\optexp$ vs.~batch budget $M$ when $\beta=1$.}
    \label{fig:varying-d}
\end{minipage}
\hfill
\begin{minipage}{0.48\textwidth}
    \centering
    \includegraphics[scale=0.4]{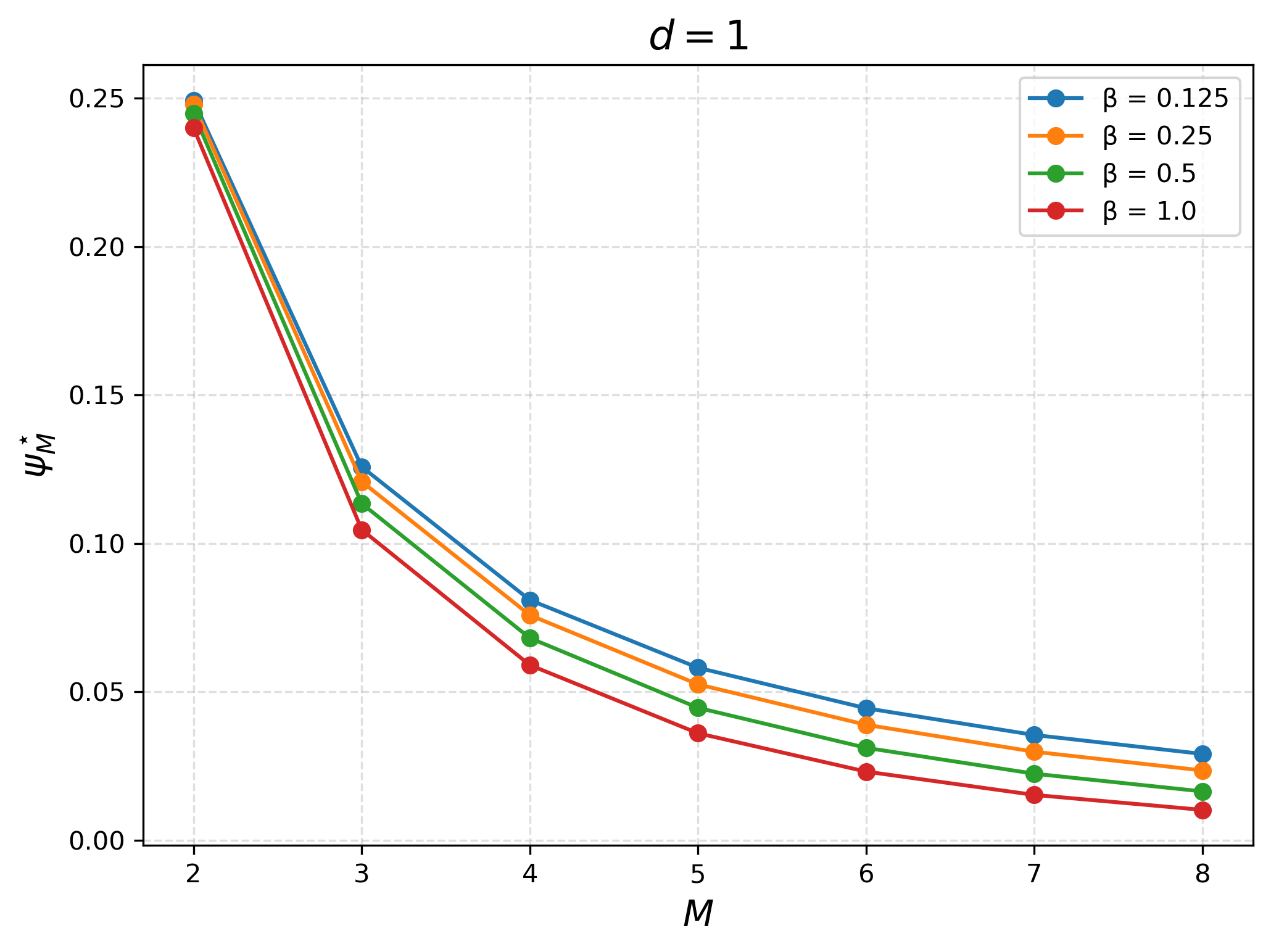}
    \caption{$\optexp$ vs.~batch budget $M$ when $d=1$.}
    \label{fig:varying-beta}
\end{minipage}
\end{figure}

Since $\optexp$ generally lacks a closed-form expression, we first turn to numerical solutions. These experiments shed light on how the difficulty of adaptation varies with smoothness, dimension, and batch budget.

Figure~\ref{fig:varying-d} fixes $\beta =1$ and varies $d$. It plots the trend of $\optexp$ as the number $M$ of batches increases. 
Similarly, Figure~\ref{fig:varying-beta} fixes $d =1$, and varies $\beta$. It also plots the trend of $\optexp$ as the number $M$ of batches increases. 
A few observations are in order. 

\begin{enumerate}
    \item All the exponents $\optexp$ are strictly positive, indicating a polynomial regret inflation when the margin parameter is not known. 
    \item When fixing $\beta$ and $M$, the exponent $\optexp$ is increasing in $d$. This demonstrates that adapting to $\alpha$ is increasingly difficult for high-dimensional problems. 
    \item Similarly, when fixing $d$ and $M$, the exponent $\optexp$ is increasing as $\beta$ decreases. This demonstrates that adapting to $\alpha$ is increasingly difficult for non-smooth problems. 
\end{enumerate}
    
In fact, the last two observations can be understood from the definition of the variational problem. 
Recall that under the setup for both plots, 
we have $\knowledge = [0,d/\beta] \cup \{\infty\}$. 
Correspondingly, the allowable range for $\gamma(\alpha)$ is 
\[
\gamma \in \left[\frac{\beta}{2\beta + d}, \frac{d +\beta}{2\beta + d} \right] \cup \{\infty\}.
\]
It is clear that as $d$ increases and $\beta$ decreases, the allowable range for $\gamma$ increases. 
Since the inner maximization problem is effectively over $\gamma$, the supremum is non-decreasing. 
As a result, the optimal exponent $\optexp$ is non-decreasing.

\section{The \rbalg algorithm: optimal adaptation to unknown margin\label{sec:algorithm}}

We now 
% present a batched contextual bandit algorithm, called 
introduce \rbalg (RObust batched algorithm with adaptive BINning) 
to tackle batched contextual bandits with unknown margin.  The algorithm builds on the \myalg framework from~\cite{jiang2025batched}, but introduces a new design principle: the batch schedule and split factors are selected based on the solution to the key variational convex program~\eqref{eq:min-opt}. 
As we shall see, this design principle enables robust adaptation across all margin parameter values.

We first describe the \myalg framework with fixed grid size and split factors, then present our new robust choices informed by the variational problem, and finally state the regret inflation guarantee for \rbalg.

\begin{algorithm}
\caption{RObust batched algorithm with adaptive BINning (\rbalg)}
\label{alg:adaptive-bin}
\begin{algorithmic}[1]
\REQUIRE Batch size $M$, grid $\Gamma=\{t_{i}\}_{i=0}^{M}$, split factors $\{g_{i}\}_{i=0}^{M-1}$ as in Equations~\eqref{eq:split-factors} and~\eqref{eq:batch-bin-size}
% \ENSURE Output result $Y$
\STATE $\mathcal{L}\leftarrow\mathcal{B}_{1}$.
\FOR{each $C\in\mathcal{L}$}
    \STATE $\mathcal{I}_{C}=\mathcal{I}$.    
\ENDFOR
\FOR{$i=1$ to $M-1$}
    \FOR{$t=t_{i-1}+1$ to $t_i$}
        \STATE $C\leftarrow\mathcal{L}(X_t)$.
        \STATE Pull an arm from $\mathcal{I}_C$ in a round-robin way.
    \ENDFOR
    \STATE Observe the outcomes for batch $i$
    \STATE Update $\mathcal{L}$ and $\{\mathcal{I}_{C}\}_{C\in\mathcal{L}}$ by invoking Algorithm \ref{algo-subroutine} with inputs $(\mathcal{L},\{\mathcal{I}_{C}\}_{C\in\mathcal{L}},i,g_{i})$.
\ENDFOR
\FOR{$t=t_{M-1}+1$ to $T$}
    \STATE $C\leftarrow\mathcal{L}(X_t)$.
    \STATE Pull any arm from $\mathcal{I}_C$.
\ENDFOR
\end{algorithmic}
\end{algorithm}

\subsection{The \myalg framework with fixed parameters}

We begin by reviewing the \myalg algorithm, which serves as the foundation for our robust variant. 
The \myalg algorithm operates over a horizon $T$ divided into $M$ batches, indexed by $i = 1, \ldots, M$. 
It consists of three main components:
\begin{itemize}
    \item A \emph{batch schedule}, specified by a grid $\Gamma = \{t_0 = 0 < t_1 < \cdots < t_M = T\}$, which determines the number of time steps in each batch;
    \item A sequence of \emph{split factors} $\{g_i\}_{i=0}^{M-1}$, which control how the covariate space $[0,1]^d$ is iteratively partitioned into bins;
    \item A \emph{Successive Elimination (SE)} subroutine (see Algorithm~\ref{algo-subroutine}) that runs independently in each active bin to eliminate suboptimal arms.
\end{itemize}

\paragraph{The high-level idea behind \myalg. }
On a high level, initially, the covariate space is divided into $g_0^d$ bins of equal width. In each batch, the algorithm pulls arms uniformly at random within each bin and uses SE to discard clearly suboptimal arms. After the $i$-th batch, bins with unresolved ambiguity are split into finer bins using the corresponding split factor $g_i$, yielding a refined partition. The final batch is reserved for exploitation, where the algorithm chooses arms based on the surviving set in each bin; see Algorithm~\ref{alg:adaptive-bin}.

% , which is often unavailable in practice. \rj{polish} As displayed by Theorem~\ref{thm:main}, there is a cost for not knowing the true value of $\alpha$ as quantified by the competitive ratio under the batched setting. We now present a batch learning procedure that can overcome the above challenges and achieve the nearly-optimal competitive ratio.

% \paragraph{A tree-based interpretation. }

\paragraph{A detailed description of \myalg. }
More formally the \myalg algorithm can be described using a tree diagram. Let $\mathcal{T}$ be a tree of depth $M$. The $i$-th level of the tree  $\mathcal{T}$  consists of a collection of bins that is a regular partition of the covariate space $\mathcal{X}$, which we denote by $\mathcal{B}_i$. Each bin $C\in\mathcal{B}_i$ has the same width $w_i$ given by $w_0 = 1$ and 
\begin{equation}
w_{i}\coloneqq (\prod_{l=0}^{i-1}g_{l})^{-1},\quad i\ge 1.\label{eq:width}
\end{equation}
Here $\{g_{i}\}_{i=0}^{M-1}$ is a list of split factors. 
More precisely, $\mathcal{B}_{i}$ is composed of all the bins 
\[
C_{i,\bm{v}}=\{x\in\mathcal{X}:(v_{j}-1)w_{i}\le x_{j}<v_{j}w_{i},1\le j\le d\},
\]
where $\bm{v}=(v_{1},v_{2},\ldots,v_{d})\in[\frac{1}{w_{i}}]^{d}$. Clearly, $\mathcal{B}_{i}$ has $(1/w_i)^{d}$  bins in total.

Algorithm~\ref{alg:adaptive-bin} operates in batches and keeps track of the following key variables: a collection $\mathcal{L}$ of active bins, and the set of active arms $\mathcal{I}_{C}$ for each $C\in\mathcal{L}$.
% see Figure~\ref{tree-example} for an example. 
The collection of active bins $\mathcal{L}$ is initialized
to be $\mathcal{B}_{1}$, while the set of active arms $\mathcal{I}_{C}$ is set to be $\{1,-1\}$ for all $C\in\mathcal{L}$ at the beginning. During the $i$-th batch, each arm in $\mathcal{I}_{C}$ is pulled for an equal amount of times. At the end of that batch, the active arm set $\mathcal{I}_{C}$ is updated by doing a hypothesis testing based on the revealed rewards from this batch. If after the arm elimination process  $|\mathcal{I}_{C}|>1$ for some  $C\in\mathcal{L}$, this means the current bin $C$ is too coarse to distinguish the optimal action. Consequently, this bin $C$  is further split into its children $\textrm{child}(C,g_i)$ in tree $\mathcal{T}$, which is a set of $g_i^d$ bins, and the child nodes $\textrm{child}(C,g_i)$ will replace the original bin $C$ in $\mathcal{L}$. For the last batch, we simply pull any arm from $\mathcal{I}_C$ whenever the covariate $X_t$ lands in some $C\in\mathcal{L}$.\footnote{For the final batch $M$, the split factor $g_{M-1}=1$ by default because there is no need to further partition the nodes for estimation.}

% \paragraph{Grid $\Gamma$ and split factors $\{g_{i}\}_{i=0}^{M-1}$.}

\begin{algorithm}
\caption{Node splitting and arm elimination procedure}
\label{algo-subroutine}
\begin{algorithmic}[1]
\REQUIRE Active bin list $\mathcal{L}$, active arm sets $\{\mathcal{I}_{C}\}_{C\in\mathcal{L}}$, batch number $i$, split factor $g_i$.
\STATE $\mathcal{L}^\prime\leftarrow\{\}$
\FOR{each $C\in\mathcal{L}$}
    \IF{$|\mathcal{I}_C|=1$}
        \STATE $\mathcal{L}^\prime\leftarrow\mathcal{L}^\prime\cup\{C\}$.
        \STATE Proceed to next $C$ in the iteration.
    \ENDIF
    \STATE $\bar{Y}_{C,i}^{\max}\leftarrow\max_{k\in\mathcal{I}_{C}}\bar{Y}_{C,i}^{(k)}$.
    \FOR{each $k\in\mathcal{I}_C$}
        \IF{$\bar{Y}_{C,i}^{\max}-\bar{Y}_{C,i}^{(k)}>U(m_{C,i},T,C)$}
            \STATE $\mathcal{I}_{C}\leftarrow\mathcal{I}_{C}-\{k\}$
        \ENDIF
    \ENDFOR
    \IF{$|\mathcal{I}_{C}|>1$}
        \FOR{each $C^{\prime}\in\textrm{child}(C,g_{i})$}
            \STATE $\mathcal{I}_{C^{\prime}}\leftarrow\mathcal{I}_{C}$.
        \ENDFOR
        \STATE $\mathcal{L^{\prime}}\leftarrow\mathcal{L^{\prime}}\cup\textrm{child}(C,g_{i})$.
    \ELSE
        \STATE $\mathcal{L}^{\prime}\leftarrow\mathcal{L}^{\prime}\cup\{C\}$.
    \ENDIF
\ENDFOR
\STATE Return $\mathcal{L}^{\prime}$.
\end{algorithmic}
\end{algorithm}

Next, we turn to the arm elimination part in Algorithm~\ref{algo-subroutine}. 
The underlying idea is based on Successive Elimination (SE) from the bandit literature
\citep{even2006action,perchet2013multi,zijun2020batch}. An arm is eliminated from the active arm set $\mathcal{I}_C$ if the revealed rewards from this batch provide sufficient evidence of the suboptimality of this arm. For any node $C\in\mathcal{T}$, denote by $m_{C,i}\coloneqq \sum_{t=t_{i-1}+1}^{t_{i}}\mathbf{1}\{X_{t}\in C\}$ the number of times the covariates go into the bin $C$ during the $i$-th batch. For $k\in\{1,-1\}$, define the empirical estimate arm $k$'s reward in bin $C$ during batch $i$ as 
\[
\bar{Y}_{C,i}^{(k)}\coloneqq \frac{\sum_{t=t_{i-1}+1}^{t_{i}}Y_{t}\cdot\mathbf{1}\{X_{t}\in C,A_{t}=k\}}{\sum_{t=t_{i-1}+1}^{t_{i}}\mathbf{1}\{X_{t}\in C,A_{t}=k\}}.
\]
The expectation of $\bar{Y}_{C,i}^{(k)}$ is equal to 
\[
\bar{f}_{C}^{(k)}\coloneqq \mathbb{E}[f^{(k)}(X)\mid X\in C]=\frac{1}{P_{X}(C)}\int_{C}f^{(k)}(x)\mathrm{d}P_{X}(x).
\]
A key quantity for SE is the uncertainty level of the estimates in bin $C$, which is given by
\[
U(\tau,T,C)\coloneqq 4\sqrt{\frac{\log(2T|C|^{d})}{\tau}},
\]
where $|C|$ is the width of the bin. The uncertainty level is defined in a way so that with high probability the suboptimal arm for bin $C$ is eliminated while the near optimal ones remain in it. If $|\mathcal{I}_{C}|>1$, then the surviving arms are statistically close to each other; we further split $C$ into finer bins to obtain a more precise estimate of the rewards of these actions in later batches.

\subsection{Robust parameter design via variational optimization}

In the original \myalg algorithm, both the batch schedule $\Gamma$ and split factors $\{g_i\}$ are chosen assuming knowledge of the true margin parameter $\alpha$, which allows the algorithm to achieve minimax-optimal regret in that setting.
% So far we have described a batch learning algorithm with general grid $\Gamma$ and split factors $\{g_{i}\}_{i=0}^{M-1}$. 

To achieve optimal regret without knowledge of $\alpha$, \rbalg selects the split factors and the batch points based on the minimizer $\bm{u}^\star$ to the convex program~\eqref{eq:min-opt}.
% Let $\bm{u}^\star$ be the minimizer of~\eqref{eq:opt-obj}. 

\paragraph{Split factor design.}
Based on $\bm{u}^\star$,  we define the split factors as 
\begin{equation}
g_{0}=\lfloor T^{\frac{1}{2\beta+d}\cdot u_{1}^{\star}}\rfloor,\qquad\text{and}\qquad g_{i}=\lfloor T^{\frac{1}{2\beta+d}(u_{i+1}^{*}-u_{i}^{\star})}\rfloor,\qquad \qquad i=1,\dots,M-2.\label{eq:split-factors}
\end{equation}

\paragraph{Batch grid construction.} 
In addition, we choose the grid to satisfy
\begin{align}
t_{i}-t_{i-1} & =\lfloor l_{i}w_{i}^{-(2\beta+d)}\log(Tw_{i}^{d})\rfloor,\quad1\le i\le M-1,\label{eq:batch-bin-size}
\end{align}
for some $l_i>0$ sufficiently large. 
Here, we recall that $w_i$ is given in Equation~\eqref{eq:width}. 
It can be shown that under these choices, we have 
\[
t_{i} \approx T^{u_{i}^{\star}},\quad1\le i\le M-1,
\]
where the approximation sign ignores log factors.

To summarize, \rbalg runs the \myalg procedure using these robust parameters $(\Gamma, \{g_i\})$ specified in Equations~\eqref{eq:split-factors} and~\eqref{eq:batch-bin-size}.

\subsection{Regret inflation guarantee}\label{sec:upper-proof}

We now state the main guarantee for \rbalg.

\begin{theorem}\label{thm:c-ratio-upper}Equipped with the grid and split
factors list that satisfy~(\ref{eq:batch-bin-size}) and~(\ref{eq:split-factors}),
the policy $(\hat{\Gamma}, \hat{\pi})$ given by $\rbalg$ obeys
\[
% \sup_{\alpha\ge0}
\mathrm{RI}(\hat {\Gamma}, \hat{\pi})
\apprle M^{5}(\log T)\cdot T^{\optexp}.
\]
\end{theorem}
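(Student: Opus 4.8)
The goal is to bound $\mathrm{RI}(\hat\Gamma,\hat\pi) = \sup_{\alpha\in\knowledge}\sup_{P\in\EnvAlpha} R_T(\hat\Gamma,\hat\pi;P)/R_T^\star(\alpha)$. Since Proposition~\ref{proposition:oracle-rate} gives $R_T^\star(\alpha)\gtrsim M^{\Mexponentoracle} T^{h_M(\alpha)}$, it suffices to show that for every $\alpha\in\knowledge$ and every $P\in\EnvAlpha$, the regret of \rbalg satisfies $R_T(\hat\Gamma,\hat\pi;P)\lesssim M^{5-4}(\log T)\,T^{\optexp+h_M(\alpha)}$ — i.e., I would prove a per-$\alpha$ regret upper bound of order $T^{\psi_M(\bm u^\star)+h_M(\alpha)}$ (times $M$-polynomial and $\log T$ factors) and then divide by the oracle rate. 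The plan is therefore: (i) fix an arbitrary $\alpha$ and $P\in\EnvAlpha$; (ii) run the standard \myalg-style regret decomposition over the tree $\mathcal T$ with the robust parameters from~\eqref{eq:split-factors}--\eqref{eq:batch-bin-size}; (iii) show the resulting bound is $\max_{1\le i\le M}\eta_i(\bm u^\star,\alpha)$ in the exponent; (iv) invoke $\max_i\eta_i(\bm u^\star,\alpha)-h_M(\alpha)\le\psi_M(\bm u^\star)=\optexp$ by definition of $\psi_M$ in~\eqref{eq:def-psi-u}.

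For step (ii)–(iii), I would follow the analysis template of \cite{jiang2025batched}. The good event is that in every active bin $C$ at level $i$ the successive-elimination test with threshold $U(m_{C,i},T,C)$ correctly eliminates any arm that is suboptimal by a margin exceeding $\asymp w_i^\beta$ (smoothness, Assumption~\ref{ass:smooth}) while retaining near-optimal arms; a union bound over the $\le T$ bins and $\le T$ rounds, using the choice $t_i-t_{i-1}\asymp w_i^{-(2\beta+d)}\log(Tw_i^d)$ so that $m_{C,i}\gtrsim w_i^{-2\beta}\log(\cdot)$, makes this event hold with probability $\ge 1-T^{-c}$ (the complement contributes $O(1)$ regret). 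On the good event, decompose regret by the batch at which a bin is "resolved": a bin at level $i$ that still has two active arms must have pointwise gap $\lesssim w_{i-1}^\beta$, so by the margin condition (Assumption~\ref{ass:margin}) its total covariate mass is $\lesssim w_{i-1}^{\beta\alpha}$; such bins incur per-round regret $\lesssim w_{i-1}^\beta$ over the $\le t_i$ rounds they are active during batch $i$, contributing regret $\lesssim t_i\cdot w_{i-1}^\beta\cdot w_{i-1}^{\beta\alpha}$ (for $2\le i\le M-1$), plus the exploration cost $\lesssim$ (number of active bins at level $i$)$\times$(samples per bin)$\times$(gap) and the final-batch exploitation cost $\lesssim T\cdot w_{M-1}^\beta\cdot w_{M-1}^{\beta\alpha}$. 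Plugging $w_i^{-1}=\prod_{l<i}g_l\asymp T^{u_i^\star/(2\beta+d)}$ (up to log factors) into each term and simplifying the exponents, the level-$1$ term scales as $T^{u_1^\star}=T^{\eta_1}$, the level-$i$ term as $T^{u_i^\star-\gamma(\alpha)u_{i-1}^\star}=T^{\eta_i}$ using $\gamma(\alpha)=\beta(\alpha+1)/(2\beta+d)$, and the final term as $T^{1-\gamma(\alpha)u_{M-1}^\star}=T^{\eta_M}$ — so the total regret is $\lesssim M\cdot(\text{poly}(M)\log T)\cdot T^{\max_i\eta_i(\bm u^\star,\alpha)}$; careful accounting of the split-factor rounding $g_i=\lfloor\cdot\rfloor$, the $\log(Tw_i^d)$ factors, and the per-level $M$-dependence yields the claimed $M^5\log T$ prefactor.

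The last step is a one-line appeal to optimality of $\bm u^\star$: $\max_{1\le i\le M}\eta_i(\bm u^\star,\alpha)-h_M(\alpha)=\Psi_M(\bm u^\star,\alpha)\le\sup_{\alpha'\in\knowledge}\Psi_M(\bm u^\star,\alpha')=\psi_M(\bm u^\star)=\optexp$, uniformly in $\alpha$, so dividing the regret bound by the oracle lower bound $c_1 M^{\Mexponentoracle}T^{h_M(\alpha)}$ gives $\mathrm{RI}(\hat\Gamma,\hat\pi)\lesssim M^{5-4}(\log T)\,T^{\optexp}=M\log T\cdot T^{\optexp}$; being generous with the $M$ powers that accumulate across batches and bins gives the stated $M^5\log T$. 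One subtlety to handle explicitly is the case $\alpha=\infty$ (equivalently a constant gap, $\knowledge\ni\infty$, $h_M(\infty)=0$): there the bins resolve after $O(1)$ effective refinement because the gap never shrinks below a constant, and the regret bound degenerates to the same $T^{\max_i\eta_i(\bm u^\star,\infty)}$ form with $\gamma(\infty)=\infty$ interpreted as killing the $\eta_i$ for $i\ge2$; one must check the construction $\bm u^\star$ (interior minimizer, Proposition~\ref{proposition:opt-prop}) still makes this term no larger than $T^{\optexp}$, which it does by definition since $\infty\in\knowledge$.

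\textbf{Main obstacle.} The crux is step (iii): translating the geometric/combinatorial bookkeeping — how many bins survive to each level, and with what covariate mass, under an \emph{arbitrary} $\alpha$ and with split factors $g_i$ \emph{designed for a different (worst-case) profile} rather than for $\alpha$ — into exactly the exponents $\eta_i(\bm u^\star,\alpha)$, so that the resulting bound is $T^{\max_i\eta_i(\bm u^\star,\alpha)}$ with the right (only polynomial in $M$) prefactor. The delicate points are: (a) the rounding $g_i=\lfloor T^{(u_{i+1}^\star-u_i^\star)/(2\beta+d)}\rfloor$ and $t_i-t_{i-1}=\lfloor\cdot\rfloor$ can erode the exponents by lower-order terms that must be shown to be absorbed into the $\log T$ factor (this is why the guarantee carries an extra $\log T$ rather than being clean $T^{\optexp}$); (b) ensuring the number of active bins at level $i$ is controlled by the margin-induced mass bound $w_{i-1}^{\beta\alpha}$ divided by bin volume $w_i^d$, uniformly over $\alpha$, which is where Assumptions~\ref{ass:bdd-density} and~\ref{ass:margin} interact; and (c) keeping every union bound's $\log$ and every per-batch constant from compounding beyond $\mathrm{poly}(M)$ — since $M$ may itself grow with $T$, an exponential-in-$M$ slack would be fatal for the $\log\log T$ corollary.
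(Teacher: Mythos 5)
Your plan matches the paper's proof of Theorem~\ref{thm:c-ratio-upper} almost step-for-step: the paper establishes the per-$\alpha$ regret bound through Lemma~\ref{lem:basedb-regret} (which is exactly the \myalg-style decomposition into good event, per-batch regret via smoothness and margin, and final batch that you sketch), plugs in the grid and split factors to expose the exponents $\eta_i(\bm u,\alpha)$, divides by the oracle rate with the $M^{-4}$ lower-bound prefactor from Proposition~\ref{proposition:oracle-rate}, takes the sup over $\alpha$, and finally specializes $\bm u = \bm u^\star$ to land on $\optexp$. The only wrinkle is your intermediate arithmetic on the $M$-prefactor (you write $M^{5-4}\log T \cdot T^{\optexp}$ and in passing simplify it to $M\log T$, but dividing a bound proportional to $M\log T \cdot T^{\max_i\eta_i}$ by $c_1 M^{-4}T^{h_M(\alpha)}$ gives $M^{5}\log T$, which you do ultimately state), so the bookkeeping is a bit loose but the conclusion and route are the same.
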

This upper bound on the amount of regret inflation matches the lower bound result in Theorem~\ref{thm:main} (up to $\log$ factors). It demonstrates that \rbalg could achieve the optimal adaptation cost when the margin parameter is unknown. 

As we will soon see in its proof, the key to achieve optimal regret inflation is the use of the minimizer $\bm{u}^\star$ of the variational problem.

\subsection{Proof of Theorem~\ref{thm:c-ratio-upper}}

We now prove that \rbalg achieves the optimal regret inflation rate. The argument proceeds in two steps. First, we establish a regret bound for the \myalg algorithm with an arbitrary grid $\bm{u}$ in the interior of $\gridset$. Second, we specialize to the minimizer $\bm{u}^\star$ of the variational problem~\eqref{eq:min-opt}, which yields the optimal rate.

\paragraph{Step 1: Regret of \myalg with a fixed grid.}

Let $\bm{u}$ be any interior point of $\gridset$, i.e., $0< u_1 < u_2 < \cdots < u_{M-1} < 1$.  
Consider the split factors and the grid size given by~Equations~\eqref{eq:split-factors} and~(\ref{eq:batch-bin-size}). 
The following lemma provides a generic regret bound.

\begin{lemma}\label{lem:basedb-regret} Fix any $\alpha \geq 0$, and let $\hat{\pi}_{\bm{u}}$ be as above. Then
\begin{align*}
\sup_{P \in\EnvAlpha}R_{T}(\hat{\Gamma}_{\bm{u}}, \hat{\pi}_{\bm{u}}; P ) & \le c\left(t_{1}+\sum_{i=2}^{M-1}(t_{i}-t_{i-1})\cdot w_{i-1}^{\beta+\alpha\beta}+(T-t_{M-1})w_{M-1}^{\beta+\alpha\beta}\right),
\end{align*}
where $c$ depends only on $(\beta ,d)$. 
\end{lemma}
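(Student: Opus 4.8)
The plan is to decompose the cumulative regret by batch and bound the per-batch contribution using the structure of the successive-elimination subroutine. For batch $i$, the regret contributed by rounds $t_{i-1}+1,\dots,t_i$ is at most $(t_i-t_{i-1})$ times the worst per-round regret, and the per-round regret at context $x$ is $|f^{(1)}(x)-f^{(-1)}(x)|$ whenever the algorithm pulls a suboptimal arm there. The key is a ``good event'' on which, for every active bin $C\in\mathcal B_j$ at level $j$, the SE test correctly eliminates any arm whose reward gap exceeds a multiple of the uncertainty level $U(m_{C,j},T,C)$, while never eliminating the truly optimal arm; this holds with probability $1-O(1/T)$ by Hoeffding/Bernstein concentration of $\bar Y_{C,j}^{(k)}$ around $\bar f_C^{(k)}$ together with a union bound over the $O(\mathrm{poly}(T))$ bins and batches, and the residual $O(1/T)$ failure probability contributes $O(1)$ to the regret since rewards are bounded in $[0,1]$.

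On the good event I would argue inductively that any bin still active at the start of batch $i$ must have small ``intrinsic gap'': because it survived the test in batch $i-1$ with $m_{C,i-1}\asymp l_{i-1}w_{i-1}^{-(2\beta+d)}\log(Tw_{i-1}^d)$ effective pulls (using Assumption~\ref{ass:bdd-density} to lower-bound $m_{C,i-1}$ by its expectation $\asymp (t_{i-1}-t_{i-2})w_{i-1}^d$ via a concentration argument), the surviving arms satisfy $|\bar f_C^{(1)}-\bar f_C^{(-1)}|\lesssim U(m_{C,i-1},T,C)\asymp w_{i-1}^\beta$. Combining this with Assumption~\ref{ass:smooth} — which gives $|f^{(k)}(x)-\bar f_C^{(k)}|\le L\,(\sqrt d\,w_{i-1})^\beta$ for $x\in C$ — yields that the true pointwise gap on any context in an active bin of level $i-1$ (hence on all contexts handled in batch $i$) is $O(w_{i-1}^\beta)$. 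Therefore the regret incurred in batch $i$ is at most $(t_i-t_{i-1})$ times the $P_X$-measure of contexts falling in active level-$(i-1)$ bins, times $O(w_{i-1}^\beta)$; the margin condition Assumption~\ref{ass:margin} bounds that measure by $D_0\,(w_{i-1}^\beta)^\alpha$, giving the per-batch bound $(t_i-t_{i-1})\,w_{i-1}^{\beta+\alpha\beta}$. The first batch is handled separately: there is no prior elimination, so one simply bounds its regret by its length $t_1$ (pointwise gaps are at most $1$), and the final block $t_{M-1}+1,\dots,T$ is handled like a generic batch with level $w_{M-1}$, contributing $(T-t_{M-1})w_{M-1}^{\beta+\alpha\beta}$.

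Summing the three pieces gives exactly the claimed bound, with the constant $c$ absorbing $L$, $\sqrt d^{\,\beta}$, $D_0$, $\bar c/\underline c$, and the SE constants, hence depending only on $(\beta,d)$ (the margin constants $D_0,\delta_0$ being treated as part of the instance class but bounded uniformly, or absorbed since the excerpt's convention folds them into the class $\EnvAlpha$). The main obstacle I anticipate is making the inductive ``active bins have $O(w_{i-1}^\beta)$ gap'' claim fully rigorous: one must handle the case where a bin was split not because of statistical ambiguity but because $m_{C,i-1}$ was atypically small (a low-probability covariate realization), and one must ensure the effective number of pulls per arm in round-robin exploration is $\asymp m_{C,i-1}/2$ with high probability. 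This requires a careful concentration step for $m_{C,i}=\sum_{t}\mathbf 1\{X_t\in C\}$ uniformly over all bins in the tree, which is where Assumption~\ref{ass:bdd-density} and the specific choice~\eqref{eq:batch-bin-size} of batch lengths (with the extra $\log(Tw_i^d)$ factor) enter crucially; I would isolate this as a preliminary concentration lemma so that the main regret decomposition proceeds cleanly on the resulting good event.
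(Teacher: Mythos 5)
Your overall blueprint — covariate-count concentration as a preliminary good event, an inductive claim that surviving arms in an active bin of width $w_{i-1}$ have gap $O(w_{i-1}^\beta)$, and then the margin condition to bound the $P_X$-mass of such bins — matches the paper's strategy and is correct in outline. However, the way you propose to dispose of the successive-elimination failure events has a genuine gap that the paper's proof is specifically built to avoid.

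You suggest defining a single good event on which every SE test in every bin succeeds, obtaining it with probability $1-O(1/T)$ by Hoeffding plus a union bound over bins, and then charging the complementary event a total of $T\cdot O(1/T)=O(1)$ regret. This does not work for the algorithm as stated. At level $i$ there are $w_i^{-d}$ bins, and with the uncertainty level $U(\tau,T,C)=4\sqrt{\log(2T|C|^d)/\tau}$ the per-bin Hoeffding failure probability is of order $(T\,w_i^d)^{-c}$ for a fixed numeric exponent $c$ determined by the constant $4$. Summing over bins gives $w_i^{-d}(Tw_i^d)^{-c}=T^{-c}w_i^{-(c+1)d}$, and since $w_i$ can be as small as $T^{-1/(2\beta+d)}$, this is $T^{(c+1)d/(2\beta+d)-c}$, which exceeds $1/T$ (indeed, diverges) as soon as $d/\beta$ is moderately large relative to $c$. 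In other words, there are simply too many bins at the fine scales for a union bound with a universal constant in $U$ to give an $O(1/T)$ tail; the ``bad event contributes $O(1)$'' shortcut fails precisely where the margin parameter is small and bins are numerous.

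The paper circumvents this by never union-bounding the SE failures. Instead it decomposes the regret over the tree into, for each level $i$, a term $V_i$ over bins whose whole ancestry (including themselves) passed the test, and a term $U_i$ over bins $C$ that are the \emph{first} to fail ($\mathcal{G}_C\cap\mathcal{A}_C^c$). The crucial observations are (a) on $\mathcal{G}_C$ the gap over $C$ and all its descendants is already $O(w_{i-1}^\beta)$ regardless of what happens at $C$ or below, so the regret from the entire subtree of a failed $C$ is at most $T\cdot w_{i-1}^\beta\cdot P_X(\{\text{small gap}\}\cap C)$; and (b) the per-bin failure probability, stated as $\mathbb{P}(E\cap\mathcal{G}_C\cap\mathcal{A}_C^c)\le 4m_{C,i}^\star/(T|C|^d)$ in Lemma~\ref{lemma:clean-event-2}, is exactly of the size needed to cancel the factor $T$ and replace it by $t_i-t_{i-1}$, so that $U_i$ ends up with the same order as $V_i$. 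This per-bin cancellation is what makes the argument dimension-independent. Your proposal identifies the right pieces — the covariate concentration (the paper's Lemma~\ref{lemma:clean-event-1} and the $\log(Tw_i^d)$ factor you flag) and the ``survivors have $O(w_{i-1}^\beta)$ gap'' claim — but applies the gap bound only inside the good event, while treating the bad event with a crude $T\cdot\mathbb{P}(\text{bad})$ bound that is not available. To repair the proof along the paper's lines, you should drop the attempt to build a single all-bins SE good event, keep the covariate-count event $E$ as your only union-bounded event, and fold the per-bin SE failure probability into the born/live tree decomposition.
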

\noindent See Appendix~\ref{subsec:proof-lem-basedb-regret} for the proof. 
\medskip

Applying the relations~\eqref{eq:split-factors}-\eqref{eq:batch-bin-size}, this bound simplifies to
\[
\sup_{P \in \EnvAlpha} R_T(\hat{\Gamma}_{\bm{u}}, \hat{\pi}_{\bm{u}}; P) \;\lesssim\; (\log T)
\Big( T^{u_1} + \sum_{i=2}^{M-1} T^{u_i - u_{i-1}\gamma(\alpha)} + T^{1 - u_{M-1}\gamma(\alpha)} \Big).
\]
Since the maximum exponent dominates, we obtain
\[
\sup_{P \in \EnvAlpha} R_T(\hat{\Gamma}_{\bm{u}}, \hat{\pi}_{\bm{u}}; P) \;\lesssim\; (\log T) M \, T^{\max_{1 \le i \le M} \eta_i(\bm{u}, \alpha)}.
\]
Dividing by the oracle regret $R_T^\star(\alpha) \asymp T^{h_M(\alpha)}$ from Proposition~\ref{proposition:oracle-rate} yields 
\begin{equation}\label{eq:ratio-fixed-alpha-clean}
\sup_{P \in \EnvAlpha}
\frac{R_T(\hat{\Gamma}_{\bm{u}}, \hat{\pi}_{\bm{u}}; P)}{R_T^\star(\alpha)}
\;\lesssim\; \upperprefactor T^{\max_{1 \le i \le M} \eta_i(\bm{u}, \alpha) - h_M(\alpha)}.
\end{equation}
Taking the supremum of~\eqref{eq:ratio-fixed-alpha-clean} over all $\alpha \in \knowledge$ gives
\[
\sup_{\alpha \in \mathcal{A}} \sup_{P \in \EnvAlpha}
\frac{R_T(\hat{\Gamma}_{\bm{u}}, \hat{\pi}_{\bm{u}}; P)}{R_T^\star(\alpha)}
\;\lesssim\; \upperprefactor \,
T^{\sup_{\alpha \in \mathcal{A}} \big(\max_{1 \le i \le M} \eta_i(\bm{u}, \alpha) - h_M(\alpha)\big)}.
\]

\paragraph{Step 2: optimizing over $\bm{u}$.}

Finally, by construction, \rbalg corresponds to the policy $\hat\pi_{\bm{u}^\star}$ where $\bm{u}^\star$ minimizes the right-hand side. This gives
\[
\sup_{\alpha \in \knowledge} \sup_{P \in \EnvAlpha}
\frac{R_T(\hat{\Gamma}_{\bm{u}^\star}, \hat{\pi}_{\bm{u}^\star}; P)}{R_T^\star(\alpha)}
\;\lesssim\; \upperprefactor \, T^{\optexp}.
\]
This matches the lower bound of Theorem~\ref{thm:main} up to logarithmic factors, completing the proof of Theorem~\ref{thm:c-ratio-upper}.

\subsection{Solve the variational problem}
The final step in implementing \rbalg is computing the optimal batch grid $\bm{u}^\star$, which solves the variational problem \eqref{eq:min-opt} underlying our theoretical analysis.
Although Proposition~\ref{proposition:opt-prop} establishes that the objective function $\psi(\bm{u})$ is convex in $\bm{u}$, convexity alone does not immediately yield an efficient numerical solver. 
Instead, we exploit an equivalent characterization of the optimal solution.

\begin{proposition}\label{prop:equiv-variational}
Define $\phi_{M}(x)=\min_{\alpha \in [0,d/\beta]}\gamma(\alpha) x+h_{M}(\alpha)$
for $x\in(0,1)$. The unique solution $\bm{u}^\star$ to the variational problem~\eqref{eq:min-opt} is also the unique root to the following nonlinear system of equations:
\begin{align*}
u_{2}&=u_{1}+\phi_{M}(u_{1}),\\
&\cdots \\
u_{m}&=u_{1}+\phi_{M}(u_{m-1}), \\
&\cdots \\
1 = u_{M}&=u_{1}+\phi_{M}(u_{M-1}).
\end{align*}    
\end{proposition}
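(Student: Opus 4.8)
The plan is to establish a structural characterization of the optimizer $\bm{u}^\star$ by combining the convexity of $\psi_M$ (Proposition~\ref{proposition:opt-prop}) with the equalization structure of its active branches, and then to verify that the stated nonlinear system encodes exactly this structure. First, I would invoke Part~2 of Proposition~\ref{proposition:opt-prop}: at $\bm{u}^\star$ there is a non-increasing sequence $\{\alpha_i\}_{1\le i\le M}$ such that every branch attains the common value $\optexp=\eta_i(\bm{u}^\star,\alpha_i)-h_M(\alpha_i)$, with $\alpha_1=\infty$ and $\alpha_i\le d/\beta$ for $i\ge 2$. Since $\gamma(\infty)=\infty$ and $h_M(\infty)=0$, the $i=1$ branch reads $\eta_1(\bm{u}^\star,\alpha_1)=u_1^\star=\optexp$. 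For $2\le i\le M-1$, the $i$-th branch gives $u_i^\star-u_{i-1}^\star\gamma(\alpha_i)-h_M(\alpha_i)=\optexp=u_1^\star$, i.e.\ $u_i^\star = u_1^\star + \gamma(\alpha_i)u_{i-1}^\star + h_M(\alpha_i)$, and likewise the $i=M$ branch gives $1-u_{M-1}^\star\gamma(\alpha_M)-h_M(\alpha_M)=u_1^\star$.

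The next step is to identify the minimizing $\alpha$ in each branch. Because $\bm{u}^\star$ is the joint minimizer over $\bm{u}$ of $\psi_M(\bm{u})=\sup_{\alpha}\Psi_M(\bm{u},\alpha)$, and each $\eta_i(\bm{u},\alpha)-h_M(\alpha)$ for $2\le i\le M$ is, as a function of $\alpha$ alone (with $u_{i-1}^\star$ fixed), of the form $-\gamma(\alpha)u_{i-1}^\star - h_M(\alpha)+\text{(affine in }u_i)$, the saddle-point/complementary-slackness conditions force the $\alpha_i$ appearing in branch $i$ to be the one that \emph{maximizes} $\eta_i(\bm{u}^\star,\alpha)-h_M(\alpha)$ over $\alpha\in\knowledge$; equivalently it minimizes $\gamma(\alpha)u_{i-1}^\star + h_M(\alpha)$ over $\alpha\in[0,d/\beta]$ (the point $\alpha=\infty$ is handled separately and only matters for branch~$1$, since it drives every other branch to $-\infty$). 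This is precisely the definition of $\phi_M(x)$ evaluated at $x=u_{i-1}^\star$: $\phi_M(u_{i-1}^\star)=\min_{\alpha\in[0,d/\beta]}\gamma(\alpha)u_{i-1}^\star+h_M(\alpha) = \gamma(\alpha_i)u_{i-1}^\star+h_M(\alpha_i)$. Substituting back, branch $i$ ($2\le i\le M-1$) becomes $u_i^\star-\phi_M(u_{i-1}^\star)=\optexp=u_1^\star$, i.e.\ $u_i^\star = u_1^\star+\phi_M(u_{i-1}^\star)$, and branch $M$ becomes $1 = u_1^\star+\phi_M(u_{M-1}^\star)$. This is exactly the displayed system (reading $u_m=u_1+\phi_M(u_{m-1})$ for $2\le m\le M-1$ and the terminal equation $1=u_1+\phi_M(u_{M-1})$), so $\bm{u}^\star$ is a root.

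For the converse — that any root of the system is the optimizer — and for uniqueness, I would argue as follows. Given a root $(u_1,\dots,u_{M-1})$, reconstruct for each $i\ge 2$ the minimizing $\alpha_i\in[0,d/\beta]$ attaining $\phi_M(u_{i-1})$; then $\max_{1\le i\le M}\eta_i(\bm{u},\alpha)-h_M(\alpha)$ is $\ge u_1$ for the choices $\alpha=\infty$ (branch~1) and $\alpha=\alpha_i$ (branch $i$), so $\psi_M(\bm{u})\ge u_1$; conversely, by the definition of $\phi_M$ as a minimum, $\eta_i(\bm{u},\alpha)-h_M(\alpha)\le u_i-\phi_M(u_{i-1})=u_1$ for all admissible $\alpha$ in branches $2\le i\le M-1$, and similarly $\le 1-\phi_M(u_{M-1})=u_1$ in branch $M$, while branch~1 is exactly $u_1$; hence $\psi_M(\bm{u})=u_1$. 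So every root achieves objective value equal to its own first coordinate. To pin down that this common value is the \emph{minimum} $\optexp$ and that the root is unique, I would use strict convexity/monotonicity: the map $u_1\mapsto$ (value of $u_{M-1}$ obtained by iterating $u_{m}=u_1+\phi_M(u_{m-1})$ starting from $u_1$) is continuous and strictly increasing in $u_1$ (since $\phi_M$ is the minimum of affine-in-$x$ increasing functions, hence continuous, nondecreasing, and the composition/iteration is monotone in the initial value), so the terminal constraint $u_1+\phi_M(u_{M-1})=1$ has at most one solution; existence of a root then follows from Proposition~\ref{proposition:opt-prop}'s guarantee that an interior minimizer exists and satisfies the system, and uniqueness of the minimizer of the strictly convex-on-its-active-region $\psi_M$ (or simply from the monotonicity just established) closes the loop.

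The main obstacle I anticipate is the rigorous handling of the branch-selection/complementary-slackness step in paragraph two: one must carefully justify that at the saddle point the $\alpha$ active in branch $i$ is determined solely by $u_{i-1}^\star$ through the $\phi_M$ minimization — in particular ruling out that the outer infimum over $\bm u$ could be improved by a coordinated shift that changes which $\alpha$'s are active — and one must treat the $\alpha=\infty$ limit (where $\gamma(\alpha)\to\infty$, $h_M(\alpha)\to 0$) with care so that it contributes only to branch~1. Making the monotonicity argument for uniqueness fully rigorous when $\phi_M$ has kinks (it is only piecewise smooth) also requires a small amount of care, though it is routine once one notes $\phi_M$ is nondecreasing and continuous.
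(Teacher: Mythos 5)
Your proof is correct and follows essentially the same route as the paper: derive the system of equations by combining the equalization of active branches (Proposition~\ref{proposition:opt-prop}, Part~2, which the paper obtains via Lemma~\ref{lemma:no-missing-dir}) with the observation that each active $\alpha_i$ must maximize its branch over $\alpha$ (equivalently, minimize $\gamma(\alpha)u_{i-1}^\star + h_M(\alpha)$, which is exactly $\phi_M(u_{i-1}^\star)$), and establish uniqueness of the root via the strict monotonicity of the iterated map $u_1\mapsto u_M$. The ``saddle-point/complementary-slackness'' step you flag as a potential obstacle is in fact automatic: since $\eta_i(\bm{u}^\star,\alpha)-h_M(\alpha)\le\Psi_M(\bm{u}^\star,\alpha)\le\psi_M(\bm{u}^\star)$ for every $\alpha$, any $\alpha_i$ achieving equality in branch~$i$ is already the maximizer, so no extra perturbation or saddle-point argument is needed.
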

\noindent See Appendix~\ref{sec:variational} for the proof of this proposition. 
\medskip

This equivalence provides a simple and robust computational procedure.
Two structural properties are immediate:
(1) the final value $u_M$ is a strictly increasing function of $u_1$, and (2) evaluating the univariate  function $\phi_{M}(x)$ amounts to solving the convex problem $\min_{\alpha \in [0,d/\beta]}\gamma(\alpha) x+h_{M}(\alpha)$. 
Consequently, we can recover $\bm{u}^\star$ by a one-dimensional bisection search on $u_1$:
start with an interval $[a,b]\subset(0,1)$ such that $u_M(a)<1<u_M(b)$; iteratively update $u_1$ by halving the interval until $u_M$ computed from the recursive relations above equals $1$ within numerical tolerance.
This routine yields the optimal grid $\bm{u}^\star$ efficiently and stably even for large $M$.

\section{Proof of the lower bound\label{sec:lower}} 

We now establish the lower bound in Theorem~\ref{thm:main}, proving that every $M$-batch policy must suffer regret inflation of at least order $T^{\optexp}$.

% The technical challenges are two-fold. First, we need to design a set of margin parameters that are sufficiently hard, and each chosen margin parameter will induce a family of nonparametric bandit instances that also require proper construction. Second, one can no longer apply the standard reduction to a fixed multiple-hypothesis testing problem because the grid $\Gamma$ can be selected sequentially in an adaptive fashion. To tackle the randomized grid, we carefully design of a family of mixture distributions and employ an information-theoretic argument.

% In what follows, we give an outline of the core ideas involved in the proof.

% overcome this difficulty by using an appropriately defined bad event that happens with sufficient probability to reduce the adaptive case to the fixed grid case. However, the proof under the nonparametric setting is much more challenging because the presence of contexts makes the instances for different batches less indistinguishable with each other. A key ingredient of our proof is to establish tight control of the total variation distance between two mixture distributions

% One can no longer apply the standard reduction to a fixed multiple-hypothesis testing problem because the grid $\Gamma$ can be selected sequentially in an adaptive fashion. Moreover, we need to properly design a set of margin parameters that are sufficiently hard, and each chosen margin parameter will induce a family of nonparametric bandit instances that also require careful construction.

\subsection{Proof overview}

Let $\bm{u}^\star$ be a minimizer of the variational problem~\eqref{eq:min-opt}.
By Proposition~\ref{proposition:opt-prop}(ii), there exist margin parameters
$\alpha_1=\infty \ge \alpha_2 \ge \cdots \ge \alpha_M$ with $\alpha_i\le d/\beta$ for $i\ge2$ such that
\begin{equation}\label{eq:margin-selection-again}
\optexp \;=\; \eta_i(\bm{u}^\star,\alpha_i)-h_M(\alpha_i)
\quad\text{for each } i\in[M].
\end{equation}
Define batch cutoffs $T_0=0$, $T_i=\lceil T^{u_i^\star}\rceil$ for $1\le i\le M-1$, and $T_M=T$.
We interpret index $i$ as a \emph{difficulty level}: level~1 has a constant gap (easy), while levels $i\ge2$ have nonincreasing margin parameters (harder).
Crucially, resolving level~$i$ requires on the order of $T^{u_{i-1}^\star}$ samples, which is exactly the size of the preceding window.

The key insight is that an algorithm that does not know which level it is facing must allocate grids sub-optimally for at least one level. 
The proof proceeds in three main steps:

\begin{itemize}
    \item \textbf{Hard-instance construction (Section~\ref{sec:lower-family-design}).} We partition the covariate space into $M$ ``stripes'', each operating at a different resolution. Within stripe $i$, we randomly place ``active cells'' with reward gaps aligned with margin $\alpha_i$. An algorithm cannot determine which cells are active without sufficient exploration.
    \item \textbf{Regret in the pivotal window (Section~\ref{sec:lower-lb-Qi}).} 
    Let $\Gamma = \{0 < t_1 < t_2 < \cdots < t_{M-1} < T\}$ be the (possibly adaptively chosen) grid points by the policy.  
Define the bad events
\begin{equation}\label{eq:bad-event-i}
    A_{i}=\{t_{i-1}<T_{i-1}<T_{i}\le t_{i}\}, \qquad i=1,\ldots, M,
\end{equation}
which form a partition of
the sample space. Roughly speaking, $A_i$ represents the event that the algorithm's chosen grid points are suboptimal between the $(i-1)$-th and $i$-th batch.
We show that if $A_i$ happens, the algorithm incurs large regret on level-$i$ cells within the window $[T_{i-1},T_i]$.
\item \textbf{Indistinguishability across consecutive levels (Section~\ref{sec:mix-indis-proof}).} 
We define mixtures $Q_i$ by randomizing the active sets and signs for levels $m\le i$.
The key is that observations up to time $T_{i-1}$ cannot reliably distinguish whether the environment is drawn from $Q_i$ or $Q_{i+1}$:
\[
\mathrm{TV}\!\big(Q_i^{T_{i-1}},Q_{i+1}^{T_{i-1}}\big)
\ \text{is small (Lemma~\ref{lemma:consecutive-family-tv})}.
\]
This is proved via an equivalent coin model (biased vs.\ fair coins) and a $\chi^2$ bound (Lemma~\ref{lemma:chi-square-bound}), followed by Pinsker.
By a telescoping argument over $i$ and the triangle inequality, there exists an index $i^\star$ such that
\[
\mathbb{P}_{Q_{i^\star}}(A_{i^\star})\ \ge\ \frac{1}{2M}
\qquad\text{(Lemma~\ref{lemma:exist-i})}.
\]
Thus, on at least one level, the pivotal window occurs with nontrivial probability under the corresponding mixture.

\end{itemize}

% Since the size of the first batch is determined before the game begins, we can treat $t_1$ as fixed without loss of generality. 

\begin{figure}         
\centering         
\includegraphics[scale=0.5]{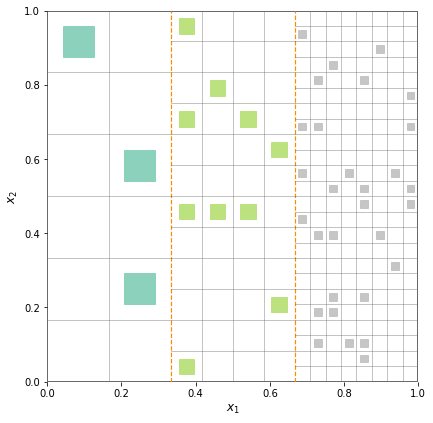}
\caption{Visualization of the active cells when $d=2,M=3$. The domain is partitioned into $M=3$ vertical stripes, each subdivided into fine grids of micro-cells. Colored squares indicate active regions, with each color corresponding to a different stripe resolution parameter $z_m$.}
\label{fig:hard-inst}     
\end{figure}

\subsection{The hard instance construction}\label{sec:lower-family-design}

We now construct the family of hard instances $\{Q_1, Q_2, \ldots, Q_M\}$. 

\paragraph{Step 1: designing the covariate distribution. }
Recall that an instance $P$ dictates a law over $X_t, Y^{(1)}_t, Y^{(-1)}_t$. 
We begin with describing the covariate distribution $P_X$, which is shared among 
$\{Q_1, Q_2, \ldots, Q_M\}$. 

We start with partition the covariate space $[0,1]^{d}$. 
Define 
\[
z_1 = 1, \quad z_m = \lceil 16M^{-1}(M^5T^{u^\star_{m-1}})^{\frac{1}{2\beta+d}} \rceil, \text{ for } 2 \leq m \leq M. 
\]
Split coordinate $x_1$ into $M$ stripes $\mathcal{S}_m=\{x\in[0,1]^d:x_1\in[(m-1)/M,m/M)\}$, $m=1,\ldots,M$. Fix integers $z_m$ and set
\[
w_m=\frac{1}{M z_m},\qquad r_m=\frac{1}{4 M z_m}.
\]
Inside stripe $m$, form an axis-aligned grid of \emph{micro-cells} $\{C_{m,j}\}_{j=1}^{Z_m}$ of side-length $w_m$ by using $z_m$ cuts along $x_1$ (within the stripe) and $M z_m$ cuts along each of the remaining $d-1$ coordinates. Thus
\[
Z_m = z_m\cdot (M z_m)^{d-1} = M^{d-1} z_m^d .
\]
Let $q_{m,j}$ be the center of $C_{m,j}$, and define $\ell_\infty$ balls
\[
B_{m,j}\coloneqq  B_{\infty}(q_{m,j},r_m)\subset C_{m,j}.
\]

With this partition, we define $P_X$ to be the uniform distribution on $\bigcup_{m=1}^M\bigcup_{j=1}^{Z_m} B_{m,j}$. Then
\begin{equation}\label{eq:covariate-density}
 P_X(B_{m,j})=(Mz_m)^{-d}\qquad\text{for all }m,j.   
\end{equation}
It is straightforward to check that $P_X$ obeys Assumption~\ref{ass:bdd-density}.

\paragraph{Step 2: designing the reward family. }

Now we are ready to construct the reward functions. 
Across the families, we will let $f_{(-1)} \equiv \frac{1}{2}$. 
Fix a bump $\phi:[0,\infty)\to[0,1]$:
\[
\phi(r)=\begin{cases}
1,& 0\le r<\tfrac14,\\
2-4r,& \tfrac14\le r<\tfrac12,\\
0,& r\ge\tfrac12.
\end{cases}
\]
For level $m$, define
\begin{equation}
\label{eq:xi}
\xi_{m,j}(x)\coloneqq \delta_m\,\phi^{\beta}\big(M z_m\|x-q_{m,j}\|_\infty\big)\,\mathbf 1{\{x\in C_{m,j}\}},
\qquad \delta_m \coloneqq  D_\phi\,(M z_m)^{-\beta},
\end{equation}
with $D_{\phi}=\min(4^{-\beta}L,1/4)$. Then $\xi_{m,j}$ is supported on $B_{m,j}$, equals $\delta_m$ on the inner quarter, and is $(\beta,L)$-H\"older.

Choose a subset $S_m\subset[Z_m]$ with size 
\begin{equation}
\label{eq:sm}
|S_m| = s_m \coloneqq  \Big\lceil M^{-1}(M z_m)^{\,d-\alpha_m\beta}\Big\rceil ,\quad2\le m\le M, \quad|S_1| = s_1 \coloneqq M^{d-1},
\end{equation}
% for a small absolute constant $c_s>0$, 
and attach i.i.d.\ Rademacher signs $\{\sigma_{m,i}\}_{i\in S_m}$. The subset $S_m$ controls which micro-cells are active for the reward function in the $m$-th stripe; see Figure~\ref{fig:hard-inst} for an illustration.

For $i\in[M]$, define the level-$i$ reward family $\mathcal{F}_i$ to be 
\begin{equation}
\label{eq:family}
\mathcal{F}_i = \Big\{\, (f^{(1)}_{S,\sigma,i}(x) = \tfrac12 + \sum_{m=1}^{i}\ \sum_{j\in S_m} \sigma_{m,j}\,\xi_{m,j}(x),\quad f^{(-1)}\equiv\tfrac12):\textrm{for all possible configurations of } S,\sigma\,\Big\}.
\end{equation}

\begin{proposition}\label{prop:smooth-margin}
For every $i$ and $f^{(1)}_{S,\sigma,i}\in \mathcal{F}_i$, the pair $(f^{(1)}_{S,\sigma,i},f^{(-1)})$ is $(\beta,L)$-H\"older and satisfies the margin with parameter $\alpha_i$. Hence $\mathcal{F}_i\subset\Env_{\alpha_i}$.
\end{proposition}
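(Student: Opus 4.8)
The plan is to verify the two required properties—Hölder smoothness and the margin condition with parameter $\alpha_i$—separately, since they involve quite different arguments. Throughout, fix a level $i$, a configuration $(S,\sigma)$, and write $f \coloneqq f^{(1)}_{S,\sigma,i}$.

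\textit{Step 1: Hölder smoothness.} Since $f^{(-1)}\equiv \tfrac12$ is trivially $(\beta,L)$-Hölder, it suffices to check $f$. First I would record that each bump $\xi_{m,j}$ is supported on $B_{m,j}\subset C_{m,j}$, and that for a fixed $m$ the micro-cells $\{C_{m,j}\}_j$ are disjoint, while the $B_{m,j}$ are separated from the boundary of $C_{m,j}$ (since $r_m = w_m/4 < w_m/2$). The only subtlety is that across \emph{different} levels $m\ne m'$ the supports may overlap, since $B_{m,j}$ and $B_{m',j'}$ live in different stripes but the stripes are adjacent; however, distinct stripes $\mathcal{S}_m$ are disjoint as sets, so in fact no two bumps from different levels share a point either. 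Hence $f - \tfrac12$ is, pointwise, equal to at most one term $\pm\xi_{m,j}(x)$. The key estimate is then: $\xi_{m,j}(x) = \delta_m\,\phi^\beta(Mz_m\|x-q_{m,j}\|_\infty)$ with $\delta_m = D_\phi (Mz_m)^{-\beta}$ and $D_\phi \le 4^{-\beta}L$; since $\phi$ is $1$-Lipschitz, $\phi^\beta$ is $\beta$-Hölder with constant $4^{1-\beta}\le 4$ when $\beta\le 1$ (and one checks the constant directly), so after the $Mz_m$ rescaling and multiplication by $\delta_m$ one gets $|\xi_{m,j}(x)-\xi_{m,j}(x')| \le L\|x-x'\|_\infty^\beta \le L\|x-x'\|_2^\beta$. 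For two points in the \emph{same} support this is immediate; for two points in different (or no) supports one interpolates through a point on the common boundary where $\xi$ vanishes, using that $\phi=0$ for $r\ge \tfrac12$, i.e.\ outside $B_{m,j}$. This chaining argument is the one place requiring a little care, but it is standard for bump constructions.

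\textit{Step 2: margin condition with parameter $\alpha_i$.} Here I would compute $P_X\big(0 < |f^{(1)}(X) - f^{(-1)}(X)| \le \delta\big)$ for $\delta \in [0,\delta_0]$. On the active cells of level $m\le i$, $|f^{(1)}-f^{(-1)}| = \xi_{m,j}$, which takes values in $(0,\delta_m]$ and equals $\delta_m$ on the inner ball of radius $r_m/ \ldots$; in particular the set $\{0<|f^{(1)}-f^{(-1)}|\le \delta\}$ is empty when $\delta < \delta_m$ is impossible to satisfy with room, so the relevant thresholds are the values $\delta_m$. The total $P_X$-mass of all active level-$m$ cells is $s_m \cdot (Mz_m)^{-d}$ by~\eqref{eq:covariate-density}, and by~\eqref{eq:sm} this is $\asymp M^{-1}(Mz_m)^{-\alpha_m\beta} \asymp M^{-1}\delta_m^{\alpha_m}$ (using $\delta_m \asymp (Mz_m)^{-\beta}$ from~\eqref{eq:xi}). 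Since the margin parameters satisfy $\alpha_i \le \alpha_m$ for $m\le i$ (they are nonincreasing, per Proposition~\ref{proposition:opt-prop}(ii)), and $\delta_m \le \delta_0 \le 1$, we have $\delta_m^{\alpha_m} \le \delta_m^{\alpha_i}$. Summing over the at most $i\le M$ active levels whose threshold is below $\delta$, one bounds the probability by $\sum_{m:\delta_m \le \delta} O(M^{-1}) \delta_m^{\alpha_i} \le O(1)\,\delta^{\alpha_i}$, which is exactly the margin bound with some $D_0$ depending only on $(\beta,d)$ and the absolute constants. One must also handle level $1$ (constant gap $\delta_1 \asymp M^{-\beta}$, mass $M^{d-1}(M z_1)^{-d} = M^{-1}$, consistent with $\alpha_1 = \infty$ being vacuous on $[0,\delta_0]$ once $\delta_0 < \delta_1$).

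\textit{Main obstacle.} The genuinely delicate point is the bookkeeping in Step 2: one must confirm that summing the per-level contributions $M^{-1}\delta_m^{\alpha_m}$ over $m$ does not blow up the constant (the $M^{-1}$ factors absorb the $M$ levels), and that comparing $\delta_m^{\alpha_m}$ to $\delta_m^{\alpha_i}$ via monotonicity of $\{\alpha_m\}$ is valid on the whole relevant range—i.e.\ that $\delta_0$ can be chosen small enough (depending on $M$, but $M$ is allowed in the constants here) that $\delta_m \le \delta_0$ holds or is irrelevant for every $m$. I would isolate the choice of $\delta_0$ and $D_0$ at the end so that all the inequalities line up. The smoothness part (Step 1) is routine once the disjointness of supports across both $j$ and $m$ is noted.
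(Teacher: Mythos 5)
Your proposal takes essentially the same route as the paper's proof: Hölder smoothness is verified via the disjointly supported bump construction, and the margin condition is verified by thresholding on the level-$m$ gap values $\delta_m$, computing the $P_X$-mass $s_m (M z_m)^{-d} \asymp M^{-1}\delta_m^{\alpha_m}$ of active cells at each level, and using the monotonicity $\alpha_m \ge \alpha_i$ together with the $M^{-1}$ prefactor to absorb the $\le M$ levels. A couple of minor slips do not affect correctness: $\phi$ is $4$-Lipschitz (not $1$-Lipschitz), so the $\beta$-H\"older constant of $\phi^\beta$ is $4^\beta$, which is exactly cancelled by $D_\phi \le 4^{-\beta}L$; and on the $P_X$-support $B_{m,j}$ (the inner quarter) the gap is identically equal to $\delta_m$ rather than ranging over $(0,\delta_m]$, which only makes the thresholding cleaner.
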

\noindent See Section~\ref{sec:lower-remain-pf} for the proof.

\paragraph{Step 3: designing the mixture $Q_i$. }

For $i\in[M]$, define the \emph{mixture} $Q_i$ by drawing $S_m$ uniformly among subsets of size $s_m$ and i.i.d.\ signs for $m\le i$.

\subsection{Lower bounding the regret  on $Q_{i}$ via indistinguishability}\label{sec:lower-lb-Qi}

Fix $i\in\{1,\dots,M\}$ and recall $T_{i-1}=\lceil T^{u^\star_{i-1}}\rceil$, $T_i=\lceil T^{u^\star_i}\rceil$ with $T_M=T$.
Let $Q_i$ be the level-$i$ mixture from \eqref{eq:family} (random $S_m, \sigma_m$ for $m\le i$) and recall
the bad event $A_i=\{t_{i-1}<T_{i-1}<T_i\le t_i\}$; the $A_i$'s partition the sample space. Our goal is to
show that, \emph{on some $i^\star$ for which $Q_{i^\star}(A_{i^\star})$ is bounded below}, the expected regret
incurred between rounds $T_{i^\star-1}\!+\!1$ and $T_{i^\star}$ is large.

\paragraph{Step 1: Restricting to the pivotal window.}

Since the maximum is larger than the average and the single-step regret is nonnegative, for any policy $(\Gamma, \pi)$, we have 
\begin{align*}
  \sup_{P \in \Env_{\alpha_i}}R_{T} (\Gamma, \pi; P) &\geq \mathbb{E}_{\Env \sim Q_i} [R_{T} (\Gamma, \pi; P)] \\
  &= \mathbb{E}_{\Env \sim Q_i} \left[ \mathbb{E}_{P} \left[\sum_{t=1}^{T}\left(f^{\star}(X_{t})-f^{(\pi_t(X_{t}))}(X_{t})\right)\right] \right] \\
  &\geq \sum_{t=T_{i-1}+1}^{T_i}
\mathbb{E}_{\Env \sim Q_i} \left[ \mathbb{E}_{P} \left[\left(f^{\star}(X_{t})-f^{(\pi_t(X_{t}))}(X_{t})\right)\right] \right].
\end{align*}

\paragraph{Step 2: Localizing to active level-$i$ cells.}

For each $1 \leq m \leq i$, let $S_{m}$ and $\sigma_{m}$ be randomly and uniformly generated, i.e., $S_{m}$ is a random subset of $[Z_{m}]$ with size $s_m$, and $\sigma_{m}$ be a random binary vector. 
Use the definition of $Q_i$ to rewrite
\begin{align*}
\mathbb{E}_{\Env \sim Q_i} \left[ \mathbb{E}_{P} \left[ \left(f^{\star}(X_{t})-f^{(\pi_t(X_{t}))}(X_{t})\right)\right] \right] &=  \mathbb{E}_{\{S_{m}\}_{1 \leq m \leq i}} \mathbb{E}_{\{\sigma_{m}\}_{1 \leq m \leq i}}  
\mathbb{E}_{\Env_{S,\sigma}} \left[ \left(f^{\star}(X_{t})-f^{(\pi_t(X_{t}))}(X_{t})\right)\right] \\
&= \mathbb{E}_{\{S_{m}\}_{1 \leq m \leq i-1}} \mathbb{E}_{\{\sigma_{m}\}_{1 \leq m \leq i-1}} \mathbb{E}_{S_i} \mathbb{E}_{\sigma_i} 
\mathbb{E}_{\Env_{S,\sigma}} \left[ \left(f^{\star}(X_{t})-f^{(\pi_t(X_{t}))}(X_{t})\right)\right].
\end{align*}

On level $i$ the only locations where the two arms differ are the active micro-cells $\{C_{i,j}\}_{j\in S_i}$,
and there the gap equals $\delta_i$ with sign $\sigma_{i,j}\in\{\pm1\}$ (see \eqref{eq:xi} and
\eqref{eq:sm}). Therefore,
\[
\mathbb{E}_{\Env_{S,\sigma}} \left[ \left(f^{\star}(X_{t})-f^{(\pi_t(X_{t}))}(X_{t})\right)\right] \geq \delta_{i} \cdot \mathbb{E}_{\Env_{S,\sigma}} \left[  \sum_{j \in S_{i}} \mathbf{1}\{X_{t} \in C_{i,j}, \pi_t(X_{t}) \neq \sigma_{i,j} \}\right]
\]

Now fix the realization for $\{S_{m}\}_{1 \leq m \leq i}$, $\{\sigma_{m}\}_{1 \leq m \leq i-1}$, and fix any $j \in S_{i}$. 
We aim to lower bound $\mathbb{E}_{\sigma_i} 
 \mathbb{E}_{\Env_{S,\sigma}} \left[ \mathbf{1}\{X_{t} \in C_{i,j}, \pi_t(X_{t}) \neq \sigma_{i,j} \}\right]$. 
Denote by $\sigma_{i,-j}$ the random vector excluding the $j$-th coordinate.
We have
\begin{align*}
 &\mathbb{E}_{\sigma_i} 
 \mathbb{E}_{\Env_{S,\sigma}} \left[ \mathbf{1}\{X_{t} \in C_{i,j}, \pi_t(X_{t}) \neq \sigma_{i,j} \}\right] \\
 &\quad = \frac{1}{2}\mathbb{E}_{\sigma_{i,-j}} \left [\mathbb{P}_{S,\sigma \mid \sigma_{i,j} = 1} (X_{t} \in C_{i,j}, \pi_t(X_{t}) \neq 1 )+ \mathbb{P}_{S,\sigma \mid \sigma_{i,j} = -1} (X_{t} \in C_{i,j}, \pi_t(X_{t}) \neq -1 ) \right] \\
 &\quad = \frac{1}{2 (M z_i)^{d}} \mathbb{E}_{\sigma_{i,-j}} \Big [\underbrace{\mathbb{P}_{S,\sigma \mid \sigma_{i,j} = 1} (\pi_t(X_{t}) \neq 1 \mid X_{t} \in C_{i,j} )+ \mathbb{P}_{S,\sigma \mid \sigma_{i,j} = -1} (\pi_t(X_{t}) \neq -1 \mid X_{t} \in C_{i,j})}_{U^t_{i,j}} \Big],
\end{align*}
where we have used the fact that $P_X ( X_t \in C_{i,j}) =1/(M z_i)^d$. 

% Since the worst-case regret is lower bounded by the average regret over the family $\Omega_i$, 
% \begin{align}
% \sup_{P \in \Env_{\alpha_i}}&R_{T} (\Gamma, \pi; P) \nonumber \\
% % &\ge\mathbb{E}_{S,\omega}\eoverpi\left[\sum_{t=1}^{T}\left(f^{\star}(X_{t})-f^{(\pi_t(X_{t}))}(X_{t})\right)\right]\nonumber \\
%  & \overset{\mathrm{(i)}}{\ge}\sum_{t=T_{i-1}+1}^{T_{i}}\sum_{m=1}^M\sum_{k\in I_{m}}\sum_{j=1}^{s_{m}}\evjoint\eoverpi^{t}\left[D_{\phi}(Mz_{m})^{-\beta}\mathbf{1}\{X_{t}\in\peakm,\pi_{t}(X_{t})\neq\omega_{k,j}^{m}\}\right]\nonumber \\
%   & \overset{\mathrm{}}{\ge}\sum_{t=T_{i-1}+1}^{T_{i}}\sum_{k\in I_{i}}\sum_{j=1}^{s_{i}}\mathbb{E}_{S,\omega}\eoverpi^{t}\left[D_{\phi}(Mz_{i})^{-\beta}\mathbf{1}\{X_{t}\in\peaki,\pi_{t}(X_{t})\neq\omega_{k,j}^{i}\}\right]\nonumber \\
%  % & =D_{\phi}(Mz_{i})^{-\beta}\sum_{t=T_{i-1}+1}^{T_{i}}\sum_{k\in I_{i}}\sum_{j=1}^{s_{i}}\frac{1}{2^{Z}}\sum_{\omega\in\Omega}\eoverpi^{t}\left[\mathbf{1}\{X_{t}\in\peakm,\pi_{t}(X_{t})\neq\omega_{k,j}^{i}\}\right]\nonumber \\
%  & =D_{\phi}(Mz_{i})^{-\beta-d}\sum_{t=T_{i-1}+1}^{T_{i}}\sum_{k\in I_{i}}\sum_{j=1}^{s_{i}}\frac{1}{2^{Z}}\sum_{\wloo\in\omgloo}\underbrace{\sum_{l\in\{\pm1\}}\mathbb{E}_{\pi,\omega_{k,j}^{i}=l}^{t}P_{X}(\pi_{t}(X_{t})\neq l\mid X_{t}\in\peakm)}_{U_{i,k,j}^{t}}.\label{eq:regret-to-test}
% \end{align}
% Here, step (i) uses the fact that regret is only incurred on $\peakm$'s and the optimal action is specified by $\omega_{k,j}^i$; we use $\omega_{-(k,j)}$ to represent the vector after leaving out the $j$-th entry in the $k$-th block of $\omega$.

\paragraph{Step 3: Localizing the TV to $A_i$ (Le Cam on a subset).}

Define $\mathbb{P}_{\Gamma,\pi;\sigma_{i,j}}^t$ to be the law of observations up to time $t$ under the environment with $\sigma_{i,j}$ and under the policy $(\Gamma,\pi)$. By Le Cam's method, one has
\begin{align*}
U_{i,j}^{t} & \ge1-\|\pminus^{t}-\pplus^{t}\|_{\mathrm{TV}}\\
 & \ge1-\|\pminus^{T_{i}}-\pplus^{T_{i}}\|_{\mathrm{TV}}\\
 & =\int\min\left\{ \mathrm{d}\pminus^{T_{i}},\mathrm{d}\pplus^{T_{i}}\right\} \\
 & \ge\int_{A_{i}}\min\left\{ \mathrm{d}\pminus^{T_{i}},\mathrm{d}\pplus^{T_{i}}\right\} ,
\end{align*}
where the second inequality holds since $t\le T_i$. 
Here we recall that 
$A_i = \{t_{i-1}<T_{i-1}<T_{i}\le t_{i}\}$.
Under $A_i$, the available
observations at $T_{i}$ are the same as those at $T_{i-1}$ under $A_{i}$, we therefore have
\begin{align*}
U_{i,j}^{t}  
 &\ge\int_{A_{i}}\min\left\{ \mathrm{d}\pminus^{T_{i-1}},\mathrm{d}\pplus^{T_{i-1}}\right\} \\
 &=\frac{1}{2}\int_{A_{i}}\left(\mathrm{d}\pminus^{T_{i-1}}+\mathrm{d}\pplus^{T_{i-1}}-|\mathrm{d}\pminus^{T_{i-1}}-\mathrm{d}\pplus^{T_{i-1}}|\right)\\
 &\ge\frac{1}{2}\left(\pminus^{T_{i-1}}(A_{i})+\pplus^{T_{i-1}}(A_{i})\right)-\|\pminus^{T_{i-1}}-\pplus^{T_{i-1}}\|_{\mathrm{TV}}.
\end{align*}
For the TV distance, we have the following bound, whose proof is deferred to Section~\ref{sec:lower-lb-Qi-proof}. 
\begin{lemma}\label{lemma:single-bin-tv}
   Fix any $n\in[T]$ and any policy $(\Gamma, \pi)$. For any $i\in[M]$ and $j \in S_i$, 
   \[\|\pminus^{n}-\pplus^{n}\|_\mathrm{TV}\le \sqrt{n(Mz_i)^{-(2\beta+d)}}.\]
\end{lemma}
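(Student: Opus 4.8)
The two laws $\pplus^{n}$ and $\pminus^{n}$ are produced by environments that share the covariate law $P_X$ and the arm $f^{(-1)}\equiv\tfrac12$, and whose arm $f^{(1)}$ agrees everywhere except on the ball $B_{i,j}$, where flipping $\sigma_{i,j}$ replaces the bump $+\xi_{i,j}$ by $-\xi_{i,j}$ (all other bumps are unaffected by this flip, and they vanish on $B_{i,j}$ in any case). Hence the only way an observer can tell the two apart is through rewards of arm $1$ collected while $X_t\in B_{i,j}$. Modeling the rewards as conditionally Bernoulli, $Y_t^{(k)}\mid X_t\sim\mathrm{Ber}\!\big(f^{(k)}(X_t)\big)$ --- a legitimate choice for the lower-bound construction --- the plan is to pass to KL via Pinsker, $\|\pplus^{n}-\pminus^{n}\|_{\TV}\le\sqrt{\tfrac12\,\KL(\pplus^{n}\,\|\,\pminus^{n})}$, and then apply the KL chain rule.

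\textbf{Step 1: trajectory reduction and chain rule.}
The observed history at time $n$ is a deterministic function of the interaction trajectory $Z_{1:n}=(X_1,A_1,Y_1,\dots,X_n,A_n,Y_n)$, with $A_t=\pi_t(X_t)$, so by the data-processing inequality it suffices to bound the KL divergence of the trajectory laws; this framing also absorbs the (possibly adaptive) grid $\Gamma$ and the delayed batched feedback without extra care. In the chain-rule factorization of that KL, the context $X_t$ has law $P_X$ under both environments, and the action $A_t$ --- chosen by a policy blind to the environment --- has the same conditional law given the past and $X_t$ under both; hence only the reward conditionals survive, and they differ solely on $\{X_t\in B_{i,j},\,A_t=1\}$. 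This yields
\begin{align*}
\KL\big(\pplus^{n}\,\|\,\pminus^{n}\big)
\;\le\;\sum_{t=1}^{n}\mathbb{E}_{\pplus}\!\Big[\mathbf 1\{X_t\in B_{i,j},\,A_t=1\}\;
\KL\!\big(\mathrm{Ber}(\tfrac12+\xi_{i,j}(X_t))\,\big\|\,\mathrm{Ber}(\tfrac12-\xi_{i,j}(X_t))\big)\Big].
\end{align*}

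\textbf{Step 2: per-round bound and visit count.}
For $\xi\in[0,\tfrac14]$ one has $\KL\!\big(\mathrm{Ber}(\tfrac12+\xi)\,\|\,\mathrm{Ber}(\tfrac12-\xi)\big)\le\chi^2=\frac{(2\xi)^2}{(\tfrac12-\xi)(\tfrac12+\xi)}\le\tfrac{64}{3}\,\xi^2$. Since $0\le\xi_{i,j}\le\delta_i=D_\phi (Mz_i)^{-\beta}$ with $D_\phi\le\tfrac14$ (see~\eqref{eq:xi}), we have $\delta_i\le\tfrac14$ and $\delta_i^2\le\tfrac1{16}(Mz_i)^{-2\beta}$, so each summand is at most $\tfrac{64}{3}\delta_i^2\,\mathbf 1\{X_t\in B_{i,j}\}\le\tfrac43(Mz_i)^{-2\beta}\,\mathbf 1\{X_t\in B_{i,j}\}$. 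Taking expectations and using $\mathbb{E}_{\pplus}[\mathbf 1\{X_t\in B_{i,j}\}]=P_X(B_{i,j})=(Mz_i)^{-d}$ from~\eqref{eq:covariate-density},
\begin{align*}
\KL\big(\pplus^{n}\,\|\,\pminus^{n}\big)\;\le\;n\cdot(Mz_i)^{-d}\cdot\tfrac43(Mz_i)^{-2\beta}\;=\;\tfrac43\,n\,(Mz_i)^{-(2\beta+d)},
\end{align*}
and Pinsker gives $\|\pplus^{n}-\pminus^{n}\|_{\TV}\le\sqrt{\tfrac23\,n\,(Mz_i)^{-(2\beta+d)}}\le\sqrt{n\,(Mz_i)^{-(2\beta+d)}}$, as claimed.

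\textbf{Main obstacle.}
The only delicate point is the chain-rule bookkeeping in Step 1: one must verify carefully that, because $\pi$ does not observe which sign $\sigma_{i,j}$ is in force, the conditional law of the action $A_t$ given the past agrees across the two environments, so that the KL decomposition collects \emph{only} the reward contributions inside $B_{i,j}$. Everything else --- grid adaptivity and batched, delayed feedback --- collapses to a single application of data processing, since all the learner ever sees is a function of $Z_{1:n}$.
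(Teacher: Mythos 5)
Your proof is correct and follows essentially the same route as the paper's: reduce to KL via Pinsker, apply the chain rule to the trajectory law, observe that the two environments differ only in the arm-$1$ reward conditional on $\{X_t\in B_{i,j}\}$, bound the per-round Bernoulli KL by $O(\delta_i^2)$, and use $P_X(B_{i,j})=(Mz_i)^{-d}$. The only cosmetic difference is the per-round constant: the paper invokes a standard $8(p-q)^2$ bound (yielding $32D_\phi^2$, which is $\le 2$ since $D_\phi\le\tfrac14$), whereas you pass through $\chi^2$ to get $\tfrac{64}{3}\xi^2$; both slack cancel correctly after Pinsker, giving the stated $\sqrt{n(Mz_i)^{-(2\beta+d)}}$.
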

Applying Lemma~\ref{lemma:single-bin-tv} with $n=T_{i-1}$, we obtain
\[
U_{i,j}^{t} \geq \frac{1}{2}\left(\pminus^{T_{i-1}}(A_{i})+\pplus^{T_{i-1}}(A_{i})\right) - \singlecordtv.
\]

\paragraph{Step 4: Averaging over $(S_m, \sigma_m)$.}

Combining Steps 1-3, we arrive at 
\begin{align*}
    &\sup_{P \in \Env_{\alpha_i}}R_{T} (\Gamma, \pi; P) \\ 
    & \geq  \frac{\delta_{i}}{2 (M z_i)^{d}} \sum_{t=T_{i-1}+1}^{T_i} \mathbb{E}_{\{S_{m}\}_{1 \leq m \leq i-1}} \mathbb{E}_{\{\sigma_{m}\}_{1 \leq m \leq i-1}} \mathbb{E}_{S_i} \sum_{j \in S_{i}} 
     \mathbb{E}_{\sigma_{i,-j}} \left [\frac{1}{2}\left(\pminus^{T_{i-1}}(A_{i})+\pplus^{T_{i-1}}(A_{i})\right) - \singlecordtv \right] \\
     & = \frac{\delta_{i} \cdot s_i }{2 (M z_i)^{d}} \sum_{t=T_{i-1}+1}^{T_i} \left ( \mathbb{P}_{Q_i}^{T_{i-1}} (A_i) - \singlecordtv \right ) \\
     & = \frac{\delta_{i} \cdot s_i }{2 (M z_i)^{d}} (T_i - T_{i-1}) \left ( \mathbb{P}_{Q_i}^{T_{i-1}} (A_i) - \singlecordtv \right ). 
\end{align*}
Here the first equality essentially uses the definition of $Q_i$.

Since the event $A_i$ can be determined by observations up to $T_{i-1}$, we have $\mathbb{P}_{Q_i}^{T_{i-1}} (A_i)=\mathbb{P}_{Q_i}^{} (A_i)$. It boils down to lower bounding $\mathbb{P}_{Q_i}^{} (A_i)$, for which we have the following lemma. 
\begin{lemma}\label{lemma:exist-i}
There exists some $1 \leq i^\star \leq M$ 
such that 
$\mathbb{P}_{Q_{i^\star}}^{} (A_{i^\star}) \geq 1/(2M)$. 
\end{lemma}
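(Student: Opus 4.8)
\textbf{Proof plan for Lemma~\ref{lemma:exist-i}.}

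The plan is to argue by contradiction: suppose that for every $i\in[M]$ we have $\mathbb{P}_{Q_i}(A_i) < 1/(2M)$, and derive a contradiction from the fact that the events $\{A_i\}_{i=1}^M$ partition the sample space. The key mechanism is that although each $A_i$ is measured under a \emph{different} mixture $Q_i$, consecutive mixtures $Q_i$ and $Q_{i+1}$ are statistically indistinguishable on the information available up to time $T_{i-1}$, and $A_i$ is a $T_{i-1}$-measurable event (it only involves the grid points $t_{i-1}, t_i$, which are determined by observations through time $T_{i-1}$ by adaptivity of the grid). So the probability assigned to $A_i$ changes little when we swap $Q_i$ for $Q_{i+1}$ — indeed for any other $Q_m$ as long as the TV distance is controlled.

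First I would set up the telescoping. Since $\{A_i\}_{i=1}^M$ partitions the sample space, under any single fixed measure $Q$ the probabilities sum to one: $\sum_{i=1}^M \mathbb{P}_{Q}(A_i) = 1$. I would like to evaluate each term $\mathbb{P}_{Q_i}(A_i)$ against a common reference, say $Q_M$ (the ``hardest'' mixture, which randomizes all levels). For each $i$, write
\[
\mathbb{P}_{Q_M}(A_i) \;\le\; \mathbb{P}_{Q_i}(A_i) + \big|\mathbb{P}_{Q_M}(A_i) - \mathbb{P}_{Q_i}(A_i)\big|
\;\le\; \mathbb{P}_{Q_i}(A_i) + \big\|Q_M^{T_{i-1}} - Q_i^{T_{i-1}}\big\|_{\mathrm{TV}},
\]
using that $A_i$ is $\mathcal{H}_{T_{i-1}}$-measurable. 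The TV distance on the right telescopes along consecutive levels via the triangle inequality: $\|Q_M^{T_{i-1}} - Q_i^{T_{i-1}}\|_{\mathrm{TV}} \le \sum_{m=i}^{M-1} \|Q_{m+1}^{T_{i-1}} - Q_m^{T_{i-1}}\|_{\mathrm{TV}}$, and since $T_{i-1}\le T_m$ for $m\ge i$, each summand is bounded by $\|Q_{m+1}^{T_m} - Q_m^{T_m}\|_{\mathrm{TV}}$, which is exactly the quantity controlled by Lemma~\ref{lemma:consecutive-family-tv} (referenced in the overview). I would invoke that lemma to conclude each such term is at most, say, $1/(4M^2)$ (this is what the construction is calibrated for — the factor $M^5$ and the $16M^{-1}$ in the definition of $z_m$ are chosen precisely so the $\chi^2$ and Pinsker bounds deliver a per-step TV of this order), so that $\|Q_M^{T_{i-1}} - Q_i^{T_{i-1}}\|_{\mathrm{TV}} \le (M-i)/(4M^2) \le 1/(4M)$.

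Summing the displayed inequality over $i=1,\dots,M$ then gives
\[
1 \;=\; \sum_{i=1}^M \mathbb{P}_{Q_M}(A_i) \;\le\; \sum_{i=1}^M \mathbb{P}_{Q_i}(A_i) \;+\; \sum_{i=1}^M \big\|Q_M^{T_{i-1}} - Q_i^{T_{i-1}}\big\|_{\mathrm{TV}}
\;\le\; \sum_{i=1}^M \mathbb{P}_{Q_i}(A_i) \;+\; \frac14 \;\le\; M\cdot\frac{1}{2M} + \frac14,
\]
under the contradiction hypothesis, i.e. $1 \le 3/4$, which is absurd. Hence some $i^\star$ satisfies $\mathbb{P}_{Q_{i^\star}}(A_{i^\star}) \ge 1/(2M)$. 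The one point needing care is verifying that $A_i$ is genuinely $\mathcal{H}_{T_{i-1}}$-measurable: the event $A_i=\{t_{i-1}<T_{i-1}<T_i\le t_i\}$ involves $t_i$, but on the complementary scenario where $t_i < T_i$ the event fails, and whether $t_i \ge T_i$ can be decided as soon as round $T_i$ is reached — however we actually need it decided by $T_{i-1}$. This is where the nested structure of $A_i$ matters: on $A_i$ the grid point $t_{i-1}$ already lies below $T_{i-1}$, so the $(i-1)$-th batch closes before $T_{i-1}$, and the policy commits to $t_i$ using only $\mathcal{H}_{t_{i-1}}\subseteq \mathcal{H}_{T_{i-1}}$; thus $t_i$ is $\mathcal{H}_{T_{i-1}}$-measurable on $A_i$, making $A_i$ itself $\mathcal{H}_{T_{i-1}}$-measurable. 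I expect this measurability bookkeeping, together with correctly citing the calibration constants in Lemma~\ref{lemma:consecutive-family-tv}, to be the only delicate part; the rest is the clean telescoping-and-pigeonhole argument sketched above.
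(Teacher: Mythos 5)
Your argument is essentially the paper's proof in contrapositive form: telescope the TV distance from $Q_i$ to $Q_M$ over consecutive levels, use that $A_i$ is $\mathcal{H}_{T_{i-1}}$-measurable (your measurability discussion is exactly the right bookkeeping), then pigeonhole, which is what the paper does directly without the contradiction wrapper. The one thing you glossed over is that to make the per-step TV bound of order $1/(2M^2)$ (not your guessed $1/(4M^2)$) you need Lemma~\ref{lemma:precondition} to show the exponent $u_{m-1}^\star - u_m^\star(\gamma_{m+1}^\star+\tfrac12)$ is nonpositive so that $T^{(\cdot)}\le 1$; also the summands should be $\|Q_{m+1}^{T_{m-1}}-Q_m^{T_{m-1}}\|_{\mathrm{TV}}$ rather than at time $T_m$, to line up with the statement of Lemma~\ref{lemma:consecutive-family-tv}.
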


From now on, we identify $i$ with $i^\star$. 
As a result, we have 
\begin{align*}
    \sup_{P \in \Env_{\alpha_i}}R_{T} (\Gamma, \pi; P) \gtrsim  \frac{\delta_{i} \cdot s_i }{2 (M z_i)^{d}} \frac{T_i}{M}.  
\end{align*}
When $i=1$, $s_1=M^{d-1}$, one has
\begin{align*}
    \sup_{P \in \Env_{\alpha_1}}R_{T} (\Gamma, \pi; P) \gtrsim  \frac{\delta_{1} \cdot s_1 }{2 (M z_1)^{d}} \frac{T_1}{M}
    \apprge M^{-3}\cdot T^{u_1^\star}.
\end{align*}

When $i\ge2$, recall that $\delta_i = D_{\phi} (M z_i)^{-\beta}$, $s_i = M^{-1}(Mz_i)^{d - \alpha_i \beta}$, $T_i \asymp T^{u_i^\star}$, and $M z_i \asymp (M^5T^{u^\star_{m-1}})^{\frac{1}{2\beta+d}} $. 
We therefore obtain
\[
\sup_{P \in \Env_{\alpha_i}}R_{T} (\Gamma, \pi; P) \gtrsim \frac{1}{M^{7}}\cdot T^{u_i^\star - u_{i-1}^\star \gamma(\alpha_i)}.
\]
Combining the above relations with the inequality
\begin{align}
\mathrm{RI}(\Gamma,\pi)
= \sup_{\alpha \in \knowledge} \sup_{P \in \EnvAlpha}
\frac{R_T(\Gamma,\pi;P)}{R_T^\star(\alpha)} 
\ge 
% \mathrm{RI}(\Gamma,\pi)
% =  
\sup_{P \in \mathcal{P}_{\alpha_i}}
\frac{R_T(\Gamma,\pi;P)}{R_T^\star(\alpha_i)}\label{eq:c-ratio-lower-i}
\end{align}
yields 
\begin{align}
    \mathrm{RI}(\Gamma,\pi)
\geq   \sup_{P \in \mathcal{P}_{\alpha_i}}
\frac{R_T(\Gamma,\pi;P)}{R_T^\star(\alpha_i)} 
\apprge \frac{1}{\log T}\cdot M^{\Mexponent}\cdot T^{u_{i}^{\star}-u_{i-1}^{\star}\gamma(\alpha_{i})-h_M(\alpha_i)}
\asymp \frac{1}{\log T}\cdot M^{\Mexponent}\cdot T^{\optexp}.
\end{align}

% Plugging the above back to~(\ref{eq:regret-to-test}), we obtain

% \begin{align}\label{eq:reg-to-i}
% \sup_{f\in\Env_{\alpha_i}}& R_{T}(\pi,f) \nonumber\\& 
% \ge D_{\phi}(Mz_{i})^{-(\beta+d)}\sum_{t=T_{i-1}+1}^{T_{i}}\sum_{k\in I_{i}}\sum_{j=1}^{s_{i}}\frac{1}{2^{s+1}}\sum_{\wloo\in\omgloo}\left(\pminus(A_{i})+\pplus(A_{i})-\frac{2c}{M}\right)\nonumber \\
%  & =D_{\phi}(Mz_{i})^{-(\beta+d)}\sum_{t=T_{i-1}+1}^{T_{i}}\sum_{k\in I_{i}}\sum_{j=1}^{s_{i}}\frac{1}{2}\left(\mathbb{E}_{S,\omega}\mathbb{P}_{\pi,\omega}(A_{i})-\frac{c}{M}\right)\nonumber\\
%  & =\frac{1}{2}D_{\phi}(Mz_{i})^{-(\beta+d)}(T_{i}-T_{i-1})M^{d-1}s_{i}\left(\mathbb{E}_{S,\omega}\mathbb{P}_{\pi,\omega}(A_{i})-\frac{c}{M}\right)\nonumber\\
%  % & =\frac{1}{2}D_{\phi}(Mz_{i})^{-(\beta+d)}(T_{i}-T_{i-1})M^{d-1}(M^{-\alpha_{i}\beta})z_{i}^{d-\alpha_{i}\beta}\left(\mathbb{E}_{S,\omega}\mathbb{P}_{\pi,\omega}(A_{i})-\frac{c}{M}\right)\\
%  & \asymp M^{-1-\beta(1+\alpha_i)}T_{i}z_{i}^{-\beta(1+\alpha_i)}\left(\mathbb{E}_{S,\omega}\mathbb{P}_{\pi,\omega}(A_{i})-\frac{c}{M}\right).
% \end{align}
% Since $\mathbb{E}_{S,\omega}\mathbb{P}_{\pi,\omega}(A_{i})\ge1/(2M)$, we reach

\subsection{Proving the indistinguishability}\label{sec:mix-indis-proof}

In this section, we aim to demonstrate that the family $\{Q_1, Q_2, \ldots, Q_M\}$ is indistinguishable from finite samples. 
As a consequence, we establish Lemma~\ref{lemma:exist-i}.

The following lemma is the key result of this section, which establishes the fact that for any policy $(\Gamma, \pi)$, given observations up to time $T_{i-1}$, it is not possible to distinguish if the bandit instance is from $Q_i$ or from $Q_{i+1}$.

\begin{lemma}\label{lemma:consecutive-family-tv}
    % Fix any $n\in[T]$.
    Fix any policy $(\Gamma, \pi)$. Denote by $Q_{i}^{T_{i-1}}$ the law of observation up to time $T_{i-1}$ under the mixture distribution $Q_i$ and under the policy $(\Gamma, \pi)$. 
    Then for any $1\le i\le M-1$, one has
\[\mathrm{TV}(Q_{i}^{T_{i-1}},Q_{i+1}^{T_{i-1}})
\le\frac{1}{2M^2}\cdot T^{u_{i-1}^{\star}-u_{i}^{\star}\cdot(\gamma_{i+1}^{\star}+\frac{1}{2})}.\]
\end{lemma}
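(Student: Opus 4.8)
The plan is to reduce $\mathrm{TV}\bigl(Q_i^{T_{i-1}},Q_{i+1}^{T_{i-1}}\bigr)$ to an equivalent biased-vs-fair coin problem, control that by the $\chi^2$-method (Lemma~\ref{lemma:chi-square-bound}), and then substitute the explicit scalings of $z_{i+1}$, $s_{i+1}$, $\delta_{i+1}$. For the reduction: the covariate law $P_X$ and the law of the level-$\le i$ reward configuration $(S_m,\sigma_m)_{m\le i}$ are identical under $Q_i$ and $Q_{i+1}$, and the two mixtures differ only in that $Q_{i+1}$ additionally activates the level-$(i+1)$ micro-cells, placing a uniformly random size-$s_{i+1}$ subset of $[Z_{i+1}]$ into the active set with i.i.d.\ Rademacher signs and gap $\delta_{i+1}$. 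Conditioning on this shared randomness and on the covariate sequence $(X_t)_{t\le T_{i-1}}$, joint convexity of total variation reduces the claim to bounding, for each such conditioning, the TV between the observation law with level $i+1$ \emph{off} and the mixture with level $i+1$ \emph{on}. Since $f^{(-1)}\equiv\tfrac12$ and $f^{(1)}\equiv\tfrac12$ outside the active cells, the only informative rounds $t\le T_{i-1}$ are those with $X_t$ in a level-$(i+1)$ ball and $\pi_t=+1$; a likelihood-ratio factorization over rounds (via the tower property, since the set of informative rounds is itself observation-dependent) puts us in the coin model of Lemma~\ref{lemma:chi-square-bound}: $Z_{i+1}$ coins, a uniformly random $s_{i+1}$-subset $\pm\delta_{i+1}$-biased, and at most $\tilde n_j$ flips of coin $j$, where $\tilde n_j$ (the number of rounds up to $T_{i-1}$ with $X_t\in C_{i+1,j}$) is policy-independent with mean $\bar n:=T_{i-1}(Mz_{i+1})^{-d}$. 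A structural fact used throughout is $\bar n\,\delta_{i+1}^2\lesssim M^{-5}$, which follows from $T_{i-1}\le T^{u_i^\star}$ together with $(Mz_{i+1})^{2\beta+d}\asymp M^5 T^{u_i^\star}$.

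For the $\chi^2$ bound: the Rademacher sign randomization cancels the first-order $\delta_{i+1}^2$ term in each coin's likelihood contribution, so that only coins flipped at least twice contribute, at order $\tilde n_j^2\delta_{i+1}^4$; the smallness $\bar n\delta_{i+1}^2\lesssim M^{-5}$ dominates all higher-order terms (including the sum over the random active-set intersection). Averaging over the covariates (so $\mathbb{E}[\tilde n_j^2\mathbf{1}\{\tilde n_j\ge2\}]\lesssim\bar n^2$) and over the two independent random active sets (whose intersection has expected size $\asymp s_{i+1}^2/Z_{i+1}$), Lemma~\ref{lemma:chi-square-bound} gives
\[
\chi^2\bigl(Q_{i+1}^{T_{i-1}}\,\big\|\,Q_i^{T_{i-1}}\bigr)\ \lesssim\ \delta_{i+1}^4\,\bar n^{\,2}\,\frac{s_{i+1}^2}{Z_{i+1}}.
\]
If this right-hand side exceeds a constant, the stated bound holds trivially since $\mathrm{TV}\le1$; otherwise $\mathrm{TV}\le\tfrac12\sqrt{\chi^2}$ (equivalently, $\mathrm{KL}\le\log(1+\chi^2)$ followed by Pinsker) turns it into the desired TV estimate.

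It remains to substitute the parameters. With $\delta_{i+1}=D_\phi(Mz_{i+1})^{-\beta}$, $s_{i+1}\asymp M^{-1}(Mz_{i+1})^{d-\alpha_{i+1}\beta}$ (up to the rounding in its definition), $Z_{i+1}=M^{-1}(Mz_{i+1})^{d}$, $\bar n=T_{i-1}(Mz_{i+1})^{-d}$ and $T_{i-1}\asymp T^{u_{i-1}^\star}$, the bound collapses to $\chi^2\lesssim D_\phi^4 M^{-1}T^{2u_{i-1}^\star}(Mz_{i+1})^{-(4\beta+d+2\alpha_{i+1}\beta)}$. Inserting $(Mz_{i+1})^{2\beta+d}\asymp M^5 T^{u_i^\star}$ and using the identity
\[
\frac{4\beta+d+2\alpha_{i+1}\beta}{2\beta+d}\;=\;1+\frac{2\beta(\alpha_{i+1}+1)}{2\beta+d}\;=\;1+2\gamma(\alpha_{i+1})\;=\;1+2\gamma_{i+1}^\star,
\]
which is exactly what makes the horizon exponent land on $\gamma_{i+1}^\star+\tfrac12$ after taking a square root, one obtains $\mathrm{TV}(Q_i^{T_{i-1}},Q_{i+1}^{T_{i-1}})\lesssim D_\phi^2\,M^{-(3+5\gamma_{i+1}^\star)}\,T^{\,u_{i-1}^\star-u_i^\star(\gamma_{i+1}^\star+1/2)}$. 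Since $\tfrac12 D_\phi^2 M^{-(3+5\gamma_{i+1}^\star)}\le\tfrac1{32}M^{-3}\le\tfrac1{2M^2}$, the claim follows (the case $i=1$ is vacuous, as $T_0=0$; the degenerate case $\alpha_{i+1}=d/\beta$, where $s_{i+1}$ is a single cell, worsens only the $M$-power by a factor $M^2$, which is still absorbed since $\gamma_{i+1}^\star=\gmax\ge\tfrac12$ there).

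The main obstacle is executing the first two steps rigorously and \emph{uniformly over adaptive $M$-batch policies}: organizing the likelihood-ratio expansion by the tower property, and verifying that only the $O(\tilde n_j^2\delta_{i+1}^4)$ contribution survives per coin — i.e., that the $M^5$ slack built into $z_m$, through $\bar n\delta_{i+1}^2\lesssim M^{-5}$, truly dominates every higher-order term in the $\chi^2$ expansion. Granting Lemma~\ref{lemma:chi-square-bound}, which packages precisely this, what remains is the deterministic exponent bookkeeping above, whose only non-routine inputs are $(Mz_{i+1})^{2\beta+d}\asymp M^5 T^{u_i^\star}$ and the displayed identity relating $4\beta+d+2\alpha_{i+1}\beta$ to $1+2\gamma_{i+1}^\star$.
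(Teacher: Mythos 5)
Your proposal follows essentially the same route as the paper: reduce $\mathrm{TV}(Q_i^{T_{i-1}},Q_{i+1}^{T_{i-1}})$ to a biased-vs-fair coin testing problem, bound the $\chi^2$-divergence via Lemma~\ref{lemma:chi-square-bound}, apply Pinsker, and do the exponent bookkeeping. Your bookkeeping is correct — in particular the identity $\frac{4\beta+d+2\alpha_{i+1}\beta}{2\beta+d}=1+2\gamma_{i+1}^\star$ and the substitution $(Mz_{i+1})^{2\beta+d}\asymp M^5T^{u_i^\star}$ are exactly what make the exponent land on $\gamma_{i+1}^\star+\tfrac12$. Your version is in fact slightly sharper in the $M$-power: you use the per-cell flip rate $\bar n = T_{i-1}(Mz_{i+1})^{-d}$, whereas the paper applies the coin model with $n=T_{i-1}$ rounds distributed over $z=Z_{i+1}$ coins (a factor $M$ more flips per coin, hence $M^2$ more in $\chi^2$), which is a valid but looser over-count.

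The gap is in the reduction to the coin model under an adaptive policy, which you yourself flag as ``the main obstacle.'' The paper makes this step vanish with a single clean inequality: by the data processing inequality,
\[
\mathrm{TV}\bigl(Q_i^{T_{i-1}},Q_{i+1}^{T_{i-1}}\bigr)\;\le\;\mathrm{TV}\bigl(Q_i^{\otimes T_{i-1}},Q_{i+1}^{\otimes T_{i-1}}\bigr),
\]
where the right-hand side compares the joint laws of the \emph{full potential outcomes} $(X_t,Y_t^{(1)},Y_t^{(-1)})_{t\le T_{i-1}}$. That law is manifestly policy-independent — the learner's observations are a deterministic function of these tuples and the (possibly randomized) policy — so all adaptivity considerations disappear at once. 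Your proposal instead tries to condition on the covariate sequence and track ``the set of informative rounds'' $\{t:X_t\in C_{i+1,j},\,\pi_t=+1\}$, which is observation-dependent for adaptive policies, and to untangle this via a tower-property likelihood-ratio factorization; you then assert that Lemma~\ref{lemma:chi-square-bound} ``packages precisely this.'' It does not: that lemma is a $\chi^2$ bound for an i.i.d.\ coin model with no policy in sight, and it becomes applicable only after the reduction to a policy-independent process has been made. Without the data processing step (or some equivalent), the tower-property plan you sketch is not a proof — it is the part of the argument you needed to actually execute, and it is exactly the part that the paper short-circuits.
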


Before diving into the proof of this lemma, we prove Lemma~\ref{lemma:exist-i} based on Lemma~\ref{lemma:consecutive-family-tv}. 
By the triangle inequality, we have 
    \begin{align}\label{eq:tv-to-m}
        \mathrm{TV}(Q_{i}^{T_{i-1}}, Q_{M}^{T_{i-1}}) \le\sum_{m=i}^{M-1} \mathrm{TV}(Q_{m}^{T_{i-1}}, Q_{m+1}^{T_{i-1}})
        &\overset{\mathrm{(i)}}{\le}\sum_{m=i}^{M-1} \mathrm{TV}(Q_{m}^{T_{m-1}}, Q_{m+1}^{T_{m-1}})\nonumber\\
        % &\le\sum_{m=i}^{M-1} \frac{1}{2M^2}\cdot T_{i-1}\cdot T^{ -u_{m}^{\star}\cdot(\gamma_{m+1}^{\star}+\frac{1}{2})}\\
        &\overset{\mathrm{(ii)}}{\le} \frac{1}{2M^2}\sum_{m=i}^{M-1}T^{ u^\star_{m-1}-u_{m}^{\star}\cdot(\gamma_{m+1}^{\star}+\frac{1}{2})}\nonumber\\
        &\overset{\mathrm{(iii)}}{\le}\frac{1}{2M}.
    \end{align}
Here, step (i) uses the fact that $T_{m-1} \geq T_{i-1}$, step (ii) uses Lemma~\ref{lemma:consecutive-family-tv}, and step (iii) uses the fact that $u^\star_{m-1}-u_{m}^{\star}\cdot(\gamma_{m+1}^{\star}+\frac{1}{2})\le0$ from Lemma~\ref{lemma:precondition}.

As a result, we obtain
\begin{equation}
|Q_{M}(A_{i})-Q_{i}(A_{i})|\overset{\mathrm{}}{=}|Q_{M}^{T_{i-1}}(A_{i})-Q_{i}^{T_{i-1}}(A_{i})|\overset{\mathrm{}}{\le}\mathrm{TV}(Q_{M}^{T_{i-1}},Q_{i}^{T_{i-1}})\overset{\mathrm{}}{\le}\frac{1}{2M},\label{eq:Ai-diff}
\end{equation}
where the first step holds since $A_{i}$ can be determined by observations
up to $T_{i-1}$, the second step  uses the definition of TV, and the last step 
is due to relation~\eqref{eq:tv-to-m}.

Consequently,
\begin{align*}
\sum_{i=1}^{M}Q_{i}(A_{i}) & =Q_{M}(A_{M})+\sum_{i=1}^{M-1}Q_{i}(A_{i})\\
 & =Q_{M}(A_{M})+\sum_{i=1}^{M-1}(Q_{i}(A_{i})-Q_{M}(A_{i})+Q_{M}(A_{i}))\\
 & \overset{\mathrm{(iv)}}{\ge}Q_{M}(A_{M})+\sum_{i=1}^{M-1}(Q_{M}(A_{i})-\frac{1}{2M})\ge\sum_{i=1}^{M}Q_{M}(A_{i})-\frac{1}{2}\overset{\mathrm{(v)}}{=}\frac{1}{2},
\end{align*}
where step (iv) uses inequality (\ref{eq:Ai-diff}), and step (v)
uses the fact that $\sum_{i=k}^{M}Q_{M}(A_{i})=1$.
Lemma~\ref{lemma:exist-i} follows from the pigeonhole principle.  
% \begin{lemma}\label{lemma:mix-tv} Assume $u_{i-1}^{\star}\le u_{i}^{\star}\cdot(\gamma_{i+1}^{\star}+\frac{1}{2})$ for $2\le i\le M-1$,
% then one has
% $\mathrm{TV}(Q_{i}^{T_{i-1}}, Q_{M}^{T_{i-1}})\le1/(2M)$ for any $2\le i\le M-1$.
% \end{lemma}
% \begin{proof}
%     % By the triangle inequality and Lemma~\ref{lemma:consecutive-family-tv}, 
%     % \begin{align*}
%     %     \mathrm{TV}(Q_{i}^{T_{i-1}}, Q_{M}^{T_{i-1}})
%     %     &\le\sum_{m=i}^{M-1} \mathrm{TV}(Q_{m}^{T_{i-1}}, Q_{m+1}^{T_{i-1}})\\
%     %     % &\le\sum_{m=i}^{M-1} \frac{1}{2M^2}\cdot T_{i-1}\cdot T^{ -u_{m}^{\star}\cdot(\gamma_{m+1}^{\star}+\frac{1}{2})}\\
%     %     &=\frac{1}{2M^2}\sum_{m=i}^{M-1}T^{ u^\star_{i-1}-u_{m}^{\star}\cdot(\gamma_{m+1}^{\star}+\frac{1}{2})}\\
%     %     &\le\frac{1}{2M^2}\sum_{m=i}^{M-1}T^{ u^\star_{m-1}-u_{m}^{\star}\cdot(\gamma_{m+1}^{\star}+\frac{1}{2})}\le\frac{1}{2M},
%     % \end{align*}
%     % where the penultimate step is due to $u^\star_{m-1}\ge u^\star_{i-1}$, and the last step uses the assumption on $u^\star_{m-1}$ and $u^\star_{m}$.
% \end{proof}

Now we return to the proof of Lemma~\ref{lemma:consecutive-family-tv}.
When $i=1$, one has $T_0=0$ and the statement trivially holds. 
Hence in the remaining proof, we consider $i\ge2$.

\subsubsection{An equivalent coin model}
In this section, we introduce an equivalent coin model to help us control $\mathrm{TV}(Q_{i}^{T_{i-1}},Q_{i+1}^{T_{i-1}})$. 
The coin model is indexed by four parameters: $z$, the number of coins, $s$, the number of possibly biased coins, $\delta$, the effective bias of the coin, and $n$, the total number of tosses.  

Suppose there are $z$ coins labeled by $1, 2, \ldots, z$. We perform $\nsample$ rounds of experiments. In each round $t$: we pick a random coin $I_t \sim \text{Unif}\{1, \ldots, z\}$ independently, flip that coin, and observe the outcome $Y_t \in \{0,1\}$. We define for each coin $i$:
\[
N_i \coloneqq  \sum_{t=1}^N \mathbf{1}\{I_t = i\}, \quad R_i \coloneqq  \sum_{t=1}^N \mathbf{1}\{I_t = i, Y_t = 1\}.
\]
In words, $N_i$ is the number of tosses for coin $i$, and $R_i$ is the number of heads for coin $i$. 

We consider two possible hypothesis for the bias of the coins. 

\paragraph{Null model $H_0$.}
Under $H_0$, every coin is fair: the probability of heads $p_i = 1/2$ for all $i$. Conditional on the number of times a coin was used:
\[
R_i \mid N_i \sim \text{Bin}(N_i, 1/2),
\]
and the different coins' results $(R_i)_{i=1}^z$ are independent given the counts $(N_i)_{i=1}^z$. Denote by $\nullp$ the joint law of the observed data under $H_0$.

\paragraph{Alternative model $H_1$.}
Now, under $H_1$, we introduce a small number of biased coins by randomly choosing a subset of coins
\[
    S \subset [z], \quad |S| = \supportsize,
\]
uniformly among all $\supportsize$-element subsets. 
Also, Let $\sigma_i\in\{\pm 1\}$ be i.i.d.~Rademacher random variables for $i\in[z]$. For coins $i \in S$, they are biased either upwards or downwards,
        \[
        R_i \mid (S, \sigma, N_i) \sim \text{Bin}\left(N_i, \frac{1}{2} + \sigma_i \delta\right),
        \]
where $\delta \in (0, 1/2)$ is the bias magnitude. Coins $i \notin S$ are still fair,
        \[
        R_i \mid (S, \sigma, N_i) \sim \text{Bin}(N_i, 1/2).
        \]
Denote by $P_{S,\sigma}$ the joint law of the observed data given $S,\sigma$. Define $Q=\mathbb{E}_{S,\sigma}[P_{S,\sigma}]$ to be the mixture distribution under $H_1$.

We have the following control on the chi-squared divergence between the null model and the alternative model. 
\begin{lemma}\label{lemma:chi-square-bound}
    Assume that $\nsample\binht^2\nbin^{-1}<o(1)$ and $\nsample^2\supportsize^2\binht^4/\nbin^3\le1/\chisquareconstant$, then
    \[\chi^2(\altq, \nullp)
    \le\frac{\chisquareconstant\nsample^2\supportsize^2\binht^4}{\nbin^3}.\]
\end{lemma}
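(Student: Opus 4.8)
The plan is to bound $\chi^2(\altq,\nullp)=\mathbb{E}_{\nullp}\!\left[(\mathrm{d}\altq/\mathrm{d}\nullp)^2\right]-1$ by the standard Ingster--Suslina device: since $\altq=\mathbb{E}_{S,\sigma}[P_{S,\sigma}]$ is a mixture, the chi-square divergence against the null equals
\[
\chi^2(\altq,\nullp)+1=\mathbb{E}_{S,\sigma}\,\mathbb{E}_{S',\sigma'}\left[\,\mathbb{E}_{\nullp}\!\left(\frac{\mathrm{d}P_{S,\sigma}}{\mathrm{d}\nullp}\cdot\frac{\mathrm{d}P_{S',\sigma'}}{\mathrm{d}\nullp}\right)\right],
\]
where $(S',\sigma')$ is an independent copy of $(S,\sigma)$. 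First I would condition on the count vector $\multinomvec=(N_i)_{i=1}^z$, which has the same multinomial law under $\nullp$, $P_{S,\sigma}$, and $P_{S',\sigma'}$, so the likelihood ratios only involve the head-counts $R_i\mid N_i$. Given the counts, the coins are independent, so the inner expectation factorizes over $i\in[z]$:
\[
\mathbb{E}_{\nullp}\!\left(\frac{\mathrm{d}P_{S,\sigma}}{\mathrm{d}\nullp}\cdot\frac{\mathrm{d}P_{S',\sigma'}}{\mathrm{d}\nullp}\,\Big|\,\multinomvec\right)=\prod_{i=1}^z \rho_i,
\]
where $\rho_i=1$ unless $i\in S\cap S'$, in which case $\rho_i$ is the correlation-type term $\mathbb{E}_{R\sim\mathrm{Bin}(N_i,1/2)}\!\left[\frac{p_{\sigma_i}(R)}{p_0(R)}\cdot\frac{p_{\sigma'_i}(R)}{p_0(R)}\right]$ for the biased-binomial likelihood ratio $p_{\pm}(R)/p_0(R)$.

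Next I would compute this per-coin factor. A direct calculation with the binomial pmf gives $\mathbb{E}_{R\sim\mathrm{Bin}(N,1/2)}\!\left[\frac{p_{\sigma}(R)}{p_0(R)}\cdot\frac{p_{\sigma'}(R)}{p_0(R)}\right]=(1+4\sigma\sigma'\binht^2)^{N}$; averaging over the i.i.d.\ Rademacher signs $\sigma_i,\sigma'_i$ (which enter only through the product $\sigma_i\sigma'_i$, uniform on $\{\pm1\}$) yields
\[
\mathbb{E}_{\sigma_i,\sigma'_i}\rho_i=\tfrac12\big[(1+4\binht^2)^{N_i}+(1-4\binht^2)^{N_i}\big]\le \exp\!\big(8N_i^2\binht^4\big),
\]
using $\cosh$-type bounds together with the hypothesis $N_i\binht^2=o(1)$ (so the first-order term is controlled and the exponent stays small). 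Plugging back and taking the product over $i\in S\cap S'$, and then the expectation over the multinomial counts with the crude bound $\mathbb{E}[N_i^2\mid |S\cap S'|]$ replaced by its worst-case value of order $(\nsample/\nbin)^2$ on the typical event (handling the atypical large-count event separately via a tail bound that is negligible under $\nsample\binht^2\nbin^{-1}=o(1)$), one reduces the whole expression to
\[
\chi^2(\altq,\nullp)+1\le \mathbb{E}_{S,S'}\exp\!\Big(c\,|S\cap S'|\,(\nsample/\nbin)^2\binht^4\Big)
\]
for an absolute constant $c$. Finally I would invoke the standard hypergeometric moment-generating-function bound: for two independent uniform $s$-subsets of $[z]$, $|S\cap S'|$ is stochastically dominated by $\mathrm{Bin}(s,s/z)$, so $\mathbb{E}\,e^{\lambda|S\cap S'|}\le (1+\tfrac{s}{z}(e^\lambda-1))^s\le \exp\!\big(\tfrac{s^2}{z}(e^\lambda-1)\big)$; taking $\lambda=c(\nsample/\nbin)^2\binht^4$, which is $\le 1$ by the second hypothesis $\nsample^2\supportsize^2\binht^4/\nbin^3\le 1/\chisquareconstant$ (hence $e^\lambda-1\le 2\lambda$), gives $\chi^2(\altq,\nullp)+1\le\exp\!\big(2c\,\tfrac{\supportsize^2}{\nbin}(\nsample/\nbin)^2\binht^4\big)=\exp\!\big(2c\,\nsample^2\supportsize^2\binht^4/\nbin^3\big)$, and since the exponent is at most $1/\chisquareconstant\le 1$, $e^x-1\le 2x$ converts this to $\chi^2(\altq,\nullp)\le \chisquareconstant\,\nsample^2\supportsize^2\binht^4/\nbin^3$ after fixing the constants so that $4c=\chisquareconstant$.

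The main obstacle I anticipate is the bookkeeping around the random counts $N_i$: the clean factorized bound $\exp(8N_i^2\binht^4)$ depends on the realized counts, and one must average $\exp\!\big(8\binht^4\sum_{i\in S\cap S'}N_i^2\big)$ over both the multinomial counts and the subset overlap simultaneously. The cleanest route is to first use the hypothesis $\nsample\binht^2\nbin^{-1}=o(1)$ to restrict to a high-probability event on which every $N_i\lesssim \nsample/\nbin$ (a union bound over $z$ coins via a Bernstein/Chernoff tail for the multinomial marginals, with the complement contributing a negligible additive term because the likelihood ratios are bounded), and only then peel off the overlap via the hypergeometric MGF bound; alternatively one can keep $N_i$ random, use convexity to pull the expectation over counts inside, and exploit independence of $(N_i)_{i\in S\cap S'}$ conditionally on $|S\cap S'|$. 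Either way, the two smallness hypotheses in the lemma statement are exactly what is needed: the first tames the per-coin count, and the second guarantees the final exponent is $O(1)$ so that $e^x-1\le 2x$ closes the bound with the stated constant $\chisquareconstant$.
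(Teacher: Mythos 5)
Your high-level plan is the same as the paper's: Ingster--Suslina second-moment representation, condition on the multinomial count vector $\multinomvec$, factorize the cross likelihood ratio into per-coin terms indexed by $S\cap S'$, average over the signs, and finish with a hypergeometric MGF bound on $|S\cap S'|$. But there is a genuine gap in the middle of your argument. After averaging over the signs you arrive at the per-coin factor $\tfrac12\big[(1+4\delta^2)^{N_i}+(1-4\delta^2)^{N_i}\big]$ (the paper's $g_\delta(N_i)$), and you immediately bound it by $\exp(8N_i^2\delta^4)$. This converts the remaining task into controlling $\mathbb{E}\big[\exp\!\big(8\delta^4\sum_{i\in S\cap S'}N_i^2\big)\big]$, with the random counts $N_i$ now squared inside an exponential. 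Your proposed fix---restricting to the high-probability event $\{N_i \lesssim n/z \text{ for all } i\}$ and treating the complement as a negligible additive term---does not go through in the regime $n/z \ll \log z$: when the mean occupancy per bin is small, $\max_i N_i$ concentrates around $\log z/\log\log z$, not around $n/z$, so the worst-case replacement $N_i^2 \lesssim (n/z)^2$ fails. Moreover, the ``negligible additive term'' on the bad event is not obviously small, since $\Lambda^2$ can be exponentially large in $\max_i N_i$; you would need a Cauchy--Schwarz or truncation argument to control that tail, and neither is supplied.

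The paper sidesteps all of this by \emph{not} applying the $\cosh$-type bound before averaging over the counts. Instead, it observes that $g_\delta(N_i)$ is a sum of two terms of the form $t^{N_i}$, so that $\mathbb{E}_{\nullp}[g_\delta(N_i)]$ is computable \emph{exactly} via the probability generating function of $\mathrm{Bin}(n,1/z)$, yielding
\[
\mathbb{E}_{\nullp}[g_\delta(N_i)]
=\tfrac12\Big[\big(1+\tfrac{4\delta^2}{z}\big)^n+\big(1-\tfrac{4\delta^2}{z}\big)^n\Big]
= g_{\delta/\sqrt z}(n).
\]
Negative association of the multinomial coordinates then lets the expectation factor through the product over $S\cap S'$, giving $\mathbb{E}\big[\prod_{i\in S\cap S'}g_\delta(N_i)\big]\le\big(g_{\delta/\sqrt z}(n)\big)^{|S\cap S'|}$ with no tail argument or truncation. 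Only at this point, after all randomness in $N_i$ has been integrated out, does the paper apply the $\cosh$ approximation to the deterministic scalar $g_{\delta/\sqrt z}(n)$, where the hypothesis $n\delta^2/z = o(1)$ directly controls the relevant exponent. One further small slip: you invoke the second hypothesis $n^2s^2\delta^4/z^3\le 1/\chisquareconstant$ to conclude $\lambda = c(n/z)^2\delta^4\le 1$, but that implication does not hold in general (the ratio of the two quantities is $z/s^2$, which need not exceed $1$); it is the first hypothesis $n\delta^2/z=o(1)$, squared, that gives $\lambda=o(1)$.
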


\subsubsection{Connect $\mathrm{TV}(Q_{i}^{T_{i-1}},Q_{i+1}^{T_{i-1}})$ to the coin model}
Define $Q_i^{\otimes T_{i-1}}$ to be the joint law of the full observations 
\[(X_t,Y_t^{(1)},Y_t^{(-1)}),\quad 1\le t\le T_{i-1},\]
under the mixture distribution $Q_i$. 
It is worth noting that $Q_i^{\otimes T_{i-1}}$ is independent from any policy $(\Gamma,\pi)$, as opposed to $Q_i^{T_{i-1}}$. 

By the data processing inequality, we know that 
\begin{equation}\label{eq:data-proc}
    \mathrm{TV}(Q_i^{T_{i-1}},Q_{i+1}^{T_{i-1}})
\le\mathrm{TV}(Q_i^{\otimes T_{i-1}},Q_{i+1}^{\otimes T_{i-1}}).
\end{equation}

Recall the definitions of $Q_i$ and $Q_{i+1}$. 
We note that they only differ when $X_t\in\mathcal{S}_{i+1}$. 
Due to the Bernoulli reward structure, in this region, $Q_{i+1}$ now corresponds to the alternative model $H_1$ where a subset of coins are biased and $Q_i$ corresponds to the null model $H_0$. 
Since only samples landing into $\mathcal{S}_{i+1}$ can help distinguish $Q_i^{\otimes T_{i-1}}$ and $Q_{i+1}^{\otimes T_{i-1}}$, one has 
\begin{equation}\label{eq:connect-coin}
  \mathrm{TV}(Q_i^{\otimes T_{i-1}},Q_{i+1}^{\otimes T_{i-1}})\le\mathrm{TV}(\nullp, \altq),  
\end{equation}
where $\nullp, \altq$ denote the coin model with parameters 
$\nbin=Z_{i+1}$, $\supportsize=s_{i+1}^{\mathrm{tot}}$, $\binht=\delta_{i+1}$, and $n=T_{i-1}$.  

Under the choice of $\nsample,\binht,\nbin$ and $\supportsize$, it can be verified that $\nsample\binht^2\nbin^{-1}<o(1)$ and $\nsample^2\supportsize^2\binht^4/\nbin^3\le1/\chisquareconstant$. Hence, by Pinsker's inequality and Lemma~\ref{lemma:chi-square-bound}, 
\begin{equation}\label{eq:tv-null-alt}
    \mathrm{TV}(\nullp, \altq)
\le\sqrt{\frac{1}{2}\chi^2(\altq, \nullp)}
\le \sqrt{\frac{16\nsample^2\supportsize^2\binht^4}{\nbin^3}}
=\frac{4\nsample\supportsize\binht^2}{\nbin^{1.5}}.
\end{equation}
Combining relations~\eqref{eq:data-proc}, \eqref{eq:connect-coin} and~\eqref{eq:tv-null-alt}, 
\[
\mathrm{TV}(Q_i^{T_{i-1}},Q_{i+1}^{T_{i-1}})
\le \frac{4\nsample\supportsize\binht^2}{\nbin^{1.5}}
=\frac{4T_{i-1}s_{i+1}\delta_{i+1}^2}{Z_{i+1}^{1.5}}
\le\frac{1}{2M^2}\cdot T^{u_{i-1}^{\star}-u_{i}^{\star}\cdot(\gamma_{i+1}^{\star}+\frac{1}{2})}.
\]
This completes the proof of Lemma~\ref{lemma:consecutive-family-tv}. The remaining of this section is devoted to proving Lemma~\ref{lemma:chi-square-bound}.

% Hence, it suffices to bound $\mathrm{TV}(\nullp, \altq)$.

\subsubsection{Proof of Lemma~\ref{lemma:chi-square-bound}}

We bound the chi-squared divergence using the second-moment representation, i.e., 
\[
\chi^2(\altq, \nullp)
    =\mathbb{E}_{\nullp}[\mathbb{E}[\Lambda^2\mid \multinomvec]]-1,
\]
where we further condition on $\multinomvec=(N_1,\dots,N_\nbin)$, the multinominal vector which counts the number of times each coin is flipped.

\paragraph{The conditional likelihood ratio.} 
 Conditioned on $\multinomvec$, coins are independent under both $H_0$ and $H_1$. Denote by $\bm{R}=(R_1,\dots,R_\nbin)$.  For the null model,
\[
\nullp(R\mid \multinomvec) = \prod_{i=1}^\nbin \binom{N_i}{R_i} 2^{-N_i}.
\]
Under a fixed $(S,\sigma)$,
\[
P_{S,\sigma}(\nheadvec\mid\multinomvec) = \prod_{i\notin S} \binom{N_i}{R_i} 2^{-N_i}
\prod_{i\in S} \binom{N_i}{R_i}\left(\frac{1}{2}+\sigma_i\binht\right)^{R_i}\left(\frac{1}{2}-\sigma_i\binht\right)^{N_i-R_i}.
\]
Hence the per-coin likelihood ratio factor for $i\in S$ is
\[
r_i(\sigma_i)
= \frac{\left(\frac{1}{2}+\sigma_i\binht\right)^{R_i}\left(\frac{1}{2}-\sigma_i\binht\right)^{N_i-R_i}}{(1/2)^{N_i}}
= (1+2\sigma_i\binht)^{R_i}(1-2\sigma_i\binht)^{N_i-R_i}.
\]
Consequently,
\[
\altq(\nheadvec\mid \multinomvec)
= \frac{1}{\binom{\nbin}{\supportsize}} \sum_{S:\,|S|=\supportsize}
\mathbb{E}_{\sigma}\!\left[\,P_{S,\sigma}(\nheadvec\mid\multinomvec)\,\right].
\]
Dividing by $\nullp(\nheadvec\mid\multinomvec)$ gives
\[
\Lambda(\nheadvec\mid\multinomvec)
= \frac{1}{\binom{\nbin}{\supportsize}} \sum_{S:\,|S|=\supportsize}
\mathbb{E}_{\sigma}\!\left[\prod_{i\in S} r_i(\sigma_i)\right],
\]
where we use $\Lambda$ to denote the likelihood ratio between $\altq$ and $\nullp$.
Because the $\sigma_i$'s are i.i.d., the expectation factorizes:
\[
\mathbb{E}_{\sigma}\!\left[\prod_{i\in S} r_i(\sigma_i)\right]
= \prod_{i\in S} m_i,
\quad\text{where}\quad
m_i \coloneqq  \frac{1}{2}\big(r_i(+1)+r_i(-1)\big).
\]
Putting it together,
\[
\Lambda(\nheadvec\mid\multinomvec)
= \frac{1}{\binom{\nbin}{\supportsize}} \sum_{S:\,|S|=\supportsize} \prod_{i\in S} m_i.
\]

\paragraph{Representation of $\Lambda^2$.} 
To control the chi-square divergence, it suffices to bound the second moment of the likelihood ratio. We compute
\begin{align*}
    \mathbb{E}_{\nullp}[\Lambda^2\mid \multinomvec]
    &=\mathbb{E}_{\nullp}\left[
    \left(\frac{1}{\binom{\nbin}{\supportsize}} \sum_{S\subset[\nbin],\,|S|=\supportsize} \prod_{i\in S} m_i\right)
    \left(\frac{1}{\binom{\nbin}{\supportsize}} \sum_{S^\prime\subset[\nbin],\,|S^\prime|=\supportsize} \prod_{j\in S^\prime} m_j \right)
    \,\Big|\,\multinomvec\right]\\
    &=\frac{1}{\binom{\nbin}{\supportsize}^2}\sum_{\substack{|S|=\supportsize\\|S^\prime|=\supportsize}}
\mathbb{E}_{\nullp}\!\left[\prod_{i\in S} m_i\prod_{j\in S^\prime} m_j\,\Big|\, \multinomvec\right].
\end{align*}
Fix any ordered pair $(S,S^\prime)$. For each index $k\in [z]$, its contribution to the product
$\left(\prod_{i\in S} m_i\right)\left(\prod_{j\in S^\prime} m_j\right)$
depends on which of the sets $S,S^\prime$ it belongs to:
\begin{itemize}
    \item If $k\in S\cap S^\prime$: the factor contributes $m_k \cdot m_k = m_k^2$.
    \item If $k\in S\setminus S^\prime$ or $k\in S^\prime\setminus S$: the factor contributes a single $m_k$.
    \item If $k\notin S\cup S^\prime$: the factor contributes $1$.
\end{itemize}
Define the function $g_b(a)=((1+4b^2)^{a}+(1-4b^2)^{a})/2$ for some $0<b<1/2$. The following lemma helps control the moments of $m$, whose proof is deferred to Section~\ref{sec:chi-sq-remain}.
\begin{lemma}\label{lemma:moment-of-m}
    Under $\nullp$ and conditional on $\multinomvec$,\[
\mathbb{E}_{\nullp}[m_i\mid N_i]=1,\quad
\mathbb{E}_{\nullp}[m_i^2\mid N_i]=\tfrac{1}{2}\Big((1+4\delta^2)^{N_i}+(1-4\delta^2)^{N_i}\Big)=g_\delta(N_i).
\]
\end{lemma}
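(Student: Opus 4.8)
The plan is to reduce both identities to a single application of the binomial theorem. Recall from the coin model that, conditional on $\multinomvec$ under $\nullp$, the counts satisfy $R_i\mid N_i\sim\mathrm{Bin}(N_i,1/2)$ independently across coins, and that $r_i(\sigma_i)=(1+2\sigma_i\delta)^{R_i}(1-2\sigma_i\delta)^{N_i-R_i}$ with $m_i=\tfrac12(r_i(+1)+r_i(-1))$; since $0<\delta<1/2$ every base appearing below is strictly positive, so no domain issues arise. I will work conditionally on $N_i$ throughout.

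For the first identity, I would fix a sign $\sigma\in\{\pm1\}$ and compute $\mathbb{E}_{\nullp}[r_i(\sigma)\mid N_i]=\sum_{r=0}^{N_i}\binom{N_i}{r}2^{-N_i}(1+2\sigma\delta)^{r}(1-2\sigma\delta)^{N_i-r}$, which collapses via the binomial theorem to $2^{-N_i}\bigl((1+2\sigma\delta)+(1-2\sigma\delta)\bigr)^{N_i}=2^{-N_i}\cdot 2^{N_i}=1$. Averaging over $\sigma=\pm1$ then gives $\mathbb{E}_{\nullp}[m_i\mid N_i]=1$, as claimed.

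For the second identity, I would expand $m_i^2=\tfrac14\bigl(r_i(+1)^2+2\,r_i(+1)r_i(-1)+r_i(-1)^2\bigr)$ and evaluate the three expectations in the same style. Each diagonal term gives $\mathbb{E}_{\nullp}[r_i(\sigma)^2\mid N_i]=2^{-N_i}\bigl((1+2\delta)^2+(1-2\delta)^2\bigr)^{N_i}=(1+4\delta^2)^{N_i}$, using that $(1\pm2\delta)^2$ sum to $2+8\delta^2$ regardless of the sign $\sigma$. For the cross term, the powers recombine as $(1+2\delta)^{R_i}(1-2\delta)^{N_i-R_i}\cdot(1-2\delta)^{R_i}(1+2\delta)^{N_i-R_i}=\bigl((1+2\delta)(1-2\delta)\bigr)^{N_i}=(1-4\delta^2)^{N_i}$, which is independent of $R_i$, so $\mathbb{E}_{\nullp}[r_i(+1)r_i(-1)\mid N_i]=(1-4\delta^2)^{N_i}$ after summing the binomial weights to $1$. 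Combining, $\mathbb{E}_{\nullp}[m_i^2\mid N_i]=\tfrac14\bigl(2(1+4\delta^2)^{N_i}+2(1-4\delta^2)^{N_i}\bigr)=\tfrac12\bigl((1+4\delta^2)^{N_i}+(1-4\delta^2)^{N_i}\bigr)=g_\delta(N_i)$.

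There is essentially no obstacle here: the computation is elementary, and the only points needing a touch of care are the sign symmetry that makes the two diagonal terms equal and the cancellation of the $R_i$-dependence in the cross term. The value of the lemma is organizational --- it isolates the per-coin conditional moments so that, using independence of the coins under $\nullp$ given $\multinomvec$, the second-moment expression $\mathbb{E}_{\nullp}[\Lambda^2\mid\multinomvec]$ in Lemma~\ref{lemma:chi-square-bound} factorizes over the overlap pattern of the index pair $(S,S')$, reducing the bound to counting indices in $S\cap S'$ versus the symmetric difference.
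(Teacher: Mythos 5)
Your proof is correct and follows essentially the same route as the paper: compute $\mathbb{E}[r_i(\sigma)\mid N_i]$ via the binomial theorem, expand $m_i^2$ into diagonal and cross terms, observe the diagonal terms collapse to $(1+4\delta^2)^{N_i}$ and the cross term is constant in $R_i$ and equals $(1-4\delta^2)^{N_i}$, then combine. Nothing is missing.
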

Hence,
\[
\mathbb{E}_{\nullp}\!\left[\prod_{i\in S} m_i\prod_{j\in S^\prime} m_j\,\Big|\, \multinomvec\right]
=\left(\prod_{i\in S\cap S^\prime}\mathbb{E}[m_i^2]\right)\left(\prod_{i\in S\triangle S^\prime}\mathbb{E}[m_i]\right)
=\prod_{i\in S\cap S^\prime} g_\delta(N_i),
\]
where $S\triangle S^\prime=(S\setminus S^\prime)\cup(S^\prime\setminus S)$ is the symmetric difference.

% Next, we count the number of ordered pairs $(S,S^\prime)$ with a given overlap $r$ and a given intersection set $J$. Fix a particular $J\subset[\nbin]$ with $|J|=r$. The process is equivalent to:
% \begin{itemize}
%     \item Choose $A\subset [\nbin]\setminus J$ with $|A|=\supportsize-r$: $\binom{\nbin-r}{\supportsize-r}$ choices.
%     \item Choose $B\subset [\nbin]\setminus (J\cup A)$ with $|B|=\supportsize-r$: $\binom{\nbin-\supportsize}{\supportsize-r}$ choices.
% \end{itemize}
% Thus, for this fixed $J$, the number of ordered pairs $(S,S^\prime)$ with $S\cap S^\prime=J$ is
% \[
% \binom{\nbin-r}{\supportsize-r}\binom{\nbin-\supportsize}{\supportsize-r}.
% \]
% Starting from the double sum and inserting the above observation,
% \[
% \sum_{\substack{|S|=\supportsize\\|S^\prime|=\supportsize}}
% \prod_{i\in S\cap S^\prime} g_\delta(N_i)
% =\sum_{r=0}^{\supportsize}\sum_{\substack{J\subset[\nbin]\\ |J|=r}}
% \left[\binom{\nbin-r}{\supportsize-r}\binom{\nbin-\supportsize}{\supportsize-r}\right]\prod_{i\in J} g_\delta(N_i).
% \]
\paragraph{Averaging over the randomness of $\multinomvec$.}By the law of total expectation, we reach
\begin{align*}
    \mathbb{E}_{\nullp}[\mathbb{E}[\Lambda^2\mid \multinomvec]]
%     &= 
%     \frac{1}{\binom{\nbin}{\supportsize}^2}
%   \sum_{r=0}^{\supportsize}\sum_{\substack{J\subset[\nbin]\\ |J|=r}}
% \binom{\nbin-r}{\supportsize-r}\binom{\nbin-\supportsize}{\supportsize-r}
% \mathbb{E}_{\nullp}\left[\prod_{i\in J} g_\delta(N_i)
%     \right]
&=\frac{1}{\binom{\nbin}{\supportsize}^2}\sum_{\substack{|S|=\supportsize\\|S^\prime|=\supportsize}}
\mathbb{E}_{\nullp}\left[\prod_{i\in S\cap S^\prime} g_\delta(N_i)\right]\\
&\le\frac{1}{\binom{\nbin}{\supportsize}^2}\sum_{\substack{|S|=\supportsize\\|S^\prime|=\supportsize}}\prod_{i\in S\cap S^\prime}\mathbb{E}_{\nullp}[ g_\delta(N_i)]\\
&=\frac{1}{\binom{\nbin}{\supportsize}^2}\sum_{\substack{|S|=\supportsize\\|S^\prime|=\supportsize}}\left[
\frac{1}{2}\left((1+4\frac{\binht^2}{\nbin})^\nsample+(1-4\frac{\binht^2}{\nbin})^\nsample\right)
\right]^{|S\cap S^\prime|},
\end{align*}
where the second step is due to the negative association of multinomial random variables~\citep{joag1983negative}, and the last step applies the probability generating function of the binomial distribution. We reach
\begin{align*}
     \mathbb{E}_{\nullp}[\mathbb{E}[\Lambda^2\mid \multinomvec]]
     &=\mathbb{E}_{S,S^\prime}\left[(g_{\binht/\sqrt{\nbin}}(\nsample))^{|S\cap S^\prime|}.\right]
\end{align*}
We record a useful lemma  for controlling the generating function of the average intersection size.

\begin{lemma}\label{lemma:inter-moment}
    Let $S,S^\prime$ be indepndent $\supportsize$-subsets of $[\nbin]$ and let $L=|S\cap S^\prime|$. For any $t\ge1$,
    \[\mathbb{E}[t^L]
    \le\exp(\frac{\supportsize^2}{\nbin}(t-1)).\]
\end{lemma}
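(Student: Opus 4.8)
The plan is to control $\mathbb{E}[t^L]$ where $L=|S\cap S'|$, $S,S'$ being independent uniformly random $\supportsize$-subsets of $[\nbin]$, by conditioning on $S$ and viewing $L$ as a function of the random subset $S'$. Once $S$ is fixed (say $S=[\supportsize]$ by symmetry), $L=\sum_{i=1}^{\supportsize}\mathbf 1\{i\in S'\}$ is a sum of indicators of membership of the fixed elements $1,\dots,\supportsize$ in $S'$. The joint law of these indicators is exchangeable but \emph{not} independent; however, membership indicators of a uniformly random fixed-size subset are known to be negatively associated (they arise from sampling without replacement). First I would invoke this negative association to get the factorized upper bound
\[
\mathbb{E}\!\left[t^{L}\right]
=\mathbb{E}\!\left[\prod_{i=1}^{\supportsize} t^{\mathbf 1\{i\in S'\}}\right]
\le\prod_{i=1}^{\supportsize}\mathbb{E}\!\left[t^{\mathbf 1\{i\in S'\}}\right],
\]
valid for $t\ge 1$ since $x\mapsto t^{x}$ is nondecreasing (so each factor $t^{\mathbf 1\{i\in S'\}}$ is a monotone function of the underlying indicators, and negative association gives the product-of-expectations upper bound).

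Next I would compute each marginal: $\mathbb{P}(i\in S')=\supportsize/\nbin$, so
\[
\mathbb{E}\!\left[t^{\mathbf 1\{i\in S'\}}\right]
=1+\frac{\supportsize}{\nbin}(t-1).
\]
Hence $\mathbb{E}[t^{L}]\le\bigl(1+\tfrac{\supportsize}{\nbin}(t-1)\bigr)^{\supportsize}$. Finally, using the elementary inequality $1+x\le e^{x}$ with $x=\tfrac{\supportsize}{\nbin}(t-1)\ge 0$, I get
\[
\mathbb{E}\!\left[t^{L}\right]\le\exp\!\left(\frac{\supportsize^{2}}{\nbin}(t-1)\right),
\]
which is the claimed bound.

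The only real subtlety — and the step I would be most careful about — is the justification of negative association for indicators of a uniformly random fixed-size subset. This is a standard fact (it follows, e.g., from the permutation-distribution results of \citet{joag1983negative}, the same reference already invoked earlier in the paper for negatively associated multinomials, or from the fact that $\{0,1\}$-vectors conditioned to have a fixed sum are NA), so I would simply cite it. An alternative, fully self-contained route that avoids NA entirely is a direct hypergeometric computation: $\mathbb{E}[t^{L}]=\sum_{\ell}t^{\ell}\binom{\supportsize}{\ell}\binom{\nbin-\supportsize}{\supportsize-\ell}/\binom{\nbin}{\supportsize}$, which one can bound by comparing term-by-term with the binomial$(\supportsize,\supportsize/\nbin)$ generating function; but this comparison is messier, so the NA argument is the cleaner path. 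Either way the concluding step is just $1+x\le e^x$ and is routine.
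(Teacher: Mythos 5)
Your proof is correct, and it reaches exactly the same intermediate bound $\bigl(1+\tfrac{\supportsize}{\nbin}(t-1)\bigr)^{\supportsize}$ as the paper before applying $1+x\le e^x$, but by a genuinely different mechanism. The paper conditions on $S$, observes that $L\mid S$ is hypergeometric, and cites Lemma~1.1 of \citet{bardenet2015concentration} --- the Hoeffding--type convex ordering that sampling without replacement is dominated by sampling with replacement, applied to the convex function $x\mapsto t^x$ --- to replace the hypergeometric by $\mathrm{Binomial}(\supportsize,\supportsize/\nbin)$ and then compute the PGF. You instead fix $S$, write $L=\sum_{i\in S}\mathbf 1\{i\in S'\}$, and invoke negative association of the membership indicators of a uniformly random fixed-size subset to pull the product of expectations outside. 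Both are standard ways of exploiting the ``better-than-independent'' behavior of sampling without replacement, and the NA route is self-contained given the Joag-Dev--Proschan reference already used elsewhere in the paper; the convex-ordering route is perhaps a touch more robust in that it applies to any convex $g$ without needing $g$ to factorize over coordinates, but for this particular $g(x)=t^x$ both yield identical bounds. Your remark that negative association needs care is well placed; the justification (fixed-sum $\{0,1\}$ vectors, equivalently a permutation distribution, are NA) is indeed the relevant fact, and the monotonicity and nonnegativity of each factor $t^{\mathbf 1\{i\in S'\}}$ for $t\ge1$ is exactly what makes the product inequality apply.
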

\noindent See Section~\ref{sec:chi-sq-remain} for the proof. 

Applying Lemma~\ref{lemma:inter-moment}, 
\begin{align}\label{eq:lambda-exp-bound}
     \mathbb{E}_{\nullp}[\mathbb{E}[\Lambda^2\mid \multinomvec]]
     &=\mathbb{E}_{S,S^\prime}\left[(g_{\binht/\sqrt{\nbin}}(\nsample))^{|S\cap S^\prime|}\right]\le\exp\left(\frac{\supportsize^2}{\nbin}(g_{\binht/\sqrt{\nbin}}(\nsample)-1)\right).
\end{align}
Denote by $\epsilon=4\binht^2/\nbin$. By definition,
\begin{align*}
    g_{\binht/\sqrt{\nbin}}(\nsample)
    &=\frac{1}{2}\left((1+\epsilon)^\nsample+(1-\epsilon)^\nsample\right)\\
    &\le\frac{1}{2}\left(\exp(\nsample\epsilon)+\exp(-\nsample\epsilon)\right)\\
    &=\cosh(n\epsilon)=1+\frac{(\nsample\epsilon)^2}{2}+O((\nsample\epsilon)^4),
\end{align*}
where the second step is due to the elementary inequality $1+x\le e^x$, and the last step is by the assumption $n\epsilon<o(1)$. Plugging the above back to~\eqref{eq:lambda-exp-bound},
\begin{align*}
    \mathbb{E}_{\nullp}[\mathbb{E}[\Lambda^2\mid \multinomvec]]
     &\le\exp\left(\frac{\supportsize^2}{\nbin}(g_{\binht/\sqrt{\nbin}}(\nsample)-1)\right)\\
     &\le\exp\left(\frac{\supportsize^2}{\nbin}\cdot (\nsample\epsilon)^2\right)
     =\exp\left(\frac{16\nsample^2\supportsize^2\binht^4}{\nbin^3}\right),
\end{align*}
where the second inequality holds for $\nsample\epsilon$ sufficiently small. \paragraph{Putting things together.}
By the definition of chi-squared divergence,
\begin{align*}
    \chi^2(\altq, \nullp)
    &=\mathbb{E}_{\nullp}[\mathbb{E}[\Lambda^2\mid \multinomvec]]-1\le\exp\left(\frac{16\nsample^2\supportsize^2\binht^4}{\nbin^3}\right)-1 \le \frac{\chisquareconstant\nsample^2\supportsize^2\binht^4}{\nbin^3},
\end{align*}
where the last step is due to the assumption $\nsample^2\supportsize^2\binht^4/\nbin^3\le1/\chisquareconstant$ and the elementary inequality $e^x\le 2x+1$ when $0\le x\le1$.

\section{Discussion}
\label{sec:discussion}

This work provides a complete characterization of the cost of adaptivity in batched nonparametric contextual bandits. 
By introducing the notion of \emph{regret inflation}, we quantify how much additional regret is unavoidable when the margin parameter~$\alpha$ is unknown. 
Our main finding is that this adaptivity cost scales as $T^{\optexp}$, where $\optexp$ is the value of a convex variational problem depending on the number of batches~$M$, the smoothness~$\beta$, and the dimension~$d$. 
The matching upper and lower bounds show that this exponent captures the exact statistical price of limited adaptivity, up to logarithmic factors.

\subsection{Phase transition in batched contextual bandits}

Our results, together with those in~\cite{jiang2025batched} (see also Proposition~\ref{proposition:oracle-rate} in this paper), reveal a sharp \emph{phase transition} in the interplay between batching, adaptivity, and oracle knowledge. 

Figure~\ref{fig:phase-diagram} provides a clear visual summary of this landscape. 
The green curve represents the normalized regret exponent of an oracle algorithm (e.g., \myalg) that knows $\alpha$, while the red curve shows the regret exponent of the optimal adaptive algorithm (e.g., \rbalg). The gap between them, which we characterize by $\optexp$, is precisely the ``price of adaptation.'' Our analysis explains the behavior of this gap, revealing three qualitatively distinct regimes determined by the number of batches $M$ relative to the horizon $T$:

\begin{enumerate}
    \item When $M \le c\log\log T$ for a sufficiently small constant~$c$, the batch constraint itself dominates. 
In this \emph{under-batched regime}, even an oracle algorithm that knows the true margin parameter~$\alpha$ cannot attain the fully online regret rate. 
\item In the intermediate range $c\log\log T < M < C\log\log T$, knowing~$\alpha$ becomes beneficial. 
An oracle algorithm with knowledge of the true margin parameter can achieve the online rate up to logarithmic factors, but ignorance of~$\alpha$ induces regret inflation quantified by~$\optexp$. 
This \emph{transition regime} therefore marks the true adaptivity barrier, where batching and parameter uncertainty interact nontrivially.

\item 
Finally, when $M \ge C\log\log T$ for a sufficiently large constant~$C$, the regret inflation becomes constant. 
In this \emph{fully adaptive regime}, the proposed algorithm \rbalg matches the performance of the online oracle up to polylogarithmic factors, and adaptation to the unknown~$\alpha$ becomes effectively free.  
\end{enumerate}

\begin{figure}[t]
    \centering
    \includegraphics[width=0.65\linewidth]{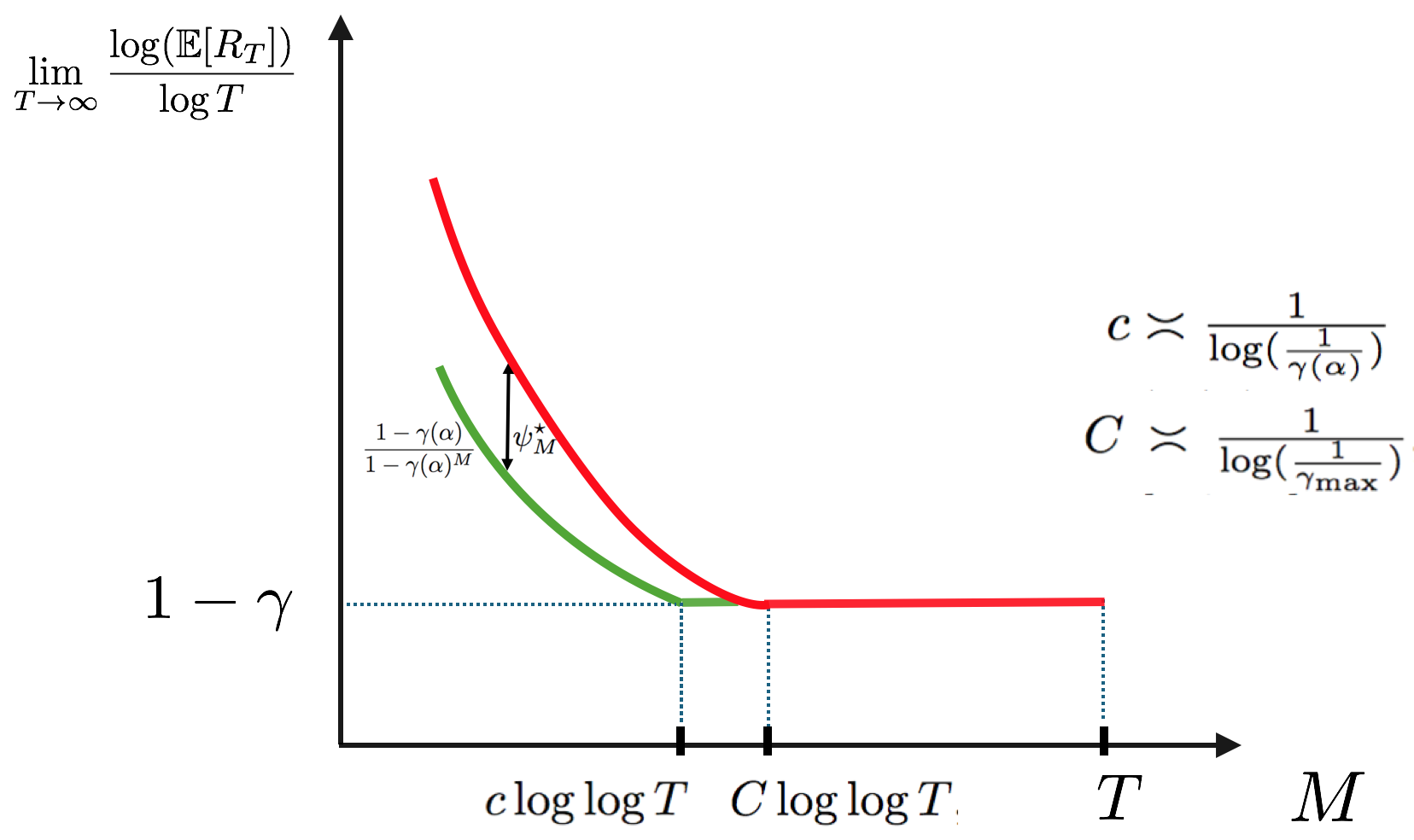}
    \caption{Phase diagram illustrating the three regimes of adaptivity under the batch constraint.  
    The $x$-axis represents the number of batches~$M$, and the $y$-axis shows normalized regret.  
    The green curve corresponds to the performance of the optimal algorithm that knows the margin parameter~$\alpha$, while the red curve corresponds to the optimal adaptive algorithm without knowledge of~$\alpha$. The dotted line shows the exponent for an optimal online algorithm. }
    \label{fig:phase-diagram}
\end{figure}

These three regimes collectively characterize the adaptivity landscape of batched contextual bandits: batching imposes a fundamental statistical cost only below the doubly logarithmic threshold, beyond which adaptation incurs no additional penalty. (More precisely, the lower and upper thresholds satisfy
$c \asymp 1 / \log(1/\gamma(\alpha))$ and $C \asymp 1 / \log(1/\gmax)$,
where $\gmax = \gamma(d/\beta)$ and $\gamma(\alpha) \le \gmax$, giving the ordering $c < C$.)

\subsection{Extension to general knowledge sets}

In practice, one may have partial prior information about the possible range of the margin parameter. 
This can be represented by restricting the \emph{knowledge set}~$\knowledge$ to a subset of $[0, d/\beta]\cup\{\infty\}$, as in Definition~\ref{eq:comp_ratio}. 
Our main results, in particular Theorem~\ref{thm:main}, continue to hold whenever $\knowledge \subseteq [d/(2\beta), d/\beta]\cup\{\infty\}$. 

\newcommand{\optexpK}{\psi_{M,\knowledge}^\star}
\begin{theorem}[General knowledge set]\label{thm:general-K-adaptive}
Fix $\knowledge$ to be a compact subset of $[d/(2\beta), d/\beta]\cup\{\infty\}$. 
Let $\optexpK$ denote the optimal value of the variational problem defined in~\eqref{eq:min-opt} with knowledge set~$\knowledge$.  
Then there exist constants $c_1,c_2>0$, independent of $T$ and $M$, such that
\[
c_1 M^{\Mexponent} (\log T)^{-1} T^{\optexpK} 
\;\le\;
\inf_{\Gamma,\pi}\mathrm{RI}(\Gamma,\pi)
\;\le\;
c_2\, \upperprefactor\, T^{\optexpK}.
\]
\end{theorem}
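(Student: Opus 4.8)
The plan is to argue that the two halves of the analysis behind Theorem~\ref{thm:main} and Theorem~\ref{thm:c-ratio-upper} go through essentially verbatim once one replaces the full admissible range of margins by the compact set $\knowledge \subseteq [d/(2\beta),d/\beta]\cup\{\infty\}$, reinterpreting the variational problem~\eqref{eq:min-opt} with $\sup_{\alpha\in\knowledge}$ in place of $\sup_{\alpha\in[0,d/\beta]\cup\{\infty\}}$, and replacing $\bm u^\star$ by a minimizer $\bm u^\star_\knowledge$ of $\psi_{M,\knowledge}(\bm u)\coloneqq\sup_{\alpha\in\knowledge}\Psi_M(\bm u,\alpha)$. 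Compactness of $\knowledge$ ensures this supremum is attained, so $\psi_{M,\knowledge}$ is a well-defined pointwise supremum of functions that are piecewise affine in $\bm u$, hence convex; since $\gridset$ is compact, a minimizer $\bm u^\star_\knowledge$ exists, and the same argument as in Proposition~\ref{proposition:opt-prop}(1) shows it may be taken in the interior of $\gridset$ (perturbing slightly if necessary; the degenerate case $\knowledge=\{\infty\}$, where the class reduces to a multi-armed bandit, is trivial).

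For the upper bound, run \rbalg with the split factors and grid of~\eqref{eq:split-factors}--\eqref{eq:batch-bin-size} instantiated at $\bm u^\star_\knowledge$. Lemma~\ref{lem:basedb-regret} is stated for an arbitrary fixed $\alpha\ge0$ and an arbitrary interior grid, so it applies unchanged, and Step~1 of the proof of Theorem~\ref{thm:c-ratio-upper} gives, for each fixed $\alpha$, $\sup_{P\in\EnvAlpha}R_T(\hat\Gamma_{\bm u^\star_\knowledge},\hat\pi_{\bm u^\star_\knowledge};P)/R_T^\star(\alpha)\lesssim \upperprefactor\,T^{\Psi_M(\bm u^\star_\knowledge,\alpha)}$. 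Taking the supremum over $\alpha\in\knowledge$ only, and using the definition of $\bm u^\star_\knowledge$, yields $\mathrm{RI}(\hat\Gamma,\hat\pi)\lesssim \upperprefactor\,T^{\optexpK}$, which is the claimed upper bound.

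For the lower bound, one first needs the analog of Proposition~\ref{proposition:opt-prop}(2): a non-increasing sequence $\alpha_1=\infty\ge\alpha_2\ge\cdots\ge\alpha_M$ with $\alpha_i\in\knowledge\cap[0,d/\beta]$ for $i\ge2$ and $\psi_{M,\knowledge}(\bm u^\star_\knowledge)=\eta_i(\bm u^\star_\knowledge,\alpha_i)-h_M(\alpha_i)$ for every $i$, obtained by the same complementary-slackness analysis of the convex program as in Appendix~\ref{sec:variational}, now with $\alpha$ restricted to $\knowledge$ (compactness again guaranteeing the active branches are attained). Given such a sequence, the hard-instance construction of Section~\ref{sec:lower-family-design}, the pivotal-window reduction of Section~\ref{sec:lower-lb-Qi}, and the coin-model $\chi^2$ bound of Section~\ref{sec:mix-indis-proof} apply word for word, since none of them uses anything about $\knowledge$ beyond the fact that the selected $\alpha_i$'s belong to it. The one place where the specific restriction $\knowledge\subseteq[d/(2\beta),d/\beta]\cup\{\infty\}$ is essential is the precondition of Lemma~\ref{lemma:precondition}, namely $u^\star_{m-1}-u^\star_m(\gamma_{m+1}^\star+\tfrac12)\le0$, which makes the telescoping TV bound~\eqref{eq:tv-to-m} sum to $1/(2M)$. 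For the relevant indices $m+1\ge2$ one has $\alpha_{m+1}\in\knowledge\cap[0,d/\beta]\subseteq[d/(2\beta),d/\beta]$, whence $\gamma_{m+1}^\star=\tfrac{\beta(\alpha_{m+1}+1)}{2\beta+d}\ge\tfrac{\beta(d/(2\beta)+1)}{2\beta+d}=\tfrac12$, so $u^\star_m(\gamma_{m+1}^\star+\tfrac12)\ge u^\star_m\ge u^\star_{m-1}$ by monotonicity of $\bm u^\star_\knowledge$. With the precondition in hand, Lemma~\ref{lemma:exist-i} follows exactly as before, and dividing the regret lower bound on $Q_{i^\star}$ by $R_T^\star(\alpha_{i^\star})$ gives $\mathrm{RI}(\Gamma,\pi)\gtrsim M^{\Mexponent}(\log T)^{-1}T^{\optexpK}$.

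The main obstacle is on the lower-bound side: re-establishing the structural statement that the branches active at $\bm u^\star_\knowledge$ can be indexed by a \emph{non-increasing} sequence of margins lying in $\knowledge$ with $\alpha_1=\infty$, since everything downstream (stripe construction, mixture design, telescoping) is driven by this structure. The observation that $\gamma(\alpha)\ge\tfrac12$ on $[d/(2\beta),d/\beta]$ is precisely what keeps the indistinguishability precondition alive under the restriction, and is the sole reason the theorem is stated for $\knowledge\subseteq[d/(2\beta),d/\beta]\cup\{\infty\}$ rather than an arbitrary compact subset of $[0,d/\beta]\cup\{\infty\}$.
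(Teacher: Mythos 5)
Your proposal is correct and takes essentially the same approach as the paper: instantiate \rbalg at the $\knowledge$-restricted minimizer for the upper bound, and note that $\gamma(\alpha)\ge 1/2$ for all $\alpha\in[d/(2\beta),d/\beta]\cup\{\infty\}$ renders the precondition of Lemma~\ref{lemma:precondition} automatic, so the telescoping TV argument in the lower bound goes through unchanged. You spell out more explicitly than the paper does that one must re-verify Proposition~\ref{proposition:opt-prop}(2) (active branches form a non-increasing sequence in $\knowledge$) for the restricted variational problem; the paper leaves this implicit in ``the proof follows closely that of Theorem~\ref{thm:main},'' but it is indeed the structural fact that everything downstream depends on, and you are right to flag it.
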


\noindent
The proof follows closely that of Theorem~\ref{thm:main}, except that Lemma~\ref{lemma:precondition} now holds automatically. 
Indeed, for all $\alpha\in[d/(2\beta),d/\beta]\cup\{\infty\}$, we have 
$\gamma(\alpha)=\frac{(\alpha+1)\beta}{d+2\beta}\ge1/2$, 
which guarantees that the successive hard-instance families $\{Q_1,\ldots,Q_M\}$ cannot be reliably distinguished within a finite number of batches. 
This property---anchored at the boundary point $d/(2\beta)$---preserves the indistinguishability argument used in the lower bound proof. 
By contrast, when the knowledge set $\knowledge$ extends below this threshold, $\gamma(\alpha)$ may fall below $1/2$, breaking the reduction and invalidating the argument.

Nevertheless, if one restricts the $M$-batch policy~$(\Gamma,\pi)$ to use a fixed grid, that is, grid points $(t_1,\ldots,t_{M-1})$ are chosen in advance, then $\optexpK$ still characterizes the optimal regret inflation over this restricted policy class.

\begin{theorem}[General knowledge set, fixed grid]\label{thm:general-K-fixed}
Fix $\knowledge$ to be a compact subset of $[0, d/\beta]\cup\{\infty\}$. 
Then there exist constants $c_1,c_2>0$, independent of $T$ and $M$, such that
\[
c_1 M^{\Mexponent} (\log T)^{-1} T^{\optexpK}
\;\le\;
\inf_{\Gamma,\pi}\mathrm{RI}(\Gamma,\pi)
\;\le\;
c_2\, \upperprefactor\, T^{\optexpK},
\]
where the infimum is taken over all $M$-batch policies with a fixed grid.
\end{theorem}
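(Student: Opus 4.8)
The plan is to treat the two inequalities separately and to observe that both are strictly easier than their counterparts in Theorem~\ref{thm:main}: the upper bound is already delivered by \rbalg (which uses a fixed grid), while the lower bound no longer requires the delicate multi-mixture telescoping of Section~\ref{sec:mix-indis-proof}. For the upper bound, I would simply re-run the proof of Theorem~\ref{thm:c-ratio-upper} with the general knowledge set $\knowledge$ in place of $[0,d/\beta]\cup\{\infty\}$. That argument uses only (i) Lemma~\ref{lem:basedb-regret}, which is stated for arbitrary $\alpha\ge0$ and an arbitrary interior grid, and (ii) the identity $\optexpK=\inf_{\bm u\in\gridset}\sup_{\alpha\in\knowledge}\Psi_M(\bm u,\alpha)$; nothing else about $\knowledge$ enters. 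Since each $\eta_i(\cdot,\alpha)$ is affine, $\Psi_M(\cdot,\alpha)$ is convex, hence $\psi_{M,\knowledge}=\sup_{\alpha\in\knowledge}\Psi_M(\cdot,\alpha)$ is convex and finite on $\gridset$; one may therefore pick an interior grid $\bm u$ with $\psi_{M,\knowledge}(\bm u)\le\optexpK+1/\log T$, at the cost of only a constant factor in $T^{\optexpK}$, and run \rbalg with this $\bm u$. Its grid is fixed in advance via~\eqref{eq:split-factors}–\eqref{eq:batch-bin-size}, so the resulting policy is admissible for the infimum and obeys $\mathrm{RI}(\hat\Gamma,\hat\pi)\lesssim\upperprefactor\,T^{\optexpK}$.

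For the lower bound, fix any $M$-batch policy $(\Gamma,\pi)$ with a pre-specified grid $\bm t=(t_1<\cdots<t_{M-1})$, set $v_0=0$, $v_i=\log t_i/\log T$ for $1\le i\le M-1$, $v_M=1$, so $\bm v\in\gridset$. The key step is to \emph{read the worst margin off the grid}: since $\optexpK=\inf_{\bm u\in\gridset}\psi_{M,\knowledge}(\bm u)$ we have $\psi_{M,\knowledge}(\bm v)\ge\optexpK$, and by compactness of $\knowledge$ (with the usual convention at $\infty$) there exist $\alpha^\dagger\in\knowledge$ and an index $i\in[M]$ with $\eta_i(\bm v,\alpha^\dagger)-h_M(\alpha^\dagger)\ge\optexpK$. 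Unlike the adaptive-grid setting, no telescoping is needed here: the ``bad level'' $i$ is determined by $\bm v$, and I would attack it directly with a single hard-instance family.

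Concretely, I would reuse the single-stripe version of the construction of Section~\ref{sec:lower-family-design}, specialized to margin $\alpha^\dagger$ at resolution level $i$: keep the covariate law $P_X$, activate only micro-cells inside stripe $\mathcal{S}_i$ at scale $z$ with $Mz\asymp(M^5T^{v_{i-1}})^{1/(2\beta+d)}$, place $s\asymp M^{-1}(Mz)^{d-\alpha^\dagger\beta}$ of them (for $i=1$ the constant-gap case with $s=M^{d-1}$ and $\alpha^\dagger=\infty$), attach i.i.d.\ Rademacher signs, and let $\altq$ be the resulting mixture; Proposition~\ref{prop:smooth-margin}, applied to this single active stripe, shows the family lies in $\Env_{\alpha^\dagger}$. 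Because the grid is fixed, rounds $1,\dots,t_i$ are exactly batches $1,\dots,i$, and throughout those rounds the learner's reward information is confined to rounds $\le t_{i-1}$; hence, by the data-processing inequality and Lemma~\ref{lemma:single-bin-tv} with $n=t_{i-1}$ (our choice of $z$ makes $t_{i-1}(Mz)^{-(2\beta+d)}$ at most a negative power of $M$), the sign of every active cell is unidentifiable in every such round. Le Cam's two-point bound then gives per-round expected regret $\gtrsim s\,\delta\,(Mz)^{-d}$ with $\delta\asymp(Mz)^{-\beta}$, and summing over the $t_i\asymp T^{v_i}$ rounds and over the sign mixture (exactly as in Steps 1–4 of Section~\ref{sec:lower-lb-Qi}, but without the straddling events $A_i$) yields
\[
\sup_{P\in\Env_{\alpha^\dagger}}R_T(\Gamma,\pi;P)\;\gtrsim\;\frac{1}{M^{O(1)}}\,T^{\,v_i-v_{i-1}\gamma(\alpha^\dagger)}\;=\;\frac{1}{M^{O(1)}}\,T^{\,\eta_i(\bm v,\alpha^\dagger)}.
\]
Dividing by $R_T^\star(\alpha^\dagger)\asymp M^{O(1)}(\log T)\,T^{h_M(\alpha^\dagger)}$ (Proposition~\ref{proposition:oracle-rate}) and invoking $\eta_i(\bm v,\alpha^\dagger)-h_M(\alpha^\dagger)\ge\optexpK$ gives $\mathrm{RI}(\Gamma,\pi)\ge R_T(\Gamma,\pi;P)/R_T^\star(\alpha^\dagger)\gtrsim M^{\Mexponent}(\log T)^{-1}T^{\optexpK}$ once the polynomial $M$-factors are tracked (their absolute exponents are bounded by the universal constant in $\Mexponent$, using $\gamma(\alpha^\dagger)\le\gmax<1$ and $\beta\le1$). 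As $(\Gamma,\pi)$ was arbitrary, the infimum obeys the same bound.

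The main obstacle is structural rather than computational: why the result holds for \emph{all} compact $\knowledge\subseteq[0,d/\beta]\cup\{\infty\}$, with no $d/(2\beta)$ threshold. In Theorem~\ref{thm:main} the difficulty was that with an adaptive grid one cannot point to a fixed bad level, and one must instead argue---via the coin-model $\chi^2$ bound and a telescoped Pinsker argument---that some mixture $Q_{i^\star}$ places nontrivial mass on the straddling event $A_{i^\star}$, which needs $\gamma(\alpha)\ge1/2$ (Lemma~\ref{lemma:precondition}) to control the telescoped sum of TV distances. With the grid fixed in advance this entire step evaporates: $\psi_{M,\knowledge}(\bm v)\ge\optexpK$ hands us the pair $(\alpha^\dagger,i)$ directly, and we only ever compare two environments differing in a single cell's sign, so Lemma~\ref{lemma:single-bin-tv} suffices and no lower bound on $\gamma$ is invoked. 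The only genuine bookkeeping is (a) verifying case by case---$i=1$ (constant gap), $2\le i\le M-1$, and $i=M$---that the $M$-polynomial prefactors collapse into $M^{\Mexponent}$, and (b) handling degenerate grids where consecutive $v_i$ coincide, which is precisely why regret is accumulated over the whole prefix $1,\dots,t_i$ of length $\asymp T^{v_i}$ rather than over the possibly-tiny batch $(t_{i-1},t_i]$.
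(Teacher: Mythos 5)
Your proposal is correct, and the lower bound takes a genuinely different (and in fact more robust) route than the one-line sketch the paper gives. The paper points to the bad events $A_i$ of Equation~\eqref{eq:bad-event-i}, which are anchored at the variational minimizer's cutoffs $T_j=\lceil T^{u_j^\star}\rceil$, and to the mixtures $Q_i$ of Section~\ref{sec:lower-family-design} with margins $\alpha_i$ supplied by Proposition~\ref{proposition:opt-prop}; with a fixed grid one of the $A_i$ holds deterministically, and one then specializes Steps~1--4 of Section~\ref{sec:lower-lb-Qi} to that level. You instead read the worst pair $(\alpha^\dagger,i)$ directly off the learner's own grid exponents $\bm v$ via $\psi_{M,\knowledge}(\bm v)\ge\optexpK$, and build a single-level family calibrated to $t_{i-1}$ rather than $T_{i-1}$. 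This decouples the argument from Proposition~\ref{proposition:opt-prop}, whose proof hinges on the minimizer of $\psi_{M,\knowledge}$ lying in the interior of $\gridset$ and on all $M$ branches of $\eta$ being active --- facts established for $\knowledge=[0,d/\beta]\cup\{\infty\}$ but not for an arbitrary compact $\knowledge$, where the minimizer may sit on the boundary and the full $\{\alpha_i\}$ sequence need not exist. It also lets you sum the per-round Le~Cam bound over the whole prefix $[1,t_i]$, gracefully absorbing degenerate grids with $t_{i-1}\approx t_i$, and the single invocation of Lemma~\ref{lemma:single-bin-tv} with $n=t_{i-1}$ and $Mz\asymp(M^5 t_{i-1})^{1/(2\beta+d)}$ indeed gives $\mathrm{TV}\lesssim M^{-5/2}$, so no $\gamma(\alpha)\ge 1/2$ restriction is ever needed. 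The upper bound matches the paper's intent: re-run Section~\ref{sec:upper-proof} with $\knowledge$ in place of $[0,d/\beta]\cup\{\infty\}$, taking an interior $\bm u$ with $\psi_{M,\knowledge}(\bm u)\le\optexpK+1/\log T$ to sidestep a possible boundary minimizer at a cost of a multiplicative constant. Before you can call the proof complete you should still carry out the $M$-exponent bookkeeping you defer to parentheses --- tracking the $M^{-1}$ from $s_i$, the $M^{-5\gamma(\alpha^\dagger)}$ from $Mz_i$, the constant Le~Cam factor, and the $M\log T$ from Proposition~\ref{proposition:oracle-rate} --- but a direct computation gives $M^{-7}(\log T)^{-1}T^{\optexpK}$, which is stronger than the stated $M^{\Mexponent}$.
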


\noindent
The upper bound follows directly from the performance guarantee of \rbalg. 
For the lower bound, since the policy class is restricted to fixed grids, the indistinguishability argument is unnecessary---one can directly choose the worst-case family $Q_i$ for the bad event~$A_i$ (see Equation~\eqref{eq:bad-event-i}). 
We omit the detailed proof for brevity.

\subsection{Monotonicity of $\optexp$ in $M$}
Empirically, $\optexp$ decreases monotonically with the number of batches~$M$ (see Section~\ref{sec:numerics}).
This behavior is not immediate from the definition of the variational objective in~\eqref{eq:min-opt}:
while adding batches allows a finer schedule and hence lower regret, the benchmark $h_M(\alpha)$ also shrinks, making the improvement subtle to analyze.
For small $M$, we can establish monotonicity formally:
\begin{proposition}\label{prop:monot}
For $M\le4$, we have $\psi_{M+1}^\star < \optexp$.
\end{proposition}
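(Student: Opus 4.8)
### Proof plan for Proposition~\ref{prop:monot}

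\textbf{Overview.} The plan is to compare the two variational values $\optexp$ and $\psi_{M+1}^\star$ by exhibiting, from the optimal schedule $\bm u^\star$ for the $M$-batch problem, a feasible schedule for the $(M+1)$-batch problem that strictly improves the objective. The key structural fact I will lean on is Proposition~\ref{prop:equiv-variational}: $\bm u^\star$ is characterized by the recursion $u_{m+1}=u_1+\phi_M(u_m)$ with terminal condition $u_M=1$, where $\phi_M(x)=\min_{\alpha\in[0,d/\beta]}\gamma(\alpha)x+h_M(\alpha)$. Since $h_{M+1}(\alpha)<h_M(\alpha)$ for every finite $\alpha$ (because $h_M(\alpha)=(1-\gamma)/(1-\gamma^M)$ is decreasing in $M$ when $\gamma\in(0,1)$), we have the pointwise inequality $\phi_{M+1}(x)<\phi_M(x)$ on $(0,1)$, and also $\psi_{M+1}(\bm u,\alpha)$ uses the smaller benchmark $h_{M+1}(\alpha)$. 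So heuristically the $(M+1)$-batch learner has strictly more room; the work is to convert this into a strict inequality of the min-max values, tracking that the benchmark shrinks too.

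\textbf{Main steps.} First I would make the comparison concrete via the recursion. Let $\{u_m^\star\}$ solve the $M$-system and let $\{v_m\}$ solve the $(M+1)$-system, i.e.\ $v_{m+1}=v_1+\phi_{M+1}(v_m)$ with $v_{M+1}=1$. Using monotonicity of the shooting map $u_1\mapsto u_M(u_1)$ (property (1) after Proposition~\ref{prop:equiv-variational}) together with $\phi_{M+1}<\phi_M$, I would argue by a shooting/monotonicity comparison that the common value of the objective, namely $\psi_M^\star = u_1^\star$ and $\psi_{M+1}^\star = v_1$ up to the subtraction of the benchmark — more precisely that $\optexp = \max_i\eta_i(\bm u^\star,\alpha_i)-h_M(\alpha_i)$ evaluates along the active branches — is strictly larger in the $M$ case. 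The cleanest route is: since at the optimum all branches are active and equal (Proposition~\ref{proposition:opt-prop}(2)), $\optexp$ equals $\eta_1(\bm u^\star,\alpha_1)-h_M(\infty)=u_1^\star$ (using $\alpha_1=\infty$, $h_M(\infty)=0$); likewise $\psi_{M+1}^\star=v_1$. So it suffices to show $v_1<u_1^\star$, i.e.\ that the initial exponent in the $(M+1)$-shooting problem that lands exactly at $1$ after $M+1$ steps is strictly smaller than the one that lands at $1$ after $M$ steps. For $M\le 4$ this is a finite computation: one writes out the $2$-, $3$-, $4$-, and $5$-step recursions explicitly, uses that $\phi_M$ is concave increasing and $\phi_{M+1}(x)<\phi_M(x)<\phi_{M+1}(x)+[\,h_M(\alpha)-h_{M+1}(\alpha)\,]$ at the minimizing $\alpha$, and checks the inequality directly, exploiting that $\phi_M(1)=\phi_{M+1}(1)$ would force a contradiction with $u_M^\star=v_{M+1}=1$ unless $v_1<u_1^\star$.

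\textbf{Anticipated obstacle.} The delicate point is that $\phi_M$ and $\phi_{M+1}$ are themselves defined by inner minimizations over $\alpha$, so the minimizing $\alpha$ may differ between the two problems and between different coordinates; the inequality $\phi_{M+1}<\phi_M$ is clean but quantifying the \emph{gap} — enough to overcome the fact that the $(M+1)$-problem also has one extra step to fill — is where the restriction $M\le 4$ bites. I expect the proof to proceed by bounding $h_M(\alpha)-h_{M+1}(\alpha)=\frac{(1-\gamma)\gamma^M(1-\gamma^M)^{-1}(1-\gamma^{M+1})^{-1}\cdot(\text{positive})}{\,}$ from below by something like $\gamma^M(1-\gamma)$ and showing that for $M\le 4$ this dominates the per-step slack $\gamma(\alpha)\cdot(u_1^\star-v_1)$ accumulated over the recursion, closing the loop. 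The main obstacle is therefore the careful bookkeeping of these $\gamma$-dependent quantities across the (at most five) recursion steps, ensuring the strict inequality survives after taking the worst-case $\alpha$; for $M\ge 5$ the accumulated slack can in principle catch up, which is why the proposition is stated only for small $M$.
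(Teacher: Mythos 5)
Your proposal is correct and follows essentially the same route as the paper's proof: reduce the claim to $u_1^{(M+1)} < u_1^{(M)}$ via Proposition~\ref{prop:equiv-variational}, compare the two shooting recursions using the gap $\Delta_M(\gamma) = h_M(\gamma) - h_{M+1}(\gamma)$ (the paper runs this as a contradiction, assuming $u_1^{(M+1)} \ge u_1^{(M)}$ and accumulating the per-step slack into the bound $u_M^{(M+1)} \ge 1 - S_{M-2}(\gmax)\Delta(\gmax)$), and close with an explicit case-by-case verification for $M=2,3,4$ using the lower bound on $u_1^{(M)}$ from Lemma~\ref{lemma:u1-lb-strengthen}. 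The only stylistic difference is that the paper phrases the argument as a contradiction with the terminal equation $u_1^{(M+1)} + \phi_{M+1}(u_M^{(M+1)}) = 1$, whereas you phrase it as a direct monotone-shooting comparison, but the quantitative content — bounding $\Delta_M$ against the accumulated $\gmax$-weighted slack — is the same.
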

\noindent
The proof appears in Section~\ref{sec:monot-proof}.
Whether this monotonicity extends to all $M$ remains an interesting open question.

% It is interesting to see if the adaptiton cost shrinks as the number of batches increases. 
% We can establish the monotonicity for small $M$.
% \begin{proposition}\label{prop:monot}
%  % Consider the case when $\knowledge$ has more than one element. 
% For $M\le4$, we have $ \psi_{M+1}^\star< \optexp$.
% \end{proposition}
% \noindent See Section~\ref{sec:monot-proof} for the proof. 

\subsection{Future directions}

Several natural extensions remain open.  
First, our analysis focuses on the two-armed setting; extending these results to multiple arms is a natural next step.  
Second, when both the margin and smoothness parameters are unknown, one may ask whether a unified adaptive strategy is possible.  
Smoothness adaptation is known to require additional structure or to incur unavoidable penalties even in the fully online case~\citep{qian2016random,gur2022smoothness,cai2024transfer}.  
Under batch constraints, the challenge intensifies because the size of the initial batch must be determined before smoothness can be estimated.  
A joint variational formulation over~$(\alpha,\beta)$ may shed light on this problem.

Finally, the regret inflation framework may extend to other batched bandit models.  
For example, in high-dimensional sparse linear contextual bandits~\citep{ren2023dynamic}, existing algorithms rely on knowing an upper bound on the sparsity level.  
Quantifying the price of unknown sparsity or other structural complexity parameters would further broaden the reach of our theory.

\subsection*{Acknowledgements} 
C.M. was partially supported by the National Science Foundation via grant DMS-2311127 and the CAREER Award DMS-2443867.

\bibliography{Batch}

\begin{thebibliography}{51}
\providecommand{\natexlab}[1]{#1}
\providecommand{\url}[1]{\texttt{#1}}
\expandafter\ifx\csname urlstyle\endcsname\relax
  \providecommand{\doi}[1]{doi: #1}\else
  \providecommand{\doi}{doi: \begingroup \urlstyle{rm}\Url}\fi

\bibitem[Abbasi-Yadkori et~al.(2011)Abbasi-Yadkori, P{\'a}l, and Szepesv{\'a}ri]{abbasi2011improved}
Yasin Abbasi-Yadkori, D{\'a}vid P{\'a}l, and Csaba Szepesv{\'a}ri.
\newblock Improved algorithms for linear stochastic bandits.
\newblock \emph{Advances in neural information processing systems}, 24, 2011.

\bibitem[Arya(2025)]{arya2025batched}
Sakshi Arya.
\newblock Batched nonparametric bandits via k-nearest neighbor ucb.
\newblock \emph{arXiv preprint arXiv:2505.10498}, 2025.

\bibitem[Audibert and Tsybakov(2007)]{audibert2007fast}
Jean-Yves Audibert and Alexandre~B Tsybakov.
\newblock Fast learning rates for plug-in classifiers.
\newblock \emph{The Annals of Statistics}, 35\penalty0 (2):\penalty0 608--633, 2007.

\bibitem[Auer(2002)]{auer2002using}
Peter Auer.
\newblock Using confidence bounds for exploitation-exploration trade-offs.
\newblock \emph{Journal of Machine Learning Research}, 3\penalty0 (Nov):\penalty0 397--422, 2002.

\bibitem[Bai et~al.(2019)Bai, Xie, Jiang, and Wang]{bai2019provably}
Yu~Bai, Tengyang Xie, Nan Jiang, and Yu-Xiang Wang.
\newblock Provably efficient q-learning with low switching cost.
\newblock \emph{Advances in Neural Information Processing Systems}, 32, 2019.

\bibitem[Bardenet and Maillard(2015)]{bardenet2015concentration}
R{\'e}mi Bardenet and Odalric-Ambrym Maillard.
\newblock Concentration inequalities for sampling without replacement.
\newblock 2015.

\bibitem[Bastani and Bayati(2020)]{bastani2020online}
Hamsa Bastani and Mohsen Bayati.
\newblock Online decision making with high-dimensional covariates.
\newblock \emph{Operations Research}, 68\penalty0 (1):\penalty0 276--294, 2020.

\bibitem[Cai et~al.(2024)Cai, Cai, and Li]{cai2024transfer}
Changxiao Cai, T~Tony Cai, and Hongzhe Li.
\newblock Transfer learning for contextual multi-armed bandits.
\newblock \emph{The Annals of Statistics}, 52\penalty0 (1):\penalty0 207--232, 2024.

\bibitem[Cai and Pu(2022)]{cai2022stochastic}
T~Tony Cai and Hongming Pu.
\newblock Stochastic continuum-armed bandits with additive models: Minimax regrets and adaptive algorithm.
\newblock \emph{The Annals of Statistics}, 50\penalty0 (4):\penalty0 2179--2204, 2022.

\bibitem[Cesa-Bianchi et~al.(2013)Cesa-Bianchi, Dekel, and Shamir]{cesa2013online}
Nicolo Cesa-Bianchi, Ofer Dekel, and Ohad Shamir.
\newblock Online learning with switching costs and other adaptive adversaries.
\newblock \emph{Advances in Neural Information Processing Systems}, 26, 2013.

\bibitem[Even-Dar et~al.(2006)Even-Dar, Mannor, Mansour, and Mahadevan]{even2006action}
Eyal Even-Dar, Shie Mannor, Yishay Mansour, and Sridhar Mahadevan.
\newblock Action elimination and stopping conditions for the multi-armed bandit and reinforcement learning problems.
\newblock \emph{Journal of machine learning research}, 7\penalty0 (6):\penalty0 1079--1105, 2006.

\bibitem[Fan et~al.(2023)Fan, Wang, Yang, and Ye]{fan2023provably}
Jianqing Fan, Zhaoran Wang, Zhuoran Yang, and Chenlu Ye.
\newblock Provably efficient high-dimensional bandit learning with batched feedbacks.
\newblock \emph{arXiv preprint arXiv:2311.13180}, 2023.

\bibitem[Feng et~al.(2022)Feng, Huang, and Wang]{feng2022lipschitz}
Yasong Feng, Zengfeng Huang, and Tianyu Wang.
\newblock Lipschitz bandits with batched feedback.
\newblock \emph{Advances in Neural Information Processing Systems}, 35:\penalty0 19836--19848, 2022.

\bibitem[Foster and George(1994)]{foster1994risk}
Dean~P Foster and Edward~I George.
\newblock The risk inflation criterion for multiple regression.
\newblock \emph{The Annals of Statistics}, 22\penalty0 (4):\penalty0 1947--1975, 1994.

\bibitem[Gao et~al.(2021)Gao, Xie, Du, and Yang]{gao2021provably}
Minbo Gao, Tianle Xie, Simon~S Du, and Lin~F Yang.
\newblock A provably efficient algorithm for linear markov decision process with low switching cost.
\newblock \emph{arXiv preprint arXiv:2101.00494}, 2021.

\bibitem[Gao et~al.(2019)Gao, Han, Ren, and Zhou]{zijun2020batch}
Zijun Gao, Yanjun Han, Zhimei Ren, and Zhengqing Zhou.
\newblock Batched multi-armed bandits problem.
\newblock \emph{Advances in Neural Information Processing Systems}, 32, 2019.

\bibitem[Goldenshluger and Zeevi(2009)]{Goldenshluger_2009}
Alexander Goldenshluger and Assaf Zeevi.
\newblock Woodroofe's one-armed bandit problem revisited.
\newblock \emph{The Annals of Applied Probability}, 19\penalty0 (4):\penalty0 1603--1633, 2009.

\bibitem[Goldenshluger and Zeevi(2013)]{goldenshluger2013linear}
Alexander Goldenshluger and Assaf Zeevi.
\newblock A linear response bandit problem.
\newblock \emph{Stochastic Systems}, 3\penalty0 (1):\penalty0 230--261, 2013.

\bibitem[Guan and Jiang(2018)]{guan2018nonparametric}
Melody Guan and Heinrich Jiang.
\newblock Nonparametric stochastic contextual bandits.
\newblock In \emph{Proceedings of the AAAI Conference on Artificial Intelligence}, volume~32, 2018.

\bibitem[Gur et~al.(2022)Gur, Momeni, and Wager]{gur2022smoothness}
Yonatan Gur, Ahmadreza Momeni, and Stefan Wager.
\newblock Smoothness-adaptive contextual bandits.
\newblock \emph{Operations Research}, 70\penalty0 (6):\penalty0 3198--3216, 2022.

\bibitem[Han et~al.(2020)Han, Zhou, Zhou, Blanchet, Glynn, and Ye]{han2020sequential}
Yanjun Han, Zhengqing Zhou, Zhengyuan Zhou, Jose Blanchet, Peter~W Glynn, and Yinyu Ye.
\newblock Sequential batch learning in finite-action linear contextual bandits.
\newblock \emph{arXiv preprint arXiv:2004.06321}, 2020.

\bibitem[Hu et~al.(2022)Hu, Kallus, and Mao]{hu2022smooth}
Yichun Hu, Nathan Kallus, and Xiaojie Mao.
\newblock Smooth contextual bandits: Bridging the parametric and nondifferentiable regret regimes.
\newblock \emph{Operations Research}, 70\penalty0 (6):\penalty0 3261--3281, 2022.

\bibitem[Jiang and Ma(2025)]{jiang2025batched}
Rong Jiang and Cong Ma.
\newblock Batched nonparametric contextual bandits.
\newblock \emph{IEEE Transactions on Information Theory}, 2025.

\bibitem[Jin et~al.(2021{\natexlab{a}})Jin, Tang, Xu, Huang, Xiao, and Gu]{tianyuan2021anytime}
Tianyuan Jin, Jing Tang, Pan Xu, Keke Huang, Xiaokui Xiao, and Quanquan Gu.
\newblock Almost optimal anytime algorithm for batched multi-armed bandits.
\newblock In \emph{International Conference on Machine Learning}, pages 5065--5073. PMLR, 2021{\natexlab{a}}.

\bibitem[Jin et~al.(2021{\natexlab{b}})Jin, Xu, Xiao, and Gu]{tianyuan2021double}
Tianyuan Jin, Pan Xu, Xiaokui Xiao, and Quanquan Gu.
\newblock Double explore-then-commit: Asymptotic optimality and beyond.
\newblock In \emph{Proceedings of Thirty Fourth Conference on Learning Theory}, pages 2584--2633. PMLR, 2021{\natexlab{b}}.

\bibitem[Joag-Dev and Proschan(1983)]{joag1983negative}
Kumar Joag-Dev and Frank Proschan.
\newblock Negative association of random variables with applications.
\newblock \emph{The Annals of Statistics}, pages 286--295, 1983.

\bibitem[Kalkanli and Ozgur(2021)]{kalkanli2021batched}
Cem Kalkanli and Ayfer Ozgur.
\newblock Batched thompson sampling.
\newblock \emph{Advances in Neural Information Processing Systems}, 34:\penalty0 29984--29994, 2021.

\bibitem[Karbasi et~al.(2021)Karbasi, Mirrokni, and Shadravan]{karbasi2021parallelizing}
Amin Karbasi, Vahab Mirrokni, and Mohammad Shadravan.
\newblock Parallelizing thompson sampling.
\newblock \emph{Advances in Neural Information Processing Systems}, 34:\penalty0 10535--10548, 2021.

\bibitem[Liu et~al.(2025)Liu, Shu, and Wang]{liu2025batched}
Yu~Liu, Yunlu Shu, and Tianyu Wang.
\newblock Batched stochastic bandit for nondegenerate functions.
\newblock \emph{IEEE Transactions on Information Theory}, 2025.

\bibitem[Locatelli and Carpentier(2018)]{locatelli2018adaptivity}
Andrea Locatelli and Alexandra Carpentier.
\newblock Adaptivity to smoothness in x-armed bandits.
\newblock In \emph{Conference on Learning Theory}, pages 1463--1492. PMLR, 2018.

\bibitem[Mammen and Tsybakov(1999)]{mammen1999smooth}
Enno Mammen and Alexandre~B Tsybakov.
\newblock Smooth discrimination analysis.
\newblock \emph{The Annals of Statistics}, 27\penalty0 (6):\penalty0 1808--1829, 1999.

\bibitem[Perchet and Rigollet(2013)]{perchet2013multi}
Vianney Perchet and Philippe Rigollet.
\newblock The multi-armed bandit problem with covariates.
\newblock \emph{Ann. Statist.}, 41\penalty0 (2):\penalty0 693--721, 2013.
\newblock ISSN 0090-5364.
\newblock \doi{10.1214/13-AOS1101}.
\newblock URL \url{https://doi.org/10.1214/13-AOS1101}.

\bibitem[Perchet et~al.(2016)Perchet, Rigollet, Chassang, and Snowberg]{perchet2016batch}
Vianney Perchet, Philippe Rigollet, Sylvain Chassang, and Erik Snowberg.
\newblock Batched bandit problems.
\newblock \emph{Ann. Statist.}, 44\penalty0 (2):\penalty0 660--681, 2016.

\bibitem[Qian and Yang(2016{\natexlab{a}})]{qian2016kernel}
Wei Qian and Yuhong Yang.
\newblock Kernel estimation and model combination in a bandit problem with covariates.
\newblock \emph{Journal of Machine Learning Research}, 17\penalty0 (149), 2016{\natexlab{a}}.

\bibitem[Qian and Yang(2016{\natexlab{b}})]{qian2016random}
Wei Qian and Yuhong Yang.
\newblock {Randomized allocation with arm elimination in a bandit problem with covariates}.
\newblock \emph{Electronic Journal of Statistics}, 10\penalty0 (1):\penalty0 242--270, 2016{\natexlab{b}}.

\bibitem[Qian et~al.(2023)Qian, Ing, and Liu]{qian2023adaptive}
Wei Qian, Ching-Kang Ing, and Ji~Liu.
\newblock Adaptive algorithm for multi-armed bandit problem with high-dimensional covariates.
\newblock \emph{Journal of the American Statistical Association}, pages 1--13, 2023.

\bibitem[Qiao et~al.(2022)Qiao, Yin, Min, and Wang]{qiao2022sample}
Dan Qiao, Ming Yin, Ming Min, and Yu-Xiang Wang.
\newblock Sample-efficient reinforcement learning with loglog (t) switching cost.
\newblock In \emph{International Conference on Machine Learning}, pages 18031--18061. PMLR, 2022.

\bibitem[Reeve et~al.(2018)Reeve, Mellor, and Brown]{reeve2018k}
Henry Reeve, Joe Mellor, and Gavin Brown.
\newblock The k-nearest neighbour ucb algorithm for multi-armed bandits with covariates.
\newblock In \emph{Algorithmic Learning Theory}, pages 725--752. PMLR, 2018.

\bibitem[Ren and Zhou(2023)]{ren2023dynamic}
Zhimei Ren and Zhengyuan Zhou.
\newblock Dynamic batch learning in high-dimensional sparse linear contextual bandits.
\newblock \emph{Management Science}, 2023.

\bibitem[Ren et~al.(2020)Ren, Zhou, and Kalagnanam]{ren2020batched}
Zhimei Ren, Zhengyuan Zhou, and Jayant~R Kalagnanam.
\newblock Batched learning in generalized linear contextual bandits with general decision sets.
\newblock \emph{IEEE Control Systems Letters}, 6:\penalty0 37--42, 2020.

\bibitem[Rigollet and Zeevi(2010)]{rigollet2010nonparametric}
Philippe Rigollet and Assaf Zeevi.
\newblock Nonparametric bandits with covariates.
\newblock \emph{arXiv preprint arXiv:1003.1630}, 2010.

\bibitem[Ruan et~al.(2021)Ruan, Yang, and Zhou]{ruan2021linear}
Yufei Ruan, Jiaqi Yang, and Yuan Zhou.
\newblock Linear bandits with limited adaptivity and learning distributional optimal design.
\newblock In \emph{Proceedings of the 53rd Annual ACM SIGACT Symposium on Theory of Computing}, pages 74--87, 2021.

\bibitem[Suk and Kpotufe(2021)]{suk2021self}
Joseph Suk and Samory Kpotufe.
\newblock Self-tuning bandits over unknown covariate-shifts.
\newblock In \emph{Algorithmic Learning Theory}, pages 1114--1156. PMLR, 2021.

\bibitem[Tao et~al.(2019)Tao, Zhang, and Zhou]{tao2019collaborative}
Chao Tao, Qin Zhang, and Yuan Zhou.
\newblock Collaborative learning with limited interaction: Tight bounds for distributed exploration in multi-armed bandits.
\newblock In \emph{2019 IEEE 60th Annual Symposium on Foundations of Computer Science (FOCS)}, pages 126--146. IEEE, 2019.

\bibitem[Tsybakov(2004)]{tsybakov2004optimal}
Alexander~B Tsybakov.
\newblock Optimal aggregation of classifiers in statistical learning.
\newblock \emph{The Annals of Statistics}, 32\penalty0 (1):\penalty0 135--166, 2004.

\bibitem[Wang and Cheng(2020)]{wang2020online}
Chi-Hua Wang and Guang Cheng.
\newblock Online batch decision-making with high-dimensional covariates.
\newblock In \emph{International Conference on Artificial Intelligence and Statistics}, pages 3848--3857. PMLR, 2020.

\bibitem[Wang et~al.(2021)Wang, Zhou, and Gu]{wang2021provably}
Tianhao Wang, Dongruo Zhou, and Quanquan Gu.
\newblock Provably efficient reinforcement learning with linear function approximation under adaptivity constraints.
\newblock \emph{Advances in Neural Information Processing Systems}, 34:\penalty0 13524--13536, 2021.

\bibitem[Woodroofe(1979)]{woodroofe1979one}
Michael Woodroofe.
\newblock A one-armed bandit problem with a concomitant variable.
\newblock \emph{Journal of the American Statistical Association}, 74\penalty0 (368):\penalty0 799--806, 1979.

\bibitem[Yang and Zhu(2002)]{yang2002nonp}
Yuhong Yang and Dan Zhu.
\newblock Randomized allocation with nonparametric estimation for a multi-armed bandit problem with covariates.
\newblock \emph{Ann. Statist.}, 30\penalty0 (1):\penalty0 100--121, 2002.
\newblock \doi{10.1214/aos/1015362186}.
\newblock URL \url{https://doi.org/10.1214/aos/1015362186}.

\bibitem[Zhang et~al.(2020{\natexlab{a}})Zhang, Janson, and Murphy]{zhang2020inference}
Kelly Zhang, Lucas Janson, and Susan Murphy.
\newblock Inference for batched bandits.
\newblock \emph{Advances in neural information processing systems}, 33:\penalty0 9818--9829, 2020{\natexlab{a}}.

\bibitem[Zhang et~al.(2020{\natexlab{b}})Zhang, Zhou, and Ji]{zhang2020almost}
Zihan Zhang, Yuan Zhou, and Xiangyang Ji.
\newblock Almost optimal model-free reinforcement learning via reference-advantage decomposition.
\newblock \emph{Advances in Neural Information Processing Systems}, 33:\penalty0 15198--15207, 2020{\natexlab{b}}.

\end{thebibliography}

\newpage
\appendix

\section{Proof of oracle regret bounds (Proposition~\ref{proposition:oracle-rate})} \label{sec:lower-bound-extension}

We split the proof into two cases: $\alpha = \infty$ and $\alpha \leq d/\beta$.

\subsection{The case of $\alpha=\infty$}
% First, we consider the case $\alpha=\infty$. 
Without loss of generality, let $\mu_1 = \mu^*$ and $\mu_2 = \mu^* - \Delta$ be the mean rewards of two arms, where $\Delta > 0$ is a fixed constant. Clearly, one has $R_T^\star(\alpha)\ge c_1$ for some constant $c_1>0$.

\paragraph{A simple policy. }To achieve the upper bound, we consider the following procedure. Given any batch budget $M\ge2$, we choose to use two batches by setting $t_1\asymp\log(T)$. Define the confidence radius
\[
r(t_1) = \sqrt{\frac{\log(4T/\delta)}{2t_1}},
\] 
where $\delta=1/T^2$. During the first batch, we pull each arm in a round-robin fashion. At the end of batch 1, we eliminate any arm $i\in\{1,2\}$ such that
\[
\widehat{\mu}_i(t_1) + r(t_1) < \max_{j\in\{1,2\}} \big\{ \widehat{\mu}_j(t_1) - r(t_1) \big\}.
\]
During the second batch, we just pull any active arm.

\paragraph{Regret analysis. }
Now we establish the regret guarantee of the above policy. The following lemma ensures that with high probability, the suboptimal arm is eliminated.
\begin{lemma}\label{lemma:se-mab}
With probability at least $1 - \delta$, the suboptimal arm is eliminated by phase
\[
t_1 = \left\lceil \frac{8}{\Delta^2} \log\!\left(\frac{4T}{\delta}\right) \right\rceil.
\]
\end{lemma}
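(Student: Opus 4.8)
The plan is to prove this as a routine application of Hoeffding's inequality combined with a union bound over the two arms, exploiting the round-robin schedule so that after $t_1$ rounds each arm has been pulled at least $\lfloor t_1/2\rfloor$ times. First I would fix the realization of the algorithm's first batch: by round $t_1$, arm $i$ has received $n_i \ge \lfloor t_1/2\rfloor$ i.i.d.\ samples with mean $\mu_i$, and $\widehat\mu_i(t_1)$ is their empirical average. By Hoeffding's inequality, for each arm $i$,
\[
\mathbb{P}\!\left(|\widehat\mu_i(t_1) - \mu_i| > r(t_1)\right) \le 2\exp\!\left(-2 n_i\, r(t_1)^2\right) \le 2\exp\!\left(-2\lfloor t_1/2\rfloor\, r(t_1)^2\right),
\]
and with the choice $r(t_1) = \sqrt{\log(4T/\delta)/(2t_1)}$ this is at most $\delta/(2T) \cdot (\text{const})$; I would verify the bookkeeping so that a union bound over the two arms gives that the event $\mathcal{E} = \{|\widehat\mu_i(t_1)-\mu_i|\le r(t_1),\ i=1,2\}$ holds with probability at least $1-\delta$.

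Next, I would argue deterministically on $\mathcal{E}$. On this event, the suboptimal arm $2$ satisfies $\widehat\mu_2(t_1) + r(t_1) \le \mu_2 + 2r(t_1) = \mu^\star - \Delta + 2r(t_1)$, while $\max_j\{\widehat\mu_j(t_1) - r(t_1)\} \ge \widehat\mu_1(t_1) - r(t_1) \ge \mu^\star - 2r(t_1)$. Hence arm $2$ is eliminated provided $\mu^\star - \Delta + 2r(t_1) < \mu^\star - 2r(t_1)$, i.e.\ whenever $4 r(t_1) < \Delta$, equivalently $t_1 > \frac{8}{\Delta^2}\log(4T/\delta)$. Choosing $t_1 = \lceil \frac{8}{\Delta^2}\log(4T/\delta)\rceil$ makes this strict inequality hold (up to the harmless rounding, which I would note only changes constants), so arm $2$ is removed by the end of the first batch on $\mathcal{E}$.

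I do not anticipate a serious obstacle here; the only care points are (i) handling the floor in $n_i \ge \lfloor t_1/2\rfloor$ versus the $t_1$ appearing in $r(t_1)$ — this is absorbed by the slack between $2r(t_1)$ and $\Delta/2$, or alternatively by inflating the constant $8$ slightly — and (ii) making sure the union-bound constant is consistent with the stated failure probability $\delta$ (one may need to define $r$ with $\log(4T/\delta)$ rather than $\log(2/\delta)$ precisely to absorb both the factor of $2$ from the two-sided Hoeffding bound and the union over two arms). Both are routine; no concentration tool beyond Hoeffding is needed, and the deterministic elimination step is elementary.
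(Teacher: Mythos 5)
Your proposal is correct and follows essentially the same route as the paper: Hoeffding plus a union bound over the two arms to get the good event $\mathcal{E}$ with probability at least $1-\delta$, followed by the elementary deterministic elimination argument showing that $r(t_1)\le \Delta/4$ suffices. You are also right to flag the bookkeeping between the per-arm sample count $n_i\ge\lfloor t_1/2\rfloor$ and the $t_1$ appearing in $r(t_1)$: the paper's asserted bound $\Pr(|\widehat\mu_i-\mu_i|>r(t_1))\le\delta/(2T)$ implicitly treats $t_1$ as the number of pulls per arm (successive-elimination ``phase'' convention), whereas the grid definition makes $t_1$ the total round count; your version with $n_i\ge\lfloor t_1/2\rfloor$ is the literal reading and, as you note, only changes the constant $8$ to $16$, which is harmless for Proposition~\ref{proposition:oracle-rate}.
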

\begin{proof}
By Hoeffding's inequality for bounded rewards,
\[
\Pr\!\big(|\widehat{\mu}_i(t_1) - \mu_i| > r(t_1)\big) \le \frac{\delta}{2T}.
\]
Taking a union bound over both arms,
\[
\mathcal{E} = \Big\{i\in\{1,2\}:\ |\widehat{\mu}_i(t_1) - \mu_i| \le r(t_1)\Big\}
\]
holds with probability at least $1 - \delta$.

On the event $\mathcal{E}$,
\[
\widehat{\mu}_1(t_1) - \widehat{\mu}_2(t_1)
\ge (\mu_1 - \mu_2) - |\widehat{\mu}_1(t_1) - \mu_1| - |\widehat{\mu}_2(t_1) - \mu_2|
\ge \Delta - 2r(t_1).
\]
If $r(t_1) \le \Delta/4$, then
\[
\widehat{\mu}_1(t_1) - \widehat{\mu}_2(t_1) \ge \Delta/2 > 2r(t_1),
\]
which implies
\[
\widehat{\mu}_2(t_1) + r(t_1) < \widehat{\mu}_1(t_1) - r(t_1),
\]
so the suboptimal arm (arm 2) is eliminated at phase $t_1$.

The condition $r(t_1) \le \Delta/4$ means
\[
\sqrt{\frac{\log(4T/\delta)}{2t_1}} \le \frac{\Delta}{4}
\quad \Longleftrightarrow \quad
t_1 \ge \frac{8}{\Delta^2} \log\!\left(\frac{4T}{\delta}\right).
\]
Thus, on event $\mathcal{E}$ (which holds with probability at least $1-\delta$), arm 2 is eliminated by $t_1$.
\end{proof}
Denote by $\mathcal{G}$ the event that the suboptimal arm is eliminated by $t_1$. By Lemma~\ref{lemma:se-mab}, we have
\begin{align*}
    R_T(\bar{\pi}, \Env)\le t_1\cdot\Delta +(T-t_1)\cdot\Delta\cdot\delta\le c_2t_1,
\end{align*}
where we have used the fact that during the second batch  regret is only incurred when $\mathcal{G}^c$ occurs and $\mathbb{P}(\mathcal{G}^c)\le\delta=1/T^2$.

\subsection{The case of $\alpha\le d/\beta$}

For the remaining of the proof, we establish the result for $\alpha\le d/\beta$, which is stated in the following proposition.

\begin{proposition}\label{thm:lower-bound-extend}

Suppose that $\alpha\le d/\beta$. Under Assumptions~\ref{ass:bdd-density}-\ref{ass:margin}. For any $M$-batch policy
$(\Gamma,\pi)$, one has 
\[
M^{\Mexponentoracle}\cdot  T^{h_M(\alpha)}
\apprle\sup_{\Env\in\EnvAlpha}R_{T}(\pi,\Env)
\apprle  M^{} (\log T) \cdot T^{\,h_M(\alpha)}.
\]
% where $\gamma=\beta(1+\alpha)/(2\beta+d)$.

\end{proposition}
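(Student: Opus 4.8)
The plan is to prove the upper and lower bounds separately, and to track the dependence on $M$ explicitly in the lower bound (this last point is the novelty relative to \cite{jiang2025batched}, who established the same rate but for $\alpha\le 1/\beta$ and without the explicit $M$-dependence).

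\emph{Upper bound.} I would invoke the performance guarantee of the \myalg algorithm from \cite{jiang2025batched} with an oracle-tuned grid. Concretely, choosing the split factors and batch points so that $w_i \asymp T^{-\gamma(\alpha)^{i-1}(1-\gamma(\alpha))/(1-\gamma(\alpha)^M)\cdot(2\beta+d)^{-1}\cdot\text{(something)}}$ — equivalently, matching the balanced-exponent schedule $u_i$ that equalizes the per-batch regret contributions — Lemma~\ref{lem:basedb-regret} (valid for \emph{any} $\alpha\ge 0$, hence in particular for $\alpha\le d/\beta$) gives
\[
\sup_{P\in\EnvAlpha} R_T(\hat\Gamma,\hat\pi;P) \apprle (\log T)\Big(t_1 + \sum_{i=2}^{M-1}(t_i-t_{i-1})w_{i-1}^{\beta+\alpha\beta} + (T-t_{M-1})w_{M-1}^{\beta+\alpha\beta}\Big).
\]
Plugging the oracle schedule, each of the $M$ terms is of order $T^{h_M(\alpha)}$ up to $\log$ factors, so the sum is $\apprle M(\log T)T^{h_M(\alpha)}$. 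This is essentially a bookkeeping exercise once the schedule is fixed; the only subtlety is verifying that the $\lfloor\cdot\rfloor$ roundings in \eqref{eq:split-factors}--\eqref{eq:batch-bin-size} do not distort the exponent, which is handled exactly as in Section~\ref{sec:upper-proof}.

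\emph{Lower bound.} Here I would run a single-level version of the hard-instance construction of Section~\ref{sec:lower-family-design} — no indistinguishability across levels is needed, since the margin $\alpha$ is \emph{known} to the oracle, so we only need to defeat a single (adaptively-chosen) grid for a single difficulty level. Partition $[0,1]^d$ into $\asymp z^d$ micro-cells of side $w\asymp 1/z$, place bumps of height $\delta\asymp w^\beta$ on a random size-$s$ subset with $s\asymp z^{d-\alpha\beta}$ (so the margin condition holds at parameter $\alpha$, by Proposition~\ref{prop:smooth-margin}), and attach Rademacher signs. Define, for each $i\in[M]$, the "bad window" event $A_i=\{t_{i-1}<T_{i-1}<T_i\le t_i\}$ with $T_i\asymp T^{u_i^\star(\alpha)}$ chosen along the balanced schedule for this $\alpha$; the $A_i$ partition the sample space, so some $A_{i}$ has probability $\ge 1/M$ under the mixture, and on $A_{i}$ the observations at time $T_i$ coincide with those at time $T_{i-1}$. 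A Le Cam / Assouad argument on the window $[T_{i-1},T_i]$, exactly as in Steps 1--4 of Section~\ref{sec:lower-lb-Qi} but now with $z$ tuned so that the pre-window sample size $T_{i-1}$ cannot resolve the active cells (TV bound from Lemma~\ref{lemma:single-bin-tv}), gives regret $\apprge \delta\cdot s\cdot (Mz)^{-d}\cdot (T_i-T_{i-1})\cdot\frac{1}{M}$ on that window. Optimizing $z$ against each window and combining yields $\sup_{P\in\EnvAlpha}R_T\apprge M^{-O(1)}T^{h_M(\alpha)}$; keeping careful count of the factors of $M$ lost in the partition $P_X(C)=(Mz)^{-d}$, in the stripe split, and in the pigeonhole step produces the claimed $M^{\Mexponentoracle}$ prefactor. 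I expect the main obstacle to be precisely this $M$-bookkeeping: one must choose the per-level resolutions $z_i$ and cutoffs $T_i$ so that \emph{every} level contributes regret of the same order $M^{-O(1)}T^{h_M(\alpha)}$ simultaneously, while respecting the constraints $T_{i-1}(Mz_i)^{-(2\beta+d)}=o(1)$ needed for indistinguishability within a level — this is a constrained optimization whose solution is the geometric schedule $\gamma(\alpha)^{i}$, and verifying the boundary cases ($i=1$ with the constant-gap-free construction, and $i=M$ where $t_M=T$ is forced) requires care. The degenerate case $\alpha=\infty$ is dispatched separately by the two-batch successive-elimination argument already given (Lemma~\ref{lemma:se-mab}).
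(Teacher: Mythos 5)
Your proposal is correct and follows essentially the same route as the paper. For the upper bound you invoke Lemma~\ref{lem:basedb-regret} with the oracle-balanced schedule $u_i=(1-\gamma(\alpha)^i)/(1-\gamma(\alpha)^M)$, which is exactly what Appendix~\ref{sec:lower-bound-extension} does via~\eqref{eq:split-factors-oracle}--\eqref{eq:batch-bin-size-oracle}. For the lower bound you correctly identify the key simplification over the regret-inflation case: since $\alpha$ is fixed, a \emph{single} mixture over sign vectors suffices, the $A_m$ partition the sample space so pigeonhole gives some $m^\star$ with $\mathbb{P}_{\pi,\omega}(A_{m^\star})\ge 1/M$, and only the within-level TV bound (Lemma~\ref{lemma:single-bin-tv}) is needed --- the cross-family $\chi^2$ machinery of Section~\ref{sec:mix-indis-proof} plays no role. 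One point worth making explicit (you gesture at it with ``per-level resolutions $z_i$'' but state the construction in terms of a single $z$ at first): the $M$ different resolutions must coexist simultaneously, which the paper achieves by placing them in disjoint stripes along $x_1$ and activating all $M$ levels of bumps at once in the reward function; a genuinely single-resolution family would only defeat the policy on one window. With that understood, your $M$-bookkeeping sketch, including the $M^{-2}$ from the pigeonhole and TV slack combined with the $M^{-2\gamma(\alpha)}$ absorbed into $(M z_{m^\star})^{-\beta(1+\alpha)}$, lands on the claimed $M^{-4}$ prefactor exactly as in the paper.
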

We use the remaining of the section to prove the above proposition.
\subsection{Proof of the upper bound}
Define
\begin{align}
g_{0}=\lfloor b^{\frac{1}{2\beta+d}}\rfloor,\qquad\text{and}\qquad g_{i}=\lfloor g_{i-1}^{\gamma}\rfloor,i=1,...,M-2. \label{eq:split-factors-oracle}
\end{align}
Denote by $w_i=(\prod_{l=0}^{i-1}g_{l})^{-1}$. In addition, define
\begin{align}
t_{i}-t_{i-1} & =\lfloor l_{i}w_{i}^{-(2\beta+d)}\log(Tw_{i}^{d})\rfloor,1\le i\le M-1,\label{eq:batch-bin-size-oracle}
\end{align}
for $l_{i}>0$ sufficiently large. Let $(\hat{\Gamma}, \hat{\pi})$ be the policy of running \myalg under the above grid choice. By Lemma~\ref{lem:basedb-regret}, for any $P\in\EnvAlpha$,

\begin{align*}
R_{T}(\hat{\Gamma},\hat{\pi};P) 
& \apprle t_{1}+\sum_{i=2}^{M-1}(t_{i}-t_{i-1})\cdot w_{i-1}^{\beta+\alpha\beta}+(T-t_{M-1})w_{M-1}^{\beta+\alpha\beta}.
\end{align*}
Under the choices for the
batch size and the split factors in~(\ref{eq:batch-bin-size-oracle})-(\ref{eq:split-factors-oracle}),  
\begin{align*}
t_{1} & \lesssim T^{\frac{1-\gamma}{1-\gamma^{M}}}\log T,\\
(t_{i}-t_{i-1})\cdot w_{i-1}^{\beta+\alpha\beta} & \lesssim T^{\frac{1-\gamma}{1-\gamma^{M}}}\log T,\qquad\text{for }2\leq i\leq M-1,\\
(T-t_{M-1})w_{M-1}^{\beta+\alpha\beta} & \leq Tw_{M-1}^{\beta+\alpha\beta}\lesssim T^{\frac{1-\gamma}{1-\gamma^{M}}}\log T.
\end{align*}
Combining the above three bounds completes the proof.

\subsection{Proof of the lower bound}
The proof mainly follows the strategy outlined in~\cite{jiang2025batched}, but with a slightly different reward function construction to handle the wider range of $\alpha$.

\subsubsection{Construction of the hard instances}
Define $b\asymp T^{(1-\gamma)/(1-\gamma^{M})}$. For each $1 \leq m \leq M$, we set $T_{m}=\lfloor b^{(1-\gamma^{m})/(1-\gamma)}\rfloor$. Besides, define  
\[z_1=1,\quad z_{m}=\lceil M^{-1}(36T_{m-1}M^{2})^{1/(2\beta+d)}\rceil,\quad\textrm{for } 2\le m\le M.\]

\paragraph{Constructing the covariate distribution.} Split coordinate $x_1$ into $M$ stripes $\mathcal{S}_m=\{x\in[0,1]^d:x_1\in[(m-1)/M,m/M)\}$, $m=1,\ldots,M$. Fix integers $z_m$ and set
\[
w_m=\frac{1}{M z_m},\qquad r_m=\frac{1}{4 M z_m}.
\]
Inside stripe $m$, form an axis-aligned grid of \emph{micro-cells} $\{C_{m,j}\}_{j=1}^{Z_m}$ of side-length $w_m$ by using $z_m$ cuts along $x_1$ (within the stripe) and $M z_m$ cuts along each of the remaining $d-1$ coordinates. Thus
\[
Z_m = z_m\cdot (M z_m)^{d-1} = M^{d-1} z_m^d .
\]
Let $q_{m,j}$ be the center of $C_{m,j}$, and define $\ell_\infty$ balls
\[
B_{m,j}\coloneqq B_{\infty}(q_{m,j},r_m)\subset C_{m,j}.
\]

With this partition, we define $P_X$ to be the uniform distribution on $\bigcup_{m=1}^M\bigcup_{j=1}^{Z_m} B_{m,j}$. Then
\begin{equation}\label{eq:covariate-density-oracle}
 P_X(B_{m,j})=(Mz_m)^{-d}\qquad\text{for all }m,j.   
\end{equation}
It is straightforward to check that $P_X$ obeys Assumption~\ref{ass:bdd-density}.

\paragraph{Designing the reward family. }

Now we are ready to construct the reward functions. 
Across the families, we will let $f_{(-1)} \equiv \frac{1}{2}$. 
Fix a bump $\phi:[0,\infty)\to[0,1]$:
\[
\phi(r)=\begin{cases}
1,& 0\le r<\tfrac14,\\
2-4r,& \tfrac14\le r<\tfrac12,\\
0,& r\ge\tfrac12.
\end{cases}
\]
For level $m$, define
\begin{equation}
\label{eq:xi}
\xi_{m,j}(x)\coloneqq \delta_m\,\phi^{\beta}\big(M z_m\|x-q_{m,j}\|_\infty\big)\,\mathbf 1{\{x\in C_{m,j}\}},
\qquad \delta_m \coloneqq  D_\phi\,(M z_m)^{-\beta},
\end{equation}
with $D_{\phi}=\min(4^{-\beta}L,1/4)$. Then $\xi_{m,j}$ is supported on $B_{m,j}$, equals $\delta_m$ on the inner quarter, and is $(\beta,L)$-H\"older.

Choose a subset $S_m\subset[Z_m]$ with size 
\begin{equation}
\label{eq:sm}
|S_m| = s_m \coloneqq  \Big\lceil M^{-1}(M z_m)^{\,d-\alpha\beta}\Big\rceil ,\quad2\le m\le M, \quad|S_1| = s_1 \coloneqq M^{d-1}.
\end{equation}
% for a small absolute constant $c_s>0$, 
Let $Z=\sum_{m=1}^M Z_m$. Denote by $\Omega=\{\pm1\}^Z$. We define the reward family $\mathcal{F}$ to be 
\begin{equation}
\label{eq:family-oracle}
\mathcal{F} = \Big\{\, (f^{(1)}_{\omega}(x) = \tfrac12 + \sum_{m=1}^{M}\ \sum_{j\in S_m} \omega_{m,j}\,\xi_{m,j}(x),\quad f^{(-1)}\equiv\tfrac12):\omega\in\Omega\,\Big\}.
\end{equation}
By Proposition~\ref{prop:smooth-margin}, we have $\mathcal{F}\subset\EnvAlpha$.

\subsubsection{Lower bounding the regret during the $m$-th batch}

Since the worst-case regret is lower bounded by the average regret over the family $\Omega$, 
\begin{align}
\sup_{(f,\frac{1}{2})\in\instfamPREV}&R_{T}(\pi,f)\nonumber \\
&\ge\mathbb{E}_{\omega\sim\omgdistPREV}\eoverpiPREV\left[\sum_{t=1}^{T}\left(f^{\star}(X_{t})-f^{(\pi_t(X_{t}))}(X_{t})\right)\right]\nonumber \\
 & \overset{\mathrm{(i)}}{\ge}\sum_{t=T_{m-1}+1}^{T_{m}}\sum_{j\in S_m}\mathbb{E}_{\omega\sim\omgdistPREV}\eoverpiPREV^{t}\left[D_{\phi}(Mz_{m})^{-\beta}\mathbf{1}\{X_{t}\in\peakmPREV,\pi_{t}(X_{t})\neq\omega_{m,j}^{}\}\right]\nonumber \\
 % & =D_{\phi}(Mz_{m})^{-\beta}\sum_{t=T_{m-1}+1}^{T_{m}}\sum_{j\in S_m}\frac{1}{2^{Z}}\sum_{\omega\in\Omega}\eoverpiPREV^{t}\left[\mathbf{1}\{X_{t}\in\peakmPREV,\pi_{t}(X_{t})\neq\omega_{m,j}^{}\}\right]\nonumber \\
 & =D_{\phi}(Mz_{m})^{-\beta-d}\sum_{t=T_{m-1}+1}^{T_{m}}\sum_{j\in S_m}\frac{1}{2^{Z}}\sum_{\wlooPREV\in\omglooPREV}\underbrace{\sum_{l\in\{\pm1\}}\mathbb{E}_{\pi,\omega_{m,j}^{}=l}^{t}P_{X}(\pi_{t}(X_{t})\neq l\mid X_{t}\in\peakmPREV)}_{U_{m,j}^{t}}.\label{eq:regret-to-test}
\end{align}
Here, step (i) uses the fact that regret is only incurred on $\peakmPREV$'s and the optimal action is specified by $\omega_{m,j}$; we use $\omega_{-(m,j)}$ to represent the vector after leaving out the $j$-th entry in the $m$-th block of $\omega$.
By Le Cam's method, one has
\begin{align*}
U_{m,j}^{t} & \ge1-\|\pminusPREV^{t}-\pplusPREV^{t}\|_{\mathrm{TV}}\\
 & \ge1-\|\pminusPREV^{T_{m}}-\pplusPREV^{T_{m}}\|_{\mathrm{TV}}\\
 & =\int\min\left\{ \mathrm{d}\pminusPREV^{T_{m}},\mathrm{d}\pplusPREV^{T_{m}}\right\} \\
 & \ge\int_{A_{m}}\min\left\{ \mathrm{d}\pminusPREV^{T_{m}},\mathrm{d}\pplusPREV^{T_{m}}\right\} ,
\end{align*}
where the second inequality  is due to $t\le T_m$. Since the available
observations for $\pi$ at $T_{m}$ are the same as those at $T_{m-1}$ under $A_{i}$, we continue to lower bound
\begin{align*}
U_{m,i,j}^{t}  
 &\ge\int_{A_{m}}\min\left\{ \mathrm{d}\pminusPREV^{T_{m-1}},\mathrm{d}\pplusPREV^{T_{m-1}}\right\} \\
 &=\frac{1}{2}\int_{A_{m}}\left(\mathrm{d}\pminusPREV^{T_{m-1}}+\mathrm{d}\pplusPREV^{T_{m-1}}-|\mathrm{d}\pminusPREV^{T_{m-1}}-\mathrm{d}\pplusPREV^{T_{m-1}}|\right)\\
 &\ge\frac{1}{2}\left(\pminusPREV^{T_{m-1}}(A_{m})+\pplusPREV^{T_{m-1}}(A_{m})\right)-\|\pminusPREV^{T_{m-1}}-\pplusPREV^{T_{m-1}}\|_{\mathrm{TV}}\\
 &\ge\frac{1}{2}\left(\pminusPREV(A_{m})+\pplusPREV(A_{m})\right)-\frac{1}{2M},
\end{align*}
where the last step applies Lemma~\ref{lemma:single-bin-tv}.

Plugging the above back to~(\ref{eq:regret-to-test}), we obtain
\begin{align}\label{eq:reg-to-m}
\sup_{f\in\instfamPREV}& R_{T}(\pi,f) \nonumber\\& \ge D_{\phi}(Mz_{m})^{-(\beta+d)}\sum_{t=T_{m-1}+1}^{T_{m}}\sum_{j\in S_m}\frac{1}{2^{Z+1}}\sum_{\wlooPREV\in\omglooPREV}\left(\pminusPREV(A_{m})+\pplusPREV(A_{m})-\frac{1}{M}\right)\nonumber \\
 & =D_{\phi}(Mz_{m})^{-(\beta+d)}\sum_{t=T_{m-1}+1}^{T_{m}}\sum_{j\in S_m}\frac{1}{2}\left(\mathbb{E}_{\omega\sim\omgdistPREV}\mathbb{P}_{\pi,\omega}(A_{m})-\frac{1}{2M}\right)\nonumber\\
 & =\frac{1}{2}D_{\phi}(Mz_{m})^{-(\beta+d)}(T_{m}-T_{m-1})s_{m}\left(\mathbb{E}_{\omega\sim\omgdistPREV}\mathbb{P}_{\pi,\omega}(A_{m})-\frac{1}{2M}\right)\nonumber.
 % & =\frac{1}{2}D_{\phi}(Mz_{m})^{-(\beta+d)}(T_{m}-T_{m-1})M^{d-1}(M^{-\alpha_{m}\beta})z_{m}^{d-\alpha_{m}\beta}\left(\mathbb{E}_{\omega\sim\omgdistPREV}\mathbb{P}_{\pi,\omega}(A_{m})-\frac{c}{M}\right)\\
 % & \asymp M^{-1}T_{m}(Mz_{m})^{-\beta(1+\alpha)}\left(\mathbb{E}_{\omega\sim\omgdistPREV}\mathbb{P}_{\pi,\omega}(A_{m})-\frac{1}{2M}\right).
\end{align}
Since $\sum_{k=1}^M\mathbb{E}_{\omega\sim\omgdistPREV}\mathbb{P}_{\pi,\omega}(A_{k})\ge1$, there exists some $m^\star\in[M]$ such that $\mathbb{E}_{\omega\sim\omgdistPREV}\mathbb{P}_{\pi,\omega}(A_{m^\star})\ge1/M$. 
When $m^\star=1$, one has,
\[\sup_{f\in\instfamPREV} R_{T}(\pi,f)
     \apprge M^{-3}\cdot T_1
     \asymp M^{-3}\cdot T^{h_M(\alpha)}\]

When $m^\star\ge2$,
 \begin{align*}
     \sup_{f\in\instfamPREV} R_{T}(\pi,f)
     &\apprge M^{-2}T_{m^\star}(Mz_{m^\star})^{-\beta(1+\alpha)}\\
     &\asymp M^{-2}T_{m^\star}(M^2T_{m^\star-1})^{-\gamma(\alpha)}
     \apprge M^{\Mexponentoracle}\cdot T^{h_M(\alpha)}.
     % &=\frac{1}{2}M^{-2-\beta(1+\alpha)}T_{m^\star}z_{m^\star}^{-\beta(1+\alpha)}\apprge M^{-3-d}\frac{T_{m^\star}}{T^\gamma_{m^\star-1}}M^{-2\gamma}\apprge M^{\Mexponentoracle}\cdot T^{\frac{1-\gamma}{1-\gamma^M}}.
 \end{align*}

\section{Detailed analysis of the variational problem}\label{sec:variational}
We recall that
\[
\psi_M(\bm{u})\;=\;\sup_{\alpha\in\knowledge}
\Psi_{M}(u,\alpha),
\]
where
\[
\Psi_{M}(\bm{u}, \alpha)
=\max\Bigl\{\,u_1,\ u_2-\gamma(\alpha) u_1,\ \dots,\ 
u_{M-1}-\gamma(\alpha) u_{M-2},\ 1-\gamma(\alpha) u_{M-1}\Bigr\}
-h_M(\alpha),
\]
with $\gamma(\alpha)=\tfrac{(\alpha+1)\beta}{2\beta + d}$ for $\alpha\in[0,d/\beta]$,
and $\Psi_{M}(\bm{u},\infty)=u_1$. Here, 
\[\bm{u} \in 
    \gridset=\{\bm{u}\in\mathbb{R}^{M-1}:0\le u_{1}\le\cdots\le u_{M-1}\le1\}.\]

In this section, we collect several useful facts of the variational problem.

\paragraph{Convexity of $\psi_M(\bm{u})$.} For each fixed $\alpha \in \knowledge$, the payoff function $\Psi_{M}(\bm{u}, \alpha)$ is piecewise linear, and hence convex in $\bm{u}$.
As a result,  $\psi_M(\bm{u})=\sup_{\alpha\in\knowledge}\Psi_{M}(\bm{u}, \alpha)$ is a convex function.

\paragraph{Existence of minimizer.}  Note that  $\Psi_{M}(\bm{u}, \alpha):\gridset\times\knowledge\rightarrow\mathbb{R}$ is jointly continuous in $\bm{u}$ and $\alpha$.
% $\gridset$, and $\mathcal{A}$ is compact, 
We can apply Berge's maximum theorem to show $\psi_M(\bm{u})=\sup_{\alpha\in\knowledge}\Psi_{M}(\bm{u}, \alpha)$ is continuous on $\gridset$. Consequently, by the Weierstrass extreme value theorem, there exists some $\bm{u}^\star\in\gridset$ such that 
$\psi_{M}(\bm{u}^\star)=\psi_{M,\knowledge}^{\star}$.

\paragraph{Positive optimal value. }
We know that for every $\alpha \in \knowledge$, 
\[
\inf_{\bm{u} \in \gridset} \Psi_{M}(\bm{u}, \alpha) = 0,
\]
and 0 is achievable by some $\bm{u}^\star(\alpha) \in \gridset$.
We also know that for any $\alpha_1 \neq \alpha_2 \in \knowledge$, $\bm{u}^\star(\alpha_1) \neq \bm{u}^\star(\alpha_2)$.

Now suppose that $\optexp = 0$, and let $\bm{u}^\star$ be the minimizer, whose existence has been shown above. 
Then we have
\[
\psi_{M}(\bm{u}^\star) = \sup_{\alpha \in \knowledge} \Psi_M(\bm{u}^\star, \alpha) = 0.
\]
That is, for every $\alpha \in \knowledge$, we have $\Psi_{M}(\bm{u}^\star, \alpha) \leq 0$. 
Taking the previous displays together, we arrive at the conclusion that 
\[
\Psi_{M}(\bm{u}^\star, \alpha) = 0, \qquad \text{for all }\alpha \in \knowledge.
\]
However, this contradicts with the fact that 
for different $\alpha$'s, we have different minimizers. 
As a result, we necessarily have $\optexp > 0$.

% \paragraph{The minimizer lies in the interior. }
\paragraph{Subdifferential. }
By the rule of the subdifferential, we know that 
\[
\partial \psi_M(\bm{u})
=\operatorname{conv}\Biggl(\;\bigcup_{\alpha\in\mathcal A(\bm{u})}
\ \partial_{\bm{u}} \Psi_{M}(\bm{u},\alpha)\;\Biggr),
\]
where $\mathcal A(\bm{u})=\{\alpha:\Psi_{M}(\bm{u},\alpha)=\psi_M(\bm{u})\}$ denotes the set of
active maximizers in the $\sup$.

Now we move on to $\partial_{\bm{u}} \Psi_{M}(\bm{u},\alpha)$. 
For each $\alpha < \infty$ the inner maximum has affine pieces with gradients
\[
\bm{g}_1 = \bm{e}_1,\qquad
\bm{g}_i(\gamma) = \bm{e}_i-\gamma \bm{e}_{i-1}\quad (i=2,\dots,M-1),\qquad
\bm{g}_M(\gamma)=-\gamma \bm{e}_{M-1},
\]
where $\bm{e}_i$ is the $i$-th standard basis vector in $\mathbb R^{M-1}$.
At $\alpha=\infty$ only the block $u_1$ is active, with gradient $\bm{g}_1=\bm{e}_1$. 
Therefore,
\[
\partial \psi_M(\bm{u})=\operatorname{conv}\Bigl\{\,
\bm{g}_i(\gamma(\alpha)):\ \alpha\in\mathcal A(\bm{u}),\ i\in\mathcal I(\bm{u},\alpha)\,\Bigr\},
\]
where $\mathcal I(\bm{u},\alpha)$ is the set of indices $i$ attaining the
max in $\Psi_{M}(\bm{u},\alpha)$.

Carath\'eodory's theorem in $\mathbb R^{M-1}$ implies that any point of
$\partial \psi_M(\bm{u})$ can be represented as a convex combination of at most
$M$ vectors. Concretely, 
for any $ \bm{v} \in\partial \psi_M(\bm{u})$ there exist pairs $(\alpha_k,i_k)$ with
$\alpha_k\in\mathcal A(\bm{u})$, $i_k\in\mathcal I(\bm{u},\alpha_k)$ and weights
$\theta_k\ge0$, $\sum_{k=1}^M\theta_k=1$, such that
\begin{align}\label{eq:convex-gradient}
    \bm{v}=\sum_{k=1}^M \theta_k\, \bm{g}_{i_k}(\gamma(\alpha_k)).
\end{align}
Note that if $\alpha_{k} = \infty$, we must have $i_{k} = 1$.

We record a useful property of this subdifferential. 
\begin{lemma}\label{lemma:no-missing-dir}
 For any $\bm{u}$ with  $\mathbf{0}\in\partial\psi_M(\bm{u})$,  we have for each $1 \leq i \leq M$, there exists some $\alpha_i\in \knowledge$ such that 
 $\eta_i(\bm{u},\alpha_i)-h_M(\alpha_i)= \psi_{M}(\bm{u})$.
 % $\eta_i(\bm{u}^{\star},\alpha)=\max_{1\le i\le M}\eta_{i}(\bm{u}^{\star},\alpha)$.
\end{lemma}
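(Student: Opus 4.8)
The statement reduces to a purely combinatorial fact about the convex representation of $\mathbf{0}\in\partial\psi_M(\bm{u})$. The plan is as follows. First I would reformulate the goal: it suffices to show that for every index $i\in[M]$ there exists an active maximizer $\alpha\in\mathcal{A}(\bm{u})$ with $i\in\mathcal{I}(\bm{u},\alpha)$; indeed, for such $\alpha$ we have $\eta_i(\bm{u},\alpha)=\max_{1\le j\le M}\eta_j(\bm{u},\alpha)$ and $\Psi_M(\bm{u},\alpha)=\psi_M(\bm{u})$, hence $\eta_i(\bm{u},\alpha)-h_M(\alpha)=\psi_M(\bm{u})$, which is exactly the desired conclusion with $\alpha_i\coloneqq\alpha$.

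Next I would invoke the Carath\'eodory representation~\eqref{eq:convex-gradient}: since $\mathbf{0}\in\partial\psi_M(\bm{u})\subset\mathbb{R}^{M-1}$, there are pairs $(\alpha_k,i_k)$ with $\alpha_k\in\mathcal{A}(\bm{u})$, $i_k\in\mathcal{I}(\bm{u},\alpha_k)$, and weights $\theta_k\ge0$ with $\sum_{k=1}^{M}\theta_k=1$ such that $\mathbf{0}=\sum_{k=1}^{M}\theta_k\,\bm{g}_{i_k}(\gamma(\alpha_k))$. Recalling $\bm{g}_1=\bm{e}_1$, $\bm{g}_i(\gamma)=\bm{e}_i-\gamma\bm{e}_{i-1}$ for $2\le i\le M-1$, and $\bm{g}_M(\gamma)=-\gamma\bm{e}_{M-1}$, I set $w_i\coloneqq\sum_{k:\,i_k=i}\theta_k$ and read off the coefficient of $\bm{e}_j$ in the zero vector. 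Only the gradient of branch $j$ contributes a positive term (coefficient $1$) and only the gradient of branch $j+1$ contributes a negative term (coefficient $\gamma(\alpha_k)$) to coordinate $j$, so the balance equations are
\begin{equation}\label{eq:chain-balance}
w_j=\sum_{k:\,i_k=j+1}\theta_k\,\gamma(\alpha_k),\qquad j=1,\dots,M-1.
\end{equation}
The only care needed here is matching gradient coordinates to branch indices correctly, and noting that $\alpha=\infty$ forces $i_k=1$ (its only active gradient is $\bm{e}_1$), so every $\alpha_k$ appearing on the right-hand side of~\eqref{eq:chain-balance} is finite, whence $\gamma(\alpha_k)\ge\beta/(2\beta+d)>0$.

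Finally I would run a forward chaining argument on~\eqref{eq:chain-balance}. Since $\sum_k\theta_k=1>0$, not all $w_i$ vanish; if $w_1=0$, then~\eqref{eq:chain-balance} with $j=1$ and positivity of $\gamma(\alpha_k)$ force $w_2=0$, then $w_3=0$, and inductively $w_1=\cdots=w_{M-1}=0$, and the $j=M-1$ equation then gives $w_M=0$ as well, contradicting $\sum w_i=1$. Hence $w_1>0$; then~\eqref{eq:chain-balance} with $j=1$ gives $w_2>0$, then $w_3>0$, and so on up to $w_{M-1}>0$, and the $j=M-1$ equation yields $w_M>0$. Therefore every branch index $i\in[M]$ has $w_i>0$, meaning some $\theta_k>0$ with $i_k=i$; the corresponding $\alpha_k$ supplies the required $\alpha_i$. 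The argument is essentially elementary; there is no serious obstacle beyond the coordinate bookkeeping, and the same reasoning applies verbatim to any knowledge set for which $\gamma(\alpha)>0$ on $\knowledge\setminus\{\infty\}$.
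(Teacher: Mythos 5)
Your proof is correct and follows essentially the same chaining argument as the paper; both extract the Carath\'eodory representation of $\mathbf{0}\in\partial\psi_M(\bm{u})$ and then argue that the gradient of every branch $i\in[M]$ must appear with positive weight. Your version is a bit more careful than the paper's in that you write out the coordinate-wise balance equations $w_j=\sum_{k:\,i_k=j+1}\theta_k\gamma(\alpha_k)$ and run the chain in both directions (contradiction from $w_1=0$, then forward propagation), which closes the paper's slightly hand-waved ``the argument continues to hold if $i_k\geq 2$'' by making explicit that $\gamma(\alpha_k)>0$ and that $\alpha_k=\infty$ cannot appear on the right-hand side.
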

\begin{proof}
% [Proof of Lemma~\ref{lemma:no-missing-dir}]
    By equation~\eqref{eq:convex-gradient}, there exist pairs $(\alpha_k,i_k)$ with
$\alpha_k\in\mathcal A(\bm{u})$, $i_k\in\mathcal I(\bm{u},\alpha_k)$ and weights
$\theta_k\ge0$, $\sum_{k=1}^M\theta_k=1$, such that
\begin{align*}
    \bm{0}=\sum_{k=1}^M \theta_k\, \bm{g}_{i_k}(\gamma(\alpha_k)).
\end{align*}
Since $\sum_{k=1}^M\theta_k=1$, there exists some $1 \leq k \leq M$ such that $\theta_k > 0$. 
Let $i_{k} $ for the corresponding index for $\bm{g}$, i.e., $\bm{g}_{i_k}$ is included in the convex combination. 
Suppose that $i_k = 1$. 
By the structure of $\bm{g}_{1}$, we know that $\theta_{k} \bm{g}_{1}$ is positive in the first entry. 
To cancel this, we must have $\bm{g}_2$ in the convex combination, which further brings $\bm{g}_3$ into the convex combination.
Chaining this argument, we arrive at the conclusion that all $\{\bm{g}_{i}\}_{1\leq i \leq M}$ must be involved in the convex combination. 
The argument continues to hold if $i_k \geq 2$.

Since the set $\{i_k\}_{1 \leq k \leq M} = \{1,2,\ldots, M\}$, by the definition of $(\alpha_k, i_k)$, we know that for each $1 \leq i \leq M$, we have some $\alpha_i \in \knowledge$ such that 
$\eta_i(\bm{u},\alpha_i)-h_M(\alpha_i)= \psi_{M}(\bm{u})$.
\end{proof}

\paragraph{Explicit conic representation of $N_{\mathcal U}(\bm{u})$.}
Define $u_0\coloneqq 0$ and $u_M\coloneqq 1$.
For $i=1,\dots,M$, set
\[
\bm{d}_i \;\coloneqq \; \bm{e}_{i-1}-\bm{e}_i \in \mathbb R^{M-1},
\qquad\text{with the convention } \bm{e}_0\coloneqq \bm{0},\ \bm{e}_M\coloneqq \bm{0}.
\]
Then define the active set at $\bm{u}$ as
\[
I(\bm{u})\;\coloneqq \;\{\, i\in\{1,\dots,M\}:\ u_i =  u_{i-1} \,\}.
\]
The normal cone is the conic hull of the active normals:
\begin{equation}\label{eq:normal-cone-succinct}
N_{\mathcal U}(\bm{u})
\;=\;
\Bigl\{\, \bm{n}\in\mathbb R^{M-1}:\ 
\bm{n}=\sum_{i\in I(\bm{u})}\lambda_i\,\bm{d}_i,\ \ \lambda_i\ge 0 \Bigr\}.
\end{equation}

\paragraph{The first-order optimality condition.}

The KKT condition $\,\bm{0}\in\partial \psi_{M}(\bm{u}^\star)+N_{\mathcal U}(\bm{u}^\star)\,$
is therefore equivalent to the existence of multipliers
$\{\lambda_i\}_{i=1}^{M}$ with $\lambda_i\ge 0$ and $\lambda_i=0$ if $i\notin I(\bm{u}^\star)$,
and weights $\{\theta_k\}_{k=1}^{M}$ as above, such that
\[
\underbrace{\sum_{k=1}^{M}\theta_k\,\bm{g}_{i_k}\bigl(\gamma(\alpha_k)\bigr)}_{\eqqcolon \bm{v}} \;+\; \underbrace{\sum_{i\in I(\bm{u}^\star)}\lambda_i\, (\bm{e}_{i-1} - \bm{e}_{i} )}_{\eqqcolon \bm{n}}\;=\;\bm{0}.
\]

Now, we are ready to establish an important property about the variational problem.

\subsection{The minimizer lies in the interior}
While we have  demonstrated the existence of a minimizer in $\mathcal{\gridset}$, we can actually show a stronger statement that the minimizer cannot be on the boundary. This fact will be crucial for establishing many subsequent properties. 

For the sake of contradiction, assume that $\bm{u}^\star$ is a minimizer lying on the boundary of $\gridset$. 
By definition, there exists some index $1 \leq j \leq m$ such that 
$
u^\star_j = u^\star_{j-1}
$. 
Here we again implicitly define $u^\star_0 = 0$, and $u^\star_M = 1$. 
Consequently, we have the following lemma.

\begin{lemma}\label{lemma:active-lessthan-max}
    Let $\bm{u}^\star$ be a minimizer. 
    Suppose that $u^\star_{j}=u^\star_{j-1}$ for some $1\le j\le M$, then for any $\alpha\in\knowledge$, we have the inequality  
    \[\eta_j(\bm{u}^\star,\alpha) - h_M(\alpha)<\psi_{M}(\bm{u}^\star).\]
\end{lemma}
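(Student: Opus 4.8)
The plan is to argue by contradiction: suppose $u_j^\star = u_{j-1}^\star$ but there exists some $\alpha_0 \in \knowledge$ with $\eta_j(\bm{u}^\star,\alpha_0) - h_M(\alpha_0) = \psi_M(\bm{u}^\star)$ (equality, since $\le$ always holds by the definition of $\psi_M$ as a supremum over branches). I will then perturb $\bm{u}^\star$ in a direction that strictly decreases this active branch $\eta_j$ while not increasing any other active branch, contradicting optimality. The natural perturbation is $\bm{u}^\star + t\,\bm{d}$ where $\bm{d}$ is a feasible direction at $\bm{u}^\star$ (one that keeps us in $\gridset$), and since $u_j^\star = u_{j-1}^\star$, the constraint $u_{j-1}\le u_j$ is tight, so feasible directions at the boundary are constrained; the key is to choose $\bm{d}$ pointing into the interior along the $j$-th coordinate.

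The core computation is to understand the gradient of $\eta_j$ in $\bm{u}$. Recall $\eta_1(\bm{u},\alpha) = u_1$, $\eta_i(\bm{u},\alpha) = u_i - \gamma(\alpha) u_{i-1}$ for $2\le i\le M-1$, and $\eta_M(\bm{u},\alpha) = 1 - \gamma(\alpha) u_{M-1}$. So $\eta_j$ has a $+1$ coefficient on $u_j$ (for $j\le M-1$) and a $-\gamma(\alpha)$ coefficient on $u_{j-1}$ (for $j\ge2$). Since $\gamma(\alpha)>0$ on $\knowledge$ when $\alpha<\infty$ (and the $\alpha=\infty$ branch only involves $\eta_1$), decreasing $u_j$ or increasing $u_{j-1}$ both decrease $\eta_j$. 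The issue is that moving these coordinates also affects the neighboring branches $\eta_{j+1}$ (which contains $-\gamma u_j$, so decreasing $u_j$ \emph{increases} $\eta_{j+1}$) and $\eta_{j-1}$ (which contains $+u_{j-1}$, so increasing $u_{j-1}$ \emph{increases} $\eta_{j-1}$). To avoid this, I would move the \emph{pair} $u_{j-1}, u_j$ together — say decrease both by the same amount $t$ — or more generally find a small perturbation supported on a contiguous block whose endpoints are not moved. Because $u_j^\star = u_{j-1}^\star$, shifting this collapsed node slightly is legitimate: decreasing $u_{j-1}^\star = u_j^\star$ by $t$ keeps $u_{j-2}^\star \le u_{j-1}^\star - t$ for small $t$ (interior gap, or iterate) and keeps $u_j^\star - t \le u_{j+1}^\star$. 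Under this joint shift, $\eta_j$ changes by $-t(1-\gamma(\alpha_0)) < 0$ since $\gamma(\alpha_0) \le \gamma_{\max} = \frac{d+\beta}{d+2\beta} < 1$; meanwhile $\eta_{j-1}$ changes by $-t < 0$, $\eta_{j+1}$ changes by $+t\gamma(\alpha) $ — wait, that last one is bad, so I need to be more careful and also move $u_{j+1}$, propagating.

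The cleanest route is therefore to iterate the local shift so that it forms a telescoping perturbation all the way to a coordinate with slack, or all the way to the fixed endpoints $u_0=0$, $u_M=1$. Concretely: starting from the collapsed pair, define $\bm{d}$ to be $-1$ on coordinates $j-1, j, j+1, \dots, \ell$ and $0$ elsewhere, where $\ell$ is the largest index $\le M-1$ with $u_{j-1}^\star = \cdots = u_\ell^\star$ (the whole collapsed run), and similarly extend downward; then the only $\eta_i$'s whose value changes are $\eta_k$ for $k$ in the collapsed run and the two branches at the two ends of the run, and a short case analysis (using $u_{j-1}^\star=\cdots=u_\ell^\star$ and $\gamma < 1$) shows every one of these either strictly decreases or is not an active branch. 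Since $\psi_M(\bm{u}^\star)$ is the max over active branches, and all active branches that move, move \emph{down}, while inactive branches are bounded away from the max by continuity for small $t$, we get $\psi_M(\bm{u}^\star + t\bm{d}) < \psi_M(\bm{u}^\star)$, contradicting minimality. The main obstacle I anticipate is bookkeeping the boundary cases $j=1$ (then $\eta_1 = u_1$ depends only on $u_1$, and $u_1^\star = u_0^\star = 0$, which forces $\eta_1 = 0 \ge$ contradiction more directly since $\psi_M(\bm{u}^\star) > 0$ by Proposition~\ref{proposition:opt-prop}) and $j=M$ (then $\eta_M = 1 - \gamma u_{M-1}$ and $u_{M-1}^\star = u_M^\star = 1$, so $\eta_M = 1 - \gamma(\alpha)$, and one checks whether this can equal $\psi_M(\bm{u}^\star)$ — it typically cannot for the minimizer, but needs the structural facts), together with verifying $\gamma(\alpha) < 1$ uniformly on $\knowledge$ to guarantee the strict sign $1-\gamma(\alpha_0) > 0$. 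The $\alpha=\infty$ branch is harmless since it only ever activates $\eta_1$.
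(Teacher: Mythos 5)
Your perturbation strategy is headed in a plausible direction but has a gap at exactly the step you wave through. Decreasing the whole collapsed run $u_a^\star=\cdots=u_b^\star$ by $t$ (where $u_{a-1}^\star<u_a^\star$ and $u_b^\star<u_{b+1}^\star$, with the conventions $u_0=0$, $u_M=1$) decreases $\eta_a,\dots,\eta_b$ but strictly \emph{increases} $\eta_{b+1}$ by $t\gamma(\alpha)$, and you assert without argument that this branch ``is not an active branch.'' That assertion is not free — in fact, proving it already yields the lemma with no perturbation at all. For any $\alpha<\infty$, since $u_{b+1}^\star>u_b^\star=u_j^\star=u_{j-1}^\star$, one has $\eta_{b+1}(\bm{u}^\star,\alpha)=u_{b+1}^\star-\gamma(\alpha)u_b^\star>(1-\gamma(\alpha))u_b^\star=\eta_j(\bm{u}^\star,\alpha)$, hence $\eta_j(\bm{u}^\star,\alpha)-h_M(\alpha)<\eta_{b+1}(\bm{u}^\star,\alpha)-h_M(\alpha)\le\psi_M(\bm{u}^\star)$. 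This one-line comparison is the entire content of the lemma; the paper does exactly this, using the branch $\eta_{k+1}$ at the bottom of the run (largest $k$ with $u_k^\star<u_{j-1}^\star$) instead of the top, which is slightly preferable because the bottom anchor $u_0=0$ always exists when $u_{j-1}^\star>0$, whereas the top anchor can fail when $u_{M-1}^\star=1$. The perturbation scaffolding is therefore a detour: you cannot certify that $\psi_M$ decreases under the shift without first establishing that the increasing branch was strictly below the max, and that is the lemma itself.

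A second, smaller issue is feasibility at the lower boundary. If the run abuts $u_0=0$, i.e.\ $u_{j-1}^\star=0$, your perturbation would push $u_{j-1}$ below $0$ and is inadmissible. You acknowledge this only for $j=1$, but it can occur for any $j$ with $u_{j-1}^\star=0$ (a run of zeros at the start of $\bm{u}^\star$). The paper dispatches this as a separate Case 1: when $u_{j-1}^\star=0$ one has $\eta_j(\bm{u}^\star,\alpha)-h_M(\alpha)=-h_M(\alpha)\le 0<\psi_M(\bm{u}^\star)$, using the positivity of the optimal value established earlier in the proposition. You should carry the same case split uniformly in $j$.
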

\begin{proof}
% [Proof of Lemma~\ref{lemma:active-lessthan-max}]
    We consider the following two cases. 

    \paragraph{Case 1: $u^\star_{j-1} = 0$.} 
    In this case, for any $\alpha \in \knowledge$, we have $\eta_j(\bm{u}^\star, \alpha) - h_{M}(\alpha) = u^\star_{j} -\gamma(\alpha) u^\star_{j-1}- h_{M}(\alpha) = - h_{M}(\alpha) \leq 0$, while $\psi_{M}(\bm{u}^\star) = \optexp > 0$.  
    Hence the desired inequality holds.

    \paragraph{Case 2: $u^\star_{j-1} > 0$.} 
     Let $k \geq 0 $ be the largest index such that $\bm{u}^\star_{k}<\bm{u}^\star_{j-1}$. Such $k$ is guaranteed to exist because $\bm{u}^\star_0 = 0 < \bm{u}^\star_{j-1}$. In this case, we have 
     In other words, $\bm{u}^\star_{k}=\bm{u}^\star_{i-1}$ for $j+1\le k\le i-1$. One has $$
\eta_{j}(\bm{u}^\star,\alpha)  =(1-\gamma(\alpha))\bm{u}^\star_{j-1},$$
while 
$$
\eta_{k+1}(\bm{u}^\star,\alpha)=\bm{u}^\star_{k+1}-\gamma(\alpha)\bm{u}^\star_{k}=\bm{u}^\star_{j-1}-\gamma(\alpha)\bm{u}^\star_{k}>(1-\gamma(\alpha))\bm{u}^\star_{j-1}=\eta_{j}(\bm{u}^\star,\alpha).
$$
Here, the inequality is due to  $\bm{u}^\star_{k}<\bm{u}^\star_{i-1}$. Therefore, we have $\eta_{j}(\bm{u}^\star,\alpha) - h_M(\alpha)<\eta_{k+1}(\bm{u}^\star,\alpha) - h_M(\alpha)\le \psi_{M}(\bm{u}^\star)$.
\end{proof}

Combining Lemma~\ref{lemma:active-lessthan-max} and Lemma~\ref{lemma:no-missing-dir}, 
we see that $\mathbf{0}\notin\partial\psi_M(\bm{u}^\star)$, $\bm{v} \neq \bm{0}$. 
As a result, $\bm{n} \neq \bm{0}$.
Let $j$ be the smallest index in $I(\bm{u}^\star)$ such that $\lambda_{j} > 0$. 
By Lemma~\ref{lemma:active-lessthan-max} again, we know that 
$i_{k} \neq j$ for all $k$'s in the convex combination representation of $\bm{v}$.

First, suppose that $j \geq 2$. 
Consider the coordinate $v_{j-1} + n_{j-1}$. 
By the definition of the normal vector, we know that $n_{j-1} > 0$. 
Since $i_{k} \neq j$ for all $k$'s, we also know that $v_{j-1} \geq 0$. 
This contradicts with $\bm{v} + \bm{n} = \bm{0}$.

Second, suppose that $j=1$, i.e., $u^\star_1 = 0$, and $\lambda_1 > 0$.
If $\lambda_{2} = 0$,
then we must have $n_1 < 0$. 
Note that $i_k \neq 1$ for all $k$, and hence $v_1 \leq 0$. 
This contradicts with the first-order optimality condition. 
Consequently, we must have $\lambda_2 > 0$.
Now consider the second coordinate $v_2 + n_2$. 
If $\lambda_{3} = 0$, then $n_2 <0$. However $i_k \neq 2$ for all $k$, and hence $v_2 \leq 0$.
As a result, we can only have $\lambda_3 > 0$.
Continuing this argument, we must have $\lambda_{j} > 0$ for all $1\leq j \leq M$.
In other words, $u^\star_{j} = u^\star_{j-1}$ for all $1 \leq j \leq M$, which is impossible.

In all, we have proved via contradiction that $\bm{u}^\star$ must lie in the interior of the feasible set $\gridset$.

\subsection{Reduction to a system of equations}
The analysis carried out so far paves the way for studying the original problem in an alternative form, which proves much more convenient for later use.

Since $\bm{u}^\star$ is a minimizer lying in the interior of $\gridset$, we have 
$
\mathbf{0}\;\in\;\partial\psi_M(\bm{u}^{\star})$. By Lemma~\ref{lemma:no-missing-dir}, for each $1 \leq i \leq M$, there exists some $\alpha_i\in \knowledge$ such that 
 $\eta_i(\bm{u}^\star,\alpha_i)-h_M(\alpha_i)= \psi_{M}(\bm{u}^\star)$. For $ i \geq 2$, we have 
\[
\eta_i(\bm{u}^\star,\alpha_i)-h_M(\alpha_i) = u^\star_{i} - \gamma(\alpha) u^\star_{i-1} -h_M(\alpha_i) = \psi_{M}(\bm{u}^\star) > 0.
\]
It is clear that $\alpha_i < \infty$. Otherwise the equality would not hold. 

Denote by $\sset=[\beta/(2\beta+d),(\beta+d)/(2\beta+d)]=[\gmin,\gmax]$ the feasible range of $\gamma(\alpha)$. From now on, we redefine $h_M:\sset\rightarrow\mathbb{R}$ as 
\[h_M(\gamma)=\frac{1-\gamma}{1-\gamma^M}.\]
To avoid notation cluster, we write $\bm{u}$ for $\bm{u}^{\star}$. By the optimality condition,
\begin{align}\label{eq:reduction-to-system}
u_{1} & =\max_{\gamma\in\sset}u_{i}-\gamma u_{i-1}-h_{M}(\gamma),\quad2\le i\le M-1\nonumber\\
 & =\max_{\gamma\in\sset}1-\gamma u_{M-1}-h_{M}(\gamma).
\end{align}
Define the function $\phi_{M}(x)=\min_{\gamma\in\sset}\gamma x+h_{M}(\gamma)$
for $x\in(0,1)$. Rearranging the above equations, we have
\begin{equation}
u_{i}=u_{1}+\phi_{M}(u_{i-1}),\quad2\le i\le M,\label{eq:opt-cond-inf-ver}
\end{equation}
where we write $u_{M}=1$ for convenience. Denote by $\gamma_{i}=\argmin_{\gamma\in\sset}\gamma u_{i-1}+h_{M}(\gamma)$. Clearly, one has $\gamma_i=\beta(1+\alpha_i)/(2\beta+d)$.

The system of equations in~\eqref{eq:reduction-to-system} is an important consequence of the optimality condition. From now on, we will focus on this system rather than the original objective function. As we shall soon see, it allows us to establish several interesting properties about the minimizer $\bm{u}$ and the sequence $\{\gamma_i\}$.

\subsubsection{Monotonicity of $\{\gamma_i\}$}
First, we show the sequence $\{\gamma_i\}$ is non-increasing, which in turn translates to the monotonicity of $\alpha_i$. 
% Denote by $\gamma_i=\beta(1+\alpha_i)/(2\beta+d)$, this is equivalent to show the sequence $\{\gamma_{i}\}_{i=1}^{M}$
% is non-increasing. With a little abuse of notation, we write $h_M(\gamma)=(1-\gamma)/(1-\gamma^M)$. 
Let $\eta_v(\gamma)=v\cdot\gamma+h_M(\gamma)$. Since $h_M^\prime(\gamma)<0$ and $h_M^{\prime\prime}(\gamma)>0$, 
\[
\eta'(\gamma)=v+h_M'(\gamma)
\]
is strictly increasing in $\gamma$, so $\eta$ is strictly convex and has a unique minimizer $\gamma^*(v)$.

Define the thresholds
\[v_L\coloneqq -h_M^\prime(\gmax),\quad v_U\coloneqq -h_M^\prime(\gmin)
\quad\text{with } 0<v_L<v_U.\]
Then
\[\gamma^*(v)=
\begin{cases}
\gmax, & 0<v<c_L,\\
\textrm{the solution to }-h_M^\prime(\gamma)=v, & v_L\le v \le v_U,\\
\gmin, & v>v_U.
\end{cases}\]
Because $-h_M'(\gamma)$ is decreasing in $\gamma$, the solution of
$-h_M'(\gamma)=v$ becomes smaller when $v$ increases. Thus $\gamma^*(v)$ is nonincreasing in $v$: it is at $\gmax$ for small $v$, moves left continuously through the interior as $v$ grows, and sticks at $\gmin$ for large $v$. Since $\{u_i\}_{i=1}^{M-1}$ is an increasing sequence, the sequence $\{\gamma^*(u_i)\}_{i=1}^{M}$ is non-increasing.

\subsubsection{Behavior of the individual $u_{i}$}
The first lemma provides lower bound to $u_1$. For $a \in (0,1)$ and $n \geq 2$, define $S_{n}(a)\coloneqq  \sum_{k=0}^{n-1}a^{-k}$.

\begin{lemma}\label{lemma:u1-lb-strengthen}Fix any $c\in\sset$. The first component $u_{1}$ is lower bounded by
\[
u_{1}\ge\frac{c^{-1}\gmax{}^{-(M-3)}\bigl(1-h_{M}(c)\bigr)-h_{M}(\gmax)S_{M-2}(\gmax)}{S_{M-2}(\gmax)+\gmax+c^{-1}\gmax{}^{-(M-3)}}.
\]
\end{lemma}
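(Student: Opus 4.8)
The plan is to run the equations~\eqref{eq:opt-cond-inf-ver} ``backwards'', turning each of them into an affine lower bound and then composing those bounds. Write $a\coloneqq u_1$ and drop the stars; since $\bm u^\star$ lies in the interior of $\gridset$ we have $u_i = a + \phi_M(u_{i-1})$ for $2\le i\le M$ with $u_M=1$, and $\phi_M$ is evaluated only at the points $0<u_1<\cdots<u_{M-1}<1$, so everything is well-defined. The one elementary fact I will use repeatedly is that $\phi_M(x)=\min_{\gamma\in\sset}(\gamma x + h_M(\gamma)) \le \gamma x + h_M(\gamma)$ for \emph{every} $\gamma\in\sset$; hence each equation $u_i = a + \phi_M(u_{i-1})$ gives $u_i \le a + \gamma u_{i-1} + h_M(\gamma)$, i.e.\ $u_{i-1}\ge \gamma^{-1}\bigl(u_i - a - h_M(\gamma)\bigr)$.

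First I would apply this at the top equation $i=M$ with the chosen value $\gamma=c$ (legitimate since $c\in\sset$), obtaining $u_{M-1}\ge \frac{1-a-h_M(c)}{c}\eqqcolon L_{M-1}$. For the remaining equations $i=M-1,M-2,\dots,2$ I would instead use $\gamma=\gmax$, which yields $u_{i-1}\ge \gmax^{-1}\bigl(u_i - a - h_M(\gmax)\bigr)$. Because the map $x\mapsto \gmax^{-1}(x-a-h_M(\gmax))$ is increasing, these bounds propagate downward: setting $L_{i-1}\coloneqq \gmax^{-1}(L_i - a - h_M(\gmax))$, a one-line downward induction gives $u_i\ge L_i$ for all $i$, and in particular $a=u_1\ge L_1$.

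The next step is to solve this affine recursion in closed form. With $b\coloneqq a + h_M(\gmax)$ the recursion reads $L_{i-1}=\gmax^{-1}(L_i-b)$, and iterating over the $M-2$ steps from $L_{M-1}$ down to $L_1$ gives $L_1 = \gmax^{-(M-2)}L_{M-1} - b\,\gmax^{-1} S_{M-2}(\gmax)$, where $S_n(a)=\sum_{k=0}^{n-1}a^{-k}$, with the conventions $S_0=0$ (so $M=2$ is just the single-step bound) and $S_1=1$. Substituting $L_{M-1}=\frac{1-a-h_M(c)}{c}$ and $b=a+h_M(\gmax)$ into $a\ge L_1$ produces an inequality that is affine in $a$; collecting all $a$-terms on the left (their coefficient $1 + c^{-1}\gmax^{-(M-2)} + \gmax^{-1}S_{M-2}(\gmax)$ is strictly positive, so the division is valid) and then multiplying numerator and denominator by $\gmax$ collapses the bound to exactly
\[
u_1 \;\ge\; \frac{c^{-1}\gmax^{-(M-3)}\bigl(1-h_M(c)\bigr) - h_M(\gmax)\,S_{M-2}(\gmax)}{S_{M-2}(\gmax) + \gmax + c^{-1}\gmax^{-(M-3)}},
\]
which is the assertion of Lemma~\ref{lemma:u1-lb-strengthen}.

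Everything here is entirely elementary; the only place I expect to need care is the bookkeeping of the geometric sums — specifically the shift of exponent from $M-2$ to $M-3$ that appears after the final multiplication by $\gmax$, together with the degenerate small-$M$ cases ($M=2$, where the descending chain is empty and $S_0=0$, and $M=3$, where it has one step and $S_1=1$). I would include a short sanity check in each of those cases, which confirms the closed form reduces correctly (e.g.\ for $M=2$ it gives $u_1\ge (1-h_M(c))/(c+1)$, matching the direct one-step argument).
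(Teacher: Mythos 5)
Your proposal is correct, and it takes essentially the same approach as the paper's proof. The paper extracts from the optimality condition the inequalities $u_1 \ge u_{j+1}-\gmax u_j-h_M(\gmax)$ for $1\le j\le M-2$ and $u_1\ge 1-cu_{M-1}-h_M(c)$, multiplies the first family by $\gmax^{-(j-1)}$, sums to telescope, and substitutes; your backward affine recursion with guarded lower bounds $L_i$ is exactly that weighted telescoping sum organized step by step, so the two derivations produce identical intermediate inequalities and the same final display.
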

\begin{proof}
% [Proof of Lemma~\ref{lemma:u1-lb-strengthen}]
In view of the optimality condition~\eqref{eq:reduction-to-system}, we have
\begin{align}
u_{1} & \ge u_{j+1}-\gmax u_{j}-h_{M}(\gmax),\quad1\le j\le M-2,\nonumber \\
u_{1} & \ge1-cu_{M-1}-h_{M}(c).\label{eq:final-inequality}
\end{align}
Multiplying the $j$-th inequality by $\gmax^{-(j-1)}$ and summing over
$1\le j\le M-2$, we obtain
\[
u_{1}\sum_{j=1}^{M-2}\gmax^{-(j-1)}\ge \gmax^{-(M-3)}u_{M-1}-\gmax u_{1}-h_{M}(\gmax)\sum_{j=1}^{M-2}\gmax^{-(j-1)}.
\]
Recall that $S_{n}(\gmax)=\sum_{k=0}^{n-1}\gmax^{-k}$. We have $\sum_{j=1}^{M-2}\gmax^{-(j-1)}=\sum_{j=0}^{M-3}\gmax^{-j}=S_{M-2}(\gmax).$
Use this to rewrite the inequality as
\begin{align*}
u_{1}(S_{M-2}(\gmax)+\gmax) & \ge \gmax^{-(M-3)}u_{M-1}-h_{M}(\gmax)S_{M-2}(\gmax)\\
 & \ge \gmax^{-(M-3)}c^{-1}(1-u_{1}-h_{M}(c))-h_{M}(\gmax)S_{M-2}(\gmax)\\
 & =-c^{-1}\gmax^{-(M-3)}u_{1}+c^{-1}\gmax^{-(M-3)}(1-h_{M}(c))-h_{M}(\gmax)S_{M-2}(\gmax),
\end{align*}
where the second step is due to relation~(\ref{eq:final-inequality}).
Combining terms we reach
\[
u_{1}(S_{M-2}(\gmax)+\gmax+c^{-1}\gmax^{-(M-3)})\ge c^{-1}\gmax^{-(M-3)}(1-h_{M}(c))-h_{M}(\gmax)S_{M-2}(\gmax).
\]
Rearranging terms yields the desired claim. 
\end{proof}

Lemma~\ref{lemma:u1-lb-strengthen} lower bounds the value of the first component $u_1$. Since $u_1=\optexp$ by the optimality condition, it provides a lower bound for the optimal objective value as well. 

Similarly, we have the upper bound on $u_{1}$ in the following lemma.

\begin{lemma}\label{lem:u1-upper-bound}We have 
\[
u_{1}\leq\frac{(M+1)\gamma_{\max}^{M-1}}{(1-\gamma_{\max})^{2}}.
\]

\end{lemma}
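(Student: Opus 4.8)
The plan is to use the reduction established earlier: by the optimality condition the minimizer of the system~\eqref{eq:opt-cond-inf-ver} satisfies $u_1=\optexp=\inf_{\bm u\in\gridset}\psi_M(\bm u)$, so it suffices to exhibit a single admissible grid $\tilde{\bm u}\in\gridset$ with $\psi_M(\tilde{\bm u})\le \frac{(M+1)\gmax^{M-1}}{(1-\gmax)^2}$. I would take $\tilde{\bm u}$ to be the geometric grid of ratio $\gmax$, offset so that its last node equals $1$: set $\tilde u_1=V$ and $\tilde u_i=V+\gmax\,\tilde u_{i-1}+h_M(\gmax)$ for $2\le i\le M$, and pick $V$ so that $\tilde u_M=1$. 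Solving the linear recursion ($\tilde u_i=V\tfrac{1-\gmax^i}{1-\gmax}+\tfrac{1-\gmax^{i-1}}{1-\gmax^M}$, using $h_M(\gmax)=\tfrac{1-\gmax}{1-\gmax^M}$) yields $V=\frac{\gmax^{M-1}(1-\gmax)^2}{(1-\gmax^M)^2}$, which is exactly the lower-bound quantity in Proposition~\ref{prop:optexp-control}.

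First I would check $\tilde{\bm u}\in\gridset$. The affine map $x\mapsto V+\gmax x+h_M(\gmax)$ has slope $\gmax\in(0,1)$ and fixed point $x^\star=\frac{V+h_M(\gmax)}{1-\gmax}$; since $\tilde u_1=V<x^\star$ (equivalently $-V\gmax<h_M(\gmax)$, which is clear), the orbit $(\tilde u_i)_i$ is strictly increasing and stays below $x^\star$, and $\tilde u_M=1$ then forces $0<\tilde u_1\le\cdots\le\tilde u_{M-1}<1$, so $\tilde{\bm u}$ is feasible. Next I would evaluate $\Psi_M(\tilde{\bm u},\alpha)$. For finite $\alpha$ with $\gamma=\gamma(\alpha)\in\sset$, substituting the recursion into $\eta_i$ gives $\eta_i(\tilde{\bm u},\gamma)=V+h_M(\gmax)+(\gmax-\gamma)\tilde u_{i-1}$ for all $2\le i\le M$ (including $i=M$, since $1=\tilde u_M=V+\gmax\tilde u_{M-1}+h_M(\gmax)$); this is nondecreasing in $i$ because $\gmax-\gamma\ge0$ and $\tilde u_{i-1}$ is increasing, and it dominates $\eta_1=V$. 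Hence $\max_i\eta_i=\eta_M$ and
\[
\Psi_M(\tilde{\bm u},\gamma)=V+\Phi(\gamma),\qquad \Phi(\gamma):=(\gmax-\gamma)\tilde u_{M-1}-h_M(\gamma)+h_M(\gmax),
\]
while $\Psi_M(\tilde{\bm u},\infty)=\tilde u_1=V$. Therefore $\psi_M(\tilde{\bm u})=V+\max\bigl(0,\ \sup_{\gamma\in\sset}\Phi(\gamma)\bigr)$.

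The crux is bounding $\sup_{\gamma\in\sset}\Phi$. The function $\Phi$ is concave (affine in $\gamma$ minus the convex $h_M$) and vanishes at $\gamma=\gmax$, so the tangent-line inequality at $\gmax$ gives $\Phi(\gamma)\le\Phi'(\gmax)(\gamma-\gmax)\le[-\Phi'(\gmax)]_+(\gmax-\gmin)$ for every $\gamma\in[\gmin,\gmax]$, reducing the whole supremum to estimating the single number $-\Phi'(\gmax)=\tilde u_{M-1}+h_M'(\gmax)$. Using $\tilde u_{M-1}=\frac{1-\gmax^{M-1}}{1-\gmax^M}-\frac{V}{\gmax}$ and $h_M'(\gmax)=-\frac{1}{1-\gmax^M}+\frac{M\gmax^{M-1}(1-\gmax)}{(1-\gmax^M)^2}$, the first two pieces telescope and one gets $-\Phi'(\gmax)=-\frac{\gmax^{M-1}}{1-\gmax^M}-\frac{V}{\gmax}+\frac{M\gmax^{M-1}(1-\gmax)}{(1-\gmax^M)^2}\le\frac{M\gmax^{M-1}(1-\gmax)}{(1-\gmax^M)^2}$. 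Hence $\sup_{\gamma\in\sset}\Phi\le\frac{M\gmax^{M-1}(1-\gmax)(\gmax-\gmin)}{(1-\gmax^M)^2}\le\frac{M\gmax^{M-1}}{1-\gmax}$, where I used $1-\gmax^M\ge1-\gmax$ and $\gmax-\gmin\le1$. Combining with $V\le\gmax^{M-1}$ and $1-\gmax\le1$,
\[
\psi_M(\tilde{\bm u})\ \le\ V+\frac{M\gmax^{M-1}}{1-\gmax}\ \le\ \gmax^{M-1}+\frac{M\gmax^{M-1}}{1-\gmax}\ \le\ \frac{(M+1)\gmax^{M-1}}{(1-\gmax)^2},
\]
which gives $u_1=\optexp\le\psi_M(\tilde{\bm u})\le\frac{(M+1)\gmax^{M-1}}{(1-\gmax)^2}$, as claimed.

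The main obstacle is precisely this last estimate: for a fixed $\gmax<1$, both $(\gmax-\gamma)\tilde u_{M-1}$ and $h_M(\gmax)-h_M(\gamma)$ are of constant order as $M\to\infty$, so a term-by-term bound on $\Phi$ is useless; one must exploit that $\Phi$ is a concave function vanishing at the right endpoint $\gmax$, so that its supremum over $[\gmin,\gmax]$ is controlled by the single slope $\Phi'(\gmax)$, which genuinely decays like $M\gmax^{M-1}$. The remaining steps (the closed form for $V$, feasibility of $\tilde{\bm u}$, and the simplification of $\eta_i$) are routine algebra.
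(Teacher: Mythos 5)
Your proof is correct but proceeds by a genuinely different route than the paper's. The paper works directly with the minimizer $\bm{u}^\star$: it uses the identity $u_1+\phi_M(u_{M-1})=1$ and then splits into two cases according to whether the active $\gamma_M\coloneqq\argmin_{\gamma}u_{M-1}\gamma+h_M(\gamma)$ sits at the boundary $\gmax$ (in which case $\gamma_2=\cdots=\gamma_M=\gmax$ and $u_1$ can be solved exactly, giving $\tfrac{\gmax^{M-1}(1-\gmax)^2}{(1-\gmax^M)^2}$) or in the interior (in which case the first-order condition $u_{M-1}=-h_M'(\gamma_M)$ plus monotonicity of $\gamma\mapsto\gamma h_M'(\gamma)-h_M(\gamma)$ gives the bound). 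You instead use only that $u_1^\star=\optexp=\inf_{\bm u\in\gridset}\psi_M(\bm u)$ and bound this infimum by evaluating $\psi_M$ at the explicit geometric test grid $\tilde{\bm u}$ with ratio $\gmax$, which is exactly the paper's Case-1 minimizer; you then absorb the potential suboptimality of this test point via concavity of $\Phi(\gamma)=(\gmax-\gamma)\tilde u_{M-1}-h_M(\gamma)+h_M(\gmax)$ and a tangent-line estimate at the right endpoint $\gmax$, which reduces the whole supremum to the single derivative $-\Phi'(\gmax)=\tilde u_{M-1}+h_M'(\gmax)\le\tfrac{M\gmax^{M-1}(1-\gmax)}{(1-\gmax^M)^2}$. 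Your approach is arguably more elementary in that it sidesteps the case analysis and needs no information about the actual minimizer beyond $u_1^\star=\optexp$; the paper's proof buys sharper intermediate bounds by analyzing the optimum directly. Both yield the same final relaxed bound $\tfrac{(M+1)\gmax^{M-1}}{(1-\gmax)^2}$, and your feasibility check, algebra, and the convexity-based tangent estimate all check out.
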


\begin{proof}The key identity to establish an upper bound on $u_{1}$
is 
\[
u_{1}+\phi_{M}(u_{M-1})=u_{1}+\inf_{\gamma\in\mathcal{S}}u_{M-1}\gamma+h_{M}(\gamma)=1.
\]
Now we split the proof into two cases: (1) when $\gamma_{M}=\gamma_{\max}$,
and (2) when $\gamma_{M}<\gamma_{\max}$.

\paragraph{Case 1: when $\gamma_{M}=\gamma_{\max}$.} In this case,
using the fact that $\gamma_{i}$ is monotonically decreasing, we
know that 
\[
\gamma_{2}=\gamma_{3}=\cdots=\gamma_{M}=\gamma_{\max}.
\]
This actually allows us to solve for $u_{1}$ exactly: 
\[
u_{1}=\frac{\gamma_{\max}^{M-1}(1-\gamma_{\max})^{2}}{(1-\gamma_{\max}^{M})^{2}}.
\]
\paragraph{Case 2: when $\gamma_{M} < \gamma_{\max}$.}In this case,
we have the relationship 
\[
u_{M-1}\geq-h_{M}'(\gamma_{M}).
\]
This together with the key identity yields 
\begin{align*}
u_{1} & =1-\inf_{\gamma\in\mathcal{S}}u_{M-1}\gamma+h_{M}(\gamma)\\
 & =1-u_{M-1}\gamma_{M}-h_{M}(\gamma_{M})\\
 & \leq1+h_{M}'(\gamma_{M})\gamma_{M}-h_{M}(\gamma_{M})\\
 & \leq1+h_{M}'(\gamma_{\max})\gamma_{\max}-h_{M}(\gamma_{\max}),
\end{align*}
where the last steps uses the fact that $h_{M}'(\gamma)\gamma-h_{M}(\gamma)$
is increasing in $\gamma$. Write this upper bound explicitly to see
that 
\[
u_{1}\leq\frac{\gamma_{\max}^{M}(M(1-\gamma_{\max})-1+\gamma_{\max}^{M})}{(1-\gamma_{\max}^{M})^{2}}.
\]

In both cases, the upper bound can be further relaxed to the one stated in the lemma. 
\end{proof}

The next two lemmas are about the gaps between consecutive $u_{i-1}$ and $u_i$.
\begin{lemma}\label{lemma:precondition} For any $M\ge3$, one has
\[
u_{i-1}^{}\le u_{i}^{}\cdot(\gamma_{i+1}^{}+\frac{1}{2}),\quad\forall2\le i\le M-1.
\]

\end{lemma}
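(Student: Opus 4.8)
The plan is to recast the assertion as a lower bound on the gap $u_i-u_{i-1}$, dispatch the case $\gamma_{i+1}\ge\tfrac12$ for free, and handle the remaining case with the recursive and concavity structure of the optimality system together with the lower bound on $\optexp$.

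First I would record what optimality gives us. By \eqref{eq:opt-cond-inf-ver}, for $2\le i\le M$ (with $u_M=1$) we have $u_i=u_1+\phi_M(u_{i-1})$, where $\phi_M(x)=\min_{\gamma\in\sset}\{\gamma x+h_M(\gamma)\}$; writing $\gamma_i$ for the unique minimizer this reads $u_i=u_1+\gamma_i u_{i-1}+h_M(\gamma_i)$, and $\gamma_2\ge\gamma_3\ge\cdots\ge\gamma_M$ by the monotonicity established above. Since $u_{i-1}=u_i-(u_i-u_{i-1})$, the target inequality $u_{i-1}\le(\gamma_{i+1}+\tfrac12)u_i$ is equivalent to
\[
\bigl(\tfrac12-\gamma_{i+1}\bigr)\,u_i\;\le\;u_i-u_{i-1}.
\]
If $\gamma_{i+1}\ge\tfrac12$ this holds trivially: the right-hand side is strictly positive because $\bm u^\star$ lies in the interior of $\gridset$, while the left-hand side is non-positive. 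Hence from now on assume $\gamma_{i+1}<\tfrac12$.

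To lower bound the gap I would use the concavity of $\phi_M$: for every level $l$,
\[
u_{l+1}-u_l=\phi_M(u_l)-\phi_M(u_{l-1})\le\phi_M'(u_{l-1})(u_l-u_{l-1})=\gamma_l\,(u_l-u_{l-1}),
\]
so $u_l-u_{l-1}\ge(u_{l+1}-u_l)/\gamma_l$; chaining this from $l=i$ up to $l=M$ yields
\[
u_i-u_{i-1}\;\ge\;\frac{1-u_{M-1}}{\prod_{l=i}^{M-1}\gamma_l}\;\ge\;\frac{u_1}{\prod_{l=i}^{M}\gamma_l},
\]
the last step using $1-u_{M-1}\ge u_1/\gamma_M$, itself immediate from $1=u_1+\gamma_M u_{M-1}+h_M(\gamma_M)$ and $h_M(\gamma_M)\ge 1-\gamma_M$. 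Therefore it suffices to prove
\[
\bigl(\tfrac12-\gamma_{i+1}\bigr)\,u_i\prod_{l=i}^{M}\gamma_l\;\le\;u_1=\optexp .
\]
On the left I would use $\tfrac12-\gamma_{i+1}<\tfrac12-\gmin=\tfrac{d}{2(2\beta+d)}$, $u_i<1$, and $\prod_{l=i}^{M}\gamma_l\le\gmax\,\gamma_{i+1}^{\,M-i}<\gmax\,2^{-(M-i)}\le\gmax/2$ (using $\gamma_l\le\gamma_{i+1}<\tfrac12$ for $l\ge i+1$, $\gamma_i\le\gmax$, and $M-i\ge1$); on the right I would invoke the sharp lower bound on $\optexp$ from Lemma~\ref{lemma:u1-lb-strengthen} (and, where it is enough, Proposition~\ref{prop:optexp-control}), using additionally that the standing assumption $\gamma_{i+1}<\tfrac12$ forces, via $\gamma_M<\tfrac12$ and the identity $1-\optexp=h_M(\gamma_M)-\gamma_M h_M'(\gamma_M)$, the constraint $\optexp<1-\bigl(h_M(\tfrac12)-\tfrac12 h_M'(\tfrac12)\bigr)$. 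Matching these two-sided estimates on $\optexp$ against the left-hand side is then a one-variable inequality that I would verify for all $M\ge3$; for $M\le2$ the range $2\le i\le M-1$ is empty and there is nothing to prove. The boundary subcases $\gamma_i$ or $\gamma_{i+1}\in\{\gmin,\gmax\}$, where the first-order conditions become inequalities, are handled by the same chain of estimates.

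The main obstacle is precisely this last step in the case $\gamma_{i+1}<\tfrac12$: there both the deficit $\tfrac12-\gamma_{i+1}$ and the gap $u_i-u_{i-1}$ are small, so crude bounds such as $u_i<1$ or $u_{i-1}<u_i$ are too lossy on one side or the other; one must quantify that in the regime where $\gamma_{i+1}$ can dip below $\tfrac12$ the deficit shrinks at least as fast as $\optexp$, which is what pins the argument to the explicit form of $h_M$ and to the two-sided control of the optimal exponent.
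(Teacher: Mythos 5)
Your reformulation of the target as $\bigl(\tfrac12-\gamma_{i+1}\bigr)u_i\le u_i-u_{i-1}$, the trivial dispatch when $\gamma_{i+1}\ge\tfrac12$, and the concavity-chaining $u_l-u_{l-1}\ge(u_{l+1}-u_l)/\gamma_l$ are all sound; the chaining is in the same spirit as the paper's Lemma~\ref{lemma:delta-ratio}, just iterated over all levels, and the derivation $u_M-u_{M-1}\ge u_1/\gamma_M$ from $h_M(\gamma)>1-\gamma$ is correct. So the reduction to the inequality
\[
\Bigl(\tfrac12-\gamma_{i+1}\Bigr)\,u_i\prod_{l=i}^{M}\gamma_l\;\le\;u_1
\]
is a genuinely different, cleaner framing than the paper's split into Step~1 ($\gamma_{M-1}\ge\tfrac12$) and Step~2 (the separate estimate for $i=M-1$ via Lemma~\ref{lemma:direct-last-ineq}).

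The gap, however, is real and is exactly where you leave it. The crude estimates you propose for the left-hand side ($u_i<1$, $\tfrac12-\gamma_{i+1}<\tfrac12-\gmin$, $\prod_{l=i}^M\gamma_l\le\gmax/2$) bound it by a constant depending only on $(\beta,d)$, while the right-hand side $u_1=\optexp$ decays exponentially in $M$ by Proposition~\ref{prop:optexp-control}. So these bounds cannot possibly close the inequality uniformly in $M$; one must show that $\tfrac12-\gamma_{i+1}$ (and in particular $\tfrac12-\gamma_M$) decays at a rate comparable to $\optexp$. That quantitative statement is precisely the mathematical content of the paper's Lemmas~\ref{lemma:uM-greater-one}, \ref{lemma:g-shift-bound}, and \ref{lemma:direct-last-ineq} together with the case split on $M$ and $d$ and the numerical verification behind Figures~\ref{fig:last-g} and \ref{fig:second-last-g}. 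Your constraint $\optexp<1-\bigl(h_M(\tfrac12)-\tfrac12 h_M'(\tfrac12)\bigr)$ only gives an upper bound on $\optexp$ and by itself does not force $\gamma_M$ close enough to $\tfrac12$. In short, ``a one-variable inequality that I would verify for all $M\ge3$'' is not a detail: it is the entire difficulty of the lemma, and the concavity chaining does not make it go away. Without carrying out that verification --- which in the paper is nontrivial and $M$-dependent --- the argument is incomplete.
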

\noindent See Appendix~\ref{sec:proof-precondition} for the proof of Lemma~\ref{lemma:precondition}.

Denote by $\Delta_{i}=u_{i+1}-u_{i}$.
\begin{lemma}\label{lemma:delta-ratio}One has $\gamma_{M}\le\Delta_{M-1}/\Delta_{M-2}\le\gamma_{M-1}$.
\end{lemma}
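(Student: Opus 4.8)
The plan is to exploit the reduced optimality system~\eqref{eq:opt-cond-inf-ver}, namely $u_i = u_1 + \phi_M(u_{i-1})$ for $2 \le i \le M$ (with the convention $u_M = 1$), together with the concavity of $\phi_M$; throughout we are in the regime $M \ge 3$ so that $u_{M-1}$ and $u_{M-2}$ are both genuine optimality variables. First I would subtract the relations at indices $M$ and $M-1$ to obtain
\[
\Delta_{M-1} \;=\; u_M - u_{M-1} \;=\; \phi_M(u_{M-1}) - \phi_M(u_{M-2}).
\]
Because the minimizer $\bm{u}^\star$ lies in the interior of $\gridset$ (established above), the chain $0 < u_1 < \cdots < u_{M-1} < 1$ is strict, so $\Delta_{M-2} = u_{M-1} - u_{M-2} > 0$ and the ratio in question is exactly the chord slope
\[
\frac{\Delta_{M-1}}{\Delta_{M-2}} \;=\; \frac{\phi_M(u_{M-1}) - \phi_M(u_{M-2})}{u_{M-1} - u_{M-2}} .
\]

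Next I would record the structural facts about $\phi_M(x) = \min_{\gamma \in \mathcal{S}} \gamma x + h_M(\gamma)$ needed to bracket this chord slope. As an infimum of affine functions of $x$, $\phi_M$ is concave; since $\gamma \mapsto \gamma x + h_M(\gamma)$ is strictly convex for each fixed $x$ (recall $h_M'' > 0$), the inner minimizer $\gamma^\star(x) \coloneqq \argmin_{\gamma \in \mathcal{S}} \gamma x + h_M(\gamma)$ is unique, hence $\phi_M$ is $C^1$ with $\phi_M'(x) = \gamma^\star(x)$ by the envelope (Danskin) theorem, and $\gamma^\star$ is non-increasing (as shown in the monotonicity discussion above). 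Using the definition $\gamma_i = \argmin_{\gamma \in \mathcal{S}} \gamma u_{i-1} + h_M(\gamma) = \gamma^\star(u_{i-1})$, this yields $\phi_M'(u_{M-2}) = \gamma_{M-1}$ and $\phi_M'(u_{M-1}) = \gamma_M$.

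Finally, I would apply the mean value theorem to $\phi_M$ on $[u_{M-2}, u_{M-1}]$: there is some $\xi \in (u_{M-2}, u_{M-1})$ with $\Delta_{M-1}/\Delta_{M-2} = \phi_M'(\xi)$, and since $\phi_M'$ is non-increasing,
\[
\gamma_M \;=\; \phi_M'(u_{M-1}) \;\le\; \phi_M'(\xi) \;\le\; \phi_M'(u_{M-2}) \;=\; \gamma_{M-1},
\]
which is the claim. (If one wishes to avoid differentiability entirely, the same bracketing follows from the elementary chord inequality for concave functions, noting that the affine map $x \mapsto \gamma^\star(u_{i-1})x + h_M(\gamma^\star(u_{i-1}))$ is a supporting line of $\phi_M$ touching at $u_{i-1}$, so $\gamma_{M-1}$ is a supergradient of $\phi_M$ at $u_{M-2}$ and $\gamma_M$ one at $u_{M-1}$.) I do not anticipate a genuine obstacle here; the only points demanding care are the envelope identity $\phi_M' = \gamma^\star$ (which rests on uniqueness of the inner minimizer, in turn on $h_M'' > 0$) and the appeal to interiority of $\bm{u}^\star$, without which $u_{M-2} < u_{M-1}$ could fail and both the ratio and the mean value theorem would degenerate.
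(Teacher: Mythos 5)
Your proposal is correct and follows essentially the same route as the paper: both use the relation $\Delta_{M-1}=\phi_M(u_{M-1})-\phi_M(u_{M-2})$ from the optimality system, the identities $\phi_M'(u_{i-1})=\gamma_i$, and the concavity of $\phi_M$ to bracket the chord slope by the derivatives at the endpoints. You are a bit more explicit about why $\phi_M$ is differentiable (uniqueness of the inner minimizer via $h_M''>0$, Danskin's theorem) and offer a supergradient fallback, but the substance of the argument is identical to the paper's.
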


\begin{proof}
By concavity of $\phi_{M}(\cdot)$,
\[
\phi_{M}^{\prime}(u_{M-1})\le\frac{\phi_{M}(u_{M-1})-\phi_{M}(u_{M-2})}{u_{M-1}-u_{M-2}}\le\phi_{M}^{\prime}(u_{M-2}).
\]
By relation~(\ref{eq:opt-cond-inf-ver}), we have $\phi_{M}(u_{M-1})-\phi_{M}(u_{M-2})=u_{M}-u_{M-1}=\Delta_{M-1}$.
Meanwhile, $\phi_{M}^{\prime}(u_{M-1})=\gamma_{M}$ and $\phi_{M}^{\prime}(u_{M-2})=\gamma_{M-1}$.
Hence, $\gamma_{M}\le\Delta_{M-1}/\Delta_{M-2}\le\gamma_{M-1}$.
\end{proof}

% \begin{lemma}\label{lemma:gamma-nonincrease}The sequence $\{\gamma_{i}\}_{i=1}^{M}$
% is non-increasing.
% \end{lemma}

\subsubsection{Monotonicity of $\optexp$}\label{sec:monot-proof}
We will show $\optexp$ is strictly decreasing in $M$ when $M$ is small, which proves Proposition~\ref{prop:monot}.

Let $\bm{u}^{(M)}=\argmin_{\bm{v}\in\gridset} \sup_{\alpha \in \knowledge} \Psi_M(\bm v,\alpha)$ and $\bm{u}^{(M+1)}=\argmin_{\bm{v}\in\mathcal{U}_{M+1}} \sup_{\alpha \in \knowledge} \Psi_{M+1}(\bm v,\alpha)$. By the optimality condition, we have $\optexp=u_1^{(M)}$ and $\psi_{M+1}^\star=u_1^{(M+1)}$. It is equivalent to show $u_1^{(M+1)} < u_1^{(M)}$.

We first give a high-level description of the proof idea.  For the sake of contradiction, suppose that the optimal solution for $M+1$ has $u_1^{(M+1)} \ge u_1^{(M)}$. We can then iteratively solve the recursion in~\eqref{eq:opt-cond-inf-ver} and obtain $u_2^{(M+1)}, \ldots, u_{M}^{(M+1)}$. Then we show that it must contradict the equation 
\[
1 = u_1^{(M+1)} + \phi_{M+1}(u_{M}^{(M+1)}).
\]
By the definition of $\phi_{M+1}$, it suffices to prove that for all $\gamma \in S$, we have 
\begin{align}
   \label{eq:key-contra}
u_1^{(M+1)}  + \gamma u_{M}^{(M+1)} + h_{M+1} (\gamma) - 1 > 0. 
\end{align}
We therefore need a lower bound on $u_{M}^{(M+1)}$ and also a lower bound on $u_{1}^{(M+1)}$. To lower bound $u_1^{(M)}$, we can apply Lemma~\ref{lemma:u1-lb-strengthen} with $c=\gmax$,
\[
u_1^{(M)} \geq \frac{\gmax^{M-1}}{S_{M}^2(\gmax)}.
\]

Next, we turn to lower bounding $u_{M}^{(M+1)}$. By the recursive relation in~\eqref{eq:opt-cond-inf-ver}, we know that 
\[
u_{k+1}^{(M+1)}\;=\;u_1^{(M+1)}\;+\;\phi_{M+1}\!\big(u_k^{(M+1)}\big).
\]
Denote by $\Delta_M(\gamma)=h_M(\gamma)-h_{M+1}(\gamma)$. For $k=1$, one has 
\begin{align*}
u_{2}^{(M+1)}&\;=\;u_1^{(M+1)}\;+\;\phi_{M+1}\!\big(u_1^{(M+1)}\big) \\
& \; = \;u_1^{(M+1)}\;+\;\inf_{\gamma}\!\big( \gamma u_1^{(M+1)} + h_{M+1}(\gamma)\big) \\
&  \ge \;u_1^{(M)}\;+\;\inf_{\gamma}\!\big( \gamma u_1^{(M)} + h_{M}(\gamma) + h_{M+1}(\gamma) - h_{M}(\gamma) \big) \\
&\geq u_1^{(M)} + \inf_{\gamma} ( \gamma u_1^{(M)} + h_{M}(\gamma) ) - \Delta(\gmax) \\
& = u_2^{(M)} - \Delta(\gmax),
\end{align*}
where the first inequality is due to the assumption $u_1^{(M+1)} \ge u_1^{(M)}$, and the second inequality is because the function $\Delta_M(\cdot)$ is increasing. Similarly when $k=2$, we have 
\begin{align*}
u_{3}^{(M+1)}&\;=\;u_1^{(M+1)}\;+\;\phi_{M+1}\!\big(u_2^{(M+1)}\big) \\
& \; = \;u_1^{(M+1)}\;+\;\inf_{\gamma}\!\big( \gamma u_2^{(M+1)} + h_{M+1}(\gamma)\big) \\
&  = \;u_1^{(M+1)}\;+\;\inf_{\gamma}\!\big( \gamma u_2^{(M)} - \gamma u_2^{(M)} + \gamma u_2^{(M+1)} + h_{M}(\gamma) + h_{M+1}(\gamma) - h_{M}(\gamma) \big) \\
&\geq u_1^{(M)}+ \inf_{\gamma} ( \gamma u_2^{(M)} + h_{M}(\gamma) ) - \Delta(\gmax) - \gmax (u_2^{(M)} - u_2^{(M+1)})\\
& = u_3^{(M)} - (1+\gmax)\Delta(\gmax).
\end{align*}
Recursively, we obtain 
\begin{align*}
    u_M^{(M+1)} \geq u_{M}^{(M)}- S_{M-2}(\gmax)  \Delta(\gmax) = 1 - S_{M-2}(\gmax)  \Delta(\gmax). 
\end{align*}
Substituting back into the key equation we aim to prove to see that
\begin{align*}
   u_1^{(M+1)}  + \gamma u_{M}^{(M+1)} + h_{M+1} (\gamma) - 1 
   &\geq u_1^{(M)}  + \gamma (1 - S_{M-2}(\gmax)  \Delta(\gmax)) + h_{M+1} (\gamma) - 1 \\
& \geq \frac{\gmax^{M-1}}{S_{M}^2(\gmax)} + \gamma (1 - S_{M-2}(\gmax)  \Delta(\gmax)) + h_{M+1} (\gamma) - 1. 
\end{align*}
For $M=2,3,4$, the RHS of the above relation simplifies to

\begin{align*}
M=2:\quad&
\frac{\gmax}{(1+\gmax)^2}+\gamma+\frac{1}{1+\gamma+\gamma^2}-1
= \frac{\gmax}{(1+\gmax)^2}+\frac{\gamma^3}{1+\gamma+\gamma^2}>0,\\[2pt]
M=3:\quad&
\frac{\gmax^2}{(1+\gmax+\gmax^2)^2}
+\gamma\Big(1-\frac{\gmax^3}{S_3(\gmax)S_4(\gmax)}\Big)
+\frac{1}{S_4(\gamma)}-1>0,\\[2pt]
M=4:\quad&
\frac{\gmax^3}{S_4(\gmax)^2}
+\gamma\Big(1-\frac{S_2(\gmax)\,\gmax^4}{S_4(\gmax)S_5(\gmax)}\Big)
+\frac{1}{S_5(\gamma)}-1>0,
\end{align*}
for all $\gamma\in \sset$. Consequently, relation~\eqref{eq:key-contra} holds and we reach $u_1^{(M+1)} + \phi_{M+1}(u_{M}^{(M+1)})>1$, which is a contradiction. Therefore, one has $u_1^{(M+1)} < u_1^{(M)}$ when $M=2,3,4$.

\subsection{Proof of Lemma~\ref{lemma:precondition}}\label{sec:proof-precondition}

On a high level, the proof first establishes $
u_{i-1}\le u_{i}\cdot(\gamma_{i+1}+\frac{1}{2})$ for any $2\le i\le M-2$ by showing $\gamma_{i+1}\ge1/2$. To do so, it suffices to prove $\gamma_{M-1}\ge1/2$ and use the fact that $\{\gamma_i\}$ is non-increasing. Next, we show that the last inequality $
u_{M-2}\le u_{M-1}\cdot(\gamma_{M}+\frac{1}{2})$ holds as well. This is achieved through analyzing different ranges of $M$ and establishing tight lower bound on $\gamma_M$ under that range.

\subsubsection{Key lemmas}
In this section, we collect several useful bounds on $u_1$ and $u_{M-1}$. 
% The next two lemmas lower bound $u_1 + \phi_{M}(u_{M-1})$ under different assumptions. 
When $\gamma_{M}$ is small, we have the following lower bound. 
\begin{lemma}\label{lemma:uM-greater-one}Fix any $c\in(\gmin,1/2]$.
If $\gamma_{M}<c$, then
\[
u_1 + \phi_{M}(u_{M-1})>\frac{c^{-1}\gmax{}^{-(M-3)}\bigl(1-h_{M}(c)\bigr)-h_{M}(\gmax)S_{M-2}(\gmax)}{S_{M-2}(\gmax)+\gmax+c^{-1}\gmax{}^{-(M-3)},}-ch_{M}^{\prime}(c)+h_{M}(c).
\]

\end{lemma}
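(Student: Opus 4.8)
The plan is to combine the lower bound on $u_1$ from Lemma~\ref{lemma:u1-lb-strengthen} with a separate lower bound on $\phi_M(u_{M-1})$ that exploits the assumption $\gamma_M < c \le 1/2$. For the first piece, I would simply invoke Lemma~\ref{lemma:u1-lb-strengthen} with the given constant $c \in \sset$ (note $c \le 1/2 \le \gmax$ for the relevant range of $\beta,d$, so $c$ is indeed feasible), giving
\[
u_1 \;\ge\; \frac{c^{-1}\gmax^{-(M-3)}\bigl(1-h_M(c)\bigr) - h_M(\gmax)\,S_{M-2}(\gmax)}{S_{M-2}(\gmax) + \gmax + c^{-1}\gmax^{-(M-3)}}.
\]
This is exactly the first term in the claimed bound, so the remaining task is to show $\phi_M(u_{M-1}) > -c\,h_M'(c) + h_M(c)$.

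For the second piece, recall $\phi_M(u_{M-1}) = \min_{\gamma\in\sset}\bigl(\gamma u_{M-1} + h_M(\gamma)\bigr)$, and the minimizer is $\gamma_M = \gamma^\star(u_{M-1})$. By the structure of $\gamma^\star(\cdot)$ established in the monotonicity analysis, $\gamma_M < c \le 1/2 \le \gmax$ forces us into the interior/right regime where $u_{M-1} = -h_M'(\gamma_M)$ (the first-order condition), and moreover $u_{M-1} = -h_M'(\gamma_M) > -h_M'(c)$ since $-h_M'$ is strictly decreasing and $\gamma_M < c$. Then
\[
\phi_M(u_{M-1}) = \gamma_M u_{M-1} + h_M(\gamma_M) = -\gamma_M h_M'(\gamma_M) + h_M(\gamma_M).
\]
Now I use that the map $\gamma \mapsto -\gamma h_M'(\gamma) + h_M(\gamma)$ has derivative $-\gamma h_M''(\gamma)$, which is negative on $\sset$ since $h_M'' > 0$; hence this map is \emph{decreasing}, so $\gamma_M < c$ gives $-\gamma_M h_M'(\gamma_M) + h_M(\gamma_M) > -c\,h_M'(c) + h_M(c)$. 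Combining the two pieces by addition yields the stated inequality (strict, because the second comparison is strict).

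The main subtlety — not a deep obstacle, but the step requiring care — is justifying that $\gamma_M < c \le 1/2$ really does place us in the regime where the first-order condition $u_{M-1} = -h_M'(\gamma_M)$ holds with $\gamma_M$ in the interior of $\sset$ (rather than $\gamma_M = \gmin$, the left endpoint, where no first-order identity is available). This is handled by the explicit characterization of $\gamma^\star(v)$ from the monotonicity subsection: $\gamma^\star(v) = \gmin$ only when $v > v_U = -h_M'(\gmin)$, and in that boundary case one still has $u_{M-1} > -h_M'(\gmin) = v_U$, so the monotonicity argument for $-\gamma h_M'(\gamma)+h_M(\gamma)$ can be replaced by a direct comparison $\phi_M(u_{M-1}) = \gmin u_{M-1} + h_M(\gmin) \ge \gmin v_U + h_M(\gmin) = -\gmin h_M'(\gmin) + h_M(\gmin) > -c h_M'(c) + h_M(c)$, again using that the map is decreasing and $\gmin < c$. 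Either way the bound follows, and the two endpoint cases together with the interior case exhaust all possibilities for $\gamma_M < c$.
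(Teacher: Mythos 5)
Your proof is correct and takes essentially the same approach as the paper's: both invoke Lemma~\ref{lemma:u1-lb-strengthen} for $u_1$ and then lower-bound $\phi_M(u_{M-1})$ via the convexity relation $u_{M-1}\ge -h_M'(\gamma_M)$, splitting into the interior case and the $\gamma_M=\gmin$ endpoint case. The only (cosmetic) difference is that you compare $g(\gamma_M)>g(c)$ using the monotonicity of $\gamma\mapsto -\gamma h_M'(\gamma)+h_M(\gamma)$, whereas the paper compares $\phi_M(u_{M-1})>\phi_M(-h_M'(c))$ using the monotonicity of $\phi_M$ and then evaluates $\phi_M(-h_M'(c))=-c\,h_M'(c)+h_M(c)$; these are two sides of the same Legendre-duality computation.
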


When $\gamma_{M-1}$ is small, we have the following lower bound. 
\begin{lemma}\label{lemma:g-shift-bound}If $\gamma_{M-1}<1/2$,
then
\[
u_1 + \phi_{M}(u_{M-1})>(1+\gmin)u_{1}+(h_{M}(\frac{1}{2})-\frac{1}{2}h_{M}^{\prime}(\frac{1}{2}))\gmin+h_{M}(\gmin).
\]
\end{lemma}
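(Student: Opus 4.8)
The plan is to exploit the system of equations~\eqref{eq:opt-cond-inf-ver}, the monotonicity of $\{\gamma_i\}$ established just above, and the piecewise description of the constrained minimizer $\gamma^\star(\cdot)$. Throughout I would write $\varphi(\gamma):=h_M(\gamma)-\gamma h_M'(\gamma)$, so that $\varphi(1/2)=h_M(1/2)-\tfrac12 h_M'(1/2)$ is exactly the quantity in the statement; since $\varphi'(\gamma)=-\gamma h_M''(\gamma)<0$ on $\sset$, the function $\varphi$ is strictly decreasing. Recall also that $\gamma^\star(v)$ equals $\gmax$ for $v\le v_L$, solves $-h_M'(\gamma)=v$ for $v\in[v_L,v_U]$, and equals $\gmin$ for $v\ge v_U$; since $\gmin<1/2<\gmax$ (as $d\ge1$), this gives $\gamma^\star(v)<1/2$ iff $v>-h_M'(1/2)$, and $\gamma^\star(v)=\gmin$ iff $v\ge v_U$. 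From $\gamma_{M-1}<1/2$ and monotonicity we get $\gamma_M\le\gamma_{M-1}<1/2$. The first step is a lower bound on $u_{M-1}$: since $\gamma_{M-1}=\gamma^\star(u_{M-2})<1/2<\gmax$, the minimizer defining $\phi_M(u_{M-2})$ is not at the right endpoint, so $u_{M-2}\ge -h_M'(\gamma_{M-1})$, and hence (using $\gamma_{M-1}>0$ and that $\varphi$ decreases)
\[
\phi_M(u_{M-2})=\gamma_{M-1}u_{M-2}+h_M(\gamma_{M-1})\ \ge\ \gamma_{M-1}\bigl(-h_M'(\gamma_{M-1})\bigr)+h_M(\gamma_{M-1})=\varphi(\gamma_{M-1})\ >\ \varphi(1/2).
\]
Substituting into $u_{M-1}=u_1+\phi_M(u_{M-2})$ yields $u_{M-1}>u_1+\varphi(1/2)$.

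Next I would show that in fact $\gamma_M=\gmin$. Suppose not; then $\gmin<\gamma_M<1/2<\gmax$, so $\gamma_M$ is an interior minimizer, giving $u_{M-1}=-h_M'(\gamma_M)<v_U$. Moreover $\gamma_{M-1}\ge\gamma_M>\gmin$ is then also interior, so $u_{M-2}=-h_M'(\gamma_{M-1})$ and therefore $u_{M-1}=u_1+\varphi(\gamma_{M-1})$. Comparing the two expressions for $u_{M-1}$ gives $u_1<v_U-\varphi(\gamma_{M-1})<v_U-\varphi(1/2)$. On the other hand $u_1=\optexp$, and the lower bound of Lemma~\ref{lemma:u1-lb-strengthen} (evaluated at $c=\gmax$ and simplified) shows $\optexp\ge v_U-\varphi(1/2)=-h_M'(\gmin)-h_M(1/2)+\tfrac12 h_M'(1/2)$, a contradiction. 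Hence $\gamma_M=\gmin$.

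With $\gamma_M=\gmin$, the minimizer in $\phi_M(u_{M-1})$ is $\gmin$, so $\phi_M(u_{M-1})=\gmin u_{M-1}+h_M(\gmin)$, and using $u_{M-1}>u_1+\varphi(1/2)$ together with $\gmin>0$,
\[
u_1+\phi_M(u_{M-1})=u_1+\gmin u_{M-1}+h_M(\gmin)\ >\ u_1+\gmin\bigl(u_1+\varphi(1/2)\bigr)+h_M(\gmin)=(1+\gmin)u_1+\gmin\,\varphi(1/2)+h_M(\gmin),
\]
which is precisely the claimed inequality. The \emph{main obstacle} is the scalar inequality $\optexp\ge -h_M'(\gmin)-h_M(1/2)+\tfrac12 h_M'(1/2)$ used to rule out an interior $\gamma_M$: since $\optexp$ has no closed form, this must be extracted from the explicit but unwieldy lower bound of Lemma~\ref{lemma:u1-lb-strengthen}, and the comparison has to hold uniformly over $M\ge3$ and over the admissible $(\beta,d)$, equivalently over $\gmin\in(0,1/2)$. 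When $\gmin$ is bounded away from $0$ the right-hand side is $\le0$ and the claim is immediate; the real work is in the regime $\gmin\to0^+$, where $-h_M'(\gmin)\to1$ and both sides are small and positive, so the lower bound on $\optexp$ must be tracked precisely.
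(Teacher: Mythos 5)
Your overall plan is sound and structurally close to the paper's: lower-bound $u_{M-1}$ by $u_1 + \varphi(1/2)$ with $\varphi(\gamma)\coloneqq h_M(\gamma)-\gamma h_M'(\gamma)$, deduce that the minimizer defining $\phi_M(u_{M-1})$ sits at $\gmin$, and substitute. The contradiction detour you take to establish $\gamma_M=\gmin$ is unnecessary: the paper simply notes that $\phi_M$ is increasing so $\phi_M(u_{M-1})\ge\phi_M(u_1+\varphi(1/2))$, and then only needs the scalar fact $u_1+\varphi(1/2)>-h_M'(\gmin)$ to read off $\phi_M(u_1+\varphi(1/2))=\gmin\bigl(u_1+\varphi(1/2)\bigr)+h_M(\gmin)$, never naming $\gamma_M$.

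The genuine gap is in how you propose to certify the required scalar inequality $u_1\ge v_U-\varphi(1/2)$ with $v_U=-h_M'(\gmin)$. You invoke Lemma~\ref{lemma:u1-lb-strengthen} with $c=\gmax$, which simplifies to $\gmax^{M-1}(1-\gmax)^2/(1-\gmax^M)^2$, i.e.\ the crude bound already recorded in Proposition~\ref{prop:optexp-control}. That bound is too weak in exactly the small-$\gmin$ regime you yourself identify as the hard case. Take $M=4$ and $\gmin=0.05$ (so $\gmax=0.95$). Then
\[
\gmax^{M-1}(1-\gmax)^2/(1-\gmax^M)^2\approx 0.0623,
\qquad
v_U-\varphi(1/2)=-h_4'(0.05)-h_4(\tfrac12)+\tfrac12 h_4'(\tfrac12)\approx 0.0751,
\]
so the $c=\gmax$ evaluation \emph{fails to dominate} $v_U-\varphi(1/2)$. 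The paper's proof instead uses $c=0.45$ (a point strictly inside $\sset$ whenever $\gmin<0.45$, which is the only regime where $v_U-\varphi(1/2)>0$); at $c=0.45$ the same example gives a lower bound $\approx 0.0832>0.0751$. So the issue is not merely that the comparison ``must be tracked precisely'' as you note — your specific choice $c=\gmax$ provably does not work, and the argument needs an interior $c$ tuned so that the resulting rational expression in $\gmax,M$ dominates $v_U-\varphi(1/2)$ uniformly.
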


% \begin{enumerate}
% \item show $\gamma_{M-1}\ge1/2$, so can focus on the last inequality
% \item when $M\le6$, use Lemma~\ref{lemma:direct-last-ineq} by plugging
% $\gamma_{M}\ge0.45$.
% \item for $d\ge2$, when $M\ge7$, $\gamma_{M}\ge1/2$
% \item for $d=1$:
% \end{enumerate}
% \begin{itemize}
% \item when $M\ge10$, $\gamma_{M}\ge1/2$
% \item when $M=7,8,9$, use Lemma~\ref{lemma:direct-last-ineq} by plugging
% corresponding lower bound on $\gamma_{M}$
% \end{itemize}

\begin{figure}         
\centering         
\includegraphics[scale=0.5]{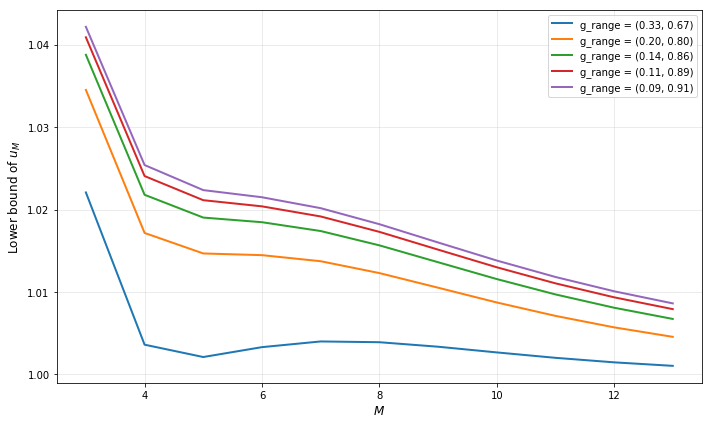}
\caption{Lower bound of $u_M$ vs.~batch budget $M$ when $\gamma_M<0.45$.}
\label{fig:last-g}     
\end{figure}

% \begin{figure}[ht]
% \centering
% \begin{minipage}{0.48\textwidth}
%     \centering
%     \includegraphics[scale=0.35]{last_g.png}
%     \caption{Lower bound of $u_M$ vs.~batch budget $M$ when $\gamma_M<0.45$.}
%     \label{fig:last-g}
% \end{minipage}
% \hfill
% \begin{minipage}{0.48\textwidth}
%     \centering
%     \includegraphics[scale=0.35]{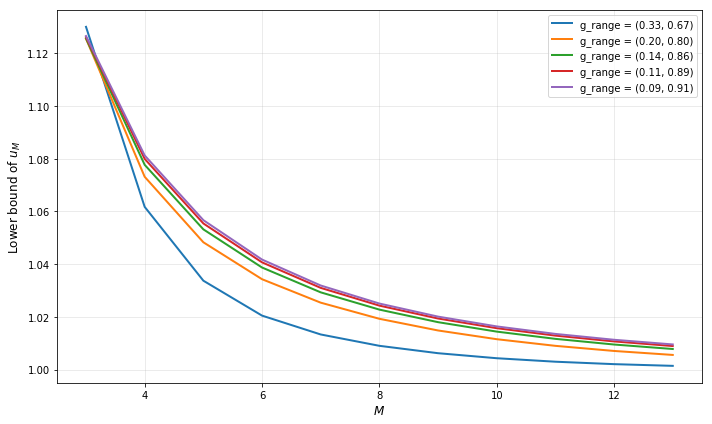}
%     \caption{Lower bound of $u_M$ vs.~batch budget $M$ when $\gamma_{M-1}<0.5$.}
%     \label{fig:second-last-g}
% \end{minipage}
% \end{figure}

\subsubsection{Main proof}
Now we are ready to prove Lemma~\ref{lemma:precondition}. 
We start with proving that $
u_{i-1}\le u_{i}\cdot(\gamma_{i+1}+\frac{1}{2})
$
holds for all $2\le i\le M-2$.
Then we finish with proving $u_{M-2}<u_{M-1}(\gamma_{M}+\frac{1}{2})$.

\paragraph{Step 1: Establishing $
u_{i-1}\le u_{i}\cdot(\gamma_{i+1}+\frac{1}{2})\; \forall 2\le i\le M-2$.} 
A key step is to prove that $\gamma_{M}\ge0.45$ and $\gamma_{M-1}\ge0.5$.
We start by lower bounding $\gamma_{M}$ by $0.45$. 
Suppose that $\gamma_{M}<0.45$. Applying Lemma~\ref{lemma:uM-greater-one}
with $c=0.45$, we have $u_1 + \phi_{M}(u_{M-1})>1$, which contradicts with $u_{M} = -u_1 + \phi_{M}(u_{M-1})=1$; see Figure~\ref{fig:last-g}. 
As a result, we have $\gamma_{M} \geq 0.45$  

Next, we lower bound $\gamma_{M-1}$ by $0.5$. Suppose that $\gamma_{M-1}<1/2$. By Lemma~\ref{lemma:g-shift-bound},
we have
\[
u_1 + \phi_{M}(u_{M-1})>(1+\gmin)u_{1}+(h_{M}(\frac{1}{2})-\frac{1}{2}h_{M}^{\prime}(\frac{1}{2}))\gmin+h_{M}(\gmin).
\]
In addition, apply Lemma~\ref{lemma:u1-lb-strengthen} with $c=0.45$ to obtain
\[
u_{1}\ge\frac{\frac{1}{0.45}\gmax{}^{-(M-3)}\bigl(1-h_{M}(0.45)\bigr)-h_{M}(\gmax)S_{M-2}(\gmax)}{S_{M-2}(\gmax)+\gmax+\frac{1}{0.45}\gmax{}^{-(M-3)}}.
\]
Taking the above two displays together, we reach
\begin{align*}
 u_1 + \phi_{M}(u_{M-1})&>(1+\gmin)\left \{ \frac{\frac{1}{0.45}\gmax{}^{-(M-3)}\bigl(1-h_{M}(0.45)\bigr)-h_{M}(\gmax)S_{M-2}(\gmax)}{S_{M-2}(\gmax)+\gmax+\frac{1}{0.45}\gmax{}^{-(M-3)}}\right \} \\
&\quad +(h_{M}(\frac{1}{2})-\frac{1}{2}h_{M}^{\prime}(\frac{1}{2}))\gmin+h_{M}(\gmin).   
\end{align*}
The RHS exceeds 1 and one has $u_1 + \phi_{M}(u_{M-1})>1$, which contradicts with $u_M=u_1 + \phi_{M}(u_{M-1}) =1$; see Figure~\ref{fig:second-last-g}. 
As a result, we must have $\gamma_{M-1}\ge0.5$. 

By the second item of Proposition~\ref{proposition:opt-prop}, we know that $\gamma_i$ is decreasing and hence 
$\gamma_{i}\ge\gamma_{M-1}\ge0.5$ for $i\le M-2$. Since
$\bm{u}$ lies in the interior of $\gridset$, we have $u_{i-1}<u_{i}$, and
this leads to 
\[
u_{i-1}\le u_{i}\cdot(\gamma_{i+1}+\frac{1}{2}),\quad\forall2\le i\le M-2.
\]
In other words, we have established the desired inequalities except
for the last one. The remaining of the proof is devoted to show $u_{M-2}<u_{M-1}(\gamma_{M}+\frac{1}{2}).$

\paragraph{Step 2: Establishing $u_{M-2}<u_{M-1}(\gamma_{M}+\frac{1}{2})$.} 
We split the proof into two cases: (1) $M\le6$, and $M \geq 7$.

A key lemma is the following. 
\begin{lemma}\label{lemma:direct-last-ineq}Assume $b<\gamma_{M}<1/2$
for some $b\in(\gmin,1/2)$. If
\begin{equation}
\gamma_{M-1}<\frac{(1+h_{M}^{\prime}(b))(1+\gamma_{M}(\frac{1}{2}-b))}{\frac{1}{2}-b},\label{eq:g-conditon-for-lem}
\end{equation}
then 
$
u_{M-2}<u_{M-1}(\gamma_{M}+\frac{1}{2}).
$
\end{lemma}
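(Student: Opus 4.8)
The plan is to reduce the claimed inequality to a single scalar inequality in $b$ and $\gamma_M$, using Lemma~\ref{lemma:delta-ratio} together with the first‑order characterization of the minimizer $\bm u$. First I would invoke Lemma~\ref{lemma:delta-ratio} in the form $\Delta_{M-1}/\Delta_{M-2}\le\gamma_{M-1}$ (recall $\Delta_i=u_{i+1}-u_i>0$ since $\bm u$ lies in the interior of $\gridset$), i.e.\ $1-u_{M-1}\le\gamma_{M-1}(u_{M-1}-u_{M-2})$, which rearranges to $u_{M-2}\le u_{M-1}-(1-u_{M-1})/\gamma_{M-1}$. Substituting this into the target $u_{M-2}<(\gamma_M+\tfrac12)u_{M-1}$ and using $\gamma_M<\tfrac12$, the whole claim follows once we establish
\[
u_{M-1}\bigl(1+\gamma_{M-1}(\tfrac12-\gamma_M)\bigr)<1,
\qquad\text{equivalently}\qquad
\gamma_{M-1}<\frac{1-u_{M-1}}{u_{M-1}\,(\tfrac12-\gamma_M)}.
\]

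Next I would pin down $u_{M-1}$ exactly. Since $\gmin<b<\gamma_M<\tfrac12\le\gmax$, the value $\gamma_M=\argmin_{\gamma\in\sset}\{\gamma u_{M-1}+h_M(\gamma)\}$ lies strictly inside $\sset$, so the first‑order condition gives $u_{M-1}=-h_M'(\gamma_M)$; in particular $0<u_{M-1}<1$ and $1+h_M'(\gamma_M)>0$. Moreover $b$ lies strictly to the left of the minimizer $\gamma_M$, so the derivative there is still negative, $u_{M-1}+h_M'(b)<0$, giving $u_{M-1}<-h_M'(b)$; and we may assume $1+h_M'(b)>0$, since otherwise the hypothesis's right‑hand side is nonpositive while $\gamma_{M-1}>0$, making the statement vacuous. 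Now I would plug the hypothesis $\gamma_{M-1}<(1+h_M'(b))(1+\gamma_M(\tfrac12-b))/(\tfrac12-b)$ into the displayed inequality; after substituting $u_{M-1}=-h_M'(\gamma_M)$, using monotonicity of $h_M'$ (so $1+h_M'(b)<1+h_M'(\gamma_M)$) and $\tfrac12-\gamma_M<\tfrac12-b$, and cancelling the common positive factors, the claim reduces to the elementary inequality
\[
\bigl(-h_M'(b)\bigr)\,(\tfrac12-\gamma_M)\,\bigl(1+\gamma_M(\tfrac12-b)\bigr)\;\le\;\tfrac12-b .
\]
To prove this I would observe that $t\mapsto(1+t(\tfrac12-b))(\tfrac12-t)$ is strictly decreasing on $(0,\tfrac12)$ — its derivative is $-1+(\tfrac12-b)(\tfrac12-2t)\le-\tfrac34<0$ — so the left‑hand side is at most $(-h_M'(b))(1+b(\tfrac12-b))(\tfrac12-b)$; it then remains to control $-h_M'(b)$, which one can do from the explicit formula $h_M(\gamma)=(1-\gamma)/(1-\gamma^M)$ and the monotonicity of $-h_M'$ (the relevant numerical margins are exactly what Figures~\ref{fig:last-g} and~\ref{fig:second-last-g} illustrate).

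The main obstacle is this final scalar inequality. The naive bounds $u_{M-1}<1$ and $\tfrac12-\gamma_M<\tfrac12-b$ throw away a multiplicative factor of $1+\gamma_M(\tfrac12-b)>1$ and are not by themselves sufficient; one genuinely needs the sharp identity $u_{M-1}=-h_M'(\gamma_M)$ together with the refined bound $u_{M-1}<-h_M'(b)$, and one must show that $-h_M'(b)$ is close enough to $1$, with the admissible closeness depending on both $b$ and $M$. This is precisely why the ambient argument (Step~2 in the proof of Lemma~\ref{lemma:precondition}) is split into the ranges $M\le 6$ and $M\ge 7$, with the anchor point $b$ chosen appropriately in each regime so that the reduction above goes through.
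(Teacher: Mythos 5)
Your reduction departs from the paper's at a crucial step, and that departure creates a real gap. The paper uses \emph{both} inequalities from Lemma~\ref{lemma:delta-ratio}: it first writes $u_{M-1}(\tfrac12-\gamma_M)=(1-\Delta_{M-1})(\tfrac12-\gamma_M)\le(1-\gamma_M\Delta_{M-2})(\tfrac12-\gamma_M)$ using $\Delta_{M-1}\ge\gamma_M\Delta_{M-2}$, then rearranges to get $\Delta_{M-2}>\frac{1/2-\gamma_M}{1+\gamma_M(1/2-\gamma_M)}$, and only then applies $\Delta_{M-2}\ge\Delta_{M-1}/\gamma_{M-1}$. This yields the sufficient condition $\gamma_{M-1}<\frac{(1+h_M'(\gamma_M))(1+\gamma_M(1/2-\gamma_M))}{1/2-\gamma_M}$, which has \emph{exactly the same functional form} as the hypothesis~\eqref{eq:g-conditon-for-lem} with $b$ replaced by $\gamma_M$; the implication is then one monotonicity check, namely that $b\mapsto\frac{(1+h_M'(b))(1+\gamma_M(1/2-b))}{1/2-b}$ is increasing on $(\gmin,1/2)$. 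You use only the upper bound $\Delta_{M-1}/\Delta_{M-2}\le\gamma_{M-1}$, arriving at the different sufficient condition $\gamma_{M-1}<\frac{1-u_{M-1}}{u_{M-1}(1/2-\gamma_M)}$, where the extra factor is $1/u_{M-1}=1/(-h_M'(\gamma_M))$ rather than $1+\gamma_M(\tfrac12-\gamma_M)$. These two factors are not ordered in general, so your condition is neither weaker nor stronger than the paper's, and the clean ``substitute $b\mapsto\gamma_M$'' monotonicity argument does not apply.

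The gap then surfaces in your final scalar inequality $(-h_M'(b))(\tfrac12-\gamma_M)(1+\gamma_M(\tfrac12-b))\le\tfrac12-b$. After your monotonicity-in-$\gamma_M$ step this becomes $(-h_M'(b))(1+b(\tfrac12-b))\le1$, i.e., $-h_M'(b)\le\tfrac{1}{1+b(1/2-b)}<1$. You write that one must show $-h_M'(b)$ is ``close enough to $1$,'' but that is exactly the wrong direction: you need $-h_M'(b)$ to be bounded \emph{below} $1$ by a definite margin, and for $M$ large and $b$ near $\gmin$ one has $-h_M'(b)\to 1^-$, so this inequality fails as stated. (Of course in those regimes the hypothesis~\eqref{eq:g-conditon-for-lem} becomes vacuous because $1+h_M'(b)\to0$, but your argument never exploits that --- the scalar inequality you reduced to does not involve $\gamma_{M-1}$ at all, so you cannot invoke the hypothesis to rescue it.) Your closing remark that the $M\le6$ versus $M\ge7$ split in Step~2 of Lemma~\ref{lemma:precondition} ``is precisely why the reduction goes through'' also misattributes the role of that split: that case analysis concerns how the lemma is \emph{applied} (choosing $b$ and verifying the hypothesis or showing $\gamma_M\ge1/2$), not how the lemma is \emph{proved}; the lemma's proof must stand on its own for all admissible $b$. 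In short, you need (i) the second use of Lemma~\ref{lemma:delta-ratio} to get the matching form, and (ii) the monotonicity of the hypothesis RHS in $b$; neither appears in your argument, and the substitute you propose is not established and is false in general.
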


When $M\le6$, setting $b=0.45$, we have
\[
\frac{(1+h_{M}^{\prime}(b))(1+\gamma_{M}(\frac{1}{2}-b))}{\frac{1}{2}-b}>1. 
\]
Since $\gamma_{M-1}<1$, the condition~\eqref{eq:g-conditon-for-lem} holds. We can then apply
Lemma~\ref{lemma:direct-last-ineq} to reach the conclusion that $u_{M-2}<u_{M-1}(\gamma_{M}+\frac{1}{2})$.

In the case when $M\ge7$, we further consider two subcases: (1) $d\ge2$, and (2) $d = 1$. 

When $d \geq 2$, we prove that $\gamma_{M} \geq  1/2$. To see this, suppose that $\gamma_{M} < 1/2$, we apply Lemma~\ref{lemma:uM-greater-one} with $c=1/2$ to obtain $u_1 + \phi_{M}(u_{M-1}) > 1$, which is a contradiction with $u_M =u_1 + \phi_{M}(u_{M-1})=1$. 
Hence we have $\gamma_{M} \geq 1/2$, that further implies $u_{M-2}<u_{M-1}(\gamma_{M}+\frac{1}{2})$.

For $d=1$, there are again two cases. When $M\ge10$, we can again prove 
$\gamma_{M}\ge1/2$ by Lemma~\ref{lemma:uM-greater-one}. 
When $M=7,8,9$,
Lemma~\ref{lemma:uM-greater-one} gives $\gamma_{M}\ge0.47,0.48,0.49,$
respectively. Plugging in the corresponding lower bound value of $\gamma_{M}$
as $b$ to Lemma~\ref{lemma:direct-last-ineq}, one can verify that
the precondition holds and therefore $u_{M-2}<u_{M-1}(\gamma_{M}+\frac{1}{2}).$

\begin{figure}         
\centering         
\includegraphics[scale=0.5]{}
\caption{Lower bound of $u_M$ vs.~batch budget $M$ when $\gamma_{M-1}<0.5$.}
\label{fig:second-last-g}   
\end{figure}

\subsubsection{Remaining proofs}\label{sec:system-rem-proofs}

\begin{proof}[Proof of Lemma~\ref{lemma:uM-greater-one}]
Recall that $\gamma_{M}=\argmin_{\gamma\in\sset}\gamma u_{M-1}+h_{M}(\gamma)$. Since $\gamma_{M}<c \leq 1/2$, one has either $u_{M-1}+h_{M}^{\prime}(\gamma_{M})=0$ or $\gamma_{M}=\gmin$.
In both cases, convexity implies $u_{M-1}\ge-h_{M}^{\prime}(\gamma_{M})$.
In addition, we know that the function $h_{M}^{\prime}(\cdot)$ is strictly increasing
and $\gamma_{M}<c$, we have $u_{M-1}\ge-h_{M}^{\prime}(\gamma_{M})>-h_{M}^{\prime}(c)$.
Consequently,
\begin{align*}
u_{1}+\min_{\gamma\in\sset}\gamma u_{M-1}+h_{M}(\gamma) & >u_{1}+\min_{\gamma\in\sset}\gamma(-h_{M}^{\prime}(c))+h_{M}(\gamma)\\
 & =u_{1}-ch_{M}^{\prime}(c)+h_{M}(c)\\
 & \ge\frac{c^{-1}\gmax{}^{-(M-3)}\bigl(1-h_{M}(c)\bigr)-h_{M}(\gmax)S_{M-2}(\gmax)}{S_{M-2}(\gmax)+\gmax+c^{-1}\gmax{}^{-(M-3)},}-ch_{M}^{\prime}(c)+h_{M}(c),
\end{align*}
where the second step is due to $\argmin_{\gamma\in\sset}-\gamma h_{M}^{\prime}(c)+h_{M}(c)=c$,
and the last step applies Lemma~\ref{lemma:u1-lb-strengthen}.
\end{proof}

\begin{proof}[Proof of Lemma~\ref{lemma:g-shift-bound}]
Recall that $\gamma_{M-1}=\argmin_{\gamma\in\sset}\gamma u_{M-2}+h_{M}(\gamma)$. 
Since by assumption $\gamma_{M-1} < 1/2$, 
one has either $u_{M-2}+h_{M}^{\prime}(\gamma_{M-1})=0$ or $\gamma_{M}=\gmin$.
In both cases, convexity implies $u_{M-2}\ge-h_{M}^{\prime}(\gamma_{M-1}).$
Since the function $h_{M}^{\prime}(\cdot)$ is strictly increasing
and $\gamma_{M-1}<1/2$, we have $u_{M-1}\ge-h_{M}^{\prime}(\gamma_{M-1})>-h_{M}^{\prime}(1/2)$.
Consequently,
\begin{align*}
u_{1}+\phi_{M}(u_{M-1}) & =u_{1}+\phi_{M}(u_{1}+\phi_{M}(u_{M-2}))\\
 & \ge u_{1}+\phi_{M}(u_{1}+\min_{\gamma\in\sset}-h_{M}^{\prime}(1/2)\gamma+h_{M}(\gamma))\\
 & \overset{}{=}u_{1}+\phi_{M}(u_{1}-\frac{1}{2}h_{M}^{\prime}(\frac{1}{2})+h_{M}(\frac{1}{2}))\\
 & =u_{1}+\min_{\gamma\in\sset}(u_{1}-\frac{1}{2}h_{M}^{\prime}(\frac{1}{2})+h_{M}(\frac{1}{2}))\gamma+h_{M}(\gamma),
\end{align*}
where the penultimate step is due to $\argmin_{\gamma\in\sset}-\gamma h_{M}^{\prime}(1/2)+h_{M}(1/2)=1/2$.
By Lemma~\ref{lemma:u1-lb-strengthen} with $c=0.45$, one has $u_{1}-\frac{1}{2}h_{M}^{\prime}(\frac{1}{2})+h_{M}(\frac{1}{2})>-h_{M}^{\prime}(\gmin)$.
Hence, the minimizer of the above function is attained at $\gamma=\gmin$.
We reach
\begin{align*}
u_{1}+\phi_{M}(u_{M-1}) & \ge u_{1}+\min_{\gamma\in\sset}(u_{1}-\frac{1}{2}h_{M}^{\prime}(\frac{1}{2})+h_{M}(\frac{1}{2}))\gamma+h_{M}(\gamma)\\
 & \overset{}{=}u_{1}+(u_{1}-\frac{1}{2}h_{M}^{\prime}(\frac{1}{2})+h_{M}(\frac{1}{2}))\gmin+h_{M}(\gmin)\\
 & =(1+\gmin)u_{1}+(-\frac{1}{2}h_{M}^{\prime}(\frac{1}{2})+h_{M}(\frac{1}{2}))\gmin+h_{M}(\gmin).
\end{align*}
This completes the proof. 
\end{proof}

\begin{proof}[Proof of Lemma~\ref{lemma:direct-last-ineq}]
Denote by $\Delta_{i}=u_{i+1}-u_{i}$. It suffices to show $u_{M-1}(1/2-\gamma_{M})<\Delta_{M-2}$.
Since
\begin{align*}
u_{M-1}(1/2-\gamma_{M}) & =(1-\Delta_{M-1})(1/2-\gamma_{M}) \le(1-\gamma_{M}\Delta_{M-2})(1/2-\gamma_{M}),
\end{align*}
where the inequality is due to Lemma~\ref{lemma:delta-ratio} and the assumption that $\gamma_{M} < 1/2$. 
It boils down to establishing $(1-\gamma_{M}\Delta_{M-2})(1/2-\gamma_{M})<\Delta_{M-2}$, which is equivalent to showing that 
\[
\Delta_{M-2}>\frac{\frac{1}{2}-\gamma_{M}}{1+\gamma_{M}(\frac{1}{2}-\gamma_{M})}.
\]
Note that
\[
\Delta_{M-2}=\frac{\Delta_{M-2}}{\Delta_{M-1}}\cdot\Delta_{M-1}\ge\frac{1}{\gamma_{M-1}}\cdot\Delta_{M-1}=\frac{1}{\gamma_{M-1}}\cdot(1-u_{M-1}),
\]
where the first inequality again uses Lemma~\ref{lemma:delta-ratio}.
Further note that since $\gamma_{M} \in (\gmin,1/2)$ minimizes $u_{M-1} g + h_M(g)$, 
we obtain $u_{M-1} = - h_M ' (\gamma_{M})$. 
This allows us to lower bound $\Delta_{M-2}$ as 
\[
\Delta_{M-2} \geq \frac{1}{\gamma_{M-1}}\cdot(1+h_M ' (\gamma_{M})).
\]
In all, it suffices to show that 
\begin{equation}\label{eq:final-goal}
    \gamma_{M-1}<\frac{(1+h_{M}^{\prime}(b))(1+\gamma_{M}(\frac{1}{2}-b))}{\frac{1}{2}-b}
\end{equation}
Under the assumption~(\ref{eq:g-conditon-for-lem}), we have 
\[
\gamma_{M-1}<\frac{(1+h_{M}^{\prime}(b))(1+\gamma_{M}(\frac{1}{2}-b))}{\frac{1}{2}-b}.
\]
Note that the RHS as a function of $b$ is increasing when $b \in (\gmin, 1/2)$, we then have 
\[
\gamma_{M-1}<\frac{(1+h_{M}^{\prime}(b))(1+\gamma_{M}(\frac{1}{2}-b))}{\frac{1}{2}-b}<\frac{(1+h_{M}^{\prime}(\gamma_{M}))(1+\gamma_{M}(\frac{1}{2}-\gamma_{M}))}{\frac{1}{2}-\gamma_{M}}.
\]
This is equivalent to our final goal~\eqref{eq:final-goal}. Hence the proof is completed. 
\end{proof}

\section{Remaining proofs for the lower bound}
\subsection{Remaining proofs of Section~\ref{sec:lower-family-design}}\label{sec:lower-remain-pf}

\begin{proof}[Proof of Proposition~\ref{prop:smooth-margin}]
  It is straightforward to check $f^{(1)}_{S,\sigma,i}$ satisfies the smoothness condition.

We now verify the margin condition. 
If $\delta < \delta_i$, by design we have
\[
P_{X}\left(0<\left|f^{(1)}_{S,\sigma,i}(X)-\frac{1}{2}\right|\leq\delta\right)=0.
\]
Otherwise, choose $\ell\le i$ s.t.\ $\delta_\ell\le \delta<\delta_{\ell-1}$. Then $\{0<|f^{(1)}-\tfrac12|\le \delta\}\subset\bigcup_{m\ge \ell}\bigcup_{j\in S_m} B_{m,j}$.
As a result, we have
\begin{align*}
 \mathbb{P}(0<|f^{(1)}-\tfrac12|\le \delta)\ \leq  \sum_{m\ge \ell}^{i} \sum_{j \in S_m} \mathbb{P}(B_{m,j}) = \sum_{m\ge \ell}^{i} s_m (M z_m)^{-d},   
\end{align*}
where the last relation arises from the covariate distribution~\eqref{eq:covariate-density}.

Recall that $s_m = M^{-1}(Mz_m)^{d - \alpha_m \beta}$. 
We further have
\begin{align*}
 \mathbb{P}(0<|f^{(1)}-\tfrac12|\le \delta)\ 
 \leq   \sum_{m\ge \ell}^{i} s_m (M z_m)^{-d} 
 = \sum_{m\ge \ell}^{i} M^{-1}(M z_m)^{-\alpha_m \beta}.
\end{align*}
Since $(Mz_{m})^{-\alpha_{m}\beta}\le (Mz_{l})^{-\alpha_{m}\beta}$ for $m \geq l$ and $\alpha_{m} \geq \alpha_{i}$ for for $m \leq i$, we arrive at 
\begin{align*}
 \mathbb{P}(0<|f^{(1)}-\tfrac12|\le \delta)\ 
 \leq   (M z_l)^{-\alpha_i \beta} \leq (\frac{\delta}{D_{\phi}})^{\alpha_i}.
\end{align*}
We therefore established the smoothness and the margin condition. 
\end{proof}

% ; the second inequality is because $\alpha_m\ge\alpha_i$ for $m\le i$. \rj{need to show $\alpha_i$ is decreasing.} Hence, the margin assumption is satisfied with parameter $\alpha_i$, and $\mathcal{F}_i\subseteq\Env_{\alpha_i}$.  

% Let $z_0=1/M$. Now suppose $\delta\in[(Mz_{l})^{-\beta},(Mz_{l-1})^{-\beta})$ for
% some $l\in[i]$, we have
% \begin{align*}
% P_{X}\left(0<\left|f_{\omega}(X)-\frac{1}{2}\right|\leq\delta\right) & =\sum_{m=l}^{i}\sum_{j\in S_m}P_{X}(\peakm)\\
%  % & =\sum_{m=k}^{M}\sum_{j\in S_m}2^{d}(2Mz_{m})^{-d}\\
%  & =\sum_{m=l}^{i}M^{d-1}s_{m}(Mz_{m})^{-d}\\
%  % & =\sum_{m=k}^{M}M^{d-1}(M^{-\alpha_{m}\beta})z_{m}^{d-\alpha_{m}\beta}(Mz_{m})^{-d}\\
%  & =\frac{1}{M}\sum_{m=l}^{i}(Mz_{m})^{-\alpha_{m}\beta}\\
%  &\le \frac{1}{M}\sum_{m=l}^{i}(Mz_{l})^{-\alpha_{m}\beta}\\
%  &\le(Mz_{l})^{-\alpha_{i}\beta}\le\delta^{\alpha_{i}},
% \end{align*}

\subsection{Remaining proofs of Section~\ref{sec:lower-lb-Qi}}\label{sec:lower-lb-Qi-proof}
\begin{proof}[Proof of Lemma~\ref{lemma:single-bin-tv}]
It suffices to bound their KL-divergence. We can compute
\begin{align*}
\mathrm{KL}(\pminus^{n},\pplus^{n}) & \overset{\mathrm{(k)}}{\le}8\mathbb{E}_{\Gamma,\pi;\sigma_{i,j}=-1}[\sum_{t=1}^{n}(f_{\sigma_{i,j}=-1}(X_{t})-f_{\sigma_{i,j}=1}(X_{t}))^{2}\mathbf{1}\{\pi_{t}(X_{t})=1\}]\\
 & \overset{\mathrm{(ii)}}{\le}32D_{\phi}^{2}(Mz_i)^{-2\beta}\mathbb{E}_{\Gamma,\pi;\sigma_{i,j}=-1}[\sum_{t=1}^{n}\mathbf{1}\{\pi_{t}(X_{t})=1,X_{t}\in \peaki\}]\\
 & \overset{\mathrm{(iii)}}{=}32D_{\phi}^{2}(Mz_i)^{-(2\beta+d)}\sum_{t=1}^{n}\mathbb{P}_{\Gamma,\pi;\sigma_{i,j}=-1}^{t}(\pi_{t}(X_{t})=1\mid X_{t}\in \peaki)\\
 & \overset{\mathrm{(iv)}}{\le}32D_{\phi}^{2}(Mz_i)^{-(2\beta+d)}n\le2n(Mz_i)^{-(2\beta+d)}.
\end{align*}
Here, step (k) uses the standard decomposition of KL divergence and
Bernoulli reward structure; step (ii) is due to the definition of
$f_{\omega}$; step (iii) uses $\mathbb{P}(X_{t}\in \peaki)=1/(Mz_i)^{d}$,
and step (iv) arises from $\mathbb{P}_{\Gamma,\pi;\sigma_{i,j}=-1}^{t}(\pi_{t}(X_{t})=1\mid X_{t}\in \peaki)\le1$
for any $1\le t\le n$. 

By Pinsker's inequality, 
\[\|\pminus^{n}-\pplus^{n}\|_\mathrm{TV}\le\sqrt{\frac{1}{2}\mathrm{KL}(\pminus^{n},\pplus^{n})}\le\sqrt{n(Mz_i)^{-(2\beta+d)}}.\]
This finishes the proof. 
\end{proof}

\subsection{Remaining proofs of Section~\ref{sec:mix-indis-proof}}\label{sec:chi-sq-remain}
\begin{proof}[Proof of Lemma~\ref{lemma:moment-of-m}]
    Throughout the proof, we drop the subscript on $i$. Recall
\[
r(\sigma)\coloneqq \frac{(\tfrac{1}{2}+\sigma\binht)^R(\tfrac{1}{2}-\sigma\binht)^{N-R}}{(\tfrac{1}{2})^N}
=(1+2\sigma\binht)^R(1-2\sigma\binht)^{N-R},\quad \sigma\in\{+1,-1\},
\]
and
\[
m\coloneqq \tfrac{1}{2}\big(r(+1)+r(-1)\big).
\]

We want to compute $\mathbb{E}_0[m\mid N]$ and $\mathbb{E}_0[m^2\mid N]$. To start with,
\begin{align*}
\mathbb{E}_0[r(\sigma)\mid N]
&=\sum_{x=0}^N \binom{N}{x}2^{-N}\,(1+2\sigma\binht)^x(1-2\sigma\binht)^{N-x}\\
&=2^{-N}\sum_{x=0}^N \binom{N}{x}\big(1+2\sigma\binht\big)^x\big(1-2\sigma\binht\big)^{N-x}\\
&=2^{-N}\,\big[(1+2\sigma\binht)+(1-2\sigma\binht)\big]^N \quad\text{(binomial theorem)}\\
&=2^{-N}\,(2)^N = 1.
\end{align*}
Therefore,
\[
\mathbb{E}_0[m\mid N]=\tfrac{1}{2}\big(\mathbb{E}_0[r(+1)\mid N]+\mathbb{E}_0[r(-1)\mid N]\big)=1.
\]

Next, we deal with the second moment. Expanding $m^2$,
\[
m^2=\tfrac{1}{4}\Big(r(+1)^2+2\,r(+1)r(-1)+r(-1)^2\Big).
\]
We will evaluate the three expectations separately.
\begin{align*}
\mathbb{E}_0[r(+1)^2\mid N]
&=\sum_{x=0}^N \binom{N}{x}2^{-N}(1+2\binht)^{2x}(1-2\binht)^{2(N-x)}\\
&=2^{-N}\big[(1+2\binht)^2+(1-2\binht)^2\big]^N\\
&=2^{-N}\big(2+8\binht^2\big)^N=(1+4\binht^2)^N.
\end{align*}
By symmetry, $\mathbb{E}_0[r(-1)^2\mid N]=(1+4\binht^2)^N$. For the cross-product,
\begin{align*}
r(+1)r(-1)
&=\big[(1+2\binht)^R(1-2\binht)^{N-R}\big]\big[(1-2\binht)^R(1+2\binht)^{N-R}\big]\\
&=\big((1+2\binht)(1-2\binht)\big)^R \big((1-2\binht)(1+2\binht)\big)^{N-R}\\
&=(1-4\binht^2)^R(1-4\binht^2)^{N-R}=(1-4\binht^2)^N,
\end{align*}
which is constant in $R$, hence
\[
\mathbb{E}_0[r(+1)r(-1)\mid N]=(1-4\binht^2)^N.
\]

Putting things together,
\begin{align*}
\mathbb{E}_0[m^2\mid N]
&=\tfrac{1}{4}\Big(\mathbb{E}_0[r(+1)^2\mid N]+2\,\mathbb{E}_0[r(+1)r(-1)\mid N]+\mathbb{E}_0[r(-1)^2\mid N]\Big)\\
&=\tfrac{1}{4}\Big((1+4\binht^2)^N+2(1-4\binht^2)^N+(1+4\binht^2)^N\Big)\\
&=\tfrac{1}{2}\Big((1+4\binht^2)^N+(1-4\binht^2)^N\Big).
\end{align*}
This finishes the proof.
\end{proof}

\begin{proof}[Proof of Lemma~\ref{lemma:inter-moment}]
    By the law of total expectation,
    \begin{align*}
        \mathbb{E}[t^L]
        &=\mathbb{E}[\mathbb{E}[t^L\mid S]]\\
        &=\mathbb{E}[\mathbb{E}[t^{\sum_{i\in S}\mathbf{1}\{i\in S^\prime\}}\mid S]]\\
        &\le\mathbb{E}[\mathbb{E}[t^{\mathrm{Binomial}(\supportsize,\frac{\supportsize}{\nbin})}\mid S]],
    \end{align*}
    where the last step applies Lemma 1.1 in~\cite{bardenet2015concentration}. Using the PGF of binomial distribution, we obtain
    \begin{align*}
        \mathbb{E}[t^L]
        &\le\mathbb{E}[\mathbb{E}[t^{\mathrm{Binomial}(\supportsize,\frac{\supportsize}{\nbin})}\mid S]]\\
        &=\left(1+\frac{\supportsize}{\nbin}(t-1)\right)^\supportsize
        \le\exp\left(\frac{\supportsize^2}{\nbin}(t-1)\right).
    \end{align*}
    This completes the proof.
\end{proof}

\section{Proof of the upper bound}\label{subsec:proof-lem-basedb-regret}

% In this section, we establish the upper bound. 

This section is devoted to establishing Lemma~\ref{lem:basedb-regret}, whose proof follows the framework developed in \cite{perchet2013multi,jiang2025batched}. 

To start with, recall $\mathcal{T}$ is a tree of depth $M$, whose root (depth 0) represents the whole covariate space $\mathcal{X}$.  The tree is recursively defined as the following: for any $i\ge 1$, each node at depth $i-1$ is split into $g_{i-1}^d$ children. Consequently, a node at depth $i$ has width $w_i=g_{i-1}^{-1}\cdot  w_{i-1}=(\prod_{l=0}^{i-1}g_{l})^{-1}$. For any bin $C\in\mathcal{T}$, denote its parent by $\mathsf{p}(C)=\{C^{\prime}\in\mathcal{T}:C\in\mathsf{child}(C^{\prime})\}$. Define $\mathsf{p}^{1}(C)=\mathsf{p}(C)$ and $\mathsf{p}^{k}(C)=\mathsf{p}(\mathsf{p}^{k-1}(C))$
for $k\geq2$. Let $\mathcal{P}(C)=\{C^{\prime}\in\mathcal{T}:C^{\prime}=\mathsf{p}^{k}(C)\textrm{ for some }k\ge1\}$ be the set of ancestors of the bin $C$. Denote by $\mathcal{L}_{0}=\{\mathcal{X}\}$, and let $\mathcal{L}_{t}$ be the set of active bins at
time $t$. It is easy to see $\mathcal{L}_{t}=\mathcal{B}_{1}$ for $1\leq t\leq t_{1}$, where $\mathcal{B}_{1}$ are all the bins in the first layer. 

\subsection{Introducing the good events}

Fix a batch $i\ge1$, for any $C\in\mathcal{L}_{t_{i-1}+1}$, define 
\[
m_{C,i}\coloneqq \sum_{t=t_{i-1}+1}^{t_{i}}\mathbf{1}\{X_{t}\in C\},
\]
which is the number of times the covariates land into bin $C$ during batch $i$. The expectation of  $m_{C,i}$ is equal to 
\[
m_{C,i}^{\star}=\mathbb{E}[m_{C,i}]=(t_{i}-t_{i-1})P_X(X\in C).
\]
Since $\bm{u}$ lies in the interior of $\gridset$ and the split factors satisfy equation~\eqref{eq:split-factors}, we have $|C|=w_i=(\prod_{l=0}^{i-1}g_{l})^{-1}\ge T^{-1/(2\beta+d)}.$ The lemma below says $m_{C,i}$ stays closely to its expectation $m_{C,i}^{\star}$ for all $C\in\mathcal{T}$.
\begin{lemma} \label{lemma:clean-event-1} 

Assume that $M\le D_{1}\log(T)$ for some constant $D_{1}>0$. With
probability at least $1-1/T$, for all $1\leq i\leq M$ and $C\in\mathcal{L}_{t_{i-1}+1}$,
one has 
\[
\frac{1}{2}m_{C,i}^{\star}\le m_{C,i}\le\frac{3}{2}m_{C,i}^{\star}.
\]
\end{lemma}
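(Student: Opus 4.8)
The statement is a uniform concentration bound on the bin occupancy counts $m_{C,i}$ around their means $m_{C,i}^\star = (t_i - t_{i-1}) P_X(X \in C)$, holding simultaneously over all batches $i \in [M]$ and all active bins $C \in \mathcal{L}_{t_{i-1}+1}$. The natural route is a pointwise Bernstein/Chernoff bound for a single $(i,C)$ followed by a union bound, so the plan is: (i) fix $(i,C)$ and observe that $m_{C,i} = \sum_{t=t_{i-1}+1}^{t_i} \mathbf{1}\{X_t \in C\}$ is a sum of i.i.d.\ Bernoulli$(P_X(C))$ random variables, since the covariates $X_t$ are drawn i.i.d.\ and independent of the policy's bin-selection choices (the event $C \in \mathcal{L}_{t_{i-1}+1}$ depends only on rewards from previous batches, hence is independent of the current batch's covariates conditionally on $\mathcal{L}_{t_{i-1}+1}$); (ii) apply a multiplicative Chernoff bound to get $\Pr(|m_{C,i} - m_{C,i}^\star| > \tfrac12 m_{C,i}^\star) \le 2\exp(-c\, m_{C,i}^\star)$ for an absolute constant $c$ (e.g., $c = 1/10$ suffices for the two-sided $1/2$-deviation); (iii) lower bound $m_{C,i}^\star$ so that the exponential is small enough to survive the union bound.

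For step (iii), the key quantitative input is the batch-size choice~\eqref{eq:batch-bin-size}: $t_i - t_{i-1} = \lfloor l_i w_i^{-(2\beta+d)} \log(T w_i^d) \rfloor$ with $l_i$ large, together with the bounded-density Assumption~\ref{ass:bdd-density}, which gives $P_X(C) \ge \underline{c}\, w_i^d$ for any bin $C$ of width $w_i$ meeting $\mathrm{supp}(P_X)$. Hence $m_{C,i}^\star \ge \underline{c}\, l_i\, w_i^{-2\beta} \log(T w_i^d) \gtrsim \log(T w_i^d)$, and since $w_i \ge T^{-1/(2\beta+d)}$ (noted in the excerpt, from $\bm u$ interior and the split-factor definition), we have $w_i^{-2\beta} \ge 1$ so in fact $m_{C,i}^\star \gtrsim l_i \log T$. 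Taking $l_i$ large enough makes $\exp(-c\, m_{C,i}^\star) \le T^{-3}$, say. The number of bins to union over is at most $M \cdot (\max_i w_i^{-d}) \le M \cdot T^{d/(2\beta+d)} \le M T$, and with $M \le D_1 \log T$ this is at most $T^2 \log T \le T^3$ for $T$ large; so the union bound yields failure probability $\le T^{-3} \cdot T^3 \cdot (\text{const}) $, which after adjusting constants is $\le 1/T$. The hypothesis $M \le D_1 \log T$ is used precisely here, to keep the cardinality of the union manageable.

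The main obstacle — more a point requiring care than a genuine difficulty — is justifying the i.i.d.\ reduction in step (i) in the presence of the \emph{adaptive} grid and the \emph{data-dependent} partition $\mathcal{L}_{t_{i-1}+1}$. One must argue that, conditionally on the information set $\mathcal{H}_{t_{i-1}}$ (which determines both $t_{i-1}$, the choice of $t_i$, and the active-bin collection $\mathcal{L}_{t_{i-1}+1}$), the covariates $X_{t_{i-1}+1},\dots,X_{t_i}$ remain i.i.d.\ $P_X$ and independent of $\mathcal{H}_{t_{i-1}}$ — which holds because the $X_t$'s are drawn i.i.d.\ at the outset and the only feedback used to form $\mathcal{H}_{t_{i-1}}$ is rewards from earlier batches. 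After conditioning, $m_{C,i}$ is genuinely a Binomial$(t_i - t_{i-1}, P_X(C))$ variable and the Chernoff bound applies verbatim; one then takes expectations over $\mathcal{H}_{t_{i-1}}$, noting the per-$(i,C)$ bound is uniform in the conditioning. A minor subtlety is that $t_i - t_{i-1}$ may itself be (mildly) random if the grid is adaptive, but the lower bound on $m_{C,i}^\star$ should be stated for whatever grid the algorithm uses; in the concrete construction~\eqref{eq:batch-bin-size} the batch sizes are deterministic given $w_i$, so this is not an issue here. I would wrap the argument by stating the single-bin bound as an intermediate claim, then assembling the union bound and invoking $M \le D_1 \log T$ at the end.
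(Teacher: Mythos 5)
Your approach matches the paper's: a Chernoff bound for each fixed $(i,C)$, followed by a union bound over all nodes of the tree $\mathcal{T}$ (which, as the paper does, conveniently sidesteps the conditioning subtlety you raise because $\mathcal{T}$ is a deterministic set), with the batch-size relation~\eqref{eq:batch-bin-size}, the bounded-density assumption, and $M\le D_1\log T$ supplying the needed inputs. One arithmetic slip to fix at the end: $T^{-3}\cdot T^{3}=1\not\le 1/T$, so either keep the tighter node count $\lesssim D_1 T\log T$ you already obtained (giving total failure probability $\lesssim \log T/T^{2}\le 1/T$) instead of loosening to $T^{3}$, or increase $l_i$ so the per-bin bound is $T^{-4}$.
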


\paragraph{Proof of Lemma~\ref{lemma:clean-event-1}}

Fix the batch index $i$, and a node $C$ in layer-$i$ of the tree
$\mathcal{T}$. If $P_X(C)=0$, then $m_{C,i}=m_{C,i}^{\star}=0$ almost surely. For the remaining part of the proof, we assume $P_X(C)>0$. By relation (\ref{eq:batch-bin-size}), we have 
\begin{align*}
m_{C,i}^{\star} & =(t_{i}-t_{i-1})P_X(X\in C)\\
 & \asymp|C|^{-(2\beta+d)}\log(T|C|^{d})P_X(X\in C)\\
 & \overset{\mathrm{(i)}}{\apprge}|C|^{-2\beta}\overset{\mathrm{}}{\ge}g_{0}^{2\beta},
 % \asymp(T^{\frac{1-\gamma}{1-\gamma^{M}}\cdot\frac{2\beta}{2\beta+d}}),
\end{align*}
where step (i) uses Assumption~\ref{ass:bdd-density}. 
% the fact that $P_X(X\in C)\ge\underline{c}|C|^{d}$.
Since $g_{0}=\lfloor T^{\frac{1}{2\beta+d}\cdot u_{1}^{}}\rfloor$ and $u_1>0$, we reach
% Therefore, 
$m_{C,i}^{\star}\ge\frac{3}{4}\log(2T^{2})$ for all $i$
and $C$, as long as $T$ is sufficiently large. This allows us to invoke
Chernoff's bound to obtain that with probability at most $1/T^{2}$,
\[
\left|\sum\nolimits_{t=t_{i-1}+1}^{t_{i}}\mathbf{1}\{X_{t}\in C\}-m_{C,i}^{\star}\right|\ge\sqrt{3\log(2T^{2})m_{C,i}^{\star}}.
\]
Denote $E^{c}=\{\exists1\le i\le M,C\in\mathcal{L}_{t_{i-1}+1}\text{ such that }\mid\sum_{t=t_{i-1}+1}^{t_{i}}\mathbf{1}\{X_{t}\in C\}-m_{C,i}^{\star}\mid\ge\sqrt{3\log(2T^{2})m_{C,i}^{\star}}\}$.
Applying union bound to reach 
\begin{align*}
\mathbb{P}(E^{c}) & \le\sum_{C\in\mathcal{T}}\frac{1}{T^{2}}
\overset{\mathrm{(ii)}}{\le}\frac{1}{T^{2}}\left(\sum_{i=1}^{M}(\prod_{l=0}^{i-1}g_{l})^{d}\right)\overset{\mathrm{(iii)}}{\le}\frac{1}{T^{2}}\cdot M\cdot(\prod_{l=0}^{M-1}g_{l})^{d},
\end{align*}
where step (ii) sums over all possible nodes of $\mathcal{T}$ across
batches, and step (iii) is due to $(\prod_{l=0}^{i-1}g_{l})^{d}\le(\prod_{l=0}^{M-1}g_{l})^{d}$
for any $1\le i\le M$. Since $g_{M-1}=1$, we further obtain
\begin{align*}
\mathbb{P}(E^{c}) & \le\frac{1}{T^{2}}\cdot M\cdot(\prod_{l=0}^{M-2}g_{l})^{d}\overset{\mathrm{(iv)}}{\le}\frac{1}{T^{2}}\cdot M\cdot t_{M-1}^{\frac{d}{2\beta+d}}\overset{\mathrm{}}{\le}D_{1}\frac{1}{T^{2}}\cdot
% \log T\cdot 
T^{\frac{d}{2\beta+d}}
\le\frac{1}{T},
\end{align*}
where step (iv) invokes relation (\ref{eq:batch-bin-size}). This completes
the proof. 

Denote the above event by $E$. By assumption $M\le D_{1}\log(T)$, we use Lemma~\ref{lemma:clean-event-1}
to reach 
\[
\mathbb{E}[R_{T}(\hat{\Gamma}_{\bm{u}}, \hat{\pi}_{\bm{u}})\mathbf{1}(E^{c})]\le T\mathbb{P}(E^{c})=1,
\]
which means the regret incurred when $E$ does not happen is negligible. For the remaining proof, the task becomes controlling $\mathbb{E}[R_{T}(\hat{\pi})\mathbf{1}(E)]$. 

Next, we turn to the arm elimination part. For each bin $C\in\mathcal{L}_{t_{i}}$, denote by $\mathcal{I}_{C}^{\prime}$ the set of remaining arms at the end of batch $i$, i.e., after
Algorithm~\ref{algo-subroutine} is invoked. Define
\[
\underline{\mathcal{I}}_{C}=\left\{ k\in\{1,-1\}:\sup_{x\in C}f^{\star}(x)-f^{(k)}(x)\le c_{0}|C|^{\beta}\right\} ,
\]
\begin{align*}
\bar{\mathcal{I}}_{C} & =\left\{ k\in\{1,-1\}:\sup_{x\in C}f^{\star}(x)-f^{(k)}(x)\le c_{1}|C|^{\beta}\right\} ,
\end{align*}

where $c_{0}=2Ld^{\beta/2}+1$ and $c_{1}=8c_{0}$. By definition,
\[
\underline{\mathcal{I}}_{C}\subseteq\bar{\mathcal{I}}_{C}.
\]
Define the event $\mathcal{A}_{C}=\{\underline{\mathcal{I}}_{C}\subseteq\mathcal{I}_{C}^{\prime}\subseteq\bar{\mathcal{I}}_{C}\}$. Besides, define $\mathcal{G}_{C}=\cap_{C^{\prime}\in\mathcal{P}(C)}\mathcal{A}_{C^{\prime}}$. For $i\ge1$, recall that $\mathcal{B}_{i}$ is the collection of bins $C$ with $|C|=(\prod_{l=0}^{i-1}g_{l})^{-1}=w_{i}$. The following lemma adapted from~\cite{jiang2025batched} shows successive elimination succeeds with high probability.

\begin{lemma}\label{lemma:clean-event-2}For any $1\le i\le M-1$
and $C\in\mathcal{B}_{i}$ such that $P_X(C)>0$, we have 
\[
\mathbb{P}(E\cap\mathcal{G}_{C}\cap\mathcal{A}_{C}^{c})\leq\frac{4m_{C,i}^{\star}}{T|C|^{d}}.
\]
\end{lemma}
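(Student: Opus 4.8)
The plan is to run the standard successive-elimination concentration argument (following \cite{perchet2013multi,jiang2025batched}): freeze the per-arm pull counts in $C$ during batch $i$ by conditioning on the covariate trajectory, introduce one ``clean'' event for the two empirical reward estimates in $C$, show deterministically that on this clean event intersected with $E$ and $\mathcal{G}_C$ the elimination step behaves as required (i.e.\ $\mathcal{A}_C$ holds), and then bound the probability of the clean event failing by Hoeffding.

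Concretely, fix $i$ and $C\in\mathcal{B}_i$ with $P_X(C)>0$. I would first record that on $\mathcal{G}_C$ the bin $C$ is either active during batch $i$---in which case $\mathcal{I}_C=\{1,-1\}$ and, pulling round-robin, each arm receives $n_k\ge(m_{C,i}-1)/2$ samples in that batch---or inactive, in which case $\mathsf{p}(C)$ resolved to a single arm that, being the sole survivor over $\mathsf{p}(C)$, is by $\mathcal{A}_{\mathsf{p}(C)}$ and the H\"older bound strictly optimal on $\mathsf{p}(C)\supseteq C$ by a margin exceeding $c_1 w_i^\beta$ once $g_{i-1}$ is large (i.e.\ for $T$ large); hence $\mathcal{A}_C$ holds deterministically and the inactive case contributes $0$. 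Assuming $C$ active, define
\[
\mathcal{D}_C:=\bigl\{\,\bigl|\bar{Y}_{C,i}^{(k)}-\bar{f}_C^{(k)}\bigr|\le\tfrac12 U(m_{C,i},T,C)\ \text{ for }k\in\{1,-1\}\,\bigr\}.
\]
On $E$ one has $m_{C,i}\asymp m_{C,i}^\star\asymp w_i^{-2\beta}\log(Tw_i^d)$ (using $P_X(C)\asymp w_i^d$ for bins of positive mass), so $U(m_{C,i},T,C)$ is of order $w_i^\beta=|C|^\beta$; combining this with the H\"older bias bound $|\bar f_C^{(k)}-f^{(k)}(x)|\le L d^{\beta/2}|C|^\beta$ on $C$, the choice $c_0=2Ld^{\beta/2}+1$, $c_1=8c_0$, and $l_i$ tuned so that $U(m_{C,i},T,C)$ lies in the appropriate window relative to $|C|^\beta$, a direct calculation yields $E\cap\mathcal{G}_C\cap\mathcal{D}_C\subseteq\mathcal{A}_C$: any arm whose suboptimality gap on $C$ exceeds $c_1|C|^\beta$ is separated from its competitor on the empirical means by more than $U(m_{C,i},T,C)$ and hence eliminated, while the near-optimal arm(s) (gap $\le c_0|C|^\beta$) are retained.

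It then remains to bound $\mathbb{P}(E\cap\mathcal{G}_C\cap\mathcal{A}_C^c)\le\mathbb{P}\bigl(\{m_{C,i}\ge\tfrac12 m_{C,i}^\star\}\cap\mathcal{D}_C^c\bigr)$. Conditioning on the covariate trajectory through $t_i$ (which fixes $m_{C,i}$ and the $n_k$'s), $\bar{Y}_{C,i}^{(k)}$ is an average of $n_k$ independent $[0,1]$ summands with mean $\bar{f}_C^{(k)}$, so Hoeffding together with a union bound over the two arms gives, on $\{m_{C,i}\ge\tfrac12 m_{C,i}^\star\}$ (whence $n_k\ge m_{C,i}/3$), a bound of the form $4(2T|C|^d)^{-c'}$ with $c'=8/3>1$, where I use the identity $m_{C,i}\,U(m_{C,i},T,C)^2=16\log(2T|C|^d)$. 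Since $(2T|C|^d)^{-c'}\le(T|C|^d)^{-1}\le m_{C,i}^\star/(T|C|^d)$ (using $m_{C,i}^\star\ge1$), this is at most $4m_{C,i}^\star/(T|C|^d)$, as claimed.

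The step I expect to be the main obstacle is the deterministic inclusion $E\cap\mathcal{G}_C\cap\mathcal{D}_C\subseteq\mathcal{A}_C$: it forces $U(m_{C,i},T,C)$ to simultaneously swamp the $O(Ld^{\beta/2}|C|^\beta)$ H\"older bias (so that genuinely near-optimal arms are not spuriously separated) and stay small enough relative to $c_1|C|^\beta$ (so that every arm whose gap exceeds $c_1|C|^\beta$ is reliably eliminated), uniformly over all active bins $C$ at every level $i\le M-1$ and all large $T$; pinning down the admissible $c_0,c_1$ and the scale of $l_i$ that make this window nonempty is the delicate part, and it also requires the uniform two-sided estimate $m_{C,i}^\star\asymp|C|^{-2\beta}\log(T|C|^d)$ across the positive-mass bins together with careful handling of the inactive-bin case.
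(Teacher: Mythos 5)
Your overall strategy is the standard one for this kind of lemma (and the one \cite{jiang2025batched,perchet2013multi} use): introduce a clean concentration event $\mathcal{D}_C$ for the two binned empirical means, argue deterministically that $E\cap\mathcal{G}_C\cap\mathcal{D}_C\subseteq\mathcal{A}_C$, and then bound $\mathbb{P}(\mathcal{D}_C^c)$ by Hoeffding on the event $E$. The paper does not spell out this proof, so there is no paper argument to compare against, but your plan is the right one. That said, two specific steps as you wrote them do not go through.

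\paragraph{The clean event radius $\tfrac12 U$ is too large for the inclusion $\underline{\mathcal{I}}_C\subseteq\mathcal{I}_C'$.}
Take any near-optimal arm $k\in\underline{\mathcal{I}}_C$, so $\sup_{x\in C}f^{\star}(x)-f^{(k)}(x)\le c_0|C|^\beta$, hence $\bar f_C^{\star}-\bar f_C^{(k)}\le c_0|C|^\beta$ and $\max_j \bar f_C^{(j)}\le\bar f_C^{\star}$. On $\mathcal{D}_C$ with radius $r$,
\[
\bar{Y}_{C,i}^{\max}-\bar{Y}_{C,i}^{(k)}\le \max_j\bar f_C^{(j)}-\bar f_C^{(k)}+2r\le c_0|C|^\beta+2r.
\]
For $k$ to survive the elimination test (threshold exactly $U$), you need $c_0|C|^\beta+2r\le U$. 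With your $r=\tfrac12 U$ this reads $c_0|C|^\beta\le 0$, which is false whenever $L>0$. No tuning of $l_i$ fixes this, because the slack $2r=U$ already saturates the threshold before the H\"older bias is even added. The correct radius is of the form $r\le\tfrac14 U$ (together with $l_i$ chosen so that $U\in[2c_0|C|^\beta,\ (c_1-4Ld^{\beta/2})|C|^\beta/2)$, which the factor $c_1=8c_0$ makes nonempty). This in turn changes the Hoeffding exponent: with $r=\tfrac14 U$ and the round-robin guarantee $n_k\ge (m_{C,i}-1)/2$ (not the weaker $m_{C,i}/3$), one gets $2\exp(-2n_k(\tfrac14 U)^2)\le 2\exp(-\tfrac{m_{C,i}-1}{16}U^2)$, and together with the identity $m_{C,i}U^2=16\log(2T|C|^d)$ this is $\lesssim (T|C|^d)^{-1}\le m_{C,i}^{\star}/(T|C|^d)$ using $m_{C,i}^{\star}\ge 1$. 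With your $r=\tfrac12 U$ and $n_k\ge m_{C,i}/3$ you do get the stronger exponent $8/3$, but only because the clean event you are bounding is the wrong one (too weak to imply $\mathcal{A}_C$).

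\paragraph{Conditioning on the full covariate trajectory changes the conditional means.}
You condition on $\{X_t\}_{t\le t_i}$ and then claim the summands $Y_t^{(k)}$ have mean $\bar f_C^{(k)}$. Conditional on $X_t=x_t$, $Y_t^{(k)}$ has mean $f^{(k)}(x_t)$, so the empirical mean concentrates around the random quantity $\tilde f_k:=n_k^{-1}\sum f^{(k)}(X_t)$, not around $\bar f_C^{(k)}$. The two are within $Ld^{\beta/2}|C|^\beta$ of each other by H\"older, so the argument is salvageable by absorbing this deterministic bias into the window, but then it eats into the already-tight budget from the first issue and must be accounted for explicitly. The cleaner route is to condition only on the counts (and the pre-batch information that determines the round-robin assignment), under which the summands really are i.i.d.\ from $P(Y^{(k)}\mid X\in C)$ with mean $\bar f_C^{(k)}$.

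Two smaller remarks: in the inactive-bin case it is not always $\mathsf{p}(C)$ that resolved---it could be any ancestor at level $j<i$---and the argument that $\mathcal{A}_C$ then holds vacuously needs $\prod_{\ell=j}^{i-1}g_\ell\ge c_0^{1/\beta}$ (which does hold for $T$ large under \eqref{eq:split-factors} since the $u_i^\star$ are interior); and the inclusion $\{k\}\subseteq\bar{\mathcal{I}}_C$ in that case follows cleanly once you observe that the margin forces $f^\star\equiv f^{(k)}$ on the ancestor, so the gap is exactly zero on $C$.
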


% \noindent Lemma~\ref{lemma:clean-event-2} ensures
% that with high probability $\mathcal{A}_{C}$ occurs given that $E$ holds
% and $\mathcal{A}_{C'}$ holds for all the ancestors $C\text{\textquoteright}$ of $C$. 
% See Section~\ref{subsec:proof-lemma-clean2} for the proof. 

\subsection{Regret decomposition}

For any bin $C\in\mathcal{T}$, we consider the following two sources of regret incurred on it. First, define 
\[
r_{T}^{\textrm{live}}(C)\coloneqq \sum_{t=1}^{T}\left(f^{\star}(X_{t})-f_{\pi_{t}(X_{t})}(X_{t})\right)\mathbf{1}(X_{t}\in C)\mathbf{1}(C\in\mathcal{L}_{t}).
\]
Besides, denote by $\mathcal{J}_{t}\coloneqq \cup_{s\le t}\mathcal{L}_{s}$ the set of bins that have been live up to time $t$. Define 
\[
r_{T}^{\textrm{born}}(C)\coloneqq \sum_{t=1}^{T}\left(f^{\star}(X_{t})-f_{\pi_{t}(X_{t})}(X_{t})\right)\mathbf{1}(X_{t}\in C)\mathbf{1}(C\in\mathcal{J}_{t}).
\]
Due to the structure of the tree $\mathcal{T}$, we have 
\begin{align*}
r_{T}^{\textrm{born}}(C) & =r_{T}^{\textrm{live}}(C)+\sum_{C^{\prime}\in\mathsf{child}(C)}r_{T}^{\textrm{born}}(C^{\prime})\\
 & =r_{T}^{\textrm{born}}(C)\mathbf{1}(\mathcal{A}_{C}^{c})+r_{T}^{\textrm{live}}(C)\mathbf{1}(\mathcal{A}_{C})+\sum_{C^{\prime}\in\mathsf{child}(C)}r_{T}^{\textrm{born}}(C^{\prime})\mathbf{1}(\mathcal{A}_{C}).
\end{align*}
The following regret decomposition is an immediate consequence of  iteratively applying the relation above to each level of the tree.
\begin{align*}
R_{T}(\hat{\Gamma}_{\bm{u}}, \hat{\pi}_{\bm{u}}) & =\mathbb{E}[r_{T}^{\textrm{born}}(\mathcal{X})]\\
 & =\sum_{C^{\prime}\in\mathsf{child}(\mathcal{X})}\mathbb{E}[r_{T}^{\textrm{born}}(C^{\prime})]\\
 & =\sum_{1\le i \leq M-1}\left(\underbrace{\sum_{C\in\mathcal{B}_{i}}\mathbb{E}[r_{T}^{\textrm{born}}(C)\mathbf{1}(\mathcal{G}_{C}\cap\mathcal{A}_{C}^{c})]}_{\eqqcolon U_{i}}+\underbrace{\sum_{C\in\mathcal{B}_{i}}
 \mathbb{E}[r_{T}^{\textrm{live}}(C)\mathbf{1}(\mathcal{G}_{C}\cap\mathcal{A}_{C})]}_{\eqqcolon V_{i}}\right)\\
 & \quad+\sum_{C\in\mathcal{B}_{M}}
 \mathbb{E}[r_{T}^{\textrm{live}}(C)\mathbf{1}(\mathcal{G}_{C})],
\end{align*}
where the second step is due to $r_{T}^{\textrm{live}}(\mathcal{X})=0$ (note $\mathcal{X}\notin\mathcal{L}_{t}$ for any $1\leq t\leq T$). Now that we have a decomposition of the regret, the task becomes bounding $V_{i},U_{i}$ and the regret of the last batch separately. 

We first consider the case of $\alpha\le d/\beta$.

\paragraph{Upper bounding term $V_{i}$.} Fix some $1\leq i\leq M-1$, and some bin $C\in\mathcal{B}_{i}$. The event $\mathcal{G}_{C}$ implies $\mathcal{I}_{\mathsf{p}(C)}^{'}\subseteq\bar{\mathcal{I}}_{\mathsf{p}(C)}$. Namely, for any $k\in\mathcal{I}_{\mathsf{p}(C)}^{'}$, 
\[
\sup_{x\in\mathsf{p}(C)}f^{\star}(x)-f^{(k)}(x)\le c_{1}|\mathsf{p}(C)|^{\beta}.
\]
Consequently, for any $x\in C$, and $k\in\mathcal{I}_{\mathsf{p}(C)}^{'}$,
\begin{equation}
\left(f^{\star}(x)-f^{(k)}(x)\right)\bm{1}\{\mathcal{G}_{C}\}\leq c_{1}|\mathsf{p}(C)|^{\beta}\mathbf{1}(0<\left|f_{1}(x)-\negrev(x)\right|\le c_{1}|\mathsf{p}(C)|^{\beta}).\label{eq:remaining-arm-gap}
\end{equation}
This leads to
\begin{align*}
& \mathbb{E}[r_{T}^{\textrm{live}}(C)\mathbf{1}(\mathcal{G}_{C}\cap\mathcal{A}_{C})]\\  
% =\mathbb{E}\left[\sum_{t=1}^{T}\left(f^{\star}(X_{t})-f_{\pi_{t}(X_{t})}(X_{t})\right)\mathbf{1}(X_{t}\in C)\mathbf{1}(C\in\mathcal{L}_{t})\mathbf{1}(\mathcal{G}_{C}\cap\mathcal{A}_{C})\right]\\
 &\quad  \overset{\mathrm{}}{\le}\mathbb{E}\left[\sum_{t=1}^{T}c_{1}|\mathsf{p}(C)|^{\beta}\mathbf{1}(0<\left|f_{1}(X_{t})-\negrev(X_{t})\right|\le c_{1}|\mathsf{p}(C)|^{\beta})\mathbf{1}(X_{t}\in C,C\in\mathcal{L}_{t})\mathbf{1}(\mathcal{G}_{C}\cap\mathcal{A}_{C})\right]\\
 &\quad \overset{\mathrm{(i)}}{\le}c_{1}|\mathsf{p}(C)|^{\beta}\mathbb{E}\left[\sum_{t=t_{i-1}+1}^{t_{i}}\mathbf{1}(0<\left|f_{1}(X_{t})-\negrev(X_{t})\right|\le c_{1}|\mathsf{p}(C)|^{\beta},X_{t}\in C)\mathbf{1}(\mathcal{G}_{C}\cap\mathcal{A}_{C})\right]\\
 &\quad \overset{\mathrm{(ii)}}{\le}c_{1}|\mathsf{p}(C)|^{\beta}\sum_{t=t_{i-1}+1}^{t_{i}}\mathbb{P}(0<\left|f_{1}(X_{t})-\negrev(X_{t})\right|\le c_{1}|\mathsf{p}(C)|^{\beta},X_{t}\in C)\\
 &\quad =c_{1}|\mathsf{p}(C)|^{\beta}(t_{i}-t_{i-1})\mathbb{P}(0<\left|f_{1}(X)-\negrev(X)\right|\le c_{1}|\mathsf{p}(C)|^{\beta},X\in C).
\end{align*}
Here, step (i) can be deduced from considering the cases of whether $C$ is split or not; step (ii) is because $\mathbf{1}(\mathcal{G}_{C}\cap\mathcal{A}_{C})\le1$.

Summing over all bins in $\mathcal{B}_{i}$, we reach 

\begin{align}
\sum_{C\in\mathcal{B}_{i}}\mathbb{E}[r_{T}^{\textrm{live}}(C)\mathbf{1}(\mathcal{G}_{C}\cap\mathcal{A}_{C})] & \le\sum_{C\in\mathcal{B}_{i}}c_{1}w_{i-1}^{\beta}(t_{i}-t_{i-1})\mathbb{P}(0<\left|f_{1}(X)-\negrev(X)\right|\le c_{1}|\mathsf{p}(C)|^{\beta},X\in C)\nonumber \\
 & =c_{1}w_{i-1}^{\beta}(t_{i}-t_{i-1})\sum_{C\in\mathcal{B}_{i}}\mathbb{P}(0<\left|f_{1}(X)-\negrev(X)\right|\le c_{1}w_{i-1}^{\beta},X\in C)\nonumber\\
 &=c_{1}w_{i-1}^{\beta}(t_{i}-t_{i-1})\mathbb{P}(0<\left|f_{1}(X)-\negrev(X)\right|\le c_{1}w_{i-1}^{\beta}),\label{eq:vi-calculation}
\end{align}
where the penultimate step is due to $|\mathsf{p}(C)|=w_{i-1}$. 
% Since
% \begin{align}
% \sum_{C\in\mathcal{B}_{i}}\mathbb{P}(0<\left|f_{1}(X)-\negrev(X)\right|\le c_{1}w_{i-1}^{\beta},X\in C) 
% & =\mathbb{P}(0<\left|f_{1}(X)-\negrev(X)\right|\le c_{1}w_{i-1}^{\beta}),
%  % & \le D_{0}\cdot\left[c_{1}w_{i-1}^{\beta}\right]^{\alpha},
%  \label{eq:margin-condition}
% \end{align}
% \rj{The above is equal to 0 when $\alpha=\infty$ for $i\ge2$.}
% Plugging (\ref{eq:margin-condition}) back to (\ref{eq:vi-calculation}),  
We can apply the margin condition to obtain
\begin{align*}
V_i=\sum_{C\in\mathcal{B}_{i}}\mathbb{E}[r_{T}^{\textrm{live}}(C)\mathbf{1}(\mathcal{G}_{C}\cap\mathcal{A}_{C})] & \le(t_{i}-t_{i-1})\cdot[c_{1}w_{i-1}^{\beta}]^{1+\alpha}\cdot D_{0}.
\end{align*}

\paragraph{Upper bounding term $U_i$.} Fix some $1\leq i\leq M-1$, and some bin $C\in\mathcal{B}_{i}$ such that $P_X(C)>0$.
By relation~\eqref{eq:remaining-arm-gap},

\begin{align*}
\mathbb{E}&[r_{T}^{\textrm{born}}(C)\mathbf{1}(\mathcal{G}_{C}\cap\mathcal{A}_{C}^{c})] \\
&\le\mathbb{E}\left[\sum_{t=1}^{T}c_{1}|\mathsf{p}(C)|^{\beta}\mathbf{1}(0<\left|f_{1}(X_{t})-\negrev(X_{t})\right|\le c_{1}|\mathsf{p}(C)|^{\beta})\mathbf{1}(X_{t}\in C,C\in\mathcal{J}_{t})\mathbf{1}(\mathcal{G}_{C}\cap\mathcal{A}_{C}^{c})\right]\\
 & \le c_{1}|\mathsf{p}(C)|^{\beta}T\mathbb{P}(0<\left|f_{1}(X)-\negrev(X)\right|\le c_{1}|\mathsf{p}(C)|^{\beta},X\in C)\mathbb{P}(\mathcal{G}_{C}\cap\mathcal{A}_{C}^{c})\\
 & \le c_{1}|\mathsf{p}(C)|^{\beta}T\mathbb{P}(0<\left|f_{1}(X)-\negrev(X)\right|\le c_{1}|\mathsf{p}(C)|^{\beta},X\in C)\frac{4m_{C,i}^{\star}}{T|C|^{d}}\\
 & \le4\bar{c}c_{1}w_{i-1}^{\beta}\mathbb{P}(0<\left|f_{1}(X)-\negrev(X)\right|\le c_{1}w_{i-1}^{\beta},X\in C)(t_{i}-t_{i-1}),
\end{align*}
where the penultimate step uses Lemma~\ref{lemma:clean-event-2}, and the last step is due to Assumption~\ref{ass:bdd-density}. 
% use the fact that $P_X(X\in C)\le\bar{c}|C|^{d}$
% in the second inequality. 
Consequently, we reach
\begin{align}\label{eq:ui-calculation}
U_i
&=\sum_{C\in\mathcal{B}_{i}}\mathbb{E}[r_{T}^{\textrm{born}}(C)\mathbf{1}(\mathcal{G}_{C}\cap\mathcal{A}_{C}^{c})]\nonumber \\
&\le4\bar{c}c_{1}w_{i-1}^{\beta}(t_{i}-t_{i-1})\sum_{C\in\mathcal{B}_{i}}\mathbb{P}(0<\left|f_{1}(X)-\negrev(X)\right|\le c_{1}w_{i-1}^{\beta},X\in C)\nonumber\\
&=4\bar{c}c_{1}w_{i-1}^{\beta}(t_{i}-t_{i-1})\mathbb{P}(0<\left|f_{1}(X)-\negrev(X)\right|\le c_{1}w_{i-1}^{\beta}),
 % & \le4\bar{c}c_{1}w_{i-1}^{\beta}(t_{i}-t_{i-1})D_{0}\cdot\left[c_{1}w_{i-1}^{\beta}\right]^{\alpha}\nonumber\\
 % & =4D_{0}\bar{c}(t_{i}-t_{i-1})[c_{1}w_{i-1}^{\beta}]^{1+\alpha}\nonumber,
\end{align}
By the margin condition, we get
\[U_i\le4D_{0}\bar{c}(t_{i}-t_{i-1})[c_{1}w_{i-1}^{\beta}]^{1+\alpha}.\]
% where the second inequality applies relation~(\ref{eq:margin-condition}).

\paragraph{Regret of last Batch.} For $C\in\mathcal{B}_{M}$, similarly we have 
\begin{align*}
\mathbb{E}[r_{T}^{\textrm{live}}(C)\mathbf{1}(\mathcal{G}_{C})] & \le c_{1}|\mathsf{p}(C)|^{\beta}(T-t_{M-1})\mathbb{P}(0<\left|f_{1}(X)-\negrev(X)\right|\le c_{1}|\mathsf{p}(C)|^{\beta},X\in C).
\end{align*}
Summing over $C\in\mathcal{B}_{M}$ gives
\begin{align}\label{eq:lastbatch-calculation}
\sum_{C\in\mathcal{B}_{M}}\mathbb{E}[r_{T}^{\textrm{live}}(C)\mathbf{1}(\mathcal{G}_{C})] & \le\sum_{C\in\mathcal{B}_{M}}c_{1}|\mathsf{p}(C)|^{\beta}(T-t_{M-1})\mathbb{P}(0<\left|f_{1}(X)-\negrev(X)\right|\le c_{1}|\mathsf{p}(C)|^{\beta},X\in C)\nonumber\\
&=c_{1}w_{M-1}^{\beta}(T-t_{M-1})\mathbb{P}(0<\left|f_{1}(X)-\negrev(X)\right|\le c_{1}w_{M-1}^{\beta}) \\
 & \le c_{1}w_{M-1}^{\beta}(T-t_{M-1})D_{0}\cdot\left[c_{1}w_{M-1}^{\beta}\right]^{\alpha}\nonumber\\
 & =D_{0}(T-t_{M-1})[c_{1}w_{M-1}^{\beta}]^{1+\alpha}\nonumber.
\end{align}

\paragraph{Putting things together.} By combining the bounds of $V_i, U_i$ and the regret of the last batch, we obtain

\begin{align*}
R_{T}(\hat{\Gamma}_{\bm{u}}, \hat{\pi}_{\bm{u}}) 
&=\sum_{1\le i<M}\left(U_i+V_i\right)+\sum_{C\in\mathcal{B}_{M}}
 \mathbb{E}[r_{T}^{\textrm{live}}(C)\mathbf{1}(\mathcal{G}_{C})]\\
 & \le c\left(t_{1}+\sum_{i=2}^{M-1}(t_{i}-t_{i-1})\cdot w_{i-1}^{\beta+\alpha\beta}+(T-t_{M-1})w_{M-1}^{\beta+\alpha\beta}\right),
\end{align*}
where $c$ is a constant that depends on $(\beta,d)$. 

Finally, we deal with the case of $\alpha=\infty$. By relations~\eqref{eq:vi-calculation} and ~\eqref{eq:ui-calculation}, one has
\begin{align}\label{eq:ainf-general-bd}
    U_i+V_i\le (1+4\bar{c})c_{1}w_{i-1}^{\beta}(t_{i}-t_{i-1})\mathbb{P}(0<\left|f_{1}(X)-\negrev(X)\right|\le c_{1}w_{i-1}^{\beta}).
\end{align}
When $i=1$, the above relation simplifies to $U_1+V_1\le(1+4\bar{c})c_{1}t_1$ because the probability term is upper-bounded by 1. 

For $i\ge2$, one has $c_{1}w_{i-1}^{\beta}<\delta_0$ due to the definition of $w_{i-1}$ and by the margin condition with $\alpha=\infty$, we get $\mathbb{P}(0<\left|f_{1}(X)-\negrev(X)\right|\le c_{1}w_{i-1}^{\beta})=0$. Consequently, relation~\eqref{eq:ainf-general-bd} reduces to $U_i+V_i=0$.

 For the last batch, relation~\eqref{eq:lastbatch-calculation} similarly reduces to 0. Hence,
\[R_{T}(\hat{\Gamma}_{\bm{u}}, \hat{\pi}_{\bm{u}})\le (1+4\bar{c})c_{1}t_1.\]

\end{document}